\newtheorem{theorem}{Theorem}
\newtheorem{corollary}{Corollary}
\newtheorem{proposition}{Proposition}
\newtheorem{definition}{Definition}
\newenvironment{customlegend}[1][]{%
    \begingroup
    % inits/clears the lists (which might be populated from previous
    % axes):
    \csname pgfplots@init@cleared@structures\endcsname
    \pgfplotsset{#1}%
}{%
    % draws the legend:
    \csname pgfplots@createlegend\endcsname
    \endgroup
}%
\def\addlegendimage{\csname pgfplots@addlegendimage\endcsname}
\pgfplotsset{
every legend to name picture/.style={west}
}
\begin{document}
\frontmatter
%\pagenumbering{gobble}% Remove page numbers (and reset to 1)
%\markright{John Smith\hfill On page styles\hfill}

%-----------------------------------------

\newgeometry{margin=1in}

\begin{titlepage}
\centering
%\includegraphics[width=0.25\textwidth]{}
%\hspace{1.2cm}
%\includegraphics[width=0.25\textwidth]{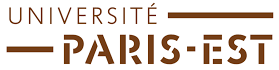}
%\hspace{1.2cm}
%\resizebox{\textwidth}{!}{
%\begin{tabular}[]{p{2cm} p{2cm} p{2cm}}
%\includegraphics[width=4.0cm]{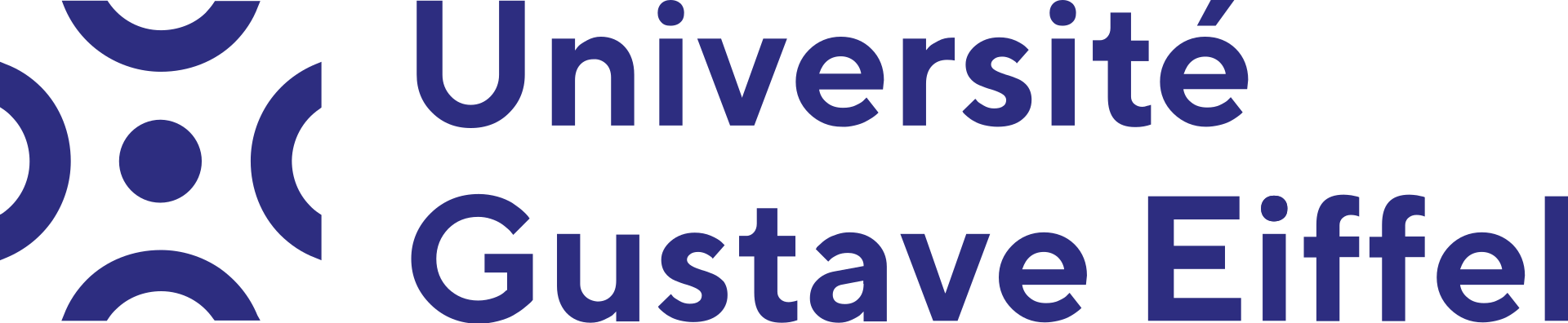}
%\hspace{1cm}
%&\includegraphics[height=1.8cm]{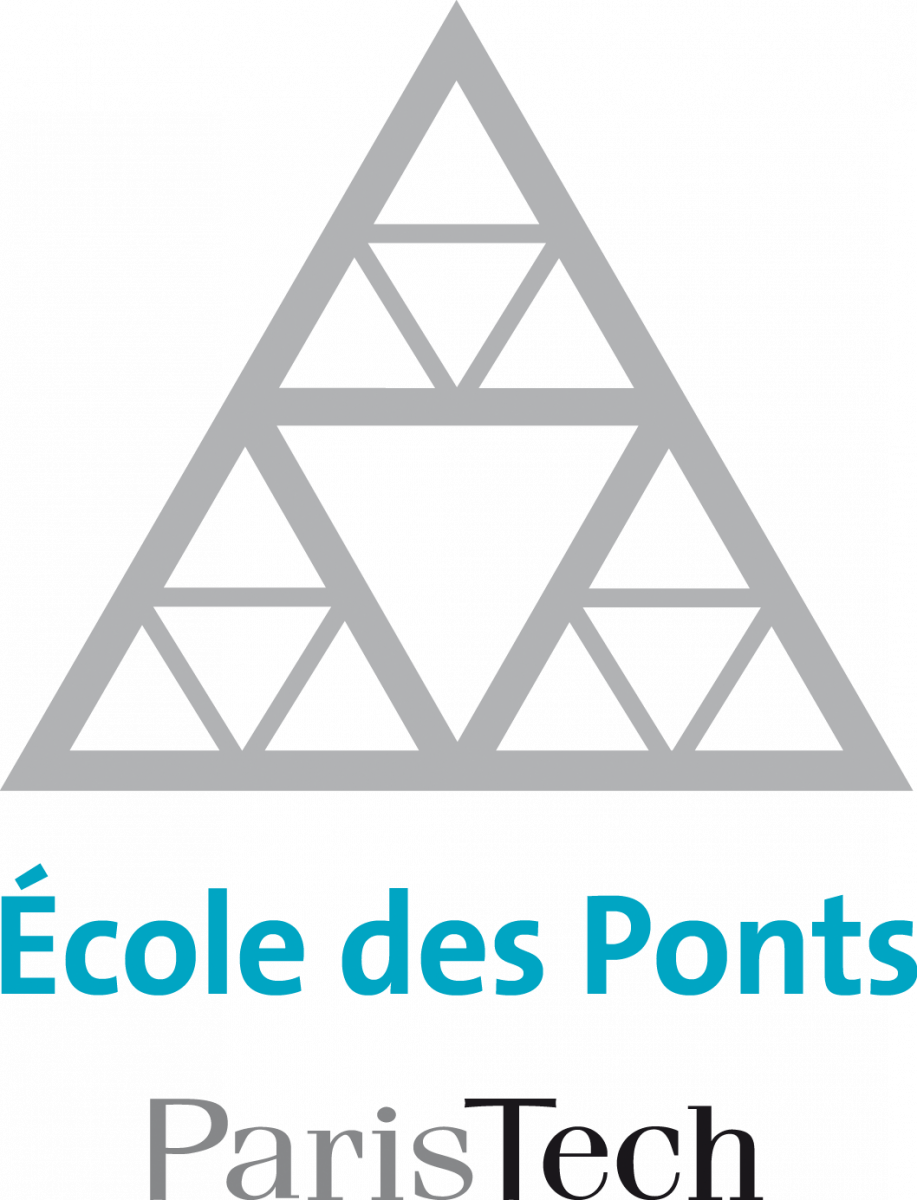}
%\hspace{2cm}
%&\includegraphics[height=1.8cm]{} 
%\end{tabular}}\par

\begin{figure}[!h]
\centering
	$\vcenter{\hbox{\includegraphics[height=1cm]{upe.png}}}$
	\qquad \qquad
	$\vcenter{\hbox{\includegraphics[height=2cm]{enpc.png}}}$
	\qquad \qquad
	$\vcenter{\hbox{\includegraphics[height=1cm]{univ-gustave-eiffel.png}}}$
\end{figure}

\vspace{1cm}
{\fontsize{14pt}{17pt}\selectfont\textsf{École doctorale Ville, Transports et Territoires} \par}
\vspace{1 cm}
{\fontsize{14pt}{17pt}\selectfont \textsf{Th\`ese de doctorat d'Universit\'e Paris-Est\\
 dans le domaine des Sciences de l'ing\'enieur} \par}
%...............
\raggedright
\vspace{2.5cm}
{\fontsize{20pt}{20pt}\selectfont \textsf{Florian Sven Schanzenb\"acher}\par}
\vspace{1cm}
%{\LARGE \bfseries R\'egulation temps-r\'eel du trafic ferroviaire sur des lignes de voyageurs \par}
%{\fontsize{30pt}{30pt}\selectfont \textsf{R\'egulation du trafic ferroviaire sur des lignes de voyageurs \`a base de diagrammes fondamentaux} \par}
{\fontsize{30pt}{30pt}\selectfont \textsf{Modélisation max-plus du trafic ferroviaire des voyageurs sur une ligne à fourche : diagramme fondamental et contrôle dynamique} \par}
%{\Large \bfseries - \par}
%{\Large \bfseries cas d'une ligne simple et \`a fourche\par}
%\vspace{1 cm}
%{\Large (Real-time traffic control ...)\par}
\vspace{1 cm}
{\fontsize{14pt}{14pt}\selectfont \textsf{Soutenue le 05 juin 2020}\par}
\vfill
%\vspace{2cm}
%{\fontsize{13pt}{14pt}\selectfont \textsf{Dirig\'ee par Prof. Fabien Leurent, ing\'enieur général des Ponts, des Eaux et For\^ets (ENPC)} \par}
%\vspace{0.5cm}
%{\fontsize{13pt}{14pt}\selectfont \textsf{Encadrée par Nadir Farhi, chargé de recherche (Université Gustave Eiffel)} \par}
%\vfill
%\vspace{1cm}
%..............
\resizebox{\textwidth}{!} {
\begin{tabular}{ll}
\fontsize{14pt}{16pt}\selectfont \textsf{\textbf{Jury de thèse de docteur}}&\\
&\\
\fontsize{13pt}{16pt}\selectfont \textsf{Président du jury :}  &\fontsize{13pt}{16pt}\selectfont \textsf{Prof. Rob Goverde - TU Delft}\\
&\\
\fontsize{13pt}{16pt}\selectfont \textsf{Directeur de thèse :}  &\fontsize{13pt}{16pt}\selectfont \textsf{Prof. Fabien Leurent, ing\'enieur général des Ponts, des Eaux et For\^ets}\\
&\\
\fontsize{13pt}{16pt}\selectfont \textsf{Encadrent :}  &\fontsize{13pt}{16pt}\selectfont \textsf{Nadir Farhi, chargé de recherche - Université Gustave Eiffel}\\
&\\
\fontsize{13pt}{16pt}\selectfont \textsf{Rapporteur :} &\fontsize{13pt}{16pt}\selectfont \textsf{Anne Bouillard, ingénieur de recherche - Huawei Technologies}\\
&\\
\fontsize{13pt}{16pt}\selectfont \textsf{Rapporteur :} &\fontsize{13pt}{16pt}\selectfont \textsf{Prof. Francesco Corman - ETH Zürich}\\
&\\
\fontsize{13pt}{16pt}\selectfont \textsf{Examinateur :} &\fontsize{13pt}{16pt}\selectfont \textsf{Marianne Akian, directrice de recherche - INRIA}\\
&\\
\fontsize{13pt}{16pt}\selectfont \textsf{Examinateur :} &\fontsize{13pt}{16pt}\selectfont \textsf{Prof. Ahmed Nait-Sidi-Moh - Université de Picardie Jules Verne}\\
&\\
\fontsize{13pt}{16pt}\selectfont \textsf{Examinateur :} &\fontsize{13pt}{16pt}\selectfont \textsf{Paola Pellegrini, directrice de recherche - Université Gustave Eiffel}\\
&\\
\fontsize{13pt}{16pt}\selectfont \textsf{Invité :} &\fontsize{13pt}{16pt}\selectfont \textsf{Gérard Gabriel - RATP}
\end{tabular}
}
%{\fontsize{11pt}{14pt}\selectfont \textsf{Examinateur : Prof. Dr. Ahmed Nait-Sidi-Moh, Université de Picardie Jules Verne} \par}
%{\fontsize{13pt}{14pt}\selectfont \textsf{Examinateur : Prof. Dr. Rob Goverde (TU Delft)} \par}
%{\fontsize{13pt}{14pt}\selectfont \textsf{Examinateur : Dr. Marianne Akian, DR INRIA Ecole polytechnique} \par}
%{\fontsize{13pt}{14pt}\selectfont \textsf{Examinateur : Dr. Anne Bouillard, IR Nokia Labs, LINCS}\par}
%%{\fontsize{13pt}{14pt}\selectfont \textsf{Dr. Shadi Sadeghian~(RATP)} \par}
%%{\fontsize{13pt}{14pt}\selectfont \textsf{Dr. Vincent Dimanche~(RATP)} \par}
%{\fontsize{11pt}{14pt}\selectfont \textsf{Examinateur : M. Gérard Gabriel,~RATP} \par}
%\vspace{1cm}
%\vfill
%\centering
%{\fontsize{13pt}{14pt}\selectfont \textsf{Soutenue le 05 juin 2020}\par}
\end{titlepage}
%\frontmatter
%---------------------------------------------
\restoregeometry
\cleardoublepage

\subsection*{\centering{Publications list}}
\begin{description}
\item F. Schanzenbächer, N. Farhi, Z. Christoforou, F. Leurent and G. Gabriel. ``A discrete event traffic model explaining the traffic phases of the train dynamics in a metro line system with a junction.'' In \textit{Proceedings of the IEEE Annual Conference on Decision and Control}, pages 6283-6288, Melbourne, Australia, 2017.
\item F. Schanzenbächer, N. Farhi, F. Leurent, and G. Gabriel. ``A discrete event traffic model explaining the traffic phases of the train dynamics on a linear metro line with demand-dependent control.'' In \textit{Proceedings of the Annual American Control Conference}, pages 6335-6340, Milwaukee, USA, 2018.
\item F. Schanzenbächer, N. Farhi, F. Leurent, and G. Gabriel. ``Real-time control of metro train dynamics with minimization of train time-headway variance.'' In \textit{Proceedings of the IEEE Intelligent Transportation Systems Conference}, pages 2747-2752, Maui~(Hawaii), USA, 2018.
\item F. Schanzenbächer, N. Farhi, F. Leurent, and G. Gabriel. ``Comprehensive passenger demand-dependent traffic control on a metro line with a junction and a derivation of the traffic phases.'' In \textit{Annual Meeting online of the Transportation Research Board}. Washington D.C., USA, 2019.
\item F. Schanzenbächer, N. Farhi, F. Leurent, and G. Gabriel. ``A discrete event traffic model for passenger demand-dependent train control in a metro line with a junction.'' In \textit{Proceedings of the ITS World Congress}. Singapore, 2019.
\item F. Schanzenbächer, N. Farhi, Z. Christoforou, F. Leurent, and G. Gabriel. ``Demand-dependent supply control on a linear metro line of the RATP network.'' In \textit{Proceedings of the International Scientific Conference on Mobility and Transport, Transportation Research Procedia}, pages 491-493, Munich, Germany, 2019.
\item F. Schanzenbächer, N. Farhi, F. Leurent, and G. Gabriel. ``Feedback control for metro lines with a junction.'' \textit{IEEE Transactions on Intelligent Transportation Systems}, 2020.
\end{description}

%---------------------------------------------
\cleardoublepage

%\begin{abstract}
\subsection*{\centering{Abstract en français}}
Cette thèse propose des modèles mathématiques de trafic et des lois de contrôle pour des lignes de métro à fourche. Les modèles sont basés sur ceux %développés pour
des lignes linéaires (sans fourche) dans~\cite{FNHL17a,FNHL16}.
La dynamique du trafic des trains est modélisée par un système à événements discrets, avec deux contraintes. %Deux contraintes sont prises en compte.
La première impose une borne inférieure au temps de parcours et au temps de stationnement.
La deuxième impose une borne inférieure au temps de sécurité entre deux trains.
Un modèle de la dynamique du trafic sur la fourche est proposé, ainsi que des lois de contrôle pour les temps de parcours et les temps de stationnement, en fonction de l'affluence voyageur.
La plupart des modèles sont écrits comme systèmes linéaires en algèbre max-plus (algèbre des polynômes matricielles), ce qui permet la caractérisation du régime stationnaire et la dérivation analytique des diagrammes de phase du trafic.

Dans tous les modèles de ce manuscrit, la ligne de métro est discrétisée (dans l'espace) en segments (cantons). Dans le modèle du Chapitre~\ref{1A}, les temps de parcours, de stationnement et de sécurité respectent des bornes inférieures, imposées par segment.
La dynamique du trafic hors fourche est modélisée comme dans le cas des lignes linéaires dans~\cite{FNHL17a,FNHL16}. Une contribution principale du Chapitre~\ref{1A} est le modèle de la dynamique à la fourche.
Il est montré que le modèle de la dynamique du trafic de la ligne entière (avec fourche) est linéaire en algèbre max-plus et que la dynamique atteint un régime stationnaire.
Le taux de croissance moyen asymptotique de la dynamique, interprété ici comme intervalle de temps moyen asymptotique, est dérivé analytiquement.
Il est donné en fonction des temps de parcours, de stationnement et de sécurité, ainsi que du nombre de trains et de la différence entre le nombre de trains sur les branches.
Ce résultat permet d'obtenir des diagrammes de phases de la dynamique du trafic, appelés diagrammes fondamentaux, comme en trafic routier.
Huit phases de trafic sont dérivées analytiquement et interprétées en termes de trafic.
Basé sur cette dérivation analytique, des lois de contrôle macroscopiques sont proposés pour la régulation du trafic sur une ligne à fourche.

Le Chapitre~\ref{Chap-4} propose une extension du modèle du Chapitre~\ref{1A}.
Dans la première section, les temps de parcours et de stationnement du modèle de la dynamique du trafic sur une ligne linéaire~\cite{FNHL17a,FNHL16} sont modélisés en fonction de l'affluence voyageur.
Il est montré que la dynamique reste linéaire en algèbre max-plus. Le régime stationnaire est caractérisé et le taux de croissance moyen asymptotique de la dynamique (intervalle moyen asymptotique) est dérivé analytiquement en fonction des paramètres de la ligne (bornes inférieures aux temps de parcours, de stationnement et de sécurité), ainsi que du nombre de trains et de l'affluence voyageur.
Des marges sur les temps de parcours peuvent être utilisées pour réaliser des temps de stationnement en fonction de l'affluence voyageur. %, ce qui augmente la robustesse de la dynamique.
Dans la deuxième section du Chapitre~\ref{Chap-4}, cette extension est appliquée au modèle pour une ligne à fourche, combinant ainsi les modèles des Chapitres~\ref{1A} et~\ref{Chap-4} (première section).
Les phases de trafic sont dérivées analytiquement, caractérisées par l'intervalle moyen asymptotique en fonction des paramètres de la ligne ainsi que du nombre de trains, de la différence entre le nombre de trains sur les deux branches, et de l'affluence voyageur.

Finalement, le Chapitre~\ref{sim} propose trois cas de simulation sur la ligne 13 du métro parisien (avec fourche).
Le premier cas montre le contrôle macroscopique du nombre de trains en fonction de l'affluence voyageur.
Le deuxième cas montre le contrôle macroscopique du nombre de trains sur les branches face à une perturbation sur les temps de parcours et/ou les temps de stationnement. % sur la ligne.
Le troisième cas considère un état intitial avec des intervalles perturbés et simule la dynamique avec des temps de stationnement et de parcours contrôlés (régulés) en fonction de l'affluence voyageur.
Le contrôle proposé garantit une harmonisation des intervalles sur l'ensemble de la ligne.\\~

\noindent
\textbf{Mots-clés:} Théorie du trafic, systèmes à événements discrets, physique du trafic, modélisation du trafic ferroviaire, contrôle du trafic.
%\end{abstract}

%---------------------------------------------
\title{\textsf{Max-plus modeling of traffic on passenger railway lines with a junction: fundamental diagram and dynamic control}}
\author{\textsf{Florian Sven Schanzenb\"acher}}
\date{\textsf{}}
\maketitle

%---------------------------------------------
\cleardoublepage
\setcounter{page}{9}

%\begin{abstract}
\subsection*{\centering{Abstract in English}}
%Forschungsfeld;
This thesis proposes mathematical traffic models and control laws for metro lines with one junction. The models are based on the ones developed for linear metro lines (without junction) in~\cite{FNHL17a,FNHL16}.
%Special attention is given to lines with one junction, connecting a central part and two branches, with an one-over-two operations rule at the junction. %The traffic models are valid for lines which are operated with a one-over-two rule at the divergence and the convergence. Trains are supposed to stop at all the platforms and the number of trains in the system is supposed to be constant within a certain time period.
%===
The train dynamics are described with a discrete event traffic model.
Two time constraints are considered. 
%The discrete event model is based on a modeling approach first developed by Farhi et al.~\cite{FNHL16} for a linear line and is a represents a new way of railway traffic modeling. As discrete events are considered the train departures from the segments on the line, corresponding to signaling blocks.
% The train dynamics are stated by two constraints. 
The first one imposes lower bounds on the train run and dwell times. The second one fixes a lower bound on the safe separation time between two trains. A model of the train dynamics on the junction is proposed, as well as
control laws for the train run and dwell times, as a function of the passenger travel demand. %including the passenger travel demand. 
Most of these models are written as linear systems in the max-plus algebra (polynomial matrix algebra), which permits the characterization of the stationary regime, and the derivation of traffic phase diagrams.
% per segment, that is the minimum time interval to respect between two trains following each other.% A train departures from a segment when both constraints (the maximum) are satisfied.

%Erweiterungen zum aktuellen Wissensstand;
%Chap. 3;
%New to the existing research is, first of all, the application of the discrete event traffic model to a line with a junction which is operated accordingly to an one-over-two rule.
In all the models considered here, the metro line is discretized (in space) in
a number of segments (blocks).
In the model of Chapter~\ref{1A}, the train run, dwell, and safe separation times are lower-bounded on every segment. %More precisely, this means that train dwell times respect the timetable values, and that all trains realize the nominal run time per segment. %The dwell time is the maximum of nominal dwell time and safe separation time. Consequently, in case the safe separation constraint realizes the maximum, the train dynamics are an approximation because of the constant run times.
%===
The train dynamics out of the junction are modeled as in the case of linear metro lines~\cite{FNHL17a,FNHL16}. One of the main contributions of Chapter~\ref{1A}
is the model of the train dynamics on the junction.
The train dynamics model for the entire line (with the junction) is shown to be linear in the max-plus algebra %of polynomial matrices, and %By application of the theory for max-plus linear systems,
and is shown to reach a stationary regime. The asymptotic average growth rate of the train dynamics, interpreted as the asymptotic average train time-headway, is derived analytically.
It is given as a function of the train run, dwell, and safe separation times, and of the total number of trains, as well as the difference between the number of trains on the two branches. %It is shown that in the free flow (and in the congestion) phase, there exists an optimal difference between the number of trains on the branches, maximizing the asymptotic average frequency for a fixed number of trains.
This derivation permits to obtain phase diagrams of the train dynamics, called here fundamental diagrams, as in road traffic. 
Eight traffic phases of the train dynamics are derived analytically and interpreted in terms of traffic. %These are two free flow phases, two congested branches phases, two congestion phases, a maximum frequency phase and a zero flow phase.
Moreover, based on the closed-form solutions of the traffic phases, macroscopic control laws are proposed for the traffic on a line with a junction. 
% derived for the control of number of trains. %the total number of trains and the difference between the number of trains on the branches, in case of: 1) a changing passenger travel demand volume, 2) a perturbation on the travel times for a fixed number of trains, and 3) a perturbation on the travel times where trains can be inserted or canceled. This macroscopic control is interesting for lines with a junction since it allows to control the train passing order at the convergence by following the optimal difference between the number of trains on the branches and in case included time margins for microscopic control are insufficient to re-stabilize the traffic.
%===

%Chap. 4;
Chapter~\ref{Chap-4} proposes an extension of the model of Chapter~\ref{1A}.
In the first section, the model of the train dynamics on a linear metro line~\cite{FNHL17a,FNHL16} is extended with the run and dwell times as functions of the passenger travel demand.
% (Chapter~\ref{Chap-4}), the constant train dwell times are replaced by functions allowing a microscopic control of the train dwell and the train run times within a time margin, depending on the train time-headway and the passenger travel demand. %The controlled dwell times are given as the minimum between a function of the average passenger arrival (respectively departure) rates to (respectively from) all platforms and of the train time-headway, as well as a maximum dwell time, depending on the time margin. Then, dwell times depend on the passenger travel demand such that they are extended for trains with a long time-headway to take into account the accumulation of passengers on the platforms. Train run times are given by the maximum of a function which cancels a possible extension of the train dwell time by accelerating the train in the inter-station within the run time margin, and the minimum run time.
% The travel time (sum of train dwell and run time) per segment is constant and depends on the run time margin and the passenger demand.
%===
It is shown that the train dynamics remain linear in the max-plus algebra.
The stationary regime is characterized, and the asymptotic average growth rate
of the train dynamics (asymptotic average train time-headway) is derived analytically,
as a function of the parameters of the line (lower bounds on the run, dwell, and safe separation times), and of the total number of trains, as well as the passenger travel demand. It is suggested that margins on the train run times can be used to extend train
dwell times at platforms in case of train delays, which improves the robustness
of the train dynamics. In the second section of Chapter~\ref{Chap-4}, this extension is applied to a line with a junction, combining the models of
Chapter~\ref{1A} and Chapter~\ref{Chap-4} (section~1).
Similarly, the traffic phases of the train dynamics are derived analytically, giving the asymptotic average train time-headway as a function of the parameters of the line % lower bounds on the run, dwell and safe separation times,
and of the total number of trains, the difference between the number of trains on the two branches, as well as the passenger travel demand.

% Consequently, the properties of the first model hold. 
% %The model is max-plus linear and the traffic reaches a stable stationary regime with an asymptotic average train time-headway. Closed-form solutions of the asymptotic average train time-headway are derived which depends, in addition to the parameters of the first model, furthermore on the passenger travel demand and the run time margin.
% The traffic phases of the train dynamics are derived and allow to analyze the effect of passenger demand and margin on the train dynamics and the asymptotic average train frequency.
% For the case of a linear line, three traffic phases can be distinguished. %: a free flow phase, a maximum frequency phase and congestion phase.
% For the case of line with a junction, eight traffic phases are found for a fixed passenger travel demand a fixed run time margin. For an optimal difference between the number of trains on the branches, the fundamental diagram is the same as for a linear line.

%Chap. 5;
Finally, Chapter~\ref{sim} proposes three simulation cases illustrated on metro line 13 of Paris (with one junction). The first case illustrates the macroscopic control on the number of trains depending on the passenger travel demand volume. %In this case, the train dynamics do not change and therefore there is no need for simulation of the train dynamics. The computed control actions can directly be applied, the stability of the system is guaranteed thanks to the analytic results in this thesis.
The second case shows the macroscopic control of the number of trains on the branches in case of a perturbation on the train travel times. %Since the train dynamics are perturbed here, the train dynamics are simulated and the results with and without control are compared.
The third case gives a simulation of the demand-dependent train dynamics with perturbed initial time-headways and shows how an additional dwell time control harmonizes the train time-headways.\\~ %compares two cases. A first one, where train dynamics are max-plus linear and stable, where train dwell times are demand-dependent and run times are controlled, but the initial perturbation on the time-headways does not disappear. And a second one with an additional control on the dwell times, where train dynamics can be interpreted as a stochastic optimal control problem of a Markov chain. %Here, the temporarily applied control guarantees a convergence of the train time-headways to the asymptotic average train time-headway which harmonizes train positions and time-headways.

%Conclusion and perspectives;
%This thesis resumes a number of important contributions for a better understanding of the physics of metro traffic on lines with a junction. It has been shown how the results can be used for strategic traffic control, especially for the timetable construction. Moreover, some applications for tactic and operational traffic control, both macroscopic control of the number of trains on each part of the line, as well as microscopic control of the train dwell and the train run times have been presented.

\noindent
\textbf{Keywords:} Traffic flow theory, discrete event systems, physics of traffic, railway traffic modeling, traffic control.
%\end{abstract}

%---------------------------------------------
\clearpage
\shipout\null
\stepcounter{page}

{\raggedright
\null
\vfill
This PhD thesis has been prepared at:\par
\vspace{.7cm}
{\bfseries ENPC - LVMT}\\
6-8 Avenue Blaise Pascal\\
Cit\'e Descartes, Champs-sur-Marne\\
F-77455 Marne-la-Vall\'ee cedex 2\par
\vspace{1cm}
{\bfseries Université Gustave Eiffel - GRETTIA}\\
14-20 Boulevard Newton\\
Cit\'e Descartes, Champs-sur-Marne\\
F-77447 Marne-la-Vall\'ee cedex 2\par
\vspace{1cm}
In cooperation with:\par
\vspace{.7cm}
{\bfseries RATP}\\
54 Quai de la Rap\'ee\\
F-75599 Paris cedex 12
}

%---------------------------------------------
\cleardoublepage
\tableofcontents

%---------------------------------------------
\cleardoublepage
\listoffigures

%\clearpage
%\pagenumbering{arabic}% Arabic page numbers (and reset to 1)

%---------------------------------------------
\mainmatter
\cleardoublepage

\chapter{Introduction}
\begin{quote}{This chapter is an introduction to the thesis. It covers in a comprehensive form, a situation of the thesis in its context, followed by a presentation of the environment in which it has been realized. %and compares the approaches of traffic simulation with the here chosen approach of analytic modeling.
Furthermore, the scientific aims and methodology of the thesis, as well as its main scientific and technological contributions are explained, before detailing the outline of the manuscript.}
\end{quote}

%===========================
\section*{Context}
\subsection*{Role of the Metro in the Context of Cities}
%===========================
Worldwide, the number of people having been moving to cities exceeds the number of those having been going to rural areas, the well known phenomena is referred to as urbanization. This ongoing process leads, on the one hand, to a drain of the countryside. On the other hand, cities keep on growing further and become dense agglomerations. These urban zones generally have a compact city center. In the context of European cities, this is the historical city center, in those of American cities, this is the Central Business District. With the increasing population, large housing areas are built on the outskirts of the city. %Different names exist for these, depending on their design.
The suburbs of American cities are characterized by widespread single housing, whereas in European and Asian cities, very dense housing blocks with a small apartment size dominate.

%The increasing number of inhabitants makes cities to \textbf{\textit{metropoles}}.
To keep the metropoles functioning and their people moving, efficient means of transportation are important. Without these, the further growth of cities would reach a natural upper limit at some point. This limit is reached when the time their inhabitants spend daily in transportation exceeds a certain value. Transportation incorporates here all different means, from walking, cycling, over individual mobility to public transportation. In the context of a widely spread metropole, people require motorized means of transportation to cover the long distances.
The first way, individual mobility, has recently received a lot of attention in sciences and the public opinion thanks to the technology of driverless and electric vehicles.
However, not only from an environmental point of view, this is only the second best solution. The innovations cited above do not change the fact that the capacity of the car-road-system is significantly lower than the one of the train-railway-system. The capacity, however, is crucial for metropoles where the passenger travel demand is very high, especially in European and Asian cities where the population is extremely dense.
Today, congestion can be observed in many metropoles, because the road network is no longer able to cope with the increasing number of cars.
Finally, particularly European cities, for example Paris, try nowadays to limit their surface occupied by cars, by giving them back to pedestrians, bicycles or by converting them to parks.
To fight congestion, public transportation can play an important role in the future.

The metropolitan (metro) is often underground and therefore independent of road traffic conditions.
Moreover, the signaling system allows the trains to run at high speed independently of the driver's visibility. Therefore metros offer short travel times to passengers, and have consequently known a great success in metropoles over all continents. Metros are urban railway systems characterized by a dense network and a large number of stations where two neighbored ones are often in walking distance. For example, the Paris metro network counts $16 \text{ lines}$ and with a total length of nearly $220 \text{ km}$. Taking the example of Paris metro line~1, the average distance between two stations is of $688 \text{ m}$, with $25$ stations over a total line length of $16.6 \text{ km}$. The lines are operated at a very high frequency, especially in peak hour. For example, on Paris metro network, the train time-headway is as low as $90 \text{ sec}$ on some lines during peak hour.

The easy access, the short waiting and the quick travel times have made in many cities the metro to the inhabitants' preferred transportation mode. First implemented in London in $1863$, similar systems have been opened all over the continent, for example in Paris in $1900$ and in Berlin in $1902$. The construction of new metro systems went on in the second half of the same century, for example in Beijing (1969), in Munich (1971), in Vienna (1978) and still continues today. More recently, the metropolitan has been implemented in many Asian metropoles, such as Hong Kong (1975) and Singapore (1987). From there, it now arrives in Oceania: Auckland, the biggest city of New Zealand plans to open its first metro line in $2024$.
Perfectly adapted to dense metropoles, the metro has known a less important development in the often very dispersed American cities and their suburbs.

A further development of the historic signaling system is the fully automated metro. Since the beginning of the $21^\text{st}$ century, many new metro lines are designed as driverless systems. Since the metro networks in Asian cities are much more recent than those in Europe, most of the automated lines can today be found there. Singapore, for example, has automated all of its metro lines. Fully automated systems present several advantages compared to systems with drivers.

Firstly, the travel time decreases, especially because trains follow the speed profile between the stations more accurately.
Secondly, dwell times are known to be shorter and well respected on automated lines.
Consequently, the regularity increases considerably, making of the metropolitan a metronome of the city~\cite{G18}. In regular time-intervals, metros run on their line, taking passengers on board and pitching them into the pulsing life of their metropoles.
As an ultimate consequence, a higher average speed and an increased regularity maximize the throughput or train frequency of a line, which allows increase the capacity offered to passengers.
Obviously, this is a key issue in metropoles with a very high passenger travel demand.
%It is their reliability which makes automated metro lines so popular. 
The first fully automatic metro in the world is based on the \textit{Véhicule Automatique Léger} (automatic light vehicle) technique and was opened in Lille, France, in 1983.
Paris opened the first automated line, metro line 14, in 1998. Since then, the Paris metro operator \textit{R\'egie Autonome des Transports Parisiens} (RATP) has also automated metro line 1 in 2012. The automation of metro line 4 is under way.

\subsection*{Existing Traffic Simulation Tools at RATP}
Development and operations of metros require stakeholders to a priori visualize and evaluate possible modifications and extensions to the system.
Public transportation operators, infrastructure operators or private traffic engineering companies realize these studies by means of simulation.
%In the context of this work, simulation means visualization of dynamic behavior. Then, simulation describes how trains or passengers move on in the system in time. A numerical simulation is based on a model of the dynamics to study. More precisely, a model represents here train or passenger dynamics by means of mathematical equations.

At RATP, simulations are done by the departments \textit{Etudes g\'en\'erales, Developpement, Territoires} (EDT) and \textit{Ma\^{i}trise d'Ouvrage des Projets} (MOP).
EDT conducts simulations of the passenger travel demand. %This includes the calculation of Origin-Destination (OD) matrices, based on 4-step and traffic affection models.
The MATYS entity of the \textit{G\'{e}nie Ferroviaire} (GEF) of MOP is in charge of train and traffic simulations on metro and \textit{R\'{e}seau Express R\'{e}gional} (RER) lines. %The entity is named after one of their simulation tools, MATYS, which simulates the dynamics of a single train between two stations.

On the train simulation side, MATYS simulates train speed profiles, based on the rolling stock and the infrastructure. %, such as the maximum speed, the gradient, the block length and the signaling system. At RATP, these speed profiles are used to calculate the run times per inter-station.
Together with the dwell times, they are the basis for timetable construction. Both dwell times and the timetable design is done by the metro and RER operations departments, \textit{Métro, Transport et Services} (MTS) and RER. %''Réseau Express Régional'' (RER).

The RER is a suburban train network, connecting the outskirts of Paris to the city center at a high frequency. For example, the RER line A runs from the west to the east of the Paris region and has a length of $109 \text{ km}$, with $46 \text { stations}$ and an average distance between two consecutive stops of $2.36 \text{ km}$. The time-headway during peak hour is of $140 \text{ seconds}$.

On the traffic simulation side, MATYS uses the railway simulator OpenTrack. %It simulates the traffic dynamics of a set of trains on a railway, here a metro or RER, line. In the standard version,
Based on a detailed infrastructure and vehicle characteristics model, a given timetable can be simulated. %of a theoretically constructed timetable.
%Simulating the robustness of a timetable means here to inject randomly perturbations while simulating the traffic. Running a high number of simulations allows to obtain results on how easily initial delays propagate through the railway line. The less initial perturbations propagate onto other trains, the more robust a timetable is.
Furthermore, MATYS uses an API version of OpenTrack which allows to communicate with the simulator in real-time. Once a simulation has begun, the train positions, among further parameters, can be accessed at any moment. Based on these data, third party tools, which are external to the simulator, allow to modify dwell times and speed profiles of specific trains.

This makes it possible to reproduce precise perturbation scenarios. The recalculated dwell times and speed profiles are then sent back to OpenTrack. They are directly taken into account in the current simulation. For example, the effect of passenger accumulation on the platform due to a perturbation can be simulated. For trains running with a long train time-headway, extended dwell times can be calculated. Moreover, different driver characteristics can be reproduced.
%Finally, a rail simulator can be used to evaluate the effect of modifications on infrastructure or trains, as most recently the implementation of an automatic train driving pilot on the central part of the RER A line in Paris.

On the one end, traffic simulation allows to visualize different scenarios with a high level of detail.
The thus obtained results, however, only hold for the simulated case.
To obtain results on the global system behavior, many simulations have to be run, with varying input parameters. %Simulating means visualizing, case per case.
In the context of traffic engineering, traffic simulators have big success because of their flexibility with regard to possible applications. %infrastructure layout, vehicle dynamics can be simulated.
%Many indicators can be calculated, such as the train time-headway, frequency or punctuality.
%However, a simulation result can possibly be misleading when it comes to an interpretation of the general behavior of the system. As mentioned above, one simulation represent only one specific scenario. Note that running a large number of simulations with changing input parameters allows to approximate but not to proof the real behavior of a dynamic system.
%From a puristic point of view, the simulation approach does not allow to fully study the behavior of a dynamic system.develop an general understanding of the behavior of a dynamic system.

On the other end, a system can be studied with a model which is simple enough to be mathematically analyzed.
%Therefore, the relevant components of the system have first to be described by parameters and variables. Then, their dependencies between each other have to be expressed in form of mathematical functions, forming the simplified model of the system.
However, the hypotheses taken to simplify the model, cause that the latter no longer represents the real world system.
Therefore, the choice between simulation on the one side, and analytic modeling on the other side, has to be made carefully, depending on the application.

%\noindent
%\textbf{Scientific problem, aims}\\

%===========================
\section*{Overview of the Thesis}
\subsection*{Objective \& Methodology}
%===========================
In railway operations, %common key indicators are train regularity and punctuality with regard to a timetable.
in case of a perturbation, the aim of the traffic controller is to re-establish the predefined timetable by recovering delays. This strategy is convenient for linear lines without junction, even though further control actions, such as canceling a train service, or re-assigning drivers and crew can be beneficial and further enhance the service offered to passengers.

For a line with a junction, traffic control in case of perturbations is a more complex problem. The branches which converge at one point influence each other. In case of saturation, the train passing order and the number of trains on each branch have to be controlled. The problem becomes even more complex in case of a high passenger affluence which may further disturb train dwell times.

%(a.)
The principal aim of this thesis is to bring about, in a first step, a general mathematical model of the traffic dynamics of a metro line with a junction. General means that the model can be applied to any line with a junction, satisfying some hypotheses.
The traffic dynamics are modeled as a discrete event system in such a way that they linear in the max-plus algebra. The application of the theory of max-plus linear systems allows to study the properties of the system in the stationary regime, interpreted in this thesis as the physics of traffic of a metro line with a junction.

%
%This thesis studies the case of an existing or future metro line with one junction, with a very high passenger travel demand during peak hour and short train time-intervals. The central part of the line is operated close to, or at maximum capacity.

In a second step, the results of the stationary regime characteristics of the traffic on a metro line with a junction, are used for the control of the system.
The application of the mathematical model is twofold, on the one side to strategic planing and scheduling and on the other side to the dynamic control of the system.

%%twofold
%%(i.)
%The first application is the representation of the relation between the macroscopic variables train number, the difference between the number of trains on the branches, passenger travel demand, train time-headway and the parameters travel times and safe separation times. %The analytic results for a line with a junction are presented in this thesis.
%Eight traffic phases of the train dynamics can be distinguished. The graphical representation of the results is the fundamental diagrams of a metro line with a junction. It is new and has first been presented by the author in~\cite{SFCLG17}. Before, Farhi et al.~\cite{FNHL16} have derived the fundamental diagram for a linear metro line. The results can be used for strategic traffic control and scheduling, and are particularly interesting in case of lines with a junction, because other than the total number of trains, also the difference between the number of trains on the branches can be controlled. The fundamental diagram of line 13 shows that the traffic offer is well planed today.
%%(ii.)
%The second application is the dynamic traffic control, based on the mathematical model of the train dynamics. The implementation of the functions of the mathematical model impose optimal dwell times with regard to the passenger travel demand and headway regularity, and control the run times to guarantee traffic stability, see~\cite{SFCLG18a,SFLG18b,SFLG18c,SFLG19a,SFLG19b}.

To illustrate its possible applications, metro line 13 of Paris is modeled. The metro line has one central part and two branches, connected with a junction. The branches have different lengths. The southern terminus of the line, common to all trains, is \textit{Châtillon -- Montrouge}. The first branch ends at \textit{Saint-Denis -- Université}, the second one at \textit{Les Courtilles (Asnières~--~Gennevilliers)}. The total length is of $24.3 \text{ km}$ with an average distance between two stations of $776 \text{ m}$. In 2017, $131.4 \text{ million}$ passengers used the line which places it on $3^\text{th}$ position of all the lines of the Paris metro network.

The final aim is to provide the operator RATP, on the basis of this thesis, three tools for application.
The first one allows to conduct a theoretic analysis of the capacity of a metro line with a junction.
The second one allows to calculate an optimal train passing order at the convergence, depending on the real-time traffic conditions.
The third one depicts the asymptotic average train frequency as a function of the parameters of the line and of the passenger travel demand. This allows to optimize the train schedule depending on the passenger travel demand.

%----------------
%\noindent
%\textbf{Methodology}\\
%===========================
\subsection*{Contributions}
%===========================
The thesis is situated between applied mathematics and engineering sciences, in particular modeling, simulation and optimization. It is a mathematical contribution to transportation engineering. %Developing a control strategy for a dynamic system, consists in defining its state variables, its control variables and a model of the train dynamics from which the control laws are derived. These laws allow to calculate, depending on the traffic state, optimal control actions for scheduling or dynamic control.
The modeling consists in describing the principal parameters and variables of the dynamics of a metro line with a junction.

An one-over-two operation of the junction in the stationary regime is assumed.
Taking the example of the convergence, one train entering the central part from branch 1 is then followed by one train from branch 2. This one-over-two rule also applies to the divergence.

The line is discretized in segments or blocks, representing the signaling system. The dependency of the parameters and variables is described by mathematical equations. Even though the focus is on how train move on in time, the system is described as a sequence of discrete events, where an event is a departure from a given segment. More precisely, these are the counted train departure times from a given segment. %All together, the equations represent the train dynamics of the system.

In a first version in Chapter~\ref{1A}, the trains are supposed to respect given lower bounds on the train dwell times at the platforms, the run times on each segment, and the safe separation times between the two consecutive trains. %on a given segment and the number of trains on the central part and on the branches.
The train dynamics are written on two constraints, one on the sum of run and travel time, and one on the safe separation time.

The series of constraints is non-linear in standard algebra but can be represented in a linear matrix form in max-plus algebra. % where the system, thanks to the cons(run + dwell) and safe separation times, turns out to be max-plus linear.
The properties of max-plus linear systems in the stationary regime are well known. A main reference is Baccelli et al.~\cite{BCOQ92}. One of the first applications of the theory of discrete event systems to scheduling in transportation is from the Dutch Olsder~\cite{O89}. The research on discrete event systems and the application of the theory to transportation have contributed to the implementation of a synchronized timetable on the Dutch railway network.

The max-plus linear model of the train dynamics and the application of the theory from~\cite{BCOQ92} allows to derive closed-form solutions for the asymptotic average growth rate of the system in the stationary regime. It is interpreted as the asymptotic average train time-headway on the branches and on the central part of the line. It depends on different parameters. The graphical representation is the fundamental diagram of a metro line.
%, for the first time presented by the author for a metro line with a junction, see~\cite{SFCLG17}. Note, that fundamental diagrams for linear metro lines have been developed by Farhi et al.~\cite{FNHL16}.
It is already well known in road traffic. It represents the dependency between the macroscopic variables throughput, speed and traffic density.

For the case of a metro line with a junction, the fundamental diagram gives the relation between the macroscopic variables train number, the difference between the number of trains on the branches, asymptotic average train time-headway and the parameters of the line (train run, dwell and safe separation times). It is new and has first been presented by Schanzenbächer et al.~\cite{SFCLG17,SFLG19c}.

In a second version in Chapter~\ref{Chap-4}, the dwell times are modeled as a function of the passenger travel demand. This allows to take into account the effect of an accumulation of passengers on the platform in case of a perturbation, within a margin on the train run times. To guarantee the stability of the system, a control on the train run times is introduced which cancels the effect of dynamic demand-dependent dwell times.

With the combination of demand-dependent dwell times and controlled run times, the system can be written linearly in the max-plus algebra. From the application of the theory of max-plus linear systems, closed-form solutions for the asymptotic average growth rate of the system are derived.
It is interpreted as the asymptotic average train time-headway and represented as the fundamental diagram, see Schanzenbächer et al.~\cite{SFCLG18a,SFLG18b,SFLG19a,SFLG19b}.
The fundamental diagram depicts the relation between the macroscopic variables asymptotic average train frequency, passenger travel demand, and run time margin. The asymptotic average train frequency depends furthermore on the difference between the number of trains on the branches, as well as the line parameters.

Chapter~\ref{sim} demonstrates the possible applications of the traffic model and the fundamental diagram, which are twofold.
Firstly, the fundamental diagram can be used for strategic planing and scheduling. For scheduling, the main control variable is the number of trains and the difference between the number of trains on the branches. The variable of the difference between the number of trains on the branches is new, it has been derived from the model for lines with a junction. The fundamental diagrams also serves for theoretic analysis of the capacity of a metro line.

With regard to dynamic traffic control, several control laws are derived from the traffic model.
First of all, following the optimal difference between the number of trains on the branches in real-time can be realized by instantaneously changing the train passing order at the convergence, depending on the online realized train dwell and run times. %The command is calculated depending on several state variables. First, the minimum required train frequency to serve the passenger demand, and furthermore the on-line observed train dwell and run times representing the real-time traffic conditions. Maximizing the frequency obviously leads to reducing the waiting time for passengers. With an increasing capacity per time offered to travelers, it also maximizes their comfort on board.

Moreover, the demand-dependent train dwell and the controlled run times of the model can be applied in real-time, which guarantees demand-dependent stable traffic operations. %Train dwell times are optimized with regard to the passenger affluence on the platform when a train arrives. Dwell times are furthermore supposed to respect an upper bound, which depends on a run time margin chosen by the operator. This run time margin allows to control the train speeds in inter-station up to the speed limit. The combined control of these two parameters within the run time margin guarantees the stability of the traffic. Their command is given by functions depending on the passenger travel demand for each platform and the time-headway of each train.
This allows to extend dwell times in case of a long headway to take into account the passenger accumulation. Combined with the run time control, an amplification of an initial delay on a train is excluded and the traffic remains stable. Clearly, dwell times which are optimized depending on the number of passengers willing to alight and board while guaranteeing the stability of the traffic, represents an important amelioration in service quality and reliability offered to travelers.

Finally, an enhanced dynamic control is presented for a linear line, see Schanzenbächer et al.~\cite{SFLG18c}. In case of a perturbation on the train time-headway, an temporary additional control on the dwell times is applied which re-harmonizes the train headways on the line while avoiding excessive train dwell times. This control is calculated depending on the headway of each train and limits over-extension of dwell times due to high passenger affluence. The strength of the control increases with the train time-headway. While applied, train dwell times are no longer optimized only with regard to passenger accumulation, but are a compromise between passenger comfort and train regularity. It is shown, that its application leads to a harmonization of the train headways on the line. Once achieved, the additional control can be released and dwell times are again fully passenger demand-dependent.
%The main benefit is the enhanced reliability of the service offered to passengers over the long term.
%Above, the control guarantees traffic stability, demand-dependent dwell times within a run time margin, but tolerates non-uniform headways.
This temporary additional control might refrain some passengers from boarding, but harmonizestrain time-headways which is, on the longer term, beneficial for passengers.

%===========================
\subsection*{Line Diagnostic and Traffic Control Tools Developed}
%===========================
Three tools have been developed for RATP. The first one %, ''Capacity Analysis through the Physics of mass transit with Exploration of capacity-constraints'' (CAPE),
allows to analyze the capacity of any metro line with a junction. Taking as input the timetable of the line, it is based on the theoretic dwell times and run times. Furthermore, the signaling system is taken into account by the safe separation times for each segment. With these parameters, the tool visualizes the fundamental diagram of the line. The fundamental diagram depicts many important informations for the operation of the line. First, the capacity of the line, and second, the optimal number of trains on the central part and on the branches. Optimal means that the frequency is maximized for a given total number of trains. Taking furthermore into account a passenger demand profile, the required train frequency in order to serve the passenger demand is calculated. The tool automatically checks if the capacity of the line allows to realize this frequency. If this is the case, the fundamental diagram allows to recalculate the corresponding optimal number of trains on the central part and on the branches.

The second tool %''Real-time macroscopic traffic control for metro lines'' (REAL),
allows to control the train passing order at the convergence in real-time. Its inputs are the same type of parameters as for the preceding tool. Precisely these are the train dwell times, run times and safe separation times. In the contrary to above, the parameters are on-line measured. For example, every time a train stops at a station or passes by a segment, the corresponding dwell and run time is updated which permits to depict the fundamental diagram in real-time. Depending on the on-line dwell and run times and to realize a certain required frequency, the optimal number of trains on the central part and the two branches is recalculated. If the optimal difference between the number of trains on the branches diverts, the train passing order at junction is temporarily modified until the new optimal difference has been retrieved. Modifying the operation of the convergence means, instead of realizing an one-over-two operation (one train from branch 1 followed by a train from branch 2 and so forth), a number of consecutive trains from one branch are entering the central part following each other.

%''Demand-dependent transport Offer through the Physics of mass transit with Exploration of capacity-constraints'' (DOPE) is
The third tool is for demand-dependent scheduling with demand-dependent dwell times and controlled run times. It is based on up-to-date Origin-Destination matrices which allow to calculate the passenger travel demand. Together with minimum dwell and run times, safe separation times and the run time margin chosen by the operator, the tool calculates demand-dependent dwell and run times and depicts the corresponding asymptotic average train frequency on the line.
\subsection*{Structure of the Manuscript}
The outline of the manuscript is as follows. In Chapter~\ref{lit}, some {\textit{Background on Railway Traffic Control, Max-plus Algebra and Traffic Flow Theory}} is presented.
The chapter covers publications on railway traffic operations towards the derivation of fundamental diagrams,
real-time control of railway systems, including optimal control approaches of the train dynamics on a metro line and operations research approaches for complex rescheduling problems.
Moreover, the bases of max-plus algebra are recalled and some main applications to railway networks are presented.
The chapter finishes with the most recent works studying metro line dynamics.

In Chapter~\ref{1A}, {\textit{The Fundamental Diagram of a Metro Line with a Junction}} is presented. This is the first version of the traffic model for a metro line with a junction and the main results are closed-form solutions of the average train time-headway and frequency depending on many parameters. The analytic solutions are displayed in their graphical form, the fundamental diagrams. Eight traffic phases of the train dynamics can be distinguished, two free flow phases, two congested branches phases, two congestion phases, a capacity and a zero flow phase. Based on the theoretic results, macroscopic control laws for the number of trains depending on the traffic state and the passenger travel demand are derived.

Chapter~\ref{Chap-4}, {\textit{The Effect of the Passenger Travel Demand on the Traffic}} presents the extended version of the traffic model, with demand-dependent dwell times and a run time control which guarantees traffic stability. Once again, closed form solutions for the average train headway and frequency, as well as their graphical form, the fundamental diagrams, are presented. Here, they depend furthermore on the passenger demand and a run time margin chosen by the operator which allows to command optimal dwell times taking into account the passenger affluence.
The fundamental diagrams allow an in-depth study of how all these parameters influence the traffic on a metro line with a junction.

Chapter~\ref{sim}, the {\textit{Simulation of Feedback Traffic Control}}, is presented. The simulation results show how the theoretic results from Chapter~\ref{1A}~and~\ref{Chap-4} can be used for traffic control on the tactical (number of trains) and on the operational level (dwell and run times).

The thesis finishes with a {\textit{Conclusion}} in Chapter~\ref{conclusion}, providing a resume of the manuscript, as well as a brief discussion of its limitations and an outlook with regard to open future research.

%{Keywords: traffic modeling, physics of traffic, fundamental diagram, traffic control, railway traffic}.

% ===========================================================
%---
%\chapter{Background on traffic flow theory, control and max-plus algebra}
\chapter{Background on Railway Traffic Control, Max-plus Algebra and Traffic Flow Theory}\label{lit}
\chaptermark{Literature Review}
% ===========================================================

\begin{quote}{Based on a bibliographical analysis, this chapter presents an overview of railway traffic control approaches, max-plus algebra and traffic flow theory for railway systems.
Special attention is given to recent publications contributing to the derivation of the fundamental diagram of railway traffic. The fundamental diagram, already widely studied for vehicular traffic, describes the relation between the macroscopic variables traffic flow, density and speed.
In the last years, research has been done on extending this concept to railways.}% has been done, results on railway traffic phases, depending on the number of trains, travel times and the passenger travel demand exist. Different approaches have been proposed, including one using max-plus algebra, of which the basics are recalled in the following.}% This thesis extents existing max-plus modeling approaches and uses it for traffic phase analysis and control.}
\end{quote}

% ===========================================================

% ===========================================================
\section{Real-time Railway Traffic Control}
\subsection*{Optimal Control of the Train Dynamics on a Metro Line}
% ===========================================================
This thesis presents, for the first time, a traffic model and feedback control for metro lines with a junction. In the first part, the train dynamics are supposed to respect given lower bounds on train dwell, run and safe separation times. In the second part, train dwell and run times are controlled depending on the passenger travel demand.
Feedback control for metro lines without junction, including demand-dependent control, has been subject to previous research.

Van Breusegem et al.~\cite{BCB91} have done pioneer work proposing a traffic regulation harmonizing train time-headways. The authors have studied metro loop lines. These lines are a closed system with a constant number of trains. Their model can also be applied to linear metro lines, connecting two terminus stations. By modeling the turnaround at the two terminus stations, the circuit is closed and the system can be seen as a loop line. The authors of~\cite{BCB91} have modeled dwell and travel times as a function of the passenger travel demand. This means, one affects a longer dwell time to a delayed train, to take into account the passenger accumulation on the platform. Consequently, on these lines, train delays are accumulated from circuit to circuit. The main contributions of the work are, first, a discrete event traffic model in state space formulation which proves the natural instability of such lines with demand-dependent dwell times and, second, state feedback control laws in order to guarantee the stability of the system.
The authors of~\cite{BCB91} have written the train dynamics on the departure times of a train from a given station (discrete events). The departure times depend on the constant train run time per inter-station and the demand-dependent dwell time per station. Supposing one train per inter-station, the dynamics are linear.
Based on the linear dynamics, a proposed control minimizes a quadratic performance index leading to a linear quadratic problem. The performance index represents the operations objectives. It minimizes, first, the deviation between the headways over the line and, second, the delay of each train. Simulation results show the interest of the proposed control. It guarantees stability and harmonizes the train positions over the line while keeping the traffic flowing.
However, possible applications are limited. The dynamics are only linear (a necessary condition for the control) in the case where there is one train per inter-station at a time. This is not a realistic condition, since on many metro lines, there can be more than one train per inter-station at a time. Therefore, segments and the signaling system have to be explicitly modeled.

On this basis, Fernandez et al.~\cite{FCVC06} have come up with a linear quadratic predictive control for metro loop line traffic regulation. The control minimizes a cost function over a time horizon. The function takes into account the deviation of each train from the timetable and the headway deviations between two consecutive trains. Furthermore, a main operation constraint is taken into account here. The control actions on the train dwell time are limited.
%and the minimum train time-interval, avoiding two trains from getting too close together to not be disturbed by the signaling system.
With a short computing time, the linear quadratic predictive control is efficient for real-time application. The presented method allows to realizes both timetable and headway train regulation. It is particularly interesting for the headway regulation when aiming to guarantee a high commercial speed.
As above, the train dynamics are written on the departure of a train from a platform. Run times and dwell times are controlled. The latter are passenger demand-dependent. The underlying dwell time model has been published in Martinez et al.~\cite{MVFC07}.
New, compared to the dynamics from above~\cite{BCB91}, is that a minimum time-interval is respected between the arrival of a train at a given platform and the departure of the preceding train from this same platform.
Most interesting is the predictive control which offers a lot more possibilities than the simple dwell time control proposed by van Breusegem et al. Here, the control actions can be holding a train at a platform or shortening its dwell time. Similarly, a train can be accelerated or slowed down in the inter-station. The quadratic cost function allows to weight the four types of control to give a preference to any one of the actions. Furthermore, the timetable and the headway deviation are weighted such that a privilege can be given to either timetable control, for example during off peak hour, or headway control, during peak hour.

Schanzenbacher et al.~\cite{SCF16} have applied this model predictive control approach to a part of the RER line A in the greater Paris area. This is Europe's commuter railway line with the highest passenger affluence, 1.3 million travelers per workday. It is operated at a very high train frequency and dwell times are particularly demand-sensitive. In case of a delay, they tend to increase significantly with the accumulation of passengers on the platform. These characteristics make it interesting for the application of the predictive control approach proposed by the authors from above.
Here, the overall aim was to ensure a freely flowing traffic by an optimization of train dwell times. The idea was to ensure a green wave such that a train, once departed from a station, is not perturbed until arriving at the following platform. Based on the green wave, a control action on the dwell time is calculated. This control minimizes a quadratic cost function. The cost function represents two traffic regulation objectives. First, the harmonization of the intervals on the line, in order to avoid interaction between consecutive trains and passenger accumulation on the platform. Second, the minimization of trains delays and control actions. This allows to guarantee a certain throughput on the line and minimizes the risk of very long dwell times.

%S. Li, B. de Schutter, L. Yang, Z. Gao, "Robust model predictive control for train regulation in underground railway transportation", IEEE Trans. Control Syst. Technol., vol. 24, no. 3, pp. 1075-1083, May 2016.
A working group at the State Key Laboratory of Rail Traffic Control and Safety at the Beijing Jiaotong University continues to develop extensions to the train dynamics model of van Breusegem~\cite{BCB91} cited above, and extends its applications.
Recently, Li et al.~\cite{LSYG16} have designed a robust model predictive controller for train traffic control on metro loop lines. As in~\cite{BCB91}, the model can also be applied to a linear metro line where the turnaround is modeled. The number of trains is considered to be constant. There can be maximum one train per inter-station. Especially this constraint imposed by the model limits its practical relevance.
As a new element, the typically uncertain passenger arrival flow at the platforms are taken into account. A state-feedback controller is presented which optimizes some cost function subject to safety constraints. The paper investigates the possible applications of a robust model predictive control and explores its limits. It is shown, that in case of a large number of stations and trains, the computation time becomes too long, so that a distributed robust model predictive control might perform better.

%S. Li, M. M. Dessouky, L. Yang, Z. Gao, "Joint optimal train regulation and passenger flow control strategy for high-frequency metro lines", Transp. Res. B Methodol., vol. 99, pp. 113-137, May 2017.
Another extension by the same laboratory is presented by the authors of~\cite{LDYG17}. The problem of minimizing train time-headway and timetable deviations with overloaded passenger flow is studied. The authors consider a metro-type linear railway with an ordered train set. The train dynamics are developed accordingly the discrete event model in~\cite{BCB91}. Consequently, there can be maximum one train per inter-station at a time. New to the dynamics is their dependence on the number of boarding and alighting passengers. Moreover, the train travel time takes into account a stochastic variable modeling possible perturbations on the train dwell and run time. In order to stabilize the traffic on the line, to minimize the variance on the headway and to increase the commercial speed, a joint optimal dynamic train regulation and a passenger flow control are developed. Therefore, a coupled state-space model for the train departure time and the passenger load dynamics is presented. By applying model predictive control, the optimal train and passenger flow control strategy minimizing headway and timetable deviations while increasing the commercial speed, is found. It is shown that the solution can be computed efficiently using a quadratic programming algorithm. The authors show that the approach is interesting for delays up to a certain limit, whereas for large delays, creating a new timetable is more promising, leading to a rescheduling problem.

%Shukai Li (2019): Efficient Real-Time Control Design for Automatic Train Regulation of Metro Loop Lines (IEEE Transactions on Intelligent Transportation Systems).
The latest work of the group is by Li et al.~\cite{LYG19} and focuses once again on traffic control for metro loop lines facing frequent minor disruptions. The number of trains is considered to be constant and the trains run periodically on the line. Still based on~\cite{BCB91}, the train dynamics are modeled considering the departure of a train from a given station. As above, the number of trains must be smaller than the number of stations, there can be maximum one train per inter-station. With a constant number of trains running in a cycle, the number of stations in the model constantly increases.
The authors aim to guarantee traffic stability, recover delays and ensure headway regularity. The proposed method is a train controller which dynamically adjusts train run and dwell times. It does not necessitate explicit time margins. Therefore, the metro line can at any time be operated under the optimal (minimum) number of trains which serves the passenger travel demand.
The main contribution is the efficient design of the controller. The optimization problem is split into a set of quadratic programming problems. They can be solved in short time which makes the algorithm interesting for real-time applications. 

%Moaveni (2018): Metro Traffic Modeling and Regulation in Loop Lines Using a Robust Model Predictive Controller to Improve Passenger Satisfaction (IEEE Transactions on Control Systems Technology).
Recently, Moaveni et al.~\cite{MN18} have presented a discrete event nonlinear traffic model for metro loop lines, based on deviations from the nominal departure times. New to the models from above is that it models buffer times and knock-on delays between two consecutive trains. They can often be observed in high frequency metro lines where trains run close to each other. Consequently, already small perturbations effect several trains and delays propagate through the line by a so called cascade effect. The model also includes uncertainty with regard to the passenger demand evolution over the day. The nonlinear model is approximated by a linear model which allows to design a robust model predictive controller aiming to maximize passenger satisfaction by minimizing timetable and headway deviations.
The model has overcome the shortcoming of the above models, modeling interaction between the trains and reproducing knock-on delays. However, it keeps the deficit that signaling system and block system are not explicitly modeled. In fact, the train dynamics are written on the train departures from the platforms, that means the model can represent maximum one train at a time between two consecutive stations.
The authors present simulation results for a real metro line which suggests the effectiveness of the controller as well as a graphical representation of the delay rate as a function of the number of passenger on the platforms and the headway.

% ===========================================================
\subsection*{Operations Research for Rescheduling Problems}
% ===========================================================
%Cacchiani (2014): An overview of recovery models and algorithms for real-time railway rescheduling. (Transportation Research Part B).
For traffic control on more complex railway layouts, such as open lines with several junctions, stations and networks, another approach has been subject to research in the last years. Based on Operations Research techniques, a lot of constraints, related to infrastructure, timetable, rolling stock and the crew can be taken into account. The utilization of solvers allows to find an optimal solution for the problem in generally relatively short time. The solution here is optimal with regard to a criteria chosen which can represent many objectives, for example maximizing the throughput, the number of passengers, or minimizing their waiting time. Obviously, this is particularly interesting for railway systems, which are rather timetable- than headway-operated. Cacchiani et al. give in their review~\cite{C14} a good overview over the state of the art of algorithms for real-time railway management and distinguish three sub-problems which are timetable, rolling stock and crew rescheduling. Approaches exist for rescheduling one of these sub-problems exist, as well as for integrated timetable, rolling stock and crew rescheduling.

First of all, for the \textit{Train Timetable Rescheduling} (TRR) problem, the decision variables are mainly the routing of trains, the departure and arrival times of the trains at the stations and the train order on the tracks. The problem is constraint by the track capacity, where a safe separation time has to be respected between two consecutive trains. The latter is usually modeled by the so called \textit{Blocking Time Stairway}. The TRR is typically close to the NP-complete combinatorial no-wait job shop scheduling problem.
Many approaches focus on deterministic online rescheduling, this means the new timetable is calculated in one optimization, supposing availability of perfect deterministic information. Corman et al.~\cite{CM15} present an overview over recent online rescheduling approaches considering the stochastic nature and uncertainty of the problem. In this case, the timetable is continuously optimized every time updated information is available. 

Secondly, if severe delays or a disruption occurs on a railway line or a network and the traffic has been rescheduled, the original rolling stock allocation might be no longer feasible. In this case, the rolling stock has to be rescheduled, too. Aspects to be considered are allocation of rolling stock to lines, providing sufficient capacity to serve the passenger demand and end-of-day balances at stations. The problem can be solved by combinatorial optimization techniques.

Finally, if the rolling stock has been rescheduled, the crew has to be reallocated, too. It deals with assigning drivers and commercial crews to train services. The original crew schedule is taken as an input to minimize the difference between the new and the original schedule. Typical constraints to be taken into account concern work time restrictions, the aim that the crew should finish their duty at their home depot and respect of break times. %The authors of~\cite{C14} cite the most recent important works on these three fields.

%Besinovic (2019): Integrated train and passenger disruption management for urban railway lines (IEEE Intelligent Transportation Systems Conference).
An integrated train and passenger disruption management for the case of urban railways has been presented by Besinovic et al.~\cite{B19}. In case of a major perturbation, trains are automatically rescheduled and passenger flows controlled, which is important in case of a very high passenger travel demand as it can be observed in many metropoles. The objective of the model is to minimize passenger delays, the number of passengers denied from boarding and to limit the number of rescheduling services and the recover time. The decisions which can be taken include short-turning, canceling and rerouting trains as well as controlling passenger flows including regulating station access.
This paper presents for the first time an integrated train and passenger rescheduling algorithm particularly adapted to high passenger travel demand. The capacity of the platforms is explicitly taken into account.
The traffic management model for train rescheduling is essentially based on works by Caimi et al.~\cite{C12} who has introduced rescheduling based on extended conflict graph models. Based on a model predictive control framework, a rescheduling algorithm for large station areas is proposed. The model assigns deterministic blocking stairways to trains under a number of constraints, including connections and platform constraints.

% (Yin (2016): Energy-efficient metro train rescheduling with uncertain time-variant passenger demands: An approximate dynamic programming approach. (Transportation Research Part B))

% ===========================================================
\section{Introduction to Max-plus Algebra}
\subsection*{Max-plus Linear Discrete Event Systems}

This thesis presents a new application of max-plus algebra to railway traffic modeling and control.

Max-plus algebra~\cite{BCOQ92} is the algebraic structure $(\mathbb R \cup \{-\infty\}, \oplus, \otimes)$ which denotes the set $\mathbb{R}$ with the zero element $\{-\infty\}$ and the two basic operations $\oplus$ and $\otimes$.
The operator $\oplus$ is defined as the maximization
\begin{equation}
a \oplus b = \max(a,b)
\end{equation}
and the operator $\otimes$ is defined as the addition
\begin{equation}
a \otimes b = a + b
\end{equation}
for $a,b \in \mathbb R \cup \{-\infty\}$.
The zero element with respect to the maximization $\oplus$ is $$\varepsilon = -\infty.$$ We have $$a \oplus \varepsilon = a = \varepsilon \oplus a$$ and the absorbing property with regard to the product $$a \otimes \varepsilon = \varepsilon = \varepsilon \otimes a.$$
The identity element with respect to the addition $\otimes$ is $e$. We have $$a \otimes e = a = e \otimes a.$$
Note that the natural order on this structure may be defined using the $\oplus$ operation: $a \leq b \text{ if } a \oplus b = b$.

%max-plus algebraic matrix operations
Extending the basic operations to matrices, consider a set of square matrices where $A$ and $B$ are two max-plus matrices of size
$n \times n$. Then, the addition $\oplus$ and the product $\otimes$ are defined by: $$(A \oplus B)_{ij} := A_{ij} \oplus B_{ij} = \max(a_{ij},b_{ij}),$$
$$(AB)_{i,j} = (A \otimes B)_{ij} := \bigoplus_k^{n}[a_{ik} \otimes b_{kj}] = \max_{k = 1,...,n}(a_{ik} + b_{kj}) \quad \forall i,j.$$
The zero and the unity matrices are also denoted by $\varepsilon$ and $e$ respectively.

%Finally, denote $\gamma^b$ the backshift operator with $b \in \mathbb N_0$ on the max-plus algebra structure such that:
%\begin{itemize}
%\item $\gamma^b \otimes \delta^k = \delta^{k-b}$,
%\item $\gamma^{b=0} \otimes \delta^k = e \otimes \delta^k = \delta^k$ .
%\end{itemize}

%---
Consider the dynamics of a homogeneous $p$-order max-plus system with a family of max-plus matrices $A_l$
\begin{equation}
  x(k) = \bigoplus_{l=0}^p A_l \otimes x(k-l). \label{eq-rev1}
\end{equation}
Define $\gamma$ as the backshift operator applied on the sequences on $\mathbb Z$:
$\gamma^l x(k) = x(k-l), \forall l\in\mathbb N$. Then~(\ref{eq-rev1}) can be written
\begin{equation}
  x(k) = \bigoplus_{l=0}^p \gamma^l A_l x(k) = A(\gamma) x(k),  \label{eq-rev2}
\end{equation}
where $A(\gamma)=\bigoplus_{l=0}^p \gamma^l A_l$ is a polynomial matrix in the backshift operator $\gamma$;
see~\cite{BCOQ92,Gov07} for more details.

$\mu \in \mathbb R_{\max} \setminus \{\varepsilon\}$ is said to be a 
\textit{generalized eigenvalue}~\cite{CCGMQ98} of $A(\gamma)$, with associated
\textit{generalized eigenvector} $v\in \mathbb R_{\max}^n \setminus \{\varepsilon\}$, if
\begin{equation}\label{eq-gev}
  A(\mu^{-1}) \otimes v = v,
\end{equation}
where $A(\mu^{-1})$ is the matrix obtained by evaluating the polynomial matrix $A(\gamma)$ at $\mu^{-1}$.

A directed graph denoted $\mathcal G (A(\gamma))$ can be associated to a dynamic system of type~(\ref{eq-rev2}).
% To a dynamic system of type~(\ref{eq-rev2}), it exists an associated graph denoted $\mathcal G (A(\gamma))$.
For every $l, 0\leq l\leq p$, an arc $(i,j,l)$ is associated to each non-null ($\neq \varepsilon$) element
$(i,j)$ of Max-plus matrix $A_l$.
A \emph{weight} $W(i,j,l)$ and a \emph{duration} $D(i,j,l)$ are associated to each arc $(i,j,l)$ in the graph,
with $W(i,j,l) = (A_l)_{ij} \neq \varepsilon$ and $D(i,j,l) = l$. 
Similarly, a weight, respectively duration of a cycle (a directed cycle) in the graph is the standard sum of the weights,
resp. durations of all the arcs of the cycle. Finally, the \textit{cycle mean} of a cycle $c$ with a weight $W(c)$ and
a duration $D(c)$ is $W(c)/D(c)$.
A polynomial matrix $A(\gamma)$ is said to be irreducible, if $\mathcal G(A(\gamma))$ is strongly connected.

The findings presented in this thesis are mainly based on the application of the following result of the max-plus algebraic eigenvalue problem.
\begin{theorem} \cite[Theorem 3.28]{BCOQ92} \cite[Theorem 1]{Gov07} \label{max-plus-theorem}
  Let $A(\gamma) = \oplus_{l=0}^p A_l\gamma^l$
  be an irreducible polynomial matrix with acyclic
  sub-graph $\mathcal G(A_0)$. Then $A(\gamma)$ has an unique generalized eigenvalue $\mu > \varepsilon$ and finite eigenvectors $v > \varepsilon$
  such that $A(\mu^{-1}) \otimes v = v$, and $\mu$ is equal to the maximum cycle mean of $\mathcal G(A(\gamma))$, given as follows:
  $\mu = \max_{c\in\mathcal C} W(c) / D(c)$,
  where $\mathcal C$ is the set of all elementary cycles in $\mathcal G(A(\gamma))$.
  Moreover, the dynamic system $x(k) = A(\gamma) x(k)$ admits an asymptotic average growth vector (also called cycle time vector here) $\chi$ 
  whose components are all equal to $\mu$.
\end{theorem}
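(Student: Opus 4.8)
The plan is to reduce the polynomial (higher-order) max-plus system to a first-order one and then invoke the classical Perron–Frobenius–type theory for irreducible max-plus matrices. First I would perform the standard \emph{state augmentation}: given $x(k) = \bigoplus_{l=0}^p A_l \otimes x(k-l)$, introduce the augmented vector $\tilde x(k) = (x(k)^\top, x(k-1)^\top, \dots, x(k-p+1)^\top)^\top$ of size $np$, so that $\tilde x(k) = \tilde A_0 \otimes \tilde x(k) \oplus \tilde A_1 \otimes \tilde x(k-1)$ for suitable block matrices $\tilde A_0, \tilde A_1$, where $\tilde A_0$ collects $A_0$ and identity blocks on the subdiagonal and $\tilde A_1$ collects $A_1,\dots,A_p$. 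Since $\mathcal G(A_0)$ is acyclic, $\tilde A_0^*=\bigoplus_{m\ge 0}\tilde A_0^m$ is well defined (the series terminates), and one gets the genuine first-order recurrence $\tilde x(k) = \tilde A_0^* \tilde A_1 \otimes \tilde x(k-1) =: M \otimes \tilde x(k-1)$.

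Next I would transfer the graph-theoretic data: the circuits of $\mathcal G(M)$ correspond bijectively, with preserved weight-to-duration ratio, to the circuits of $\mathcal G(A(\gamma))$ (an arc $(i,j,l)$ of weight $w$ and duration $l$ becomes, in the augmented graph, a path of $l$ unit-duration arcs carrying total weight $w$; composing arcs through $\tilde A_0^*$ only reroutes within a single time step and contributes duration $0$). Hence the maximum cycle mean of $\mathcal G(M)$ equals $\mu := \max_{c\in\mathcal C} W(c)/D(c)$ over the elementary circuits $\mathcal C$ of $\mathcal G(A(\gamma))$. Irreducibility of $A(\gamma)$ (strong connectedness of $\mathcal G(A(\gamma))$) carries over to strong connectedness of $\mathcal G(M)$, so $M$ is an irreducible max-plus matrix. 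By the classical max-plus spectral theorem (Baccelli et al., Theorem 3.23/3.24 of \cite{BCOQ92}), an irreducible $M$ has a unique eigenvalue equal to its maximum cycle mean, namely $\mu>\varepsilon$, with a finite (all-coordinates $>\varepsilon$) eigenvector $\tilde v$; unwinding the block structure of $\tilde v$ and using $\mu^{-1}$ to shift between blocks yields the finite generalized eigenvector $v>\varepsilon$ of $A(\gamma)$ with $A(\mu^{-1})\otimes v = v$, and the correspondence also gives uniqueness on the $A(\gamma)$ side.

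Finally, for the asymptotic average growth, I would use that an irreducible max-plus matrix is \emph{cyclic}: there exist a cyclicity index and a transient after which $M^{k+c} = \mu^{c}\otimes M^{k}$ (coordinatewise $+\,c\mu$), which forces $\lim_{k\to\infty} x(k)/k = \mu$ in every coordinate, i.e.\ the cycle time vector $\chi$ has all components equal to $\mu$; equivalently one can sandwich $\tilde x(k)$ between scalar multiples of the eigenvector using the eigenvector as a sub/super-solution and let $k\to\infty$. I expect the main obstacle to be the bookkeeping in the second step: verifying carefully that the reduction to $M$ neither creates spurious circuits of a different mean nor destroys strong connectedness, and that the block-unfolding of the eigenvector of $M$ really returns a \emph{finite} generalized eigenvector of $A(\gamma)$ — the acyclicity hypothesis on $\mathcal G(A_0)$ is exactly what makes $\tilde A_0^*$ finite and is the linchpin of the whole argument, so its role must be tracked explicitly throughout.
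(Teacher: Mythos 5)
The paper does not prove this theorem: it is quoted verbatim as a known result, with the proof delegated to \cite[Theorem 3.28]{BCOQ92} and \cite[Theorem 1]{Gov07}, so there is no in-paper argument to compare against. Your plan is essentially the standard proof from those references — state augmentation to a first-order system, elimination of the implicit part via $\tilde A_0^*$ (finite precisely because $\mathcal G(A_0)$ is acyclic), the max-plus spectral theorem for irreducible matrices, and cyclicity for the cycle-time vector — so the overall route is sound and is the intended one.

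Two points in your bookkeeping deserve attention. First, the identity shift blocks belong in $\tilde A_1$, not $\tilde A_0$: the $j$-th block of $\tilde x(k)$ is $x(k-j+1)$, which equals the $(j-1)$-th block of $\tilde x(k-1)$, so the relation is a dependence on the previous augmented state; placing those blocks in $\tilde A_0$ would assert $x(k-1)=x(k)$ and, worse, would destroy the acyclicity of $\mathcal G(\tilde A_0)$ that you need to form $\tilde A_0^*$. Second, irreducibility does \emph{not} automatically transfer to the full augmented graph on $np$ nodes: delay copies $(l,j)$ of a node $j$ that never occurs with lag $l$ in any $A_l$ are dead, so $\mathcal G(M)$ is in general only strongly connected after restriction to its accessible part (equivalently, one works with the unique nontrivial strongly connected component containing the original coordinates). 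You must check that this restriction preserves both the maximum cycle mean and the finiteness of the eigenvector on the original $n$ coordinates; this is exactly the kind of verification you flag as the main obstacle, and it is where the acyclicity and irreducibility hypotheses are consumed. With those two repairs the plan goes through.
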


Note that a matrix is irreducible if its associated graph is strongly connected. Seeing $A$ as the adjacency matrix of a directed graph, $A$ is strongly connected if and only if every vertex can be reached from any other vertex.
Accordingly to Schutter et al.~\cite{SBXF19}, the max-plus algebraic eigenvalue can be interpreted as follows. If $\mu_{max}$ is the maximal average weight over all elementary cycles of the strongly connected graph $\mathcal G(A(\gamma))$, then $\mu_{max}$ is a max-plus algebraic eigenvalue of $A$.
Moreover, its max-plus algebraic eigenvalue is unique.
An elementary cycle is a cycle in which no vertex appears more than once, except for the initial vertex which appears exactly twice.

% ===========================================================
\subsection*{Railway Applications of Max-plus Algebra}
% ===========================================================
The application of max-plus algebra to railway systems has been studied by a research group at TU Delft.
%Kersbergen et al. (2016):''Towards railway traffic management using switching max-plus-linear systems - Structure analysis and rescheduling'' (Discrete Event Dynamic Systems: Theory and Applications)
Recently (2016), Kersbergen et al.~\cite{KRBS16} have presented works on railway traffic management using switching max-plus linear systems. The authors present a model for railway traffic and a model predictive controller for real-time traffic management on a network scale with a periodic train timetable. The main objective is to maximize delay recovery by modifying train departure times, breaking connections, splitting trains and redistributing trains in case of multiple tracks. The railway system is represented by a switching max-plus linear model. Run and dwell times are updated in real-time, the model is then used to determine the optimal control actions by solving a Mixed Integer Linear Programming problem. The authors have applied the algorithm to the Dutch railway network.

Before (2012), van den Boom et al.~\cite{BKS12} have discussed the rescheduling of trains on a network in case of perturbations using a max-plus linear system description. Based on a study of the system matrices, control variables are designed such that they modify the matrices in a convenient way. Constraints for scheduling trains on separate tracks for slow and fast traffic, per direction, and for joining and splitting trains are considered.
The approach assumes a periodic timetable, this means after one cycle time, the trains pass again at a given station. A station is a point where trains can possibly change order, including junctions. The train dynamics are then written on departure times of a train and within a given cycle of the timetable, from a station. These departure times satisfy a number of constraints, including timetable with nominal run and dwell times, as well as connections from other trains. Minimum headways between two consecutive trains at the departure and arrival to a station are also taken into account. In case of a single track railway, a waiting time with regard to trains running in the opposite direction is applied. A train departures as soon as the maximum of the constraints is satisfied.
This system is transformed to max-plus algebra. It is shown that it becomes linear which allows to analyze the system matrices. It is shown that all control actions, including changing the train order, canceling connections, joining and splitting trains can be described by the entries of a control vector to the system. The model can be used to compute the optimal control with a model predictive control approach.

%Goverde et al. (2009): Railway Timetable Stability Analysis Using Stochastic Max-Plus Linear Systems
Earlier (2009), Goverde et al.~\cite{GHM09} have used max-plus linear system theory to analyze the stability of a railway timetable. In the contrary to the above used deterministic theory, their work is based on stochastic max-plus linear systems. Therefore, the authors generalize the spectral theory of deterministic max-plus linear systems to stochastic ones. More precisely, the concept of stability of deterministic max-plus linear systems is developed further towards stability analysis considering stochastic process times, which allows to analyze timetable stability. Primary delays on the railway network are modeled using general probability distributions. Delay propagation because of infrastructure and timetable constraints are calculated from the stochastic recursive equations modeling the train dynamics.

%Heidergott (2004): An ergodic theorem for stochastic max-plus linear systems
Already in 2004, Heidergott et al.~\cite{HWO04} have shown that the asymptotic growth rate of a max-plus system, well known for the deterministic case, exists under some weak conditions for their stochastic max-plus linear systems.

Among the first applications of discrete event systems to railway transportation is the one from Olsder~\cite{O89} (1989).
The theory developed in this work has played a role with for the implementation of a synchronized timetable on the Dutch railway network.

%--------------------------------------------------------------
\section{Traffic Flow Theory} %Fundamental diagrams for railway operations}
\subsection*{Standard Algebra Modeling Approaches}
%--------------------------------------------------------------
Approaches exist which aim to understand the behavior of a traffic system in its steady state.
Extensive research on this field has been done for vehicular traffic in the past.
%Traffic flow theory aiming to understand the macroscopic behavior of traffic systems, for vehicular traffic has been subject to extensive research. 

Well known for simple road links, fundamental diagrams depict the relation between vehicle flow, density and speed. Typically, two traffic phases are distinguished. Firstly, a free flow phase, where the vehicle flow or throughput, that is the number of vehicles which pass by a certain point within a time interval, increases with the vehicle density. The denser getting traffic reaches a critical value where the throughput is maximized. From this point on, the vehicle flow decreases continuously with an increasing density and congestion occurs.
Moreover, the vehicle flow is speed-dependent. A high speed at low densities increases the vehicle flow. In the contrary, approaching the critical density, a reduced speed allows to increase the maximum vehicle throughput.
The fundamental diagrams for vehicular flows allow to obtain a good understanding of the physics of traffic on road links. On this basis, traffic control laws for optimal speed or optimal density with regard to the vehicular flow can be derived. These well-known relations have been published in various forms, for example by Daganzo~\cite{D97}.

Research on traffic flow theory for urban networks has gathered interest recently. So called aggregated fundamental diagrams (also macroscopic fundamental diagram, network fundamental diagram) have been developed and allow, in the line with fundamental diagrams for simple links, to have an idea of the physics of traffic of some urban road networks. The relationship between vehicular traffic flow, density and speed on a network scale has been subject to a number of publications. Daganzo~\cite{D07} has presented physical models for an aggregated description of traffic flow on urban networks. Based on real traffic data from Yokohama (Japan), Geroliminis et al.~\cite{GD08} have shown that for this case and the specific traffic data, an aggregated fundamental diagram exists.
More recent works by Geroliminis et al.~\cite{GS11} have observed scatter phenomena for the flow-density relation for high densities. The authors of~\cite{GS11} name some properties an urban network should satisfy so that aggregated fundamental diagrams with low scatter exist. Most importantly, an aggregated fundamental diagram with lower scatter can be expected for a homogeneous spatial distribution of the vehicle density.
Most recent applications of aggregated fundamental diagrams for traffic control are from Keyvan-Ekbatani et al.~\cite{KEKPP12,KEYGP15}.

Studying the physics of railway traffic by means of fundamental diagrams has only been subject to very recent research by Corman et al.~\cite{CHKE19}, Saidi et al.~\cite{SWKZ19} and Seo et al.~\cite{SWF17}.

The first work is from Seo et al.~\cite{SWF17} (2017) have proposed a simplified analytic traffic model for urban rail transit lines taking into account the effect of the passenger travel demand.
The model proposes a dwell time depending linearly on the number of passenger willing to board and assumes infinite capacity of the trains.
The train dynamics are modeled based on Newell's car-following model, assuming trains to travel at maximum speed while respecting a safe separation distance.
Both train congestion and passenger boarding congestion have been modeled.
The main result are fundamental diagrams for the stationary regime of the traffic in an idealized environment. Main simplifying assumptions are constant train cruising speeds, safe separation intervals, passenger boarding flows, as well as an equal distance between two consecutive stations, trains stopping at all stations, a constant headway between two trains following each other and the same passenger arrival flow to all platforms.
The fundamental diagram depicts train flow over train density and passenger arrival flow, see Figure~\ref{seo}. The existence of two traffic phases is shown: a free flow phase, where the flow increases with the density, and a congestion phase. From the optimal density on, where capacity is reached, train flow decreases with an increasing train density.
The effect of the passenger demand is double: first, for a fixed train density, an increasing passenger demand decreases the train flow. Second, the optimal density is passenger demand dependent and decreases with an increasing demand.
%An enhanced version with less simplifying assumptions is necessary to further judge the pertinence of the results presented by the authors.

\begin{figure}
 \centering
 \begin{minipage}{0.49\textwidth}
 	\centering
 	\frame{
  	\includegraphics[width=\textwidth]{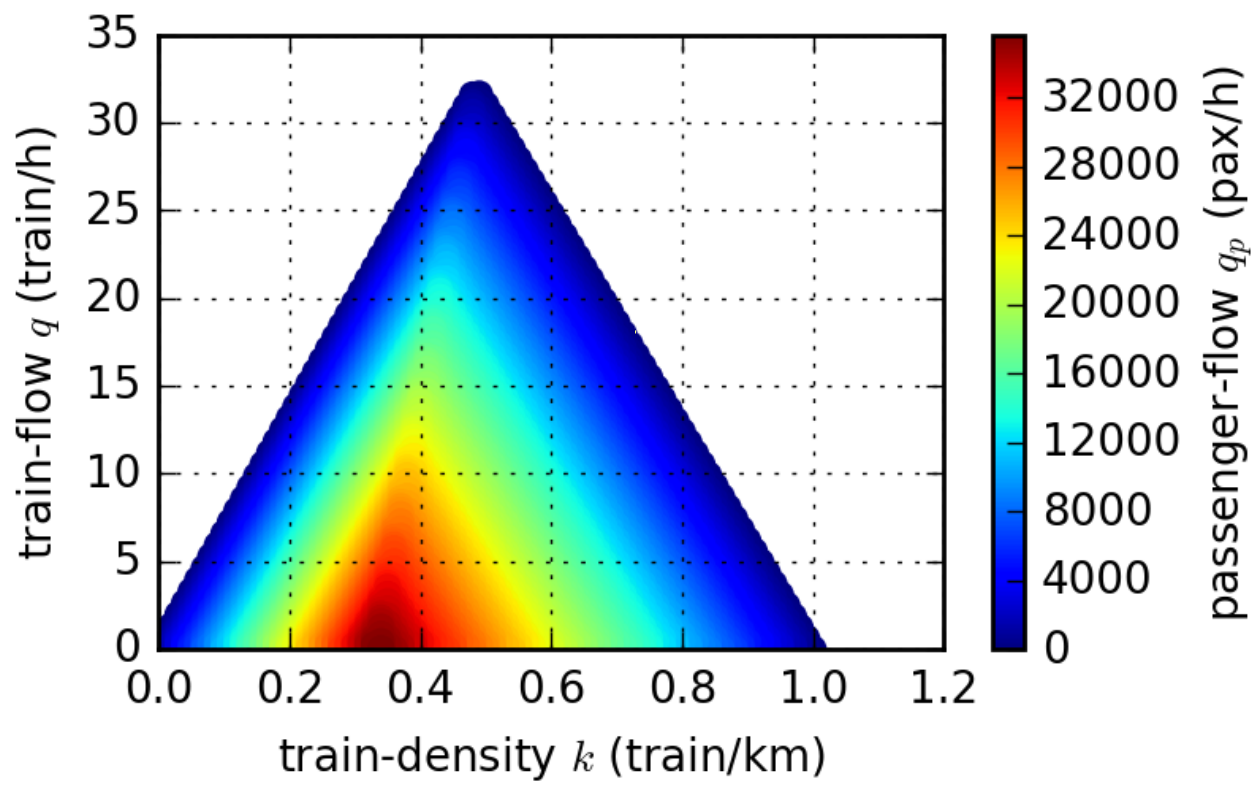}}
	\caption {Train flow, density and passenger flow from a very simplified analytic traffic model~\cite{SWF17}.}
	\label{seo}
	\end{minipage}
 \begin{minipage}{0.49\textwidth}
 	\centering
 	\frame{
  	\includegraphics[width=\textwidth]{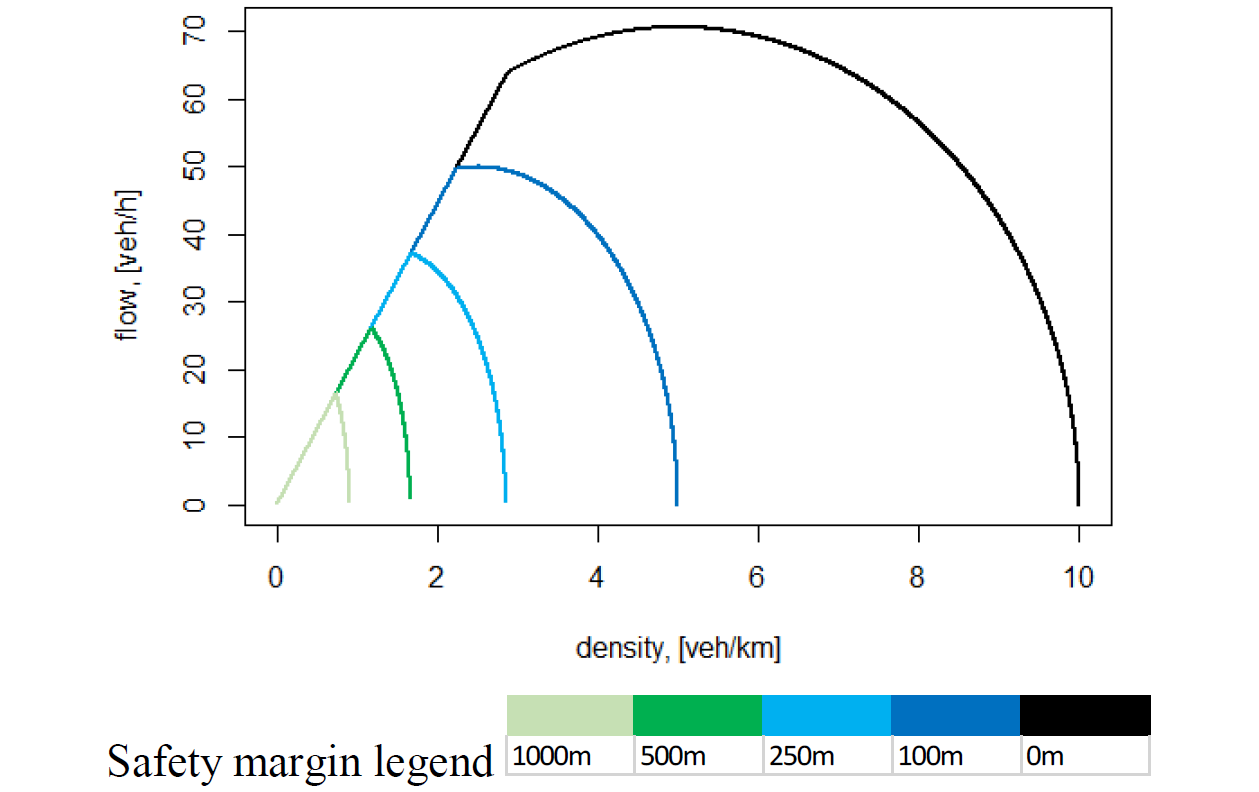}}
	\caption {Analytic fundamental diagram for a railway line, different safe separation intervals and moving block~\cite{CHKE19}.}
	\label{corman}
	\end{minipage}
\end{figure}

More recently, Corman et al.~\cite{CHKE19} (2019) have studied the idea of macroscopic descriptions of railway operations. As for vehicular traffic, fundamental diagrams can help to derive optimal control actions maximizing train speed and flow. Therefore, the authors investigate the possibility to derive equations for macroscopic variables describing the railway traffic, as speed, density and flow. In~\cite{CHKE19} this is done by closed-form solutions whenever possible, or using a railway simulator for more complex cases.
In railway, many microscopic modeling approaches exist allowing to reproduce with an increasing level of detail, the dynamics of one single train, that is acceleration, speed, breaking, position etc. Macroscopic approaches exist, but mainly allow to represent the average conditions of the system, interesting for example for long-term strategy planing purposes. Wendler~\cite{W07} has presented a popular approach using queuing theory. However, they are not able to represent the large scope of traffic states (free flow, capacity, congestion) characterized by some average condition parameter. Here, fundamental diagrams for railways can close the gap. 

The authors of~\cite{CHKE19} aim to understand physics of railway traffic on a line without junction, this means there is neither a merge, nor a divergence. The train dynamics including dwell times are supposed to be independent of the passenger travel demand,  trains respect given travel times from the timetable. 
The authors assume a moving block system, where trains follow each other in absolute braking distance such that trains can always stop in distance between themselves and their predecessor. Moreover, a safe separation time is considered, modeling traffic conditions in case the traffic gets very dense. Consequently, trains cannot exceed a maximum speed such that they can potentially stop, respecting the safe separation distance, before running into the preceding train. The breaking distance depends on the vehicle speed. Therefore, acceleration, deceleration and maximum speed of the trains are given, firstly, by the vehicle characteristics, and secondly, by the traffic conditions. The moving block system considered is one of the most interesting innovations in railway, but has not been implemented on many lines yet. In fact it is easier to model then a block system, since car-following models from road traffic can potentially be applied.

In~\cite{CHKE19}, for the case where all trains run with uniform speed, a diagram giving the dependency of train flow and train density for different safety margins, is presented, see Figure~\ref{corman}. The authors distinguish two traffic phases: a free flow phase and a congestion phase. At their intersection, the train flow is maximized for the  critical train density. Precisely, in the free flow phase, the flow increases linearly with the train density. 
%At some point, a further increase in flow is realized by decreasing the speed. In fact, from this point on, trains start to interact, and a slower speed allows to reduce the braking distance and consequently, the train separation distance which then increases the flow.
If the traffic gets very dense, congestion occurs and the flow decreases. Whereas in the first phase, flow depends linearly on the density, in the congestion phase the flow is a quadratic function of the density since the braking distance depends quadratically on the train speed.
Finally, the maximum train flow depends on the safety distance. As it can be seen in the figure, a shorter safe separation distance allows to increase the train flow. Note that reducing the train speed allows to reduce the safe separation distance. The authors suggest that the maximum train flow occurs under reduced speed.
The results have been verified by simulation. The simulation results do not show congestion since the dispatching headway (corresponding to vehicle density), can, due to operational constraints, not decrease under a certain limit which would allow to enter in the congestion phase. However, the results allow to identify a region where the train flow increases up to its maximum, under a reduced train speed.

In a parallel work, Saidi et al.~\cite{SWKZ19} (2019) have also recently begun to develop fundamental diagrams for urban railway lines, using a data approach. They have derived a new mesoscopic train-following model from empirical track circuit data from a metro line in Cambridge, Massachusetts. The authors have analyzed train delays between consecutive trains around the bottleneck of the line. %From there, a mesoscopic train following model is derived.
The resulting model, based on microscopic analysis of the train operations, is used to predict the behavior of the system from any initial state on. The authors claim their model to be more accurate than simplified macroscopic analytical railway traffic models with closed-form solutions for macroscopic traffic variables such as train speed, density and flow. Compared to microscopic train dynamics models which can be exploited by simulation, their results allow a more accurate system behavior analysis and traffic state prediction.

\begin{figure}
 \centering
 \begin{minipage}{0.49\textwidth}
 	\centering
 	\frame{
  	\includegraphics[width=\textwidth]{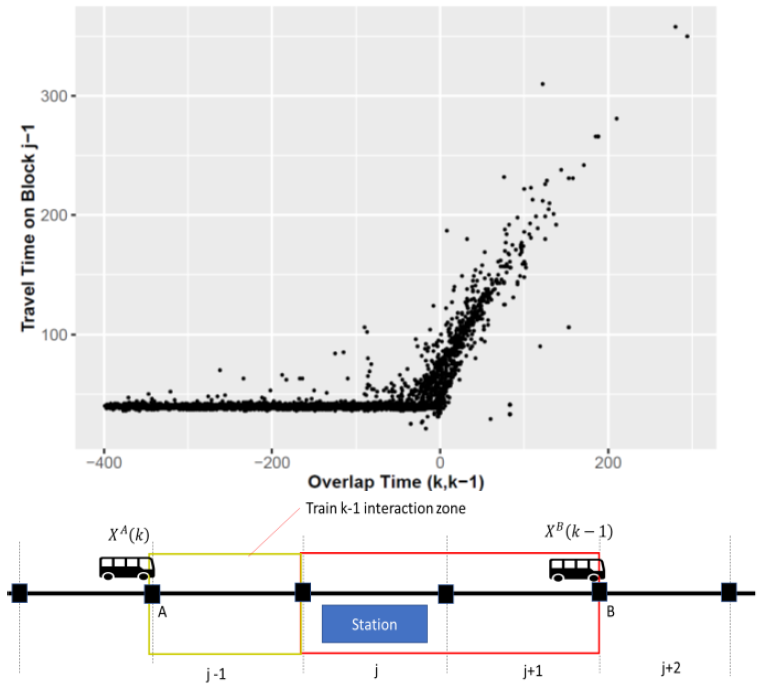}}
	\caption {Train travel depending on the position of the preceding train for block system~\cite{SWKZ19}.}
	\label{saidi-overlap}
	\end{minipage}
\begin{minipage}{0.49\textwidth}
 	\centering
 	\frame{
  	\includegraphics[width=\textwidth]{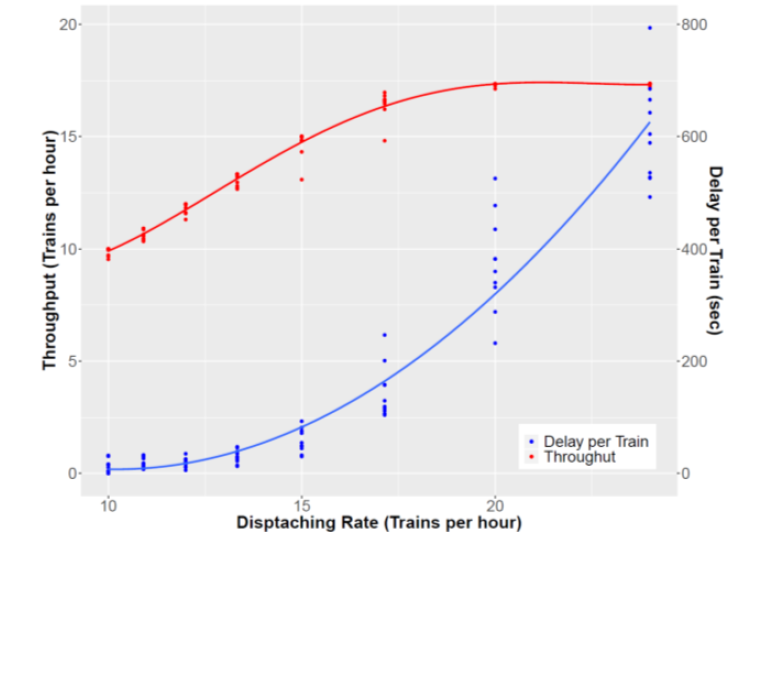} }
	\caption {Train throughput, dispatching rate, speed and delay from traffic data (metro Cambridge), block system~\cite{SWKZ19}.}
	\label{saidi}
	\end{minipage}
\end{figure}

Precisely, based on train operations data including train speed and dwell time variability, the authors of~\cite{SWKZ19} derive a train following model for MBA Red Line, metro Cambridge, with a fixed block signaling system. On this line with the block system, the speed of a train depends on the location of the preceding train, see Figure~\ref{saidi-overlap}. Note that if block $j+1$ is occupied, block $j$ needs to be clear. Trains entering block $j-1$ cannot enter with full speed but receive a reduced speed code. This alerts the driver and makes sure that it can stop before block $j$. %if the following train gets close to the lead train (two free blocks between the trains), it will be slowed down by the signaling system, receiving a reduced speed code. In case it gets very close (one free block), the following train has to stop.
Still in Figure~\ref{saidi-overlap}, the authors have studied the relation and derived a model for the dependency between travel time on block $j-1$ just before the bottleneck station, and the position of the lead and the following train. Their train interaction is given by the overlap time and accounts for the effect of signals, block boundaries, speed codes and train characteristics. As it is shown in the graph, as soon as trains start to interact (positive overlap time), the travel time on block $j-1$ depends linearly on the overlap.

The authors of~\cite{SWKZ19} have applied this empirically derived model to all block sections of the line to generated macroscopic variables curves, see Figure~\ref{saidi}. The throughput (or train flow) and the delay per train, depending on the dispatching rate (or initial headway), that is the number trains dispatched from the terminus station per time interval, related to the train density. Furthermore, they have analyzed the throughput and have shown that it first depends linearly on the dispatching rate. For high dispatching rates, the maximum throughput is constrained by the bottleneck of the line. Further increasing the initial dispatching rate does therefore not further increase the throughput, but train delays due to congestion around the bottlenecks.
For the relationship between train throughput and dispatching rate (red curve in Figure~\ref{saidi}), a free flow phase can be distinguished, where the flow depends linearly on the initial headway. From a certain point on, the flow can still be increased, but typically by reducing train approaching speeds, which leads to an increasing train delay (blue curve in Figure~\ref{saidi}). This is coherent with the result presented by Corman et al.~\cite{CHKE19} presented above. However, in the contrary to the analysis conducted by~\cite{CHKE19}, the train flow reaches a horizontal plateau phase, where, with an increasing dispatching rate, the train flow remains at its maximum level. This result might be due to the fact that the authors here have modeled the specificity of a railway block signaling system. This is particularly interesting with regard to the results presented in this thesis, where a similar plateau phase can be found.

% ===========================================================
\subsection*{Max-plus Algebra Modeling Approaches}
% ===========================================================
The latest works on metro traffic modeling and control are from Farhi et al.~\cite{FNHL16,FNHL17a,FNHL17b,F18}.
%and Schanzenbacher et al.~\cite{SFCLG17,SFCLG18a,SFLG18b,SFLG18c,SFLG19a,SFLG19b,SFLG19c}.
The authors have proposed a new approach using max-plus algebra which allows to derive the traffic phases of the train dynamics describing the physics of traffic of a metro line in the stationary regime.

Farhi et al.~\cite{FNHL17a} have proposed a novel model of the train traffic dynamics on a linear metro line. The main idea is to model the dynamics as a discrete events system, where an event is the departure of a train from a segment. Therefore, the metro line is discretized in space into a number of segments. A segment is a part of the line in which there can be at maximum one train at a time. This coincides with the blocks of a standard railway signaling system.
The discrete events are the counted train departures in time from a given segment. The departures are counted separately on every segment.
The new approach presented by Farhi et al. relaxes the shortcoming common to some approaches cited above based on van Breusegem et al.~\cite{BCB91}, who do not explicitly model the block system. The model presented in~\cite{FNHL17a} can represent as many trains per inter-station as there are blocks.
The authors apply the max-plus traffic modeling approach to a linear metro line without junction. The train dynamics are written on two constraints.
One constraint is on the travel time (= sum of run + dwell time) of a train. Trains are supposed respect given lower bounds on the train run and dwell times.
Another constraint is on the safe separation time which is enforced between two trains following each other. % and models signaling system in case the traffic is very dense.

The traffic model is a max-plus linear analytical model which allows to derive closed-form solutions for the asymptotic average growth rate of the system, interpreted as the average train time-headway on the line, accordingly to Theorem~\ref{max-plus-theorem}.
The main result, illustrated in Figure~\ref{farhi-1} for the case of a linear metro line, consists in a fundamental traffic diagram that links the average train frequency at any point on the line, to the number of trains on the line.
Three traffic phases of the train dynamics can be identified.
Firstly, a free flow traffic phase, where the observed train frequency increases linearly with the number of trains.
This traffic phase is bounded by a horizontal maximum frequency phase, corresponding to the capacity of the line. In this phase, the average frequency is independent of the number of trains on the line.
Finally, for very dense traffic, congestion occurs and the frequency decreases with an increasing number of trains.
The fundamental diagram can be used to identify the minimum number of trains for which the average train frequency is maximized. It lies at the intersection between the first and second traffic phase.%, since this point maximizes the frequency for a minimum number of trains.

Figure~\ref{farhi-2} shows the variation of train dwell time ($w$), dynamic interval ($g$) and headway ($h$, all in seconds), accordingly to the traffic phases of Figure~\ref{farhi-1}. The dynamic interval is by definition the time interval during which there is no train at a given platform. Consequently, the headway is the sum of dynamic interval and dwell time. It can be seen, that the dwell time is constant in the free flow phase, but increases in the maximum frequency phase when the average frequency still remains constant. This can be interpreted as a stop and go effect around the bottleneck of the line. Not surprisingly, dwell times increase significantly in the congestion phase.
Furthermore, the dynamic interval decreases in the free flow phase with an increasing number of trains. It is minimal at the end of the maximum frequency phase and in the congestion phase. It can be seen that the headway is minimal and constant in the maximum frequency phase. However, with an increasing number of trains, the dynamic interval between two consecutive trains decreases at the cost of an increasing dwell time.

\begin{figure}
 \centering
 \begin{minipage}{0.49\textwidth}
 	\centering
 	\frame{
  	\includegraphics[width=\textwidth]{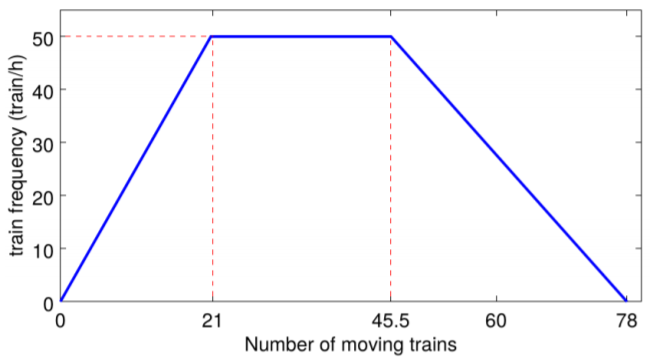}}
	\caption {Asymptotic average train frequency on a linear metro line over number of trains~\cite{FNHL16}.}
	\label{farhi-1}
	\end{minipage}
\begin{minipage}{0.49\textwidth}
 	\centering
 	\frame{
  	\includegraphics[width=\textwidth]{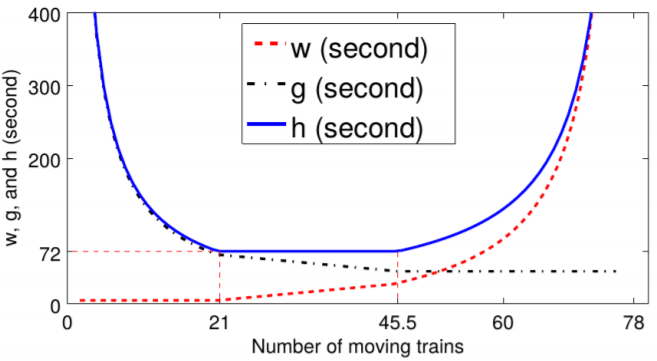} }
	\caption {Dwell time $w$, dynamic interval $g$, headway $h$ on a linear metro line over number of trains~\cite{FNHL16}.}
	\label{farhi-2}
	\end{minipage}
\end{figure}

Farhi et al.~\cite{FNHL17b} have also presented another model, where the train dwell times are a function of the passenger travel demand. Precisely, the train dwell times take into account the number of passenger on the platform, which depends on the average passenger travel demand and on the dynamic interval of the train.
Since the dynamic interval is the time in which there is no train at the platform, passengers accumulate over this interval.
The authors have shown that the train traffic dynamics on a linear line, with demand-dependent dwell times and without any further control of the train dwell and/or run times, are naturally unstable. The instability of these systems has already been pointed out in the initial works of metro traffic modeling and control by van Breusegem et al.~\cite{BCB91}.

%Schanzenbacher et al. have applied the max-plus modeling approach by~\cite{FNHL17a} to a metro line with a junction with static dwell and run times~\cite{SFCLG17}.
%In~\cite{SFLG19c}, the authors have presented a detailed interpretation of the traffic phases of a metro line with a junction and have proposed macroscopic traffic control laws for the number of train on the line, responding to a change in the passenger travel demand or a perturbation on the travel times.
%In~\cite{SFCLG18a,SFLG18b} which is another extension of~\cite{FNHL17a}, the authors have extended the model with passenger demand-dependent dwell times, solving the problem with the natural instability of~\cite{FNHL17b}, by controlling run times such that they cancel the variability of dwell times due to the number of passenger on the platform.
%In~\cite{SFLG19a,SFLG19b}, the authors have further extended~\cite{SFLG18b} to a metro line with a junction.
%Finally, the authors have proposed in~\cite{SFLG18c} a dwell time control minimizing the variance of the train time-headway in a linear metro line.
%Together, these publications form the basis of this PhD thesis.

% =========================================
\chapter{The Fundamental Diagram of a Metro Line with a Junction}\label{1A}% and feedback control of the number of trains}\label{1A}
%\chapter{A traffic model of the train dynamics on a metro line with a junction}\label{1A}
\chaptermark{The Fundamental Diagram}

\begin{quote}{This chapter presents a discrete event traffic model of the train traffic dynamics on a metro line with a junction where trains respect given lower bounds on train dwell, run and safe separation times.
{It is shown that the model can be written linearly in the max-plus algebra of polynomial matrices. The system reaches a stationary regime, with an unique asymptotic average growth rate, which is derived and interpreted as the average train time-headway on the line.}
From this result, eight traffic phases of the train dynamics are distinguished for a line with a junction and interpreted in terms of traffic. They are represented in a phase diagram, called the fundamental diagram of a metro line with a junction.
Finally, macroscopic feedback control laws for the number of trains on each part of the line are derived from the fundamental diagram.
They allow to control the number of trains with regard to a change in the passenger travel demand volume or to respond to important disturbances on the line, in case included time margins are insufficient to recover perturbations.}
\end{quote}
% =========================================
\section{A Model of the Train Traffic Dynamics on a Metro Line with a Junction}\label{1A-a}
\sectionmark{A Traffic Model of a Line with a Junction}
\subsection{Train Traffic Dynamics Modeling}\label{1A-a-i}
% =========================================

This thesis presents a series of traffic models for linear metro lines or lines with one junction.
All models are mathematical representations of the train traffic dynamics on the metro line.
{The dynamics considered here describe the movement of the trains which are supposed to respect given lower bounds on train run, dwell and safe separation times.}% (see~\ref{1A-a-ii}).

The entire work is based on a modeling approach developed in~\cite{F18,FNHL17a,FNHL17b,FNHL16} for metro train traffic dynamics on linear lines.
A metro line is called \textit{linear} if it is delimited by exactly two terminus stations, which are connected by two paths, one in each direction. There can be a certain number of platforms on the line between the terminus stations. Their number and localization can be different in the two directions.

A metro line is said to have \textit{one junction} if there are three terminus stations and the line includes a central part and two branches. The branches and the central part are connected via a junction node which is composed of a divergence and convergence. There are two paths between the junction node and the three terminus stations, one in each direction, see Figure~\ref{fig-line}.
%
%To start, in Chapter~\ref{1A}, the model of a linear line where trains respect given lower bounds on train dwell and run times presented in~\cite{FNHL17a}, is extended by modeling a metro line with one junction.
%First of all, the line consits of a central part and two branches with three terminus stations as shown in Fig.~\ref{fig-line}.
%The three parts are connected by a junction, which consists of a divergence and a convergence.
At the divergence, trains go from the central part onto the branches, while at the convergence trains pull into the central part, coming from the branches.
Trains running on the divergence and the convergence do not run on the same track and do therefore not cross each other.

In the model for a line with a junction proposed here, the metro line is discretized in space in a number of segments. A segment is a part of the line in which can be maximum one train at a time.
Two segments are delimited by a node. { The number of segments on the central part, written $n_0$ is supposed to be even. This can be achieved by an adequate discretization.}
There can, but does not necessarily have to be, a platform on a node.
Two consecutive platforms can be separated by several segments.
Segments and nodes are indexed as in Figure~\ref{fig-line}.

Finally, trains stop at all stations.
The system is closed and is fully observable.
%The number of trains on the line is considered to be constant.

\begin{figure}[]
 \centering
  \includegraphics[width=\textwidth]{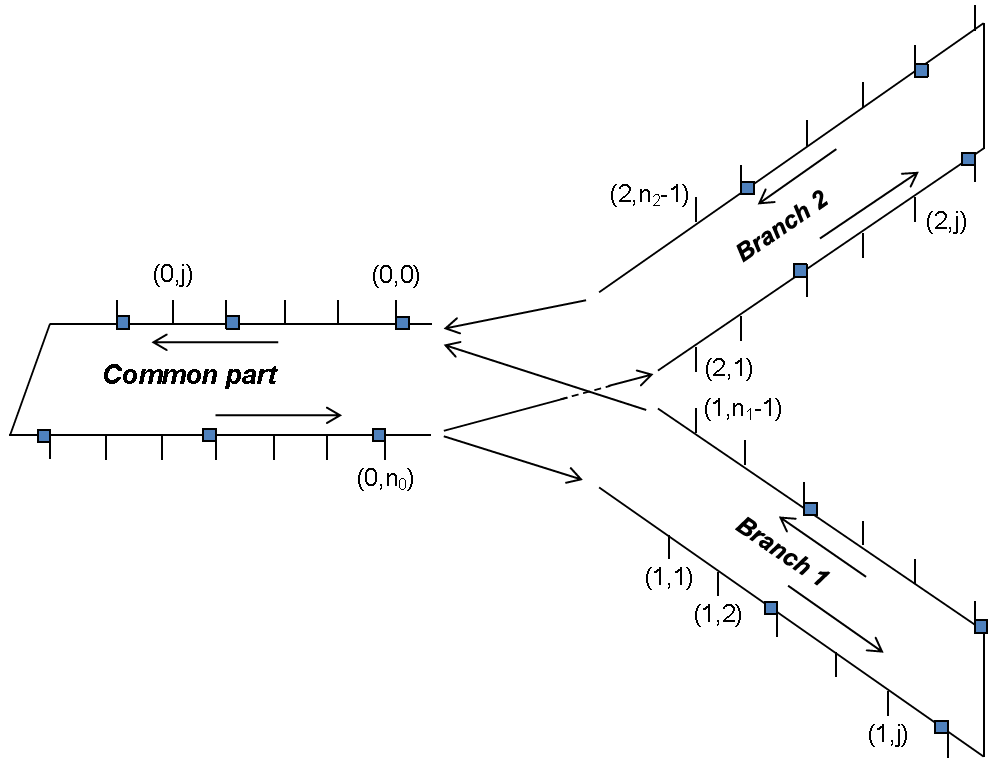} 
\caption [Schema of the metro line with one junction and the corresponding notation.]{Schema of the metro line with one junction and the corresponding notation.
The nodes and segments are indexed $(u,j)$ where a segment with $u=0$ is on the central part, a segment with $u=1$ is on branch 1, and a segment with $u=2$ is on branch 2. For $u=0, j \in \{0,1, \ldots, n_0\}$, for $u=1, j \in \{0,1, \ldots, n_1\}$  and for $u=2, j \in \{0,1, \ldots, n_2\}$. 
The nodes $(1,n_1)$ and $(2,n_2)$ are superposed with node $(0,0)$ as it is the case for the nodes $(1,0)$ and $(2,0)$ with node $(0,n_0)$. }
\label{fig-line}
\end{figure}

% =========================================
\subsubsection{Notations}\label{1A-a-ii} %\label{cdc-notations}
% =========================================
In order to model the metro train traffic dynamics on a line with one junction, consider the following notations.

\begin{tabular}{lp{0.8\textwidth}}
  $u$		& $\in \mathcal U = \{0,1,2\}$ indexes the central part if  $u=0$, branch 1 if $u=1$, branch 2 if $u=2$. \\
 $J(u)$	 & the set of indexes on part $u$ of the line, $J(u) = \{1,2, \ldots, n_u\}$.\\        
$(u,j)$ & indexes a node and a segment, in which can be maximum one train at a time. Segment $(u,j)$ connects nodes $(u,j-1)$ and $(u,j)$.\\
 	 $n_u$ & the number of segments on part $u$ of the line.\\
%\end{tabular}
%\begin{tabular}{lp{0.8\textwidth}}
  $m_u$	& the number of trains on the part $u$ of the line.
\end{tabular}
~

\begin{tabular}{lp{0.8\textwidth}}
  $b_{(u,j)}$ & $\in \{0,1\}$. It is $0$ (resp. $1$) if there is no train (resp. one train) on segment $j$ of part $u$, in the intial state (= at time zero).\\
  $\bar{b}_{(u,j)}$ & $= 1 - b_{(u,j)}$.
\end{tabular}
~

\begin{tabular}{lp{0.8\textwidth}}
  $d^k_{(u,j)}$ & the $k^{\text{th}}$ departure time from node $j$, on part $u$ of the line. Notice that $k$ does not index trains, but counts the number of train departures. \\  
  $a^k_{(u,j)}$ & the $k^{\text{th}}$ arrival time to node $j$, on part $u$ of the line.
\end{tabular}
~

\begin{tabular}{lp{0.8\textwidth}}
  $r^k_{(u,j)}$ & the $k^{\text{th}}$ run time (of a train) on segment $(u,j)$ (between nodes $j-1$ and $j$ of part $u$).\\ 
  $w^k_{(u,j)}$ & $=d^k_{(u,j)}-a^k_{(u,j)}$ the $k^{\text{th}}$ dwell time on node $(u,j)$ (separating segments $(u,j)$ and $(u,j+1)$); $w^k_{(u,j)}= 0$ in case there is no platform, $w^k_{(u,j)}> 0$ in case there is a platform.
\end{tabular}
~

\begin{tabular}{lp{0.8\textwidth}}
  $t^k_{(u,j)}$ & $=r^k_{(u,j)}+w^k_{(u,j)}$ the $k^{\text{th}}$ travel time from node $j-1$ to node $j$ on part $u$ of the line.\\      
  $g^k_{(u,j)}$ & $=a^k_{(u,j)}-d^{k-1}_{(u,j)}$ the $k^{\text{th}}$ dynamic interval, between the departure of the preceeding train and the arrival of the following train at the same node $j$ on part $u$.
\end{tabular}
~

\begin{tabular}{lp{0.8\textwidth}}
  $h^k_{(u,j)}$ & $=d^k_{(u,j)}-d^{k-1}_{(u,j)} = g^k_{(u,j)}+w^{k}_{(u,j)}$ the $k^{\text{th}}$ train departure time-headway between the departure of the preceding train and the departure of the following train at the same node $j$ on part $u$ (note that $h^k_{(u,j)} = g^k_{(u,j)}$ if $w^k_{(u,j)} = 0$).\\
  $s^k_{(u,j)}$ & $=g^{k+b_{(u,j)}}_{(u,j)}-r^k_{(u,j)}$ the $k^{\text{th}}$ safe separation time between the departure of the preceeding train at node $j$ and the departure of the following train at the preceding node $j-1$.
\end{tabular}

The (asymptotic) averages on $j$ for each part $u$  and on $k$ of the variables are indexed $u$.
Then $r_u, w_u, t_u, g_u, s_u$ and $h_u$ denote the average run, dwell, travel, dynamic interval and safe separation times and the asymptotic average train time-headway, respectively.

It is easy to check the following relationships:  
  \begin{align}
    & g_u = r_u + s_u, \label{form1} \\
    & t_u = r_u + w_u, \label{form2} \\
    & h_u = g_u + w_u = r_u + w_u + s_u = t_u + s_u.
    \label{form3}
  \end{align}    

Underline notations are used to note minimum 
bounds of the corresponding variables.
Then $\underline{r}_{(u,j)}, \underline{w}_{(u,j)},  \underline{t}_{(u,j)}, \underline{g}_{(u,j)}$ and $\underline{s}_{(u,j)}$
denote respectively minimum run, dwell, travel, dynamic interval and safe separation times.

~\\
%=======================================================================
\textbf{\textit{One-over-two operational rule of the junction.}}\\
% ======================================================================
An \textit{one-over-two} operational rule at the junction applies in the steady state.
\begin{itemize}
\item At the divergence, odd departures from the last node of the central part $(0,n_0)$ go to branch~1, even ones go to branch~2.
\item At the convergence, odd departures from the first node of the central part $(0,0)$ come from branch~1, even ones come from branch~2.
\end{itemize}
Consequently, in two $k$ steps of the dynamics, that is an odd and an even departure from the junction, the difference between the number of trains on the branches is constant.

Here in Chapter~\ref{1A}, trains are supposed to respect minimum dwell, run and safe separation times, which holds for example for automated metro lines.
This already allows some conclusions on the train traffic dynamics on a line with one junction, which will be used for macroscopic traffic control.
In Chapter~\ref{Chap-4}, a margin on the run times is introduced, which will allow microscopic traffic regulation.

In the following, based on these hypotheses and the notations,
the train traffic dynamics on a metro line with one junction are modeled.
Three train dynamics are distinguished: the ones out of the junction, the ones on the divergence and the ones on the convergence.

% =========================================
\subsubsection{Train Traffic Dynamics Model}\label{1A-a-iii}
\subsubsection{Train Traffic Dynamics out of the Junction}
% ..........................................................
Below, the train traffic dynamics on a metro line with one junction are modeled.
Two main constraints are considered to describe the dynamics out of the junction.

~\\
\textbf{\textit{1 - A constraint on the run and dwell times}}%\\
\begin{itemize}
  \item The $k^{\text{th}}$ train departure from node $j$ (of any part of the line) depends on
     the $k^{\text{th}}$ train departure from node $j-1$ in case there was no train 
     on segment $j$ at time zero; and it depends on the $(k-1)^{\text{th}}$ 
     train departure from node $j-1$ in case there was a train on segment $j$ at time zero.
     Between two consecutive departures, a minimum travel time of $\underline{t}_{(u,j)}$ is respected.
     \begin{equation}\label{eq-c1}
        d^k_{(u,j)} \geq d^{k-b_{(u,j)}}_{(u,j-1)} + \underline{t}_{(u,j)}, \; \forall k\geq 0, u\in\mathcal U, {j \in J(u)\setminus \{0\} \text{ and }  j \neq 1 \text{ for } u=1,2.}
     \end{equation}
\end{itemize}

~\\
\textbf{\textit{2 - A constraint on the safe separation time}}%\\
\begin{itemize}   
  \item The $k^{\text{th}}$ train departure from node $j$ is constrained by the $(k-1)^{\text{th}}$ train departure
    from node $j+1$ plus a minimum safe separation time $\underline{s}_{(u,j+1)}$ in case there was no train on segment $j+1$ at time zero; and
    it is constrained by the $k^{\text{th}}$ train departure from node $j+1$ plus a minimum safe separation time $\underline{s}_{(u,j+1)}$
    in case there was a train on segment $j+1$ at time zero.
    \begin{equation}\label{eq-c2}
          d^k_{(u,j)} \geq d^{k-\bar{b}_{(u,j+1)}} _{(u,j+1)} + \underline{s}_{(u,j+1)}, \; \forall k\geq 0, u\in\mathcal U, {j \in J(u) \setminus \{n_0, n_1-1, n_2-1\}.}
    \end{equation}
\end{itemize}

It is assumed that a train departs from node $j$ out of the junction, as soon as the two
constraints~(\ref{eq-c1}) and~(\ref{eq-c2}) are satisfied.
\begin{equation}\label{eq-c3}
  d^k_{(u,j)} = \max \left\{ d^{k-b_{(u,j)}}_{(u,j-1)} + \underline{t}_{(u,j)}, d^{k-\bar{b}_{(u,j+1)}} _{(u,j+1)} + \underline{s}_{(u,j+1)}  \right\}.
\end{equation}
This assumption holds furthermore for all couples of constraints on the divergence and on the convergence which will be given below.
%This will permit to write the whole train dynamics as a homogeneous Max-plus system.

% ..........................................................
\subsubsection{Train Traffic Dynamics on the Divergence}
% ..........................................................
As explained at the beginning of this chapter, it is assumed that trains leaving the central part of the line and
pulling into the branches respect the following one-over-two rule.
Odd departures go to branch~1 while even departures go to branch~2.
More precisely, in equation~(\ref{eq-div2}) it is considered that departures from the last node of the central part $(0,n_0)$ are constrained by the preceding departure pulling into the same branch (and not by the departure of the train which has been just in front in the central part).

With this assumption, the constraints on the divergence can be written as follows.

The $k^{\text{th}}$ departures from the central part:
\begin{equation}\label{eq-div1}
  d^k_{(0,n_0)} \geq d^{k-b_{(0,n_0)}}_{(0,n_0-1)} + \underline{t}_{(0,n_0)}, \; \forall k\geq 0, \\
\end{equation}

\begin{equation}\label{eq-div2}
  d^k_{(0,n_0)} \geq \begin{cases}
                      d^{(k+1)/2-\bar{b}_{(1,1)}}_{(1,1)} + \underline{s}_{(1,1)} & \text{if } k \text{ is odd} \\ ~~ \\
                      d^{k/2-\bar{b}_{(2,1)}}_{(2,1)} + \underline{s}_{(2,1)} & \text{if } k \text{ is even} \\
                   \end{cases}
\end{equation}

The $k^{\text{th}}$ departures from the entry of branch 1:
\begin{align}
   d^k_{(1,1)} \geq d^{(2k-1)-2b_{(1,1)}}_{(0,n_0)} + \underline{t}_{(1,1)}, \; \forall k\geq 0, \label{eq-div3}\\
   d^k_{(1,1)} \geq d^{k-\bar{b}_{(1,2)}} _{(1,2)} + \underline{s}_{(1,2)}, \; \forall k\geq 0. \label{eq-div4}
\end{align}

The $k^{\text{th}}$ departures from the entry of branch 2:
\begin{align}
   d^k_{(2,1)} \geq d^{2k-2b_{(2,1)}}_{(0,n_0)} + \underline{t}_{(2,1)}, \; \forall k\geq 0, \label{eq-div5}\\
   d^k_{(2,1)} \geq d^{k-\bar{b}_{(2,2)}}_{(2,2)} + \underline{s}_{(2,2)}, \; \forall k\geq 0. \label{eq-div6}
\end{align}

% ..........................................................
\subsubsection{Train Traffic Dynamics on the Convergence}
% ..........................................................
Similarly to the train traffic dynamics on the divergence, it is assumed that trains entering the central part from 
the two branches respect the following one-over-two rule.
Odd departures at node $(0,0)$ towards the central part correspond to trains coming from
branch~1 while even ones correspond to trains coming from branch~2.
At the last nodes on the branches $(1,n_1-1)$, $(2,n_2-1)$, there are two different possibilities to model the dynamics, which are explained in the following.

~\\
%=======================================================================
\textbf{\textit{Convergence model 1.}}\\
% ======================================================================
In this convergence model, it is considered that departures from the last nodes on the branches $(1,n_1-1)$, $(2,n_2-1)$ are constrained by a departure from node $(0,0)$, realized by a train from the same branch.
For example, a departure from $(2,n_2-1)$ must respect a departure at $(0,0)$ realized by a train from this branch~2 plus a minimum safe separation time $\underline{s}_{(2,n_2)}$.
Similarly, a departure from $(1,n_1-1)$ must respect a departure at $(0,0)$ realized by a train from this branch~1 plus a minimum safe separation time $\underline{s}_{(1,n_1)}$.
Consequently, there are two positions to stock trains at node $(0,0)$, one for trains from branch~1 and one for trains from branch~2.
The one-over-two junction operations rule is applied directly at node $(0,0)$.
The model is then the following.

The $k^{\text{th}}$ departures from the central part:
\begin{equation}\label{eq-conv1}
  d^k_{(0,0)} \geq \begin{cases}
                      d^{(k+1)/2-b_{(1,n_1)}}_{(1,n_1-1)} + \underline{t}_{(1,n_1)} & \text{if } k \text{ is odd} \\ ~~ \\
                      d^{k/2-b_{(2,n_2)}}_{(2,n_2-1)} + \underline{t}_{(2,n_2)} & \text{if } k \text{ is even} \\
                   \end{cases}
\end{equation}
\begin{equation}\label{eq-conv2}
  d^k_{(0,0)} \geq d^{k-\bar{b}_{(0,1)}}_{(0,1)} + \underline{s}_{(0,1)}, \; \forall k\geq 0, \\
\end{equation}

The $k^{\text{th}}$ departures from the exit of branch 1:
\begin{align}
   d^k_{(1,n_1-1)} \geq d^{k-b_{(1,n_1-1)}}_{(1,n_1-2)} + \underline{t}_{(1,n_1-1)}, \; \forall k\geq 0, \label{eq-conv3}\\
   d^k_{(1,n_1-1)} \geq d^{2k-1-2\bar{b}_{(1,n_1)}}_{(0,0)} + \underline{s}_{(1,n_1)}, \; \forall k\geq 0. \label{eq-conv4}
\end{align}

The $k^{\text{th}}$ departures from the exit of branch 2:
\begin{align}
   d^k_{(2,n_2-1)} \geq d^{k-b_{(2,n_2-1)}}_{(2,n_2-2)} + \underline{t}_{(2,n_2-1)}, \; \forall k\geq 0, \label{eq-conv5} \\
   d^k_{(2,n_2-1)} \geq d^{2k-2\bar{b}_{(2,n_2)}}_{(0,0)} + \underline{s}_{(2,n_2)}, \; \forall k\geq 0. \label{eq-conv6}
\end{align}

This model is used throughout this thesis.

~\\
%=======================================================================
\textbf{\textit{Convergence model 2.}}\\
% ======================================================================
Another possibility is to suppose that departures from the last nodes on the branches $(1,n_1-1)$, $(2,n_2-1)$ are constrained by a departure from node $(0,0)$, realized by a train from the other branch.
For example, a departure from $(2,n_2-1)$ must respect a departure at $(0,0)$ realized by a train from branch~1 plus a minimum safe separation time $\underline{s}_{(2,n_2)}$.
Similarly, a departure from $(1,n_1-1)$ must respect a departure at $(0,0)$ realized by a train from branch~2 plus a minimum safe separation time $\underline{s}_{(1,n_1)}$.
This means, a train can only depart from the last node on the branches once both segments $(1,n_1)$ and $(2,n_2)$ are free. Consequently, there can be maximum one train at a time on the segments  $(1,n_1)$, $(2,n_2)$.

The $k^{\text{th}}$ departures from the exit of branch 1, only for the safe separation constraint, to compare to~(\ref{eq-conv4}):
\begin{align}
   d^k_{(1,n_1-1)} \geq d^{2k-2\bar{b}_{(1,n_1)}}_{(0,0)} + \underline{s}_{(1,n_1)}, \; \forall k\geq 0. \label{eq-conv-4bis}
\end{align}

The $k^{\text{th}}$ departures from the exit of branch 2, only for the safe separation constraint, to compare to~(\ref{eq-conv6}):
\begin{align}
   d^k_{(2,n_2-1)} \geq d^{2k+1-2\bar{b}_{(2,n_2)}}_{(0,0)} + \underline{s}_{(2,n_2)}, \; \forall k\geq 0. \label{eq-conv-6bis}
\end{align}

~\\
%=======================================================================
\textbf{\textit{Example.}}%\\
% ======================================================================
\begin{table}
\centering
\begin{tabular}{l | l | l | l || l }~\label{ex-conv1}
$k \text{ in}$ & $d_{(1,n_1-1)}$ & $d_{(2,n_2-1)}$ & $d_{(0,0)}$ (model~2) & $d_{(0,0)}$ (model~1, used here) \\
\hline
	& 1	& - 	& $2-2\bar{b}_{(1,n_1)}$ 	& $1-2\bar{b}_{(1,n_1)}$ \\
	& - 	& 1 	& $3-2\bar{b}_{(2,n_2)}$			& $2-2\bar{b}_{(2,n_2)}$\\
	& 2	& - 	& $4-2\bar{b}_{(1,n_1)}$ 	& $3-2\bar{b}_{(1,n_1)}$ \\
	& - 	& 2 	& $5-2\bar{b}_{(2,n_2)}$			& $4-2\bar{b}_{(2,n_2)}$\\
\end{tabular}
\caption{The train departure counter index of the safe separation constraint of the convergence (before the changing of variables), for convergence model~1 and convergence model~2.}
\end{table}

Table~\ref{ex-conv1} depicts the evolution of the train departure counter index $k$ for both convergence models.
Model~1, that is equations~(\ref{eq-conv4}), (\ref{eq-conv6}) are represented in the last column.
Model~2, that is equations~(\ref{eq-conv-4bis}), (\ref{eq-conv-6bis}) are represented in the second last column.
It becomes clear that there is difference of one k-step between the two models (see last two columns).
In model~1, departures from the last nodes on the branches are constraint by a train from the same branch.
In model~2, departures from the last nodes on the branches are constraint by a train from the other branch.

If there is a train, at time 0, on segment $(2,n_2)$, this train has to wait for one train from branch~1 realizing the first departure at node $(0,0)$. Therefore, the train which is on $(2,n_2)$ in the initial state, realizes the second departure at node $(0,0)$.
In both cases, on the convergence (as on the divergence), odd departures always correspond to trains from (to) branch~1 and even departures are trains from (to) branch~2, independently of the initial state.

% =========================================
%\section{Model in max-plus algebra}\label{1A-b}
\subsection{Changing of Variables and New Train Traffic Dynamics}\label{1A-a-iv}
% =========================================
%In order to avoid multiplicative backshifts between the departures
%on the central part and the ones on the branches, a
%change of variables is introduced in this section. %In order to explain that,
%error in ref(...)

In the preceding part, the train traffic dynamics on a metro line with one junction have been modeled.
The main variable is the $k^\text{th}$ departure time from a node $(u,j)$.
Most importantly, the index \textit{k} counts the number of train departures, but does not index a specific train.
Since the system considered here has an one-over-two operated junction and two branches, the number of train departures indexed \textit{k} on the central part, is in average twice the one on the branches.
To illustrate this, consider the dynamic given by~(\ref{eq-div3}).
The departure from the first node on branch~1 $d^\textbf{k}_{(1,1)}$ is given as a function of the departure from the last node from the central part $d^{\textbf{2k}-1-2b_{(1,1)}}_{(0,n_0)}$.
The two sequences do not have the same growth rate.
Indeed, the growth rate of $d_{(0,n_0)}$ is in average double the one of $d_{(1,1)}$.
This is a consequence of the one-over-two rule at the junction.
%This is due to the fact that only every second train from the central part of the line
%goes to branch~1.

In order to have all the sequences of the train dynamics growing with the same speed,
%and then be able to write the dynamics as a homogeneous Max-plus system,
the following change of variables is introduced:
\begin{align}
  & \delta^k_{(0,j)} = d^k_{(0,j)}, \forall k\geq 0, \forall j  \\
  & \delta^{2k}_{(1,j)} = d^k_{(1,j)}, \forall k\geq 0, \forall j  \\
  & \delta^{2k}_{(2,j)} = d^k_{(2,j)}, \forall k\geq 0, \forall j.
\end{align}

The train dynamics after changing of variables are rewritten as follows.

% ..........................................................
\subsubsection{New Train Traffic Dynamics out of the Junction after Changing of Variables}
% ..........................................................
The train traffic dynamics out of the junction~(\ref{eq-c1}) and~(\ref{eq-c2}) are rewritten as follows.

On the central part, it is sufficient to replace $d$ with $\delta$:
\begin{align}
   \delta^k_{(0,j)} \geq \delta^{k-b_{(0,j)}}_{(0,j-1)} + \underline{t}_{(0,j)}, \; \forall k\geq 0,{ j \neq 0}, \label{ch-neq-c1}\\
   \delta^k_{(0,j)} \geq \delta^{k-\bar{b}_{(0,j+1)}} _{(0,j+1)} + \underline{s}_{(0,j+1)}, \; \forall k\geq 0, j \neq n_0. \label{ch-neq-c2}
\end{align}

The dynamics on the branches are rewritten as follows:
\begin{equation}
   \delta^{2k}_{(u,j)} \geq \delta^{2k-2b_{(u,j)}}_{(u,j-1)} + \underline{t}_{(u,j)}, \; \forall k \geq 0, u\in\{1,2\}, j \neq 0,1, \label{ch-neq-c3}
\end{equation}
\begin{equation}
   \delta^{2k}_{(u,j)} \geq \delta^{2k-2\bar{b}_{(u,j+1)}} _{(u,j+1)} + \underline{s}_{(u,j+1)}, \; \forall k \geq 0, u\in\{1,2\}, {j \neq n_u-1, n_u}. \label{ch-neq-c4}
\end{equation}

% ..........................................................
\subsubsection{New Train Traffic Dynamics on the Divergence after Changing of Variables}
% ..........................................................
The dynamics on the divergence are rewritten as follows.

The $k^{\text{th}}$ departures from the central part as given by equations~(\ref{eq-div1}) and~(\ref{eq-div2}):
\begin{equation}\label{ch-neq-div1}
  \delta^{k}_{(0,n_0)} \geq \delta^{k-b_{(0,n_0)}}_{(0,n_0-1)} + \underline{t}_{(0,n_0)}, \; \forall k\geq 0, \\
\end{equation}

\begin{equation}\label{ch-neq-div2}
  \delta^{k}_{(0,n_0)} \geq \begin{cases}
                      \delta^{k+1-2\bar{b}_{(1,1)}}_{(1,1)} + \underline{s}_{(1,1)} & \forall k = {2p-1, \text{ with } p \in \mathbb{N}}, \\ ~~ \\
                      \delta^{k-2\bar{b}_{(2,1)}}_{(2,1)} + \underline{s}_{(2,1)} & \forall k = {2p, \text{ with } p \in \mathbb{N}}. \\
                   \end{cases}
\end{equation}

The $k^{\text{th}}$ departures from the entry of branch 1 as given by equations~(\ref{eq-div3}) and~(\ref{eq-div4}):
\begin{align}
   \delta^{2k}_{(1,1)} \geq \delta^{2k-1-2b_{(1,1)}}_{(0,n_0)} + \underline{t}_{(1,1)}, \; \forall k \geq 0, \label{ch-neq-div3}\\
   \delta^{2k}_{(1,1)} \geq \delta^{2k-2\bar{b}_{(1,2)}} _{(1,2)} + \underline{s}_{(1,2)}, \; \forall k \geq 0. \label{ch-neq-div4}
\end{align}

The $k^{\text{th}}$ departures from the entry of branch 2 as given by equations~(\ref{eq-div5}) and~(\ref{eq-div6}):
\begin{align}
   \delta^{2k}_{(2,1)} \geq \delta^{2k-2b_{(2,1)}}_{(0,n_0)} + \underline{t}_{(2,1)}, \; \forall k \geq 0, \label{ch-neq-div5} \\
   \delta^{2k}_{(2,1)} \geq \delta^{2k-2\bar{b}_{(2,2)}}_{(2,2)} + \underline{s}_{(2,2)}, \; \forall k \geq 0. \label{ch-neq-div6}
\end{align}

% ..........................................................
\subsubsection{New Train Traffic Dynamics on the Convergence after Changing of Variables}
% ..........................................................
The dynamics on the convergence (convergence model 1) are rewritten as follows.

The $k^{\text{th}}$ departures from the central part as given by equations~(\ref{eq-conv1}) and~(\ref{eq-conv2}):
\begin{equation} \label{ch-neq-conv1}
  \delta^k_{(0,0)} \geq \begin{cases}
                      \delta^{(k+1)-2b_{(1,n_1)}}_{(1,n_1-1)} + \underline{t}_{(1,n_1)} & \forall k = {2p-1, \text{ with } p \in \mathbb{N}}, \\ ~~ \\
                      \delta^{k-2b_{(2,n_2)}}_{(2,n_2-1)} + \underline{t}_{(2,n_2)} & \forall k = {2p, \text{ with } p \in \mathbb{N}}. \\
                   \end{cases}
\end{equation}
\begin{equation}\label{ch-neq-conv2}
  \delta^k_{(0,0)} \geq \delta^{k-\bar{b}_{(0,1)}}_{(0,1)} + \underline{s}_{(0,1)}, \; \forall k\geq 0, \\
\end{equation}

The $k^{\text{th}}$ departures from the exit of branch 1 as given by equations~(\ref{eq-conv3}) and~(\ref{eq-conv4}):
\begin{align}
   \delta^{2k}_{(1,n_1-1)} \geq \delta^{2k-2b_{(1,n_1-1)}}_{(1,n_1-2)} + \underline{t}_{(1,n_1-1)}, \; \forall k \geq 0, \label{ch-neq-conv3} \\
   \delta^{2k}_{(1,n_1-1)} \geq \delta^{2k-1-2\bar{b}_{(1,n_1)}}_{(0,0)} + \underline{s}_{(1,n_1)}, \; \forall k \geq 0. \label{ch-neq-conv4}
\end{align}

The $k^{\text{th}}$ departures from the exit of branch 2 as given by equations~(\ref{eq-conv5}) and~(\ref{eq-conv6}):
\begin{align}
   \delta^{2k}_{(2,n_2-1)} \geq \delta^{2k-2b_{(2,n_2-1)}}_{(2,n_2-2)} + \underline{t}_{(2,n_2-1)}, \; \forall k \geq 0, \label{ch-neq-conv5} \\
   \delta^{2k}_{(2,n_2-1)} \geq \delta^{2k-2\bar{b}_{(2,n_2)}}_{(0,0)} + \underline{s}_{(2,n_2)}, \; \forall k \geq 0. \label{ch-neq-conv6}
\end{align}

% =========================================
\subsubsection{Index Simplification and new Train Dynamics}\label{1A-a-v}
% =========================================
Furthermore, the simplification below is applied: %on the branches:
\begin{align}
& {\delta^{l}_{(0,j)} = \delta^{k}_{(0,j)}, \forall k \in \mathbb{N}, \forall j,} \label{simpl0} \\
& {\delta^{l}_{(1,j)} = \delta^{2k}_{(1,j)}, \forall k \in \mathbb{N}, \forall j,} \label{simpl1}\\
& {\delta^{l}_{(2,j)} = \delta^{2k}_{(2,j)}, \forall k \in \mathbb{N}, \forall j. }\label{simpl2}
\end{align}
In the following, all the dynamics are rewritten with the index simplification.

% ..........................................................
\subsubsection{New Train Traffic Dynamics out of the Junction after Index Simplification}
% ..........................................................
The train traffic dynamics out of the junction are rewritten as follows.

On the central part, equations~(\ref{ch-neq-c1}) and~(\ref{ch-neq-c2}) are rewritten accordingly to~(\ref{simpl0}):
\begin{align}
   \delta^{{{l}}}_{(0,j)} \geq \delta^{{{l}-b_{(0,j)}}}_{(0,j-1)} + \underline{t}_{(0,j)}, \; \forall {l\geq 0, j \neq 0}, \label{neq-c1}\\
   \delta^{{{l}}}_{(0,j)} \geq \delta^{{{l}-\bar{b}_{(0,j+1)}}} _{(0,j+1)} + \underline{s}_{(0,j+1)}, \; \forall {l}\geq 0, j \neq n_0. \label{neq-c2}
\end{align}

The dynamics on the branches as given by equations~(\ref{ch-neq-c3}) and~(\ref{ch-neq-c4}) are rewritten with the simplification~(\ref{simpl1}),~(\ref{simpl2}) as follows:
\begin{equation}
   \delta^{{{l}}}_{(u,j)} \geq \delta^{{{l}-2b_{(u,j)}}}_{(u,j-1)} + \underline{t}_{(u,j)}, \; \forall {l = 2p, \text{ with } p \in \mathbb{N}}, u\in\{1,2\}, j \neq {0,1}, \label{neq-c3}
\end{equation}
\begin{equation}
   \delta^{{{l}}}_{(u,j)} \geq \delta^{{{l}-2\bar{b}_{(u,j+1)}}} _{(u,j+1)} + \underline{s}_{(u,j+1)}, \; \forall {l = 2p, \text{ with } p \in \mathbb{N}, u\in\{1,2\}, j \neq n_u-1, n_u}. \label{neq-c4}
\end{equation}

% ..........................................................
\subsubsection{New Train Traffic Dynamics on the Divergence after Index Simplification}
% ..........................................................
The dynamics on the divergence are rewritten as follows.

The $l^{\text{th}}$ departures from the central part as given by equations~(\ref{ch-neq-div1}) and~(\ref{ch-neq-div2}):
\begin{equation}\label{neq-div1}
  \delta^{{l}}_{(0,n_0)} \geq \delta^{{l}-b_{(0,n_0)}}_{(0,n_0-1)} + \underline{t}_{(0,n_0)}, \; \forall {l}\geq 0, \\
\end{equation}

\begin{equation}\label{neq-div2}
  \delta^{{l}}_{(0,n_0)} \geq \begin{cases}
                      \delta^{{l}+1-2\bar{b}_{(1,1)}}_{(1,1)} + \underline{s}_{(1,1)} & \forall {l = 2p-1, \text{ with } p \in \mathbb{N}}, \\ ~~ \\
                      \delta^{{l}-2\bar{b}_{(2,1)}}_{(2,1)} + \underline{s}_{(2,1)} & \forall {l \in 2p, \text{ with } p \in \mathbb{N}}. \\
                   \end{cases}
\end{equation}

The $l^{\text{th}}$ departures from the entry of branch 1 as given by equations~(\ref{ch-neq-div3}) and~(\ref{ch-neq-div4}):
\begin{align}
   \delta^{{l}}_{(1,1)} \geq \delta^{{l}-1-2b_{(1,1)}}_{(0,n_0)} + \underline{t}_{(1,1)}, \; \forall {l = 2p, \text{ with } p \in \mathbb{N}}, \label{neq-div3}\\
   \delta^{{l}}_{(1,1)} \geq \delta^{{l}-2\bar{b}_{(1,2)}} _{(1,2)} + \underline{s}_{(1,2)}, \; \forall {l = 2p, \text{ with } p \in \mathbb{N}}. \label{neq-div4}
\end{align}

The $l^{\text{th}}$ departures from the entry of branch 2 as given by equations~(\ref{ch-neq-div5}) and~(\ref{ch-neq-div6}):
\begin{align}
   \delta^{{l}}_{(2,1)} \geq \delta^{{l}-2b_{(2,1)}}_{(0,n_0)} + \underline{t}_{(2,1)}, \; \forall {l = 2p, \text{ with } p \in \mathbb{N}}, \label{neq-div5} \\
   \delta^{{l}}_{(2,1)} \geq \delta^{{l}-2\bar{b}_{(2,2)}}_{(2,2)} + \underline{s}_{(2,2)}, \; \forall {l = 2p, \text{ with } p \in \mathbb{N}}. \label{neq-div6}
\end{align}

% ..........................................................
\subsubsection{New Train Traffic Dynamics on the Convergence after Index Simplification}
% ..........................................................
The dynamics on the convergence (convergence model 1) are rewritten as follows.

The $l^{\text{th}}$ departures from the central part as given by equations~(\ref{ch-neq-conv1}) and~(\ref{ch-neq-conv2}):
\begin{equation} \label{neq-conv1}
  \delta^{{{l}}}_{(0,0)} \geq \begin{cases}
                      \delta^{{({l}+1)-2b_{(1,n_1)}}}_{(1,n_1-1)} + \underline{t}_{(1,n_1)} & \forall {l = 2p-1, \text{ with } p \in \mathbb{N}}, \\ ~~ \\
                      \delta^{{{l}-2b_{(2,n_2)}}}_{(2,n_2-1)} + \underline{t}_{(2,n_2)} & \forall {l = 2p, \text{ with } p \in \mathbb{N}}. \\
                   \end{cases}
\end{equation}
\begin{equation}\label{neq-conv2}
  \delta^{{{l}}}_{(0,0)} \geq \delta^{{{l}-\bar{b}_{(0,1)}}}_{(0,1)} + \underline{s}_{(0,1)}, \; \forall {l\geq 0}, \\
\end{equation}

The $l^{\text{th}}$ departures from the exit of branch 1 as given by equations~(\ref{ch-neq-conv3}) and~(\ref{ch-neq-conv4}):
\begin{align}
   \delta^{{{l}}}_{(1,n_1-1)} \geq \delta^{{{l}-2b_{(1,n_1-1)}}}_{(1,n_1-2)} + \underline{t}_{(1,n_1-1)}, \; \forall {l = 2p, \text{ with } p \in \mathbb{N}}, \label{neq-conv3} \\
   \delta^{{{l}}}_{(1,n_1-1)} \geq \delta^{{{l}-1-2\bar{b}_{(1,n_1)}}}_{(0,0)} + \underline{s}_{(1,n_1)}, \; \forall {l = 2p, \text{ with } p \in \mathbb{N}}. \label{neq-conv4}
\end{align}

The $l^{\text{th}}$ departures from the exit of branch 2 as given by equations~(\ref{ch-neq-conv5}) and~(\ref{ch-neq-conv6}):
\begin{align}
   \delta^{{{l}}}_{(2,n_2-1)} \geq \delta^{{{l-2b_{(2,n_2-1)}}}}_{(2,n_2-2)} + \underline{t}_{(2,n_2-1)}, \; \forall {l = 2p, \text{ with } p \in \mathbb{N}}, \label{neq-conv5} \\
   \delta^{{{l}}}_{(2,n_2-1)} \geq \delta^{{{l-2\bar{b}_{(2,n_2)}}}}_{(0,0)} + \underline{s}_{(2,n_2)}, \; \forall {l = 2p, \text{ with } p \in \mathbb{N}}. \label{neq-conv6}
\end{align}

~\\
%=======================================================================
\textbf{\textit{ Example.}}

\begin{table}
\centering
\begin{tabular}{l | l |l | l | l | l  }~\label{ex-conv2}
${l} \text{ in}$ & $d_{(1,n_1-1)}$ & $d_{(2,n_2-1)}$ & $\delta_{(1,n_1-1)}$ & $\delta_{(2,n_2-1)}$ & $\delta_{(0,0)}$ (model~1)  \\
\hline
	& 1	& -& 2	& - 	 	& $1-2\bar{b}_{(1,n_1)}$ \\
	& -	& 1& - 	& 2 	 & $2-2\bar{b}_{(2,n_2)}$\\
	& 2	& -& 4	& - 		& $3-2\bar{b}_{(1,n_1)}$ \\
	& -	& 2& - 	& 4 	& $4-2\bar{b}_{(2,n_2)}$\\
\end{tabular}
\caption{The train departure counter index of the safe separation constraint of the convergence, after the changing of variables, for convergence model~1.}
\end{table}

Table~\ref{ex-conv2} illustrates the safe separation constraint model~1 of the convergence after the changing of variables.
It depicts the departure counter index {{$l$}} of equations (\ref{neq-conv4}),~(\ref{neq-conv6}).% and the one of convergence model 2 (see equations (\ref{eq-conv-4bis}),~(\ref{eq-conv-6bis}) before the changing of variables).
%The one step between the two last columns shifts the safe separation constraint from a constraint written on the departure of a train from the same branch, to one on the departure of a train from the other branch.

Trains depart as soon as the two constraints on the travel and on the safe separation time, are satisfied.
Then, all couples of constraints can be rewritten, see equation~(\ref{eq-c3}):
\begin{equation}\label{eq-mp1}
  \delta^{{{l}}}_{(u,j)} = \max \left\{ \delta^{{{l-b_{(u,j)}}}}_{(u,j-1)} + \underline{t}_{(u,j)}, \delta^{{{l-\bar{b}_{(u,j+1)}}}} _{(u,j+1)} + \underline{s}_{(u,j+1)}  \right\}.
\end{equation}

Finally, it can be seen that with the changing of variables, the number of train departures on the branches has been doubled, whereas the one on the central part remains unchanged.
For example, note that the second departure from node $(1,n_1-1)$ is in reality the first departure from this node.
%It becomes clear that the asymptotic average growth rate of the train departures on the branches and on the central part is the same.
%This is important to be able to consider the entire dynamics in a metro line with a junction as one homogeneous system, see the following part.

% =========================================
\section{Model in Max-plus Algebra}\label{1A-b}
%\subsection{Train dynamics in max-plus algebra}\label{1A-b-ii}
%\subsubsection{Linearization and Matrix Form}
% ..........................................................

%============================================================================
In this section, the train traffic dynamics on a line with a junction which have been written above separately on the travel time and on the safe separation time for each node (see equations~(\ref{neq-c1})-(\ref{neq-conv6})), and which, when combining travel time and safe separation time constraint, are for all the nodes of the form of constraint~(\ref{eq-mp1}), will be written linearly in the max-plus algebra of polynomial matrices, summarizing the dynamics on the entire line.

%\begin{proposition}
%For all the nodes, the couple of constraints of the dynamics, on the travel time and on the safe separation time, all of the form of~(\ref{eq-mp1}), can be written linearly in the max-plus algebra as follows:
%\begin{equation}\label{eq-mp2}
%   \delta^{{{l}}}_{(u,j)} = \gamma^{b_{(u,j)}} \otimes \underline{t}_{(u,j)} \otimes \delta^{{{l}}}_{(u,j-1)} \oplus  \gamma^{\bar{b}_{(u,j+1)}} \otimes \underline{s}_{(u,j+1)} \otimes \delta^{{{l}}}_{(u,j+1)}.
%\end{equation}
%\end{proposition}
%\begin{proof}
%Per definition, in the max-plus algebra the addition $\oplus$ of two summands is their maximum (in standard algebra).
%The multiplication $\otimes$ of two factors is their sum (in standard algebra).
%Then, from~(\ref{eq-mp1}), using the $\oplus$ and the $\otimes$ operator, equation~(\ref{eq-mp1}) can rewritten in equation~(\ref{eq-mp2}) and is linear in the max-plus algebra.
%This holds for all the nodes on the line, since the couple of constraints of the dynamics (one on the travel time and one on the safe separation time) are represented in an equivalent form for all the nodes.
%\end{proof}

%Absatz: polynomial matrices.
Furthermore, remember that the couples of constraints of the dynamics are written on the counted $l^{th}$ departures from a given node (see equation~(\ref{eq-mp1})).
Depending on the initial condition (initial train departures), the $l^{th}$ departure from a node depends either on the $l^{th}$ or on the $(l-1)^{th}$ departure from, first, the node behind (travel time constraint) and, second, the node in front (safe separation time constraint).
Because of the linearity of the dynamics in max-plus algebra and the dependency on the initial condition, a convenient representation of the dynamics of the entire line is the max-plus algebra of polynomial matrices with their backshift operator $\gamma$ between the counted train departures $l$ on two neighbored nodes on the line.

%Absatz: odd/even

%============================================================================

In this section the train dynamics will be written in a max-plus polynomial matric form. For that, a number of matrices will be considered below.
Consider the following matrices:

\begin{equation}
   A_{1}(\gamma) = \begin{pmatrix}
                A_{00}(\gamma) & A_{01}(\gamma) & \varepsilon \\
                A_{10}(\gamma) & A_{11}(\gamma) & \varepsilon \\
                \varepsilon & \varepsilon & A_{22}(\gamma)
             \end{pmatrix},
\end{equation}
\begin{equation}
   A_{2}(\gamma) = \begin{pmatrix}
                A_{00}(\gamma) & \varepsilon & A_{02}(\gamma) \\
                \varepsilon & A_{11}(\gamma) & \varepsilon \\
                A_{20}(\gamma) & \varepsilon & A_{22}(\gamma)
             \end{pmatrix},
\end{equation}
\begin{equation}
   A{'}(\gamma) = \begin{pmatrix}
                A_{00}(\gamma) & A'_{01}(\gamma) & A'_{02}(\gamma) \\
                A'_{10}(\gamma) & A_{11}(\gamma) & \varepsilon \\
                A'_{20}(\gamma) & \varepsilon & A_{22}(\gamma)
             \end{pmatrix},
\end{equation}
\begin{equation}
   A{''}(\gamma) = \begin{pmatrix}
                A_{00}(\gamma) & A''_{01}(\gamma) &  A''_{02}(\gamma) \\
                A''_{10}(\gamma) & A_{11}(\gamma) & \varepsilon \\
                A''_{20}(\gamma) & \varepsilon & A_{22}(\gamma)
             \end{pmatrix}.
\end{equation}

The diagonal blocks of the matrices represent the dynamics out of the junction.
They have the following form, here for $n_0 = 6$:

\begin{equation}
A_{00}(\gamma) =  \begin{pmatrix}
\varepsilon & \gamma^{\bar{b}_{(0,1)}} \underline{s}_{(0,1)} &\varepsilon &\varepsilon &\varepsilon &\varepsilon &\varepsilon\\
\gamma^{b_{(0,1)}} \underline{t}_{(0,1)} & \varepsilon & \gamma^{\bar{b}_{(0,2)}} \underline{s}_{(0,2)} &\varepsilon &\varepsilon &\varepsilon &\varepsilon  \\
\varepsilon & \gamma^{b_{(0,2)}} \underline{t}_{(0,2)} & \varepsilon & \gamma^{\bar{b}_{(0,3)}} \underline{s}_{(0,3)} &\varepsilon &\varepsilon  &\varepsilon   \\
\varepsilon &\varepsilon   &\gamma^{b_{(0,3)}} \underline{t}_{(0,3)} & \varepsilon & \gamma^{\bar{b}_{(0,4)}} \underline{s}_{(0,4)} &\varepsilon &\varepsilon    \\
\varepsilon &\varepsilon &\varepsilon   &\gamma^{b_{(0,4)}} \underline{t}_{(0,4)} & \varepsilon & \gamma^{\bar{b}_{(0,5)}} \underline{s}_{(0,5)} &\varepsilon    \\
\varepsilon   &\varepsilon &\varepsilon &\varepsilon   &\gamma^{b_{(0,5)}} \underline{t}_{(0,5)} & \varepsilon  &\gamma^{\bar{b}_{(0,6)}} \underline{s}_{(0,6)} \\ 
\varepsilon &\varepsilon   &\varepsilon &\varepsilon &\varepsilon   &\gamma^{b_{(0,6)}} \underline{t}_{(0,6)} & \varepsilon   
                 \end{pmatrix}.
\end{equation}

The other blocks represent the dynamics on the junction.
The blocks above the diagonal blocks represent the dynamics on the convergence with regard to $\underline{t}$ and on the divergence with regard to $\underline{s}$. They have the following form, here for $n_0 = n_1 = 6$:

\begin{equation}
A_{01}(\gamma) =  \begin{pmatrix}
\varepsilon & \varepsilon &\varepsilon &\varepsilon &\varepsilon &\varepsilon & \gamma^{2{b}_{(1,n_1)}-1} \underline{t}_{(1,n_1)} \\
\varepsilon & \varepsilon & \varepsilon &\varepsilon &\varepsilon &\varepsilon &\varepsilon  \\
\varepsilon & \varepsilon & \varepsilon &\varepsilon &\varepsilon &\varepsilon &\varepsilon  \\
\varepsilon & \varepsilon & \varepsilon &\varepsilon &\varepsilon &\varepsilon &\varepsilon  \\
\varepsilon & \varepsilon & \varepsilon &\varepsilon &\varepsilon &\varepsilon &\varepsilon  \\
\varepsilon & \varepsilon & \varepsilon &\varepsilon &\varepsilon &\varepsilon &\varepsilon  \\
\gamma^{2\bar{b}_{(1,1)}-1} \underline{s}_{(1,1)}&\varepsilon   &\varepsilon &\varepsilon &\varepsilon  & \varepsilon & \varepsilon   
                 \end{pmatrix}.
\end{equation}

The blocks below the diagonal blocks represent the dynamics on the divergence with regard to $\underline{t}$ and on the convergence with regard to $\underline{s}$. They have the following form, here for $n_0 = n_1 = 6$:

\begin{equation}
A_{10}(\gamma) =  \begin{pmatrix}
\varepsilon & \varepsilon &\varepsilon &\varepsilon &\varepsilon &\varepsilon & \gamma^{2{b}_{(1,1)}+1} \underline{t}_{(1,1)} \\
\varepsilon & \varepsilon & \varepsilon &\varepsilon &\varepsilon &\varepsilon &\varepsilon  \\
\varepsilon & \varepsilon & \varepsilon &\varepsilon &\varepsilon &\varepsilon &\varepsilon  \\
\varepsilon & \varepsilon & \varepsilon &\varepsilon &\varepsilon &\varepsilon &\varepsilon  \\
\varepsilon & \varepsilon & \varepsilon &\varepsilon &\varepsilon &\varepsilon &\varepsilon  \\
\varepsilon & \varepsilon & \varepsilon &\varepsilon &\varepsilon &\varepsilon &\varepsilon  \\
\gamma^{2\bar{b}_{(1,n_1)}+1} \underline{s}_{(1,n_1)}&\varepsilon   &\varepsilon &\varepsilon &\varepsilon  & \varepsilon & \varepsilon   
                 \end{pmatrix}.
\end{equation}

%=======================
The matrices $A_1(\gamma), A_2(\gamma)$ represent at the junction the dynamics in case where $l$ is odd, respectively even.
The matrices $A'(\gamma), A''(\gamma)$ represent at the junction the dynamics in the case where $l$ is odd on the convergence node but even on the divergence node, respectively where $l$ is even on the convergence node but odd on the divergence node.

The blocks above the diagonal blocks have the following form, here for $n_0 = n_1 = 6$:

\begin{equation}
A'_{01}(\gamma) =  \begin{pmatrix}
\varepsilon & \varepsilon &\varepsilon &\varepsilon &\varepsilon &\varepsilon & \gamma^{2{b}_{(1,n_1)}-1} \underline{t}_{(1,n_1)} \\
\varepsilon & \varepsilon & \varepsilon &\varepsilon &\varepsilon &\varepsilon &\varepsilon  \\
\varepsilon & \varepsilon & \varepsilon &\varepsilon &\varepsilon &\varepsilon &\varepsilon  \\
\varepsilon & \varepsilon & \varepsilon &\varepsilon &\varepsilon &\varepsilon &\varepsilon  \\
\varepsilon & \varepsilon & \varepsilon &\varepsilon &\varepsilon &\varepsilon &\varepsilon  \\
\varepsilon & \varepsilon & \varepsilon &\varepsilon &\varepsilon &\varepsilon &\varepsilon  \\
\varepsilon &\varepsilon   &\varepsilon &\varepsilon &\varepsilon  & \varepsilon & \varepsilon   
                 \end{pmatrix}.
\end{equation}

\begin{equation}
A'_{02}(\gamma) =  \begin{pmatrix}
\varepsilon & \varepsilon &\varepsilon &\varepsilon &\varepsilon &\varepsilon &  \varepsilon\\
\varepsilon & \varepsilon & \varepsilon &\varepsilon &\varepsilon &\varepsilon &\varepsilon  \\
\varepsilon & \varepsilon & \varepsilon &\varepsilon &\varepsilon &\varepsilon &\varepsilon  \\
\varepsilon & \varepsilon & \varepsilon &\varepsilon &\varepsilon &\varepsilon &\varepsilon  \\
\varepsilon & \varepsilon & \varepsilon &\varepsilon &\varepsilon &\varepsilon &\varepsilon  \\
\varepsilon & \varepsilon & \varepsilon &\varepsilon &\varepsilon &\varepsilon &\varepsilon  \\
\gamma^{2\bar{b}_{(2,1)}} \underline{s}_{(2,1)}&\varepsilon   &\varepsilon &\varepsilon &\varepsilon  & \varepsilon & \varepsilon   
                 \end{pmatrix}.
\end{equation}

%=======================

Consider $n_0 = n_1 = n_2 = 6$. The associated graphs to the matrices $A_1(\gamma)$, $A_2(\gamma)$, $A'(\gamma)$ and $A''(\gamma)$ are depicted below.

\begin{figure}[]
\centering
%main node/.style={circle,draw,font=\sffamily\Large\bfseries}%
\tikzset{
    side by side/.style 2 args={
        line width=1.0pt,
        #1,
        postaction={
            clip,postaction={draw,#2}
        }
    }
}
\tikzset{
    side by side 2/.style 2 args={
        line width=1.5pt,
        #1,
        postaction={
            clip,postaction={draw,#2}
        }
    }
}
\definecolor{light-gray}{gray}{0.75}
\begin{tikzpicture}[->,auto, node distance=2.7 cm, state_0/.style={circle,draw},
state_1/.style={circle,draw},
state_2/.style={circle,draw}]
\node [state_0] (1) {0,0};
\node [state_0](2) [left of = 1] {0,1};
\node [state_0](3) [left of = 2] {0,2};

\node [state_0](4) [below left of = 3] {0,3};

\node [state_0](5) [below right of = 4] {0,4};
\node [state_0](6) [right of = 5] {0,5};
\node [state_0](7) [right of = 6] {0,6};

\node [state_1](8) [below right of=7] {1,1};
\node [state_1](9) [below right of=8] {1,2};
\node [state_1](10) [right of=9] {1,3};
\node [state_1](11) [above of=10] {1,4};
\node [state_1](12) [above left of=11] {1,5};
%\node [state_1](13) [above left of=12] {1,6};

\node [state_2](14) [above of=12] {2,1};
\node [state_2](15) [above right of=14] {2,2};
\node [state_2](16) [above of=15] {2,3};
\node [state_2](17) [left of=16] {2,4};
\node [state_2](18) [below left of=17] {2,5};
%\node [state_2](19) [below left of=18] {2,6};

\path[]
%t()
(1) edge [bend right, line width=1 pt] node [] {} (2)
(2) edge [bend right, line width=1 pt] node [] {} (3)
(3) edge [bend right, line width=1 pt] node [] {} (4)
(4) edge [bend right, line width=1 pt] node [] {} (5)
(5) edge [bend right, line width=1 pt] node [] {} (6)
(6) edge [bend right, line width=1 pt] node [] {} (7)
(7) edge [bend right, line width=1 pt] node [] {} (8)
(8) edge [bend right, line width=1 pt] node [] {} (9)
(9) edge [bend right, line width=1 pt] node [] {} (10)
(10) edge [bend right, line width=1 pt] node [] {} (11)
(11) edge [bend right, line width=1 pt] node [] {} (12)
%(12) edge [bend right, line width=1 pt] node [] {} (13)
(12) edge [bend right, line width=1 pt] node [] {} (1)

(14) edge [bend right, line width=1 pt] node [] {} (15)
(15) edge [bend right, line width=1 pt] node [] {} (16)
(16) edge [bend right, line width=1 pt] node [] {} (17)
(17) edge [bend right, line width=1 pt] node [] {} (18)
%(18) edge [bend right, line width=1 pt] node [] {} (19)

%s()
(2) edge [bend right, dashed, line width=1 pt] node [] {} (1)
(3) edge [bend right, dashed,line width=1 pt] node [] {} (2)
(4) edge [bend right, dashed,line width=1 pt] node [] {} (3)
(5) edge [bend right, dashed,line width=1 pt] node [] {} (4)
(6) edge [bend right, dashed,line width=1 pt] node [] {} (5)
(7) edge [bend right, dashed,line width=1 pt] node [] {} (6)
(8) edge [bend right, dashed,line width=1 pt] node [] {} (7)
(9) edge [bend right, dashed,line width=1 pt] node [] {} (8)
(10) edge [bend right,dashed, line width=1 pt] node [] {} (9)
(11) edge [bend right, dashed,line width=1 pt] node [] {} (10)
(12) edge [bend right, dashed,line width=1 pt] node [] {} (11)
%(13) edge [bend right, dashed,line width=1 pt] node [] {} (12)
(1) edge [bend right, dashed,line width=1 pt] node [] {} (12)

(15) edge [bend right, dashed,line width=1 pt] node [] {} (14)
(16) edge [bend right, dashed,line width=1 pt] node [] {} (15)
(17) edge [bend right, dashed,line width=1 pt] node [] {} (16)
(18) edge [bend right, dashed,line width=1 pt] node [] {} (17);
%(19) edge [bend right, dashed,line width=1 pt] node [] {} (18);

\begin{customlegend}[legend cell align=left,
legend entries={ 
Weight $=\underline{t}_{(u,j)}$,% Duration $=\gamma^{b_{(u,j)}}$,
Weight $=\underline{s}_{(u,j)}$,% Duration $=\gamma^{\bar{b}_{(u,j)}}$,
},
legend style={at={(1,5.5)},font=\footnotesize}]
	\addlegendimage{-stealth,opacity=1}
	\addlegendimage{-stealth,dashed,opacity=1}
\end{customlegend}
\end{tikzpicture}
\caption{The associated graph $\mathcal G(A_{1}(\gamma))$ for $n_0=n_1=n_2=6$.}
\label{fig-G(A1)}
\end{figure}

%The diagonal blocks representing departures from the first nodes of branch~2 and the central part, $A^{(2)}_{02}(\gamma)$ and $A^{(2)}_{20}(\gamma)$ exist only in $A^{(2)}(\gamma)$.
%Here, the max-plus multiplication $A^{(2)}(\gamma) \otimes \delta^k$, in the contrary, increments the dynamics by one \textit{k} step, with a departure from the first node of branch~2 (by a train coming from the central part) and a departure from the first node of the central part (but here by a train coming from branch~2). As above, trains are also departing from all other nodes, except from the first and the last node of branch~1, see the associated graph $\mathcal{G}(A^{(2)}(\gamma))$ in Fig.~\ref{fig-G(A2)}.

%A^(2)
\begin{figure}[]
\centering
%main node/.style={circle,draw,font=\sffamily\Large\bfseries}%
\tikzset{
    side by side/.style 2 args={
        line width=1.0pt,
        #1,
        postaction={
            clip,postaction={draw,#2}
        }
    }
}
\tikzset{
    side by side 2/.style 2 args={
        line width=1.5pt,
        #1,
        postaction={
            clip,postaction={draw,#2}
        }
    }
}
\definecolor{light-gray}{gray}{0.75}
\begin{tikzpicture}[->,auto, node distance=2.7 cm, state_0/.style={circle,draw},
state_1/.style={circle,draw},
state_2/.style={circle,draw}]
\node [state_0] (1) {0,0};
\node [state_0](2) [left of = 1] {0,1};
\node [state_0](3) [left of = 2] {0,2};

\node [state_0](4) [below left of = 3] {0,3};

\node [state_0](5) [below right of = 4] {0,4};
\node [state_0](6) [right of = 5] {0,5};
\node [state_0](7) [right of = 6] {0,6};

\node [state_1](8) [below right of=7] {1,1};
\node [state_1](9) [below right of=8] {1,2};
\node [state_1](10) [right of=9] {1,3};
\node [state_1](11) [above of=10] {1,4};
\node [state_1](12) [above left of=11] {1,5};
%\node [state_1](13) [above left of=12] {1,6};

\node [state_2](14) [above of=12] {2,1};
\node [state_2](15) [above right of=14] {2,2};
\node [state_2](16) [above of=15] {2,3};
\node [state_2](17) [left of=16] {2,4};
\node [state_2](18) [below left of=17] {2,5};
%\node [state_2](19) [below left of=18] {2,6};

\path[]
%t()
(1) edge [bend right, line width=1 pt] node [] {} (2)
(2) edge [bend right, line width=1 pt] node [] {} (3)
(3) edge [bend right, line width=1 pt] node [] {} (4)
(4) edge [bend right, line width=1 pt] node [] {} (5)
(5) edge [bend right, line width=1 pt] node [] {} (6)
(6) edge [bend right, line width=1 pt] node [] {} (7)
(7) edge [bend right, line width=1 pt] node [] {} (14)
(8) edge [bend right, line width=1 pt] node [] {} (9)
(9) edge [bend right, line width=1 pt] node [] {} (10)
(10) edge [bend right, line width=1 pt] node [] {} (11)
(11) edge [bend right, line width=1 pt] node [] {} (12)
%(12) edge [bend right, line width=1 pt] node [] {} (13)
(18) edge [bend right, line width=1 pt] node [] {} (1)

(14) edge [bend right, line width=1 pt] node [] {} (15)
(15) edge [bend right, line width=1 pt] node [] {} (16)
(16) edge [bend right, line width=1 pt] node [] {} (17)
(17) edge [bend right, line width=1 pt] node [] {} (18)
%(18) edge [bend right, line width=1 pt] node [] {} (19)

%s()
(2) edge [bend right, dashed, line width=1 pt] node [] {} (1)
(3) edge [bend right, dashed,line width=1 pt] node [] {} (2)
(4) edge [bend right, dashed,line width=1 pt] node [] {} (3)
(5) edge [bend right, dashed,line width=1 pt] node [] {} (4)
(6) edge [bend right, dashed,line width=1 pt] node [] {} (5)
(7) edge [bend right, dashed,line width=1 pt] node [] {} (6)
(14) edge [bend right, dashed,line width=1 pt] node [] {} (7)
(9) edge [bend right, dashed,line width=1 pt] node [] {} (8)
(10) edge [bend right,dashed, line width=1 pt] node [] {} (9)
(11) edge [bend right, dashed,line width=1 pt] node [] {} (10)
(12) edge [bend right, dashed,line width=1 pt] node [] {} (11)
%(13) edge [bend right, dashed,line width=1 pt] node [] {} (12)
(1) edge [bend right, dashed,line width=1 pt] node [] {} (18)

(15) edge [bend right, dashed,line width=1 pt] node [] {} (14)
(16) edge [bend right, dashed,line width=1 pt] node [] {} (15)
(17) edge [bend right, dashed,line width=1 pt] node [] {} (16)
(18) edge [bend right, dashed,line width=1 pt] node [] {} (17);
%(19) edge [bend right, dashed,line width=1 pt] node [] {} (18);

\begin{customlegend}[legend cell align=left,
legend entries={ 
Weight $=\underline{t}_{(u,j)}$,% Duration $=\gamma^{b_{(u,j)}}$,
Weight $=\underline{s}_{(u,j)}$,% Duration $=\gamma^{\bar{b}_{(u,j)}}$,
},
legend style={at={(1,5.5)},font=\footnotesize}]
	\addlegendimage{-stealth,opacity=1}
	\addlegendimage{-stealth,dashed,opacity=1}
\end{customlegend}
\end{tikzpicture}
\caption{The associated graph $\mathcal G(A_{2}(\gamma))$ for $n_0=n_1=n_2=6$.}
\label{fig-G(A2)}
\end{figure}

%===============================================================

%A^(')_1 
\begin{figure}[]
\centering
%main node/.style={circle,draw,font=\sffamily\Large\bfseries}%
\tikzset{
    side by side/.style 2 args={
        line width=1.0pt,
        #1,
        postaction={
            clip,postaction={draw,#2}
        }
    }
}
\tikzset{
    side by side 2/.style 2 args={
        line width=1.5pt,
        #1,
        postaction={
            clip,postaction={draw,#2}
        }
    }
}
\definecolor{light-gray}{gray}{0.75}
\begin{tikzpicture}[->,auto, node distance=2.7 cm, state_0/.style={circle,draw},
state_1/.style={circle,draw},
state_2/.style={circle,draw}]
\node [state_0] (1) {0,0};
\node [state_0](2) [left of = 1] {0,1};
\node [state_0](3) [left of = 2] {0,2};

\node [state_0](4) [below left of = 3] {0,3};

\node [state_0](5) [below right of = 4] {0,4};
\node [state_0](6) [right of = 5] {0,5};
\node [state_0](7) [right of = 6] {0,6};

\node [state_1](8) [below right of=7] {1,1};
\node [state_1](9) [below right of=8] {1,2};
\node [state_1](10) [right of=9] {1,3};
\node [state_1](11) [above of=10] {1,4};
\node [state_1](12) [above left of=11] {1,5};
%\node [state_1](13) [above left of=12] {1,6};

\node [state_2](14) [above of=12] {2,1};
\node [state_2](15) [above right of=14] {2,2};
\node [state_2](16) [above of=15] {2,3};
\node [state_2](17) [left of=16] {2,4};
\node [state_2](18) [below left of=17] {2,5};
%\node [state_2](19) [below left of=18] {2,6};

\path[]
%t()
(1) edge [bend right, line width=1 pt] node [] {} (2)
(2) edge [bend right, line width=1 pt] node [] {} (3)
(3) edge [bend right, line width=1 pt] node [] {} (4)
(4) edge [bend right, line width=1 pt] node [] {} (5)
(5) edge [bend right, line width=1 pt] node [] {} (6)
(6) edge [bend right, line width=1 pt] node [] {} (7)
(7) edge [bend right, line width=1 pt] node [] {} (14)
(8) edge [bend right, line width=1 pt] node [] {} (9)
(9) edge [bend right, line width=1 pt] node [] {} (10)
(10) edge [bend right, line width=1 pt] node [] {} (11)
(11) edge [bend right, line width=1 pt] node [] {} (12)
%(12) edge [bend right, line width=1 pt] node [] {} (13)
(12) edge [bend right, line width=1 pt] node [] {} (1)

(14) edge [bend right, line width=1 pt] node [] {} (15)
(15) edge [bend right, line width=1 pt] node [] {} (16)
(16) edge [bend right, line width=1 pt] node [] {} (17)
(17) edge [bend right, line width=1 pt] node [] {} (18)
%(18) edge [bend right, line width=1 pt] node [] {} (19)

%s()
(2) edge [bend right, dashed, line width=1 pt] node [] {} (1)
(3) edge [bend right, dashed,line width=1 pt] node [] {} (2)
(4) edge [bend right, dashed,line width=1 pt] node [] {} (3)
(5) edge [bend right, dashed,line width=1 pt] node [] {} (4)
(6) edge [bend right, dashed,line width=1 pt] node [] {} (5)
(7) edge [bend right, dashed,line width=1 pt] node [] {} (6)
(14) edge [bend right, dashed,line width=1 pt] node [] {} (7)
(9) edge [bend right, dashed,line width=1 pt] node [] {} (8)
(10) edge [bend right,dashed, line width=1 pt] node [] {} (9)
(11) edge [bend right, dashed,line width=1 pt] node [] {} (10)
(12) edge [bend right, dashed,line width=1 pt] node [] {} (11)
%(13) edge [bend right, dashed,line width=1 pt] node [] {} (12)
(1) edge [bend right, dashed,line width=1 pt] node [] {} (12)

(15) edge [bend right, dashed,line width=1 pt] node [] {} (14)
(16) edge [bend right, dashed,line width=1 pt] node [] {} (15)
(17) edge [bend right, dashed,line width=1 pt] node [] {} (16)
(18) edge [bend right, dashed,line width=1 pt] node [] {} (17);
%(19) edge [bend right, dashed,line width=1 pt] node [] {} (18);

\begin{customlegend}[legend cell align=left,
legend entries={ 
Weight $=\underline{t}_{(u,j)}$,% Duration $=\gamma^{b_{(u,j)}}$,
Weight $=\underline{s}_{(u,j)}$,% Duration $=\gamma^{\bar{b}_{(u,j)}}$,
},
legend style={at={(1,5.5)},font=\footnotesize}]
	\addlegendimage{-stealth,opacity=1}
	\addlegendimage{-stealth,dashed,opacity=1}
\end{customlegend}
\end{tikzpicture}
\caption{The associated graph $\mathcal G(A{'}(\gamma))$ for $n_0=n_1=n_2=6$.}
\label{fig-G(A1')}
\end{figure}

%The diagonal blocks representing departures from the first nodes of branch~2 and the central part, $A^{(2)}_{02}(\gamma)$ and $A^{(2)}_{20}(\gamma)$ exist only in $A^{(2)}(\gamma)$.
%Here, the max-plus multiplication $A^{(2)}(\gamma) \otimes \delta^k$, in the contrary, increments the dynamics by one \textit{k} step, with a departure from the first node of branch~2 (by a train coming from the central part) and a departure from the first node of the central part (but here by a train coming from branch~2). As above, trains are also departing from all other nodes, except from the first and the last node of branch~1, see the associated graph $\mathcal{G}(A^{(2)}(\gamma))$ in Fig.~\ref{fig-G(A2)}.

%A''
\begin{figure}[]
\centering
%main node/.style={circle,draw,font=\sffamily\Large\bfseries}%
\tikzset{
    side by side/.style 2 args={
        line width=1.0pt,
        #1,
        postaction={
            clip,postaction={draw,#2}
        }
    }
}
\tikzset{
    side by side 2/.style 2 args={
        line width=1.5pt,
        #1,
        postaction={
            clip,postaction={draw,#2}
        }
    }
}
\definecolor{light-gray}{gray}{0.75}
\begin{tikzpicture}[->,auto, node distance=2.7 cm, state_0/.style={circle,draw},
state_1/.style={circle,draw},
state_2/.style={circle,draw}]
\node [state_0] (1) {0,0};
\node [state_0](2) [left of = 1] {0,1};
\node [state_0](3) [left of = 2] {0,2};

\node [state_0](4) [below left of = 3] {0,3};

\node [state_0](5) [below right of = 4] {0,4};
\node [state_0](6) [right of = 5] {0,5};
\node [state_0](7) [right of = 6] {0,6};

\node [state_1](8) [below right of=7] {1,1};
\node [state_1](9) [below right of=8] {1,2};
\node [state_1](10) [right of=9] {1,3};
\node [state_1](11) [above of=10] {1,4};
\node [state_1](12) [above left of=11] {1,5};
%\node [state_1](13) [above left of=12] {1,6};

\node [state_2](14) [above of=12] {2,1};
\node [state_2](15) [above right of=14] {2,2};
\node [state_2](16) [above of=15] {2,3};
\node [state_2](17) [left of=16] {2,4};
\node [state_2](18) [below left of=17] {2,5};
%\node [state_2](19) [below left of=18] {2,6};

\path[]
%t()
(1) edge [bend right, line width=1 pt] node [] {} (2)
(2) edge [bend right, line width=1 pt] node [] {} (3)
(3) edge [bend right, line width=1 pt] node [] {} (4)
(4) edge [bend right, line width=1 pt] node [] {} (5)
(5) edge [bend right, line width=1 pt] node [] {} (6)
(6) edge [bend right, line width=1 pt] node [] {} (7)
(7) edge [bend right, line width=1 pt] node [] {} (8)
(8) edge [bend right, line width=1 pt] node [] {} (9)
(9) edge [bend right, line width=1 pt] node [] {} (10)
(10) edge [bend right, line width=1 pt] node [] {} (11)
(11) edge [bend right, line width=1 pt] node [] {} (12)
%(12) edge [bend right, line width=1 pt] node [] {} (13)
(18) edge [bend right, line width=1 pt] node [] {} (1)

(14) edge [bend right, line width=1 pt] node [] {} (15)
(15) edge [bend right, line width=1 pt] node [] {} (16)
(16) edge [bend right, line width=1 pt] node [] {} (17)
(17) edge [bend right, line width=1 pt] node [] {} (18)
%(18) edge [bend right, line width=1 pt] node [] {} (19)

%s()
(2) edge [bend right, dashed, line width=1 pt] node [] {} (1)
(3) edge [bend right, dashed,line width=1 pt] node [] {} (2)
(4) edge [bend right, dashed,line width=1 pt] node [] {} (3)
(5) edge [bend right, dashed,line width=1 pt] node [] {} (4)
(6) edge [bend right, dashed,line width=1 pt] node [] {} (5)
(7) edge [bend right, dashed,line width=1 pt] node [] {} (6)
(8) edge [bend right, dashed,line width=1 pt] node [] {} (7)
(9) edge [bend right, dashed,line width=1 pt] node [] {} (8)
(10) edge [bend right,dashed, line width=1 pt] node [] {} (9)
(11) edge [bend right, dashed,line width=1 pt] node [] {} (10)
(12) edge [bend right, dashed,line width=1 pt] node [] {} (11)
%(13) edge [bend right, dashed,line width=1 pt] node [] {} (12)
(1) edge [bend right, dashed,line width=1 pt] node [] {} (18)

(15) edge [bend right, dashed,line width=1 pt] node [] {} (14)
(16) edge [bend right, dashed,line width=1 pt] node [] {} (15)
(17) edge [bend right, dashed,line width=1 pt] node [] {} (16)
(18) edge [bend right, dashed,line width=1 pt] node [] {} (17);
%(19) edge [bend right, dashed,line width=1 pt] node [] {} (18);

\begin{customlegend}[legend cell align=left,
legend entries={ 
Weight $=\underline{t}_{(u,j)}$,% Duration $=\gamma^{b_{(u,j)}}$,
Weight $=\underline{s}_{(u,j)}$,% Duration $=\gamma^{\bar{b}_{(u,j)}}$,
},
legend style={at={(1,5.5)},font=\footnotesize}]
	\addlegendimage{-stealth,opacity=1}
	\addlegendimage{-stealth,dashed,opacity=1}
\end{customlegend}
\end{tikzpicture}
\caption{The associated graph $\mathcal G(A{''}(\gamma))$ for $n_0=n_1=n_2=6$.}
\label{fig-G(A2')}
\end{figure}

%The diagonal blocks of the matrices above are given as follows 
%(here $u = 0$ and $p\in\{1,2\}$, for $n_u=6$ as an example):
%
%\begin{equation}
%A^{(p)}_{00}(\gamma) =  \begin{pmatrix}
%                    \varepsilon & \gamma^{\bar{b}_{(0,2)}} \underline{s}_{(0,2)} & \ldots & \varepsilon \\
%                    \gamma^{b_{(0,2)}} \underline{t}_{(0,2)} & \varepsilon & \ddots & \varepsilon  \\
%                    \varepsilon & \ddots & \ddots & \gamma^{\bar{b}_{(0,6)}} \underline{s}_{(0,6)} \\
%                    \varepsilon & \varepsilon & \gamma^{b_{(0,6)}} \underline{t}_{(0,6)} & \varepsilon
%                 \end{pmatrix}.
%\end{equation}
%
%To have an idea of the other blocks matrix $A^{(1)}_{01}(\gamma)$ is given here:
%
%\begin{equation}
%A^{(1)}_{01}(\gamma) = \begin{pmatrix}
%                    \varepsilon & \varepsilon & \cdots &\gamma^{2b_{(0,1)}-1} \underline{t}_{(0,1)} \\
%                    \varepsilon & \varepsilon & \cdots & \varepsilon  \\
%                    \vdots &  & \ddots & \vdots \\
%                    \gamma^{2\bar{b}_{(1,0)}-1} \underline{s}_{(1,0)} & \varepsilon & \cdots & \varepsilon
%                 \end{pmatrix}.
%\end{equation}
%
%Keep in mind that by the changing of variables done above, the number of $k^{th}$ departures on the branches
%has been doubled. Most importantly, this means that there is one asymptotic average growth rate for the dynamics on the central part and the branches.

%==============================================================
\newpage
Let the matrix $B(\gamma)$ be defined as follows ($n_0$, $n_1$, $n_2$ are assumed to be
even\footnote{The assumption $n_0$, $n_1$, $n_2$ is even is necessary for the proof of Proposition~\ref{propos1}. It does not affect the applicability of the model.}).
\begin{itemize}
\item If ($n_0 \mod 4 = 0$ and $\sum_j b_{(0,j)}$ is even) or ($n_0 \mod 4 = 2$ and $\sum_j b_{(0,j)}$ is odd), then $B(\gamma) = A_1(\gamma) \otimes A_2(\gamma) \otimes A_2(\gamma) \otimes A_1(\gamma)$.
\item If ($n_0 \mod 4 = 0$ and $\sum_j b_{(0,j)}$ is odd) or ($n_0 \mod 4 = 2$ and $\sum_j b_{(0,j)}$ is even), then $B(\gamma) = A{'}(\gamma) \otimes A{''}(\gamma) \otimes A{''}(\gamma) \otimes A{'}(\gamma)$.
%\item If $n_0 \mod 4 = 0$ and $\sum_j b_{(0,j)}$ is even, then ...
%\item If $n_0 \mod 4 = 2$ and $\sum_j b_{(0,j)}$ is even, then ...
\end{itemize}

%\begin{itemize}
%\item {\color{green} Vérifié pour à partir de la convergence (impair) dans le sens du trafic,}
%\item {\color{green} Vérifié pour à partir de la convergence (pair) dans le sens du trafic,}
%\item {\color{green} Vérifié pour à partir de la divergence (impair), on applique $A''(\gamma)$, dans le contre sens du trafic,}
%\item {\color{green} Vérifié pour à partir de la divergence (pair), on applique $A'(\gamma)$, dans le contre sens du trafic,}
%\item Remarque: On garde la "bonne" association à la convergence: si on y est impair, on applique $A_1,A'$.
%\end{itemize}
%
\begin{proposition}\label{propos1}
The train dynamics~{(\ref{neq-c1})-(\ref{neq-conv6})} are equivalent to the following system of max-plus polynomial matrices:
\begin{equation}
\delta = B(\gamma) \otimes \delta.
\end{equation}
\end{proposition}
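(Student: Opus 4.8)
The plan is to read the max-plus polynomial matrix form directly off the scalar inequalities, and then to absorb the non-autonomous character of the recursion --- which comes both from the one-over-two rule at the junction and from the doubling of the departure index on the branches --- into a single autonomous equation by composing one full period of one-step transitions. The routine part is the transcription of the blocks out of the junction: for each node~$(u,j)$ equation~(\ref{eq-mp1}) says that $\delta_{(u,j)}$ is the maximum of the run-and-dwell term and the safe-separation term, so the diagonal blocks $A_{00},A_{11},A_{22}$ are exactly the bidiagonal matrices whose subdiagonal entries $\gamma^{b_{(u,j)}}\underline t_{(u,j)}$ encode~(\ref{neq-c1}),~(\ref{neq-c3}) and whose superdiagonal entries $\gamma^{\bar b_{(u,j+1)}}\underline s_{(u,j+1)}$ encode~(\ref{neq-c2}),~(\ref{neq-c4}); on the branches the $\gamma$-exponents are doubled because of the substitution $\delta^{2k}_{(u,j)}=d^k_{(u,j)}$. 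This step is the direct analogue of the linear-line construction of~\cite{FNHL17a,FNHL16}, so I would simply state it.

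Next I would treat the junction blocks. Each off-diagonal block carries a single non-$\varepsilon$ entry: $A_{01}$ links $(1,n_1-1)$ to $(0,0)$ with $\gamma$-exponent $2b_{(1,n_1)}-1$, which is precisely the index offset in~(\ref{neq-conv1}) after the change of variables, $A_{10}$ links $(0,n_0)$ to $(1,1)$ with exponent $2b_{(1,1)}+1$ from~(\ref{neq-div3}), and similarly for the $\underline s$-links in~(\ref{neq-div2}) and~(\ref{neq-conv4}); the $\pm 1$ is the mismatch between the single index used on the central part and the doubled index on the branches. The four matrices $A_1,A_2,A',A''$ then correspond exactly to the four admissible configurations of the junction allowed by the one-over-two rule: $A_1$ couples the central part to branch~1 only (branch~2 block-decoupled), $A_2$ to branch~2 only, and $A',A''$ combine a branch-1 link at one of the two junction nodes with a branch-2 link at the other. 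I would check each block entry against~(\ref{neq-div2})--(\ref{neq-conv6}).

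The heart of the proof is then the period-unfolding. Because the one-over-two rule makes the recursion at each junction node alternate with period two, and because the branch variables are defined only on the even-index sublattice (so the branch constraints~(\ref{neq-c3})--(\ref{neq-conv6}) fire only at even steps), the pattern of one-step transitions repeats after four steps; composing the corresponding four one-step matrices in order produces an autonomous polynomial matrix, and $\delta=B(\gamma)\otimes\delta$ is precisely this period-unfolded form of~(\ref{eq-mp1}), with the converse implication obtained by projecting a solution back onto the individual steps. The two cases in the definition of $B(\gamma)$ reflect the relative phase of the two junction nodes: a departure counted at $(0,0)$ reappears at $(0,n_0)$ with its index shifted by $\sum_j b_{(0,j)}$ along the central chain, so whether the convergence and the divergence are ``in phase'' or ``out of phase'' is decided by the parities of $\sum_j b_{(0,j)}$ and of $n_0$, which is exactly the case split $A_1\otimes A_2\otimes A_2\otimes A_1$ versus $A'\otimes A''\otimes A''\otimes A'$.

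I expect the parity bookkeeping of this last step to be the main obstacle: one must prove that the sequence of one-step matrices is genuinely four-periodic, identify which of the four matrices sits in each slot as a function of $n_0\bmod 4$ and $\sum_j b_{(0,j)}\bmod 2$, and verify that the four-fold product closes up and does not depend on the chosen starting step~$l$. This is also where the hypothesis that $n_0,n_1,n_2$ are even enters: it makes the even-index sublattices on the branches commensurate with the period-two alternation at the junction, so that the four-step window is a true period and $B(\gamma)$ is well defined independently of the starting point.
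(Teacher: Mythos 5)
Your proposal follows essentially the same route as the paper's own sketch: transcribe the constraints into the block polynomial matrices, encode the odd/even junction alternation in the four matrices $A_1,A_2,A',A''$, and then verify by parity bookkeeping along every path joining the nodes $(0,0)$ and $(0,n_0)$ (through the central part and through the branches) that the accumulated shift in $l$ selects the correct matrix in the four-fold product, with the case split on $n_0\bmod 4$ and the parity of $\sum_j b_{(0,j)}$ and with the evenness of $n_1,n_2$ entering exactly where you say it does. The only slip is that the off-diagonal blocks $A_{01},A_{10}$ each carry \emph{two} non-$\varepsilon$ entries (one travel-time link and one safe-separation link), not one; otherwise your plan matches the paper's argument, which likewise leaves the path-by-path verification as the substance of the proof.
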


\noindent
\textit{Sketch of the proof.} 
Proposition~\ref{propos1} states that the train dynamics, taken on four steps ($B(\gamma)$ is a product of four matrices, see above), are linear
in the max-plus algebra of polynomial matrices.
The train dynamics cannot be linear in one step, because some of the equations of the dynamics depend on whether $l$ is odd or even. This is the case for equations~(\ref{neq-div2}),~(\ref{neq-div3}),~(\ref{neq-div5}) at the divergence and~(\ref{neq-conv1}),~(\ref{neq-conv4}),~(\ref{neq-conv6}) at the convergence.

Moreover, the shifts in $l$ in one step of the dynamics are not the same for all the equations of the dynamics. They can be $0$, $1$ or $2$ (depending on $b_{(u,j)}, \bar{b}_{(u,j)}$ on the central part, respectively $2b_{(u,j)}, 2\bar{b}_{(u,j)}$ on the branches). %are not the same in one step of the dynamics (the shifts are $0$ or $1$);
Otherwise the train dynamics
would be linear in two steps.

In the following, a sketch of the proof is given. Consider the train traffic dynamics of a line with a junction, see equations~{(\ref{neq-c1})-(\ref{neq-conv6})}. Of these, only the following constraints distinguish $l$ is odd from $l$ is even (the constraints on the junction):
% for which two cases are distinguished ($l$ odd and $l$ even), dependently on the train departure index counter $l$.
\begin{itemize}
\item The safe separation constraint of the last node of the central part (divergence), equation~{(\ref{neq-div2}), represented by
two arcs: $(1,1) \to (0,n_0)$ and $(2,1) \to (0,n_0)$:
\begin{equation*}
  \delta^{{l}}_{(0,n_0)} \geq \begin{cases}
                      \delta^{{l}+1-2\bar{b}_{(1,1)}}_{(1,1)} + \underline{s}_{(1,1)} & \forall {l = 2p-1, \text{ with } p \in \mathbb{N}}, \\ ~~ \\
                      \delta^{{l}-2\bar{b}_{(2,1)}}_{(2,1)} + \underline{s}_{(2,1)} & \forall {l \in 2p, \text{ with } p \in \mathbb{N}}, \\
                   \end{cases}
\end{equation*}
\item The travel time constraint of the first node of the central part (convergence), equation~(\ref{neq-conv1})}, represented by two
arcs: $(1,n_1-1) \to (0,0)$ and $(2,n_2-1) \to (0,0)$:
\begin{equation*} 
  \delta^{{{l}}}_{(0,0)} \geq \begin{cases}
                      \delta^{{({l}+1)-2b_{(1,n_1)}}}_{(1,n_1-1)} + \underline{t}_{(1,n_1)} & \forall {l = 2p-1, \text{ with } p \in \mathbb{N}}, \\ ~~ \\
                      \delta^{{{l}-2b_{(2,n_2)}}}_{(2,n_2-1)} + \underline{t}_{(2,n_2)} & \forall {l = 2p, \text{ with } p \in \mathbb{N}}, \\
                   \end{cases}
\end{equation*}
\item The pair of travel time constraints of the first nodes of the branches (divergence), equations~(\ref{neq-div3}) and~(\ref{neq-div5}),
represented by two arcs: $(0,n_0) \to (1,1)$ and $(0,n_0) \to (2,1)$:
\begin{align*}
   \delta^{{l}}_{(1,1)} \geq \delta^{{l}-1-2b_{(1,1)}}_{(0,n_0)} + \underline{t}_{(1,1)}, \; \forall {l = 2p, \text{ with } p \in \mathbb{N}}, \\
   \delta^{{l}}_{(2,1)} \geq \delta^{{l}-2b_{(2,1)}}_{(0,n_0)} + \underline{t}_{(2,1)}, \; \forall {l = 2p, \text{ with } p \in \mathbb{N}}, 
\end{align*}
\item The pair of safe separation constraints of the last nodes of the branches (convergence), equations~(\ref{neq-conv4})
and~(\ref{neq-conv6}), represented by two arcs: $(0,0) \to (1,n_1-1)$ and $(0,0) \to (2,n_2-1)$:
\begin{align*}
   \delta^{{{l}}}_{(1,n_1-1)} \geq \delta^{{{l}-1-2\bar{b}_{(1,n_1)}}}_{(0,0)} + \underline{s}_{(1,n_1)}, \; \forall {l = 2p, \text{ with } p \in \mathbb{N}}, \\
   \delta^{{{l}}}_{(2,n_2-1)} \geq \delta^{{{l-2\bar{b}_{(2,n_2)}}}}_{(0,0)} + \underline{s}_{(2,n_2)}, \; \forall {l = 2p, \text{ with } p \in \mathbb{N}}, 
\end{align*}
\end{itemize}

To correctly represent the dynamics on odd and even $l$, four matrices $A_1(\gamma)$, $A_2(\gamma)$, $A'(\gamma)$, $A''(\gamma)$ have been defined above.
Notice that:
\begin{itemize}
\item The arc corresponding to the first line of equation~(\ref{neq-div2}) ($l$ is odd) exists only in matrix $A_1(\gamma)$ and $A'(\gamma)$ and the arc corresponding to the second line of equation~(\ref{neq-div2}) ($l$ is even) exists only in matrix $A_2(\gamma)$ and $A''(\gamma)$ .
\item The arc corresponding to the first line of equation~(\ref{neq-conv1}) ($l$ is odd) exists only in matrix $A_1(\gamma)$ and $A''(\gamma)$ and the arc corresponding to the second line of equation~(\ref{neq-conv1}) ($l$ is even) exists only in matrix $A_2(\gamma)$ and $A'(\gamma)$.
\item The arc corresponding to equation~(\ref{neq-div3}) ($l$ is odd) exists only in matrix $A_1(\gamma)$ and $A''(\gamma)$ and the arc corresponding to equation~(\ref{neq-div5}) ($l$ is even) exists only in matrix $A_2(\gamma)$ and $A'(\gamma)$ .
\item The arc corresponding to equation~(\ref{neq-conv4}) ($l$ is odd) exists only in matrix $A_1(\gamma)$ and $A'(\gamma)$ and the arc corresponding to equation~(\ref{neq-conv6}) ($l$ is even) exists only in matrix $A_2(\gamma)$ and $A''(\gamma)$ .
\end{itemize}

The nodes for which the application of the correct matrix has to be verified, are the nodes of the divergence and convergence.
More precisely, in order that the train dynamics of the entire line (with the junction) are equivalent to the matrix form of 
Proposition~\ref{propos1}, that is $\delta = B(\gamma) \otimes \delta$, it has to be verified that by iterating the matrix form the following rules are satisfied:
\begin{itemize}
\item every time $A_1(\gamma)$ is applied, $l$ is odd on the convergence node $(0,0)$ and on the divergence node $(0,n_0)$,
\item every time $A_2(\gamma)$ is applied, $l$ is even on the convergence node $(0,0)$ and on the divergence node $(0,n_0)$,
\item every time $A'(\gamma)$ is applied, $l$ is odd on the convergence node $(0,0)$ but even on the divergence node $(0,n_0)$, and
\item every time $A''(\gamma)$ is applied, $l$ is even on the convergence node $(0,0)$ but odd on the divergence node $(0,n_0)$.
\end{itemize}

For that, starting from the node $(0,0)$ or from the node $(0,n_0)$, 
and iterating the matrix $B(\gamma)$ over a number of steps, it has to be checked that every time the nodes $(0,0)$ 
or $(0,n_0)$ are reached, the parity of $l$ (odd or even) matches with the application of the correct matrix ($A_1(\gamma), A_2(\gamma), A'(\gamma), A''(\gamma)$) as defined in the rules above.

%Starting
Let us start from any node and iterate the dynamics up to another node by applying the matrices step-by-step. A one step matrix multiplication (with $A_1(\gamma)$, $A_2(\gamma)$, $A'(\gamma)$, $A''(\gamma)$) iterates the dynamics to the next node.
Then the change in parity of $l$ between origin and destination nodes depends on the sum over the shifts in $l$ along the path to the destination node.
%one node and iterating up to another node (by applying $B(\gamma)$, that is the matrices $A_1, A_2$ or $A', A''$ in the order 
%given by the definition of $B(\gamma)$), the parity of $l$ at the destination node will depend on the parity of $l$ at the origin node, and
%on the sum of all the shifts over the path from the origin to the destination node. 
All the paths between the nodes where the dynamics depend on the parity of $l$ have to be checked:
%
%
%it has to be checked that the sum of the shifts on $l$ over all the paths from $(0,0)$ or $(0,n_0)$ to $(0,0)$ or $(0,n_0)$ (using the
%arcs of the graphs of the matrices $A_1, A_2$ or $A', A''$ in the order given by the definition of $B(\gamma)$) matches with the number of steps of the dynamics (that is the number of application of every matrix $A_1, A_2, A'$ or $A''$).
%In other terms, 
\begin{itemize}
\item all the paths linking the node $(0,0)$ with itself,
\item all the paths linking the node $(0,n_0)$ with itself, and
\item all the paths linking the nodes $(0,0)$ and $(0,n_0)$.
\end{itemize}
For theses paths, the shift in $l$ (depending on $b_{(u,j)}, \bar{b}_{(u,j)}$ on the central part, respectively $2b_{(u,j)}, 2\bar{b}_{(u,j)}$ on the branches) between origin and destination nodes has to guarantee the application of the correct matrix at the destination node accordingly to the rules defined above.
%must have a number of arcs and a shift in $l$ such that they guarantee that on nodes
%$(0,0)$ and $(0,n_0)$, when applying $A_1(\gamma)$, $l$ is odd on $(0,0)$ and $(0,n_0)$,
%when applying $A_2(\gamma)$, $l$ is even on $(0,0)$ and $(0,n_0)$, when applying 
%$A'(\gamma)$,  $l$ is odd on $(0,0)$ and even on $(0,n_0)$, and when applying 
%$A''(\gamma)$,  $l$ is even on $(0,0)$ and odd $(0,n_0)$.

% Let us take the case where $B(\gamma) = A_1(\gamma) \otimes A_1(\gamma) \otimes A_2(\gamma) \otimes A_2(\gamma)$.
% Let us take a path between an origin node $(0,0)$ or $(0,n_0)$ to a destination node $(0,0)$ or $(0,n_0)$.
% We denote by $NS$ the number of arcs of the path (i.e. the number of applications of the matrices $A_1$,or  $A_2$).
% We denote by $SH$ the total shift on $l$ aver that path (i.e. the sum of the shifts on $l$ over the path).
% In this case, we need to check that 
% \begin{itemize}
%  \item If $NS$ mod $4 = 0$, then $SH$ is even, and
%  \item If $NS$ mod $4 = 2$, then $SH$ is odd.
% \end{itemize}

%as defined in matrix $B(\gamma)$, $l$ (odd/ even) corresponds
%to what has been specified in the list above.
%the application of the dynamics is made with the terms of the matrix as given by the list above.
%$A_1(\gamma)$ (respectively $A''(\gamma)$) if $l$ is odd, and with $A_2(\gamma)$ (respectively $A'(\gamma)$) if $l$ is even.

%=========================

%========================
\begin{itemize}
\item First of all, let us start with the paths along the branches.
There are two main types of paths on the branches to distinguish:
\begin{enumerate}
\item The paths from $(0,0) \rightarrow (0,0)$ and from $(0,n_0) \rightarrow (0,n_0)$ by passing on the branches: In this case, when iterating the matrix form ($A_1(\gamma) \otimes A_2(\gamma) \otimes A_2(\gamma) \otimes A_1(\gamma)$) or ($A'(\gamma) \otimes A''(\gamma) \otimes A''(\gamma) \otimes A'(\gamma)$), the matrix applied at $(0,0)$ or $(0,n_0)$ will be the same. Note that the shift in $l$ is always even because of the changing of variables ($2b_{(u,j)} + 2\bar{b}_{(u,j)} = 2$), which is concordant with the matrix form of the train traffic dynamics.
%\item the path by branch~1 exists only in $A_1(\gamma)$ and the path by branch~2 exit only in $A_2(\gamma)$, and
\item The paths from $(0,0) \rightarrow (0,n_0)$ and from $(0,n_0) \rightarrow (0,0)$ along the branches: The shift in $l$ between the two junction nodes, by passing on the branches, is always a multiple of $2$ because of the changing of the variables and the fact that divergence and convergence are operated in the same way (odd departures from/ to branch~1, even ones from/ to branch~2)~\footnote{see equations~(\ref{neq-c3}),~(\ref{neq-c4}),~(\ref{neq-div2}),~(\ref{neq-div3}),~(\ref{neq-div5}),~(\ref{neq-conv1}),~(\ref{neq-conv4}),~(\ref{neq-conv6})}. % See equations of the dynamics on the branches~{(\ref{neq-c3})-(\ref{neq-conv6})}.
%For example, supposing $l$ is odd on node $(0,0)$ and applying matrix $A_1(\gamma)$, node $(0,n_0)$ can be reached via branch~1 where $l$ is again odd because the sum over the shifts in $l$ along the branches is always even,
The condition $n_1,n_2$ is even guarantees the application of the correct matrix at the convergence or divergence node at the end of the path, accordingly to the rules above. 
For example, consider Graph~\ref{fig-G(A1)}: the shift in $l$ over the path against the travel direction along branch 1 is: $2\bar{b}_{(1,n_1)} -1 + 2\bar{b}_{(1,5)} + 2\bar{b}_{(1,4)} + 2\bar{b}_{(1,3)} + 2\bar{b}_{(1,2)} + 2\bar{b}_{(1,1)} + 1 = \text{even}$~\footnote{see equations~(\ref{neq-conv4}),~(\ref{neq-div3}),~(\ref{neq-c4})}.
On the same graph, for $l = 1$, an iteration along branch~1 with $n_1=6$ is completed by applying $A_1(\gamma) \otimes A_2(\gamma) \otimes A_2(\gamma) \otimes A_1(\gamma) \otimes A_1(\gamma) \otimes A_2(\gamma) \otimes A_2(\gamma) \otimes A_1(\gamma)$ such that at the convergence/ divergence node at the end of the path, with $A_1(\gamma)$, the correct matrix is applied, since $l$ is again odd because of the changing of variables. Other paths accordingly. %he path by branch~1 starts by the application of $A_1(\gamma)$  and ends by the application of $A_1(\gamma)$, 
%or starts by the application of one of the two matrices $A'(\gamma)$ or $A''(\gamma)$ and ends by the other one; while
%the path by branch~2 starts by the application of $A_2(\gamma)$  and ends by the application of $A_2(\gamma)$, 
%or starts by the application of one of the two matrices $A'(\gamma)$ or $A''(\gamma)$ and ends by the other one.
\end{enumerate}
%==================================================
%==================================================
\item For the paths passing by the central part of the line, there are 4 types of paths to distinguish:
\begin{enumerate}
\item The path from $(0,0) \rightarrow (0,0)$ and from $(0,n_0) \rightarrow (0,n_0)$ with a number of arcs $n$ such that $n \mod 4 = 2$: In this case, when iterating the matrix form ($A_1(\gamma) \otimes A_2(\gamma) \otimes A_2(\gamma) \otimes A_1(\gamma)$) or ($A'(\gamma) \otimes A''(\gamma) \otimes A''(\gamma) \otimes A'(\gamma)$) of the dynamics, the matrix applied at $(0,0)$ or $(0,n_0)$ will switch. Note that the shift in $l$ is odd since, $b_{(u,j)} + \bar{b}_{(u,j)} = 1$, which is concordant with the matrix form of the train traffic dynamics.
\item The path from $(0,0) \rightarrow (0,0)$ and from $(0,n_0) \rightarrow (0,n_0)$ with a number of arcs $n$ such that $n \mod 4 = 0$: In this case, when iterating the matrix form ($A_1(\gamma) \otimes A_2(\gamma) \otimes A_2(\gamma) \otimes A_1(\gamma)$) or ($A'(\gamma) \otimes A''(\gamma) \otimes A''(\gamma) \otimes A'(\gamma)$) of the dynamics, the matrix applied at $(0,0)$ or $(0,n_0)$ will be same.  The shift in $l$ is even since, $2b_{(u,j)} + 2\bar{b}_{(u,j)} = 2$, which is concordant with the train traffic dynamics.
\item The path from $(0,0) \rightarrow (0,n_0)$ going in the direction of the traffic (forward):
\begin{itemize}
\item If $n_0 \mod 4 = 2$ and if $\sum_j b_{(0,j)}$ is odd, iterating the matrix form ($A_1(\gamma) \otimes A_2(\gamma) \otimes A_2(\gamma) \otimes A_1(\gamma)$) of the dynamics imposes to switch the matrix $A_1(\gamma)$ or $A_2(\gamma)$, which is concordant with the odd shift in $l$.
\item If $n_0 \mod 4 = 2$ and if $\sum_j b_{(0,j)}$ is even, iterating the matrix form ($A'(\gamma) \otimes A''(\gamma) \otimes A''(\gamma) \otimes A'(\gamma)$) of the dynamics imposes to switch the matrix, which is concordant with the even shift in $l$.
\item If $n_0 \mod 4 = 0$ and if $\sum_j b_{(0,j)}$ is odd, iterating the matrix form ($A'(\gamma) \otimes A''(\gamma) \otimes A''(\gamma) \otimes A'(\gamma)$) of the dynamics imposes to stay on the same matrix, which is concordant with the odd shift in $l$.
\item If $n_0 \mod 4 = 0$ and if $\sum_j b_{(0,j)}$ is even, iterating the matrix form ($A_1(\gamma) \otimes A_2(\gamma) \otimes A_2(\gamma) \otimes A_1(\gamma)$) of the dynamics imposes to stay on the same matrix, which is concordant with the even shift in $l$.
\end{itemize}
\item The path from $(0,n_0) \rightarrow (0,0)$ going against the traffic direction (backward).
\begin{itemize}
\item If $n_0 \mod 4 = 2$ and if $\sum_j \bar{b}_{(0,j)}$ is odd, iterating the matrix form ($A_1(\gamma) \otimes A_2(\gamma) \otimes A_2(\gamma) \otimes A_1(\gamma)$) of the dynamics imposes to switch the matrix, which is concordant with the fact that the shift in $l$ is odd.
\item If $n_0 \mod 4 = 2$ and if $\sum_j \bar{b}_{(0,j)}$ is even, iterating the matrix form ($A'(\gamma) \otimes A''(\gamma) \otimes A''(\gamma) \otimes A'(\gamma)$) of the dynamics imposes to switch the matrix, which is concordant with the fact that the shift in $l$ is even.
\item If $n_0 \mod 4 = 0$ and if $\sum_j \bar{b}_{(0,j)}$ is odd, iterating the matrix form ($A'(\gamma) \otimes A''(\gamma) \otimes A''(\gamma) \otimes A'(\gamma)$) of the dynamics imposes to stay on the same matrix, which is concordant with the fact that the shift in $l$ is odd.
\item If $n_0 \mod 4 = 0$ and if $\sum_j \bar{b}_{(0,j)}$ is even, iterating the matrix form ($A_1(\gamma) \otimes A_2(\gamma) \otimes A_2(\gamma) \otimes A_1(\gamma)$) of the dynamics imposes to stay on the same matrix, which is concordant with the fact that the shift in $l$ is even.
\end{itemize}
\end{enumerate}
\end{itemize}

Consequently when iterating the train traffic dynamics by applying $\delta = B(\gamma) \otimes \delta$ with $B(\gamma)$ as defined above, the matrix form of the train dynamics is equivalent to the train traffic dynamics model, see equations~{(\ref{neq-c1})-(\ref{neq-conv6})}.

\section{The Steady State Train Dynamics}
\subsection{The Asymptotic Average Train Time-headway}
% =========================================
In this part, the properties of the stationary regime of the train dynamics in a metro line with one junction will be studied.
The main result presented below is based on~\cite{BCOQ92} and~\cite{FNHL16}, where the following eigenvalue problem for matrices~$A(\gamma)$ with entries in the max-plus algebra has been studied.

\begin{equation}\label{eq-eig}
A(\gamma) \otimes \upsilon = \mu \otimes \upsilon.
\end{equation}

\begin{definition}
$\mu \in \mathbb{R}_{max} \backslash \{\varepsilon\}$ is said to be a generalized eigenvalue of $A(\gamma)$, with associated generalized eigenvector $\upsilon \in \mathbb{R}^n_{max} \backslash \{\varepsilon\}$, if $A(\mu^{-1}) \otimes \upsilon = \upsilon$, where $A(\mu^{-1})$ is the matrix obtained by evaluating the polynomial matrix $A(\gamma)$ at $\mu^{-1}$ [Theorem 2.3 in~\cite{FNHL16}].
\end{definition}

%The eigenvalue problem~(\ref{eig-pr}) applied to the matrix of the compostion of the train dynamics with itself $B^{(2,1)}(\gamma), B^{(1,2)}(\gamma)$ and the train departure time vector $\delta^k$, is of particular interest:
%the generalized eigenvalue of $A(\gamma)$ corresponds to the asymptotic average growth rate of the matrix, which is interpreted here as the asymptotic average train time-headway in the metro line.
%
%As explained above, the dynamics in a metro line with one junction are represented by matrices $ B^{(2,1)}(\gamma)$ and $ B^{(1,2)}(\gamma)$. Furthermore, it exists an associated graph $G$ to each one of them.

Accordingly to~\cite{BCOQ92} and~\cite{FNHL16}:
\begin{theorem}~\label{th-mpa}
Let $B(\gamma) = \oplus^{p}_{l=0} A_l\gamma^l$ be an irreducible poynomial matrix with acyclic sub-graph $G(B_0)$. Then there exists one and only one generalized eigenvalue $\mu > \varepsilon$ and finite eigenvectors $\upsilon > \varepsilon$. This eigenvalue $\mu$ is equal to the maximum cycle mean of the graph $G(B)$:
$$
\mu = \max_{c \in C} = \frac{W(c)}{D(c)},
$$
where W is the weight,
D is the duration and
$c$ ranges over the set of circuits $C$ of $G(B)$.
Moreover, the dynamic system $\delta = B(\gamma) \otimes \delta$ admits an asymptotic average growth vector $\chi$ whose components are all equal to $\mu$.
\end{theorem}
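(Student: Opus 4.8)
\textit{Proof strategy.} Theorem~\ref{th-mpa} is the polynomial-matrix form of the max-plus Perron--Frobenius theorem and is, up to notation, identical to Theorem~\ref{max-plus-theorem} quoted from~\cite{BCOQ92,Gov07}; the plan is therefore either to invoke that result directly --- its hypotheses (irreducibility of $B(\gamma)$, i.e. strong connectedness of $\mathcal G(B(\gamma))$, and acyclicity of $\mathcal G(B_0)$) being exactly those assumed here --- or, for self-containedness, to deduce it from the first-order max-plus spectral theory. I sketch the latter route.

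First I would reduce the $p$-th order recursion $\delta(k) = \bigoplus_{l=0}^{p} A_l \otimes \delta(k-l)$, with $A_0 = B_0$, to a first-order one. Since $\mathcal G(B_0)$ is acyclic, $B_0$ is max-plus nilpotent, so $B_0^{\ast} = \bigoplus_{m \geq 0} B_0^{\otimes m}$ is finite and the instantaneous term can be eliminated: $\delta(k) = B_0^{\ast} \otimes \bigoplus_{l=1}^{p} A_l \otimes \delta(k-l)$. Stacking $X(k) = (\delta(k), \delta(k-1), \ldots, \delta(k-p+1))$ then yields a standard max-plus system $X(k) = \mathcal B \otimes X(k-1)$ with an explicit $(np)\times(np)$ matrix $\mathcal B$ built from $B_0^{\ast}$ and the $A_l$. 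Strong connectedness of $\mathcal G(B(\gamma))$ transfers to $\mathcal G(\mathcal B)$, since an arc of $\mathcal G(B(\gamma))$ of weight $w$ and delay $l$ is realized in $\mathcal G(\mathcal B)$ by a path of length $l$ through the shift copies of the state; hence $\mathcal B$ is irreducible.

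Next I would apply the classical spectral theorem for irreducible max-plus matrices~\cite[Ch.~3]{BCOQ92}: $\mathcal B$ has a unique eigenvalue $\mu$, equal to its maximum cycle mean, together with finite eigenvectors. A cycle of $\mathcal G(\mathcal B)$ projects to a closed walk of $\mathcal G(B(\gamma))$ whose total delay equals the arithmetic length of the cycle and whose total weight is unchanged, so the maximum cycle mean of $\mathcal B$ equals $\max_{c\in\mathcal C} W(c)/D(c)$, with $\mathcal C$ the elementary cycles of $\mathcal G(B(\gamma))$; restricting a finite eigenvector of $\mathcal B$ to its first block gives a finite $\upsilon$ with $B(\mu^{-1}) \otimes \upsilon = \upsilon$, and uniqueness of $\mu$ descends from uniqueness for $\mathcal B$. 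Finally, the max-plus cyclicity theorem yields $c \geq 1$ and $K$ with $\mathcal B^{\otimes(k+c)} = \mu^{\otimes c} \otimes \mathcal B^{\otimes k}$ for all $k \geq K$, hence $X(k)/k \to \mu$ componentwise; reading off the first block shows $\delta(k)/k \to (\mu, \ldots, \mu) =: \chi$.

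The main obstacle --- and the only part not reducible to a direct quotation --- is making the cycle correspondence in Step~3 precise: checking that elementary cycles of the augmented graph $\mathcal G(\mathcal B)$ are matched with the weight/duration cycles of $\mathcal G(B(\gamma))$ in a way that genuinely preserves the maximum cycle mean, and that eigenvector finiteness survives the restriction to the first block. The transfer of irreducibility to $\mathcal B$ and the applicability of the cyclicity theorem are then routine.
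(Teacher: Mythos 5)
The paper does not prove Theorem~\ref{th-mpa} at all: it is stated as a quoted result (``Accordingly to~\cite{BCOQ92} and~\cite{FNHL16}'') and is, up to notation, a restatement of Theorem~\ref{max-plus-theorem} already cited from~\cite{BCOQ92,Gov07}. Your sketch correctly reproduces the standard argument behind that cited result --- elimination of the implicit term via the Kleene star of the nilpotent matrix $B_0$ (licensed by acyclicity of $\mathcal G(B_0)$), companion-form augmentation to a first-order system, the classical irreducible max-plus spectral theorem, the delay-preserving cycle correspondence, and the cyclicity theorem for the asymptotic growth vector --- so it is consistent with what the paper relies on, and you are right that the only point needing care is the cycle/eigenvector correspondence between the augmented graph and $\mathcal G(B(\gamma))$.
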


%~\\
% =======================================================================
%\textbf{\textit{Derivation of the generalized eigenvalue of $B(\gamma)$ in max-plus algebra.}}\\
% =======================================================================
\subsubsection{Derivation of the Generalized Eigenvalue of $B(\gamma)$ in Max-plus algebra}
The generalized eigenvalue of $B(\gamma)$ corresponds to the asymptotic average growth rate of the polynomial matrix and is interpreted here as the asymptotic average train time-headway.
%
%In fact a generalized eigenvalue in max-plus algebra can be seen as incrementing the departure index $k$ by one step every $\mu$-times.
%
Note that if the asymptotic average growth rate $h$ of system $\delta = B(\gamma) \otimes \delta$ exists, it represents the asymptotic average time-headway on the central part, and since the number of \textit{k} steps on the branches has been doubled because of the changing of variables,
the asymptotic average train time-headway on the branches is $2h$.
The asymptotic average growth rate is given by the unique generalized eigenvalue of the homogeneous max-plus system, which can be calculated
from its associated graph, see Theorem~\ref{th-cdc17} below.

%-
%In~\cite{FNHL17a,FNHL17b,FNHL16}, the train frequency (and thus the train time-headway) on the line are given as a
%function of the total number of trains on the line, which is invariable in time. By that it has been shown 
%that the train frequency depends on the total number of trains on the line, but is independent of their positions 
%at time zero.
%-

The asymptotic average
train frequency of a metro line with a junction, depends on the total number of trains and on the difference between the number
of trains on the branches. Both parameters are invariable in time (in two steps of the train dynamics),
since the one-over-two rule is applied on the divergence and on the convergence.

Consider the following notations.

\begin{tabular}{lp{0.8\textwidth}}
  $m_u$ & the number of trains on part $u$ of the line at time 0.\\  
  $m$ & $= m_0+m_1+m_2$ the total number of trains on the line.
  \end{tabular}
  
  \begin{tabular}{lp{0.8\textwidth}}
  $\Delta m$ & $= m_2 - m_1$ the difference in the number of trains between branches 2 and 1.\\
  $n_u$	& the number of segments on part $u$ of the line. 
  \end{tabular}
  
  \begin{tabular}{lp{0.8\textwidth}}
  $\bar{m}_u$ & $= n_u - m_u, \forall u\in\{0,1,2\}$ the number of free segments on part $u$ of the line.\\ 
  $\bar{m}$ & $= \bar{m}_0 + \bar{m}_1 + \bar{m}_2$ the total number of free segments.
  \end{tabular}
  
  \begin{tabular}{lp{0.8\textwidth}}
  $\Delta \bar{m}$ & $= \bar{m}_2 - \bar{m}_1$ the difference in the number of free segments between branches 2 and 1.
    \end{tabular}
  
  \begin{tabular}{lp{0.8\textwidth}}
  $\underline{T}_u$ & $= \sum_j \underline{t}_{(u,j)}, \forall u\in\{0,1,2\}$ the sum over the travel times on each part of the line. 
    \end{tabular}
  
  \begin{tabular}{lp{0.8\textwidth}}
  $\underline{S}_u$ & $= \sum_j \underline{s}_{(u,j)}, \forall u\in\{0,1,2\}$ the sum over the s times on each part of the line.
\end{tabular}

The two ceses of $m=0$ and $m=n:=n_0+n_1+n_2$ are excluded here. In fact, for these two cases, the train dynamics are fully implicit. 
However, it is known that for these two cases, the train frequency is zero (no train in the case $m=0$, and no movement in the case $m=n$).

\begin{theorem}\label{th-cdc17}
  The dynamic system~$\delta = B(\gamma) \otimes \delta$ admits  a stationary regime, with a common average growth
  rate $h_0$ for all the variables, which represents the average train
  time-headway $h_0$ on the central part and $h_1/2 = h_2/2$ on the branches.
  Moreover 
  $$ h_0 = h_1/2 = h_2/2 = \max \{ h_{fw}, h_{\min} ,h_{bw}, h_{br}\}, $$
  with\footnote{fw: forward, bw: backward, min: minimum, br: branches.}
  $$ h_{fw} = \max\left\{ \frac{\underline{T}_0 + \underline{T}_1}{m - \Delta m},
                          \frac{\underline{T}_0 + \underline{T}_2}{m + \Delta m} \right\}, $$
%  $$ h_{\min} = \max_{u,j} 2(t_{(u,j)} + s_{(u,j)}) \forall u=1 \max_{u,j} (t_{(u,j)} + s_{(u,j)}), $$
	$$ h_{\min} = \max \begin{cases}
		  \max_{u,j} (t_{(u,j)} + s_{(u,j)}) & \forall u \in \{0\},\\
		  \max_{u,j} (t_{(u,j)} + s_{(u,j)})/2 & \forall u \in \{1,2\}, \forall j \in J(u) \backslash \{n_u\},
	\end{cases}$$
  $$ h_{bw} = \max\left\{ \frac{\underline{S}_0 + \underline{S}_1}{\bar{m} - \Delta \bar{m}},
                          \frac{\underline{S}_0 + \underline{S}_2}{\bar{m} + \Delta \bar{m}} \right\}, $$
  $$ h_{br} =  \max\left\{\frac{\underline{T}_1 + \underline{S}_2}{2(n_2 - \Delta m)},
                  \frac{\underline{S}_1 + \underline{T}_2}{2(n_1 + \Delta m)}\right\}.$$
\end{theorem}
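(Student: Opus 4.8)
\emph{Proof strategy.} The plan is to apply Theorem~\ref{th-mpa} to the max-plus polynomial system $\delta = B(\gamma)\otimes\delta$ provided by Proposition~\ref{propos1}, so that the common asymptotic growth rate $h_0$ is exactly the maximum cycle mean of the associated graph $\mathcal G(B(\gamma))$. First I would check the two hypotheses. Irreducibility of $B(\gamma)$ holds because the line (central part plus both branches, glued at the junction) is a connected circulation network, so in $\mathcal G(B(\gamma))$ every node is reachable from every other. Acyclicity of $\mathcal G(B_0)$, the subgraph of zero-shift arcs, is where the excluded cases $m=0$ and $m=n$ enter: along any circuit of the dependency graph the total shift in $l$ equals the number of trains sitting on the forward (travel) arcs plus the number of free segments sitting on the backward (safe-separation) arcs, up to the $\pm 1$ offsets at the junction which cancel over a circuit; this is zero only if the circuit carries no train and no free segment, which is impossible once $1\le m\le n-1$. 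The parity hypothesis $n_0,n_1,n_2$ even, used to secure Proposition~\ref{propos1}, is already available.

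Next I would isolate the two quantities attached to a directed path in the graph: its \emph{weight} (sum of the arc labels, which are $\underline t_{(u,j)}$ on travel arcs and $\underline s_{(u,j)}$ on safe-separation arcs) and its \emph{duration} (sum of the arc shifts in $l$). The key elementary computations are: a forward traversal of part $u$ has weight $\underline T_u$ and, because the counter on the branches has been doubled by the change of variables, duration $m_0$ on the central part and $2m_u$ on branch $u$; a backward traversal of part $u$ has weight $\underline S_u$ and duration $\bar m_0$ on the central part and $2\bar m_u$ on branch $u$; and the extra $\pm 1$ shifts in~(\ref{neq-div2}),~(\ref{neq-div3}),~(\ref{neq-div5}),~(\ref{neq-conv1}),~(\ref{neq-conv4}),~(\ref{neq-conv6}) always occur in cancelling pairs along any circuit. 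I would also use the reduction that lets one read cycle means off the one-step dependency graph rather than off the four-fold product $\mathcal G(B(\gamma))$ itself: since the circuits of the one-step graph have been arranged (via Proposition~\ref{propos1}) to close up with a consistent parity, each lifts to a circuit of $\mathcal G(B(\gamma))$ with the same weight-to-duration ratio.

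Then I would enumerate the elementary circuits, which fall into four families. \textbf{(i)} Two-node shuttles $(u,j)\leftrightarrow(u,j+1)$ built from one travel arc and one safe-separation arc: weight $\underline t_{(u,j+1)}+\underline s_{(u,j+1)}$, duration $b_{(u,j+1)}+\bar b_{(u,j+1)}=1$ on the central part and $2$ on the branches; the maximum over these is $h_{\min}$. \textbf{(ii)} Traffic-direction circuits through the central part and one branch, $(0,0)\to\cdots\to(0,n_0)\to\cdots\to(0,0)$ via branch $u$: weight $\underline T_0+\underline T_u$, duration $m_0+2m_u$, equal to $m-\Delta m$ for $u=1$ and $m+\Delta m$ for $u=2$; the maximum is $h_{fw}$. \textbf{(iii)} The reverse circuits, central part and one branch traversed against the traffic: weight $\underline S_0+\underline S_u$, duration $\bar m_0+2\bar m_u=\bar m\mp\Delta\bar m$; the maximum is $h_{bw}$. \textbf{(iv)} Branch-mixing circuits that go forward along one branch and backward along the other, never using the forward or backward arcs of the central part: weight $\underline T_1+\underline S_2$ with duration $2m_1+2\bar m_2=2(n_2-\Delta m)$, and symmetrically $\underline S_1+\underline T_2$ with duration $2(n_1+\Delta m)$; the maximum is $h_{br}$. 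Taking the maximum over the four families gives $h_0=\max\{h_{fw},h_{\min},h_{bw},h_{br}\}$, and since on a branch the counter $l$ advances by $2$ per real departure, the branch headways are $2h_0$.

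Finally I would argue no other circuit can raise the maximum: any elementary circuit not in the four families above is a concatenation of pieces of them (e.g. forward central part, then up a branch, then back down the central part, or a figure-eight through both junction nodes), so writing its weight and duration as sums of the weights and durations of the basic pieces and invoking $\frac{a_1+\cdots+a_r}{d_1+\cdots+d_r}\le\max_i\frac{a_i}{d_i}$ bounds its cycle mean by the largest among the pieces, hence by one of the four terms. \textbf{The main obstacle} I anticipate is exactly this completeness-and-domination step at the junction: the graph around the divergence $(0,n_0)$ and the convergence $(0,0)$ is intricate (the four matrices $A_1,A_2,A',A''$ each carry a different subset of the junction arcs), so one must be careful that every elementary circuit is accounted for and that the bookkeeping — the doubling of the branch counter and the cancelling $\pm 1$ offsets — is tracked correctly throughout; the remaining cycle-mean arithmetic is routine.
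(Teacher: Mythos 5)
Your proposal follows essentially the same route as the paper's proof: apply the max-plus spectral theorem to $\delta = B(\gamma)\otimes\delta$ (using $0<m<n$ for acyclicity of $\mathcal G(B_0)$), enumerate the same four families of elementary circuits (one-segment shuttles giving $h_{\min}$, forward and backward circuits through the central part and one branch giving $h_{fw}$ and $h_{bw}$, and the figure-eight branch-mixing circuits giving $h_{br}$), and dominate every remaining circuit via the mediant inequality $\frac{a_1+\cdots+a_r}{d_1+\cdots+d_r}\leq\max_i\frac{a_i}{d_i}$. The one claim to adjust is that $\mathcal G(B(\gamma))$ is strongly connected: the paper points out that the four-step product graph can in fact split into up to four strongly connected components (two in the $n_0=n_1=n_2=6$ example), and it handles this by applying the theorem to each component and verifying they share the same maximum cycle mean --- which is what your device of reading cycle means off the one-step dependency graph and lifting them accomplishes implicitly.
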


\begin{proof}
In the following, Theorem~\ref{th-mpa} is applied to the dynamics $\delta = B(\gamma) \otimes \delta$.
\begin{itemize}
\item First, since the dynamics $\delta = B(\gamma) \otimes \delta$ are the application of the original train dynamics in four steps, the 
graph $\mathcal G(B(\gamma))$ associated
to $B(\gamma)$ may (theoretically) have up to four strongly connected components (depending on the number of segments of the line).
Theorem~\ref{th-mpa} is then applied to every sub-system associated to a strongly connected component.
The definition of $B(\gamma)$ implies (shown below) that all the strongly connected components have the same maximum cycle mean, which then 
implies that the train dynamics have the same asymptotic average growth rate (interpreted as the asymptotic average train 
time-headway $h_0$ on the central part of the metro line).
For the case $n_0 = n_1 = n_2 = 6$, there are two strongly connected components of $\mathcal G(B(\gamma))$. One strongly connected component is shown in Figures~\ref{fig-G(A2*A1)} and~\ref{fig-G(B)-2}.
In order to have readable figures, some of the arcs of the strongly connected component studied here (the other has the same maximum cycle mean) are given in Figure~\ref{fig-G(A2*A1)}, and 
some further arcs are given in Figure~\ref{fig-G(B)-2}. %(in the case taken here, there are two strongly connected components).
The isolated nodes in these two figures are those of the second strongly connected component of the graph $\mathcal G(B(\gamma))$.
For example, in Figure~\ref{fig-G(A2*A1)}, the arc $(0,0) \to (0,4)$ results from the sequence of arcs $(0,0) \to (0,1) \to (0,2) \to (0,3) \to (0,4)$
of the original train dynamics.
In Figure~\ref{fig-G(B)-2}, the arc $(0,0) \to (0,2)$ results from the sequence of arcs $(0,0) \to (0,1) \to (0,0) \to (0,1) \to (0,2)$
of the original train dynamics, or from the sequence of arcs $(0,0) \to (0,1) \to (0,2) \to (0,1) \to (0,2)$ of the original train dynamics.
\item Second, since $m\neq 0$ and $m\neq n$, it can be checked that the train dynamics are not fully implicit. They are triangular, that is, 
there exists an order of applying the dynamics on every node, in such a way that the dynamics will be explicit. 
By consequent, the graph $\mathcal G (B_0)$ is acyclic. Therefore, the sub-graphs of $\mathcal G(B_0)$ associated to each strongly 
connected component of $\mathcal G(B(\gamma))$ are also acyclic. 
\end{itemize}
Consequently, Theorem~\ref{th-mpa} can be applied to the strongly connected components of $\mathcal G(B(\gamma))$.
The cycles of the strongly connected components of Figure~\ref{fig-G(A2*A1)} %and~\ref{fig-G(B)-2}
and their cycle means are given and calculated below.
\begin{itemize}
  \item One cycle in the travel direction, passing by the central part and by branch~1 of the line.
  This cycle is given in Figure~\ref{fig-G(A2*A1)}: $(0,2) \to (0,6) \to (1,4) \to (0,2)$.
%faux
  %The other one is given in Figure~\ref{fig-G(B)-2}: $(0,2) \to (0,4) \to (0,6) \to (1,2) \to (1,4) \to (0,0) \to (0,2)$.
  %It is easy to check that these two cycles have the same cycle mean given by
The cycle mean is given by
  $$\frac{\sum_j \underline{t}_{(0,j)} + \sum_j \underline{t}_{(1,j)}}{\sum_j b_{(0,j)} + \sum_j 2b_{(1,j)}}
     = \frac{\underline{T}_0 + \underline{T}_1}{m_0 + 2 m_1} = \frac{\underline{T}_0 + \underline{T}_1}{m - \Delta m}.$$
  \item Another cycle in the travel direction, passing by the central part and by branch~2 of the line.
 This cycle is given in Figure~\ref{fig-G(A2*A1)}: $(0,0) \to (0,4) \to (2,2) \to (0,0)$.
%faux
  %The other one is given in Figure~\ref{fig-G(B)-2}: $(0,0) \to (0,2) \to (0,4) \to (0,6) \to (2,2) \to (2,4) \to (0,0)$.
  %It is easy to check that these two cycles have the same cycle mean given by
The cycle mean is given by
  $$\frac{\sum_j \underline{t}_{(0,j)} + \sum_j \underline{t}_{(2,j)}}{\sum_j b_{(0,j)} + \sum_j 2b_{(2,j)}}
     = \frac{\underline{T}_0 + \underline{T}_2}{m_0 + 2 m_2} = \frac{\underline{T}_0 + \underline{T}_2}{m + \Delta m}.$$  
  \item[$\Rightarrow$] $h_{fw}$ = maximum of the cycle means of these types of cycles.
  
  \item One cycle against the travel direction, passing by the central part and by branch~1 of the line.
  It is given in Figure~\ref{fig-G(A2*A1)}: $(0,0) \to (1,2) \to (0,4) \to (0,0)$.
%faux;
  %The other one is given in Figure~\ref{fig-G(B)-2}: $(0,2) \to (0,0) \to (1,4) \to (1,2) \to (0,6) \to (0,4) \to (0,2)$.
  %It is easy to check that these two cycles have the same cycle mean given by
The cycle mean is given by
  $$\frac{\sum_j \underline{s}_{(0,j)} + \sum_j \underline{s}_{(1,j)}}{\sum_j \bar{b}_{(0,j)} + \sum_j 2\bar{b}_{(1,j)}}
     = \frac{\underline{S}_0 + \underline{S}_1}{n_0 - m_0 + 2 n_1 - 2 m_1} = \frac{\underline{S}_0 + \underline{S}_1}{\bar{m} - \Delta \bar{m}}.$$
  \item The cycle against the travel direction, passing by the central part and by branch~2 of the line.
  The cycle is given in Figure~\ref{fig-G(A2*A1)}: $(0,2) \to (2,4) \to (0,6) \to (0,2)$.
%faux;
  %The other one is given in Figure~\ref{fig-G(B)-2}: $(0,0) \to (2,4) \to (2,2) \to (0,6) \to (0,4) \to (0,2) \to (0,0)$.
  %It is easy to check that these two cycles have the same cycle mean given by
The cycle mean is given by
  $$\frac{\sum_j \underline{s}_{(0,j)} + \sum_j \underline{s}_{(2,j)}}{\sum_j \bar{b}_{(0,j)} + \sum_j 2\bar{b}_{(2,j)}}
     = \frac{\underline{S}_0 + \underline{S}_2}{n_0 - m_0 + 2 n_2 - 2 m_2} = \frac{\underline{S}_0 + \underline{S}_2}{\bar{m} + \Delta \bar{m}}.$$  
  \item[$\Rightarrow$] $h_{bw}$ = maximum of the cycle means of these types of cycles.
\item The loops over each node, whose realizations in four steps use one segment.
For example, the loop $(0,0) \to (0,0)$ in Figure~\ref{fig-G(A2*A1)} is the realization of the cycle $(0,0) \to (0,1) \to (0,0) \to (0,1) \to (0,0)$
in four steps. %These loops are all given in Figure~\ref{fig-G(A2*A1)}.
The cycle mean of such loops, in case $u=0$ (central part of the line), is given by
$$\frac{2\underline{t}_{(u,j)} + 2\underline{s}_{(u,j)}}{2b_{(u,j)} + 2\bar{b}_{(u,j)}} = \frac{2\underline{t}_{(u,j)} + 2\underline{s}_{(u,j)}}{2} = \underline{t}_{(u,j)} + \underline{s}_{(u,j)}.$$
The cycle mean of such loops, in case $u=1$ or $2$ (branches 1 or 2), is given by
$$\frac{2\underline{t}_{(u,j)} + 2\underline{s}_{(u,j)}}{4b_{(u,j)} + 4\bar{b}_{(u,j)}} = \frac{\underline{t}_{(u,j)} + \underline{s}_{(u,j)}}{2}.$$
%Note that over each node there are two loop cycles, one representing the downstream and one representing the upstream segment.
 \item[$\Rightarrow$] $h_{\min}$ = maximum of the cycle means of these types of cycles.

\item The loops over each node, whose realizations in four steps use two segments.
For example, the loop $(0,0) \to (0,0)$ in Figure~\ref{fig-G(A2*A1)} is the realization of the cycle $(0,0) \to (0,1) \to (0,2) \to (0,1) \to (0,0)$
in four steps. %These loops are all given in Figure~\ref{fig-G(A2*A1)}.
The cycle mean of such loops, in case $u=0$ (central part of the line), is given by
$$\begin{array}{l}
\frac{\underline{t}_{(u,j)} + \underline{t}_{(u,j+1)} + \underline{s}_{(u,j)} + \underline{s}_{(u,j+1)} }{b_{(u,j)} + b_{(u,j+1)} + \bar{b}_{(u,j)} + \bar{b}_{(u,j+1)}} \\ \\
  = \frac{(\underline{t}_{(u,j)} + \underline{s}_{(u,j)}) + (\underline{t}_{(u,j+1)} + \underline{s}_{(u,j+1)})}{2} \\ \\
  \leq \max_{u=0,j} \left( \underline{t}_{(u,j)} + \underline{s}_{(u,j)} \right) \leq h_{\min}.
\end{array}$$
The cycle mean of such loops, in case $u=1$ or $2$ (branches 1 or 2 of the line), is given by
$$\begin{array}{l}
\frac{\underline{t}_{(u,j)} + \underline{t}_{(u,j+1)} + \underline{s}_{(u,j)} + \underline{s}_{(u,j+1)} }{2b_{(u,j)} + 2b_{(u,j+1)} + 2\bar{b}_{(u,j)} + 2\bar{b}_{(u,j+1)}} \\ \\
  = \frac{(\underline{t}_{(u,j)} + \underline{s}_{(u,j)}) + (\underline{t}_{(u,j+1)} + \underline{s}_{(u,j+1)})}{4} \\ \\
  \leq \max_{u\in\{1,2\},j} ( \underline{t}_{(u,j)} + \underline{s}_{(u,j)} )/2 \leq h_{\min}.
\end{array}$$
 \item[$\Rightarrow$] The cycle means of all such loops are upper bounded by $h_{\min}$. 
\item All the cycles with two arcs, for example: $(0,0) \to (0,4) \to (0,0)$ in Figure~\ref{fig-G(A2*A1)}.
The cycle mean of such cycles, in case $u=0$ (central part of the line), is given by
     $$\begin{array}{l}
       \frac{\sum_{j=1}^4 \underline{t}_{(u,j)} + \sum_{j=1}^4 \underline{s}_{(u,j)}}{\sum_{j=1}^4b_{(u,j)} + \sum_{j=1}^4\bar{b}_{(u,j)}} \\ \\
        = \frac{\sum_{j=1}^4 \underline{t}_{(u,j)} + \sum_{j=1}^4 \underline{s}_{(u,j)}}{4}  \\ \\
        \leq \max_{u=0,j} \left( \underline{t}_{(u,j)} + \underline{s}_{(u,j)} \right) \leq h_{\min}.
     \end{array}$$  
      % \underline{t}_{(u,j)} + \underline{s}_{(u,j)}.$$
   The cycle mean of such cycles, in case $u=1$ or $2$ (branches 1 or 2 of the line), is given by
     $$\begin{array}{l}
       \frac{\sum_{j=1}^4 \underline{t}_{(u,j)} + \sum_{j=1}^4 \underline{s}_{(u,j)}}{\sum_{j=1}^4 2b_{(u,j)} + \sum_{j=1}^4 2\bar{b}_{(u,j)}} \\ \\
        = \frac{\sum_{j=1}^4 \underline{t}_{(u,j)} + \sum_{j=1}^4 \underline{s}_{(u,j)}}{8}  \\ \\
        \leq \max_{u\in\{1,2\},j} \left( \underline{t}_{(u,j)} + \underline{s}_{(u,j)} \right)/2 \leq h_{\min}.
     \end{array}$$      
   \item[$\Rightarrow$] The cycle means of these cycles are upper bounded by $h_{\min}$.
\item The cycle in form of a ``8'' passing by the two branches, without passing by the central part. One cycle passes on branch~1 in the travel direction
     and on branch~2 against the travel direction: $(0,6) \to (1,4) \to (2,4) \to (0,6)$.
Its cycle mean is given by
 $$\frac{\sum_j \underline{t}_{(1,j)} + \sum_j \underline{s}_{(2,j)}}{\sum_j 2b_{(1,j)} + \sum_j 2\bar{b}_{(2,j)}}
     = \frac{\underline{T}_1 + \underline{S}_2}{2m_1 + 2 n_2 - 2 m_2}= \frac{\underline{T}_1 + \underline{S}_2}{2(n_ 2 - \Delta m)}.$$
   \item The other cycle passes on branch~2 in the travel direction
     and on branch~1 against the travel direction: $(1,2) \to (2,2) \to (0,0) \to (1,2)$.
Its cycle mean is given by
 $$\frac{\sum_j \underline{s}_{(1,j)} + \sum_j \underline{t}_{(2,j)}}{\sum_j 2\bar{b}_{(1,j)} + \sum_j 2{b}_{(2,j)}}
     = \frac{\underline{S}_1 + \underline{T}_2}{2 n_1 + 2n_2 - 2m_2}= \frac{\underline{S}_1 + \underline{T}_2}{2(n_1 + \Delta m)}.$$
   \item[$\Rightarrow$] $h_{br}$ = maximum of the cycle means of these two cycles.  
\end{itemize}

%================================

%G(A2*A2*A1*A1), partie 1;
\noindent
The remaining cycles of the strongly connected component studied here are given in Figure~\ref{fig-G(B)-2},
and their cycle means are calculated below. 
It is shown below that they are all upper bounded by the cycle means of the same component depicted in Figure~\ref{fig-G(A2*A1)}.
Indeed, they are averages of the cycle means of Figure~\ref{fig-G(A2*A1)}.
%this can be shown by $a/b + c/d \geq (a+c)/(b+d)$).
\begin{itemize}
  \item The cycles in the travel direction, passing by the central part and by branch~1 of the line.
  For example, the cycle given in Figure~\ref{fig-G(B)-2}: $(0,0) \to (0,2) \to (0,4) \to (0,6) \to (1,2) \to (1,4) \to (0,0)$.
%Note that its cycle means are upper bounded by the cycle means of the cycle in the same direction passing by the same parts of the same component, depicted in Figure~\ref{fig-G(A2*A1)}.
Its cycle mean is given by
  $$\begin{array}{l}
    \frac{\sum_j \underline{t}_{(0,j)} + \sum_j \underline{t}_{(1,j)} + \sum_{u,j} (\underline{t}_{(u,2j)} + \underline{s}_{(u,2j)})}{\sum_j {b}_{(0,j)}+\sum_j 2{b}_{(1,j)} + \sum_{j} b_{(0,2j)} + \sum_{j} \bar{b}_{(0,2j)} + \sum_{j} 2{b}_{(1,2j)} + \sum_{j} 2\bar{b}_{(1,2j)} } \\ \\
    = \frac{\underline{T}_0 + \underline{T}_1 + \sum_{u,j} (\underline{t}_{(u,2j)} + \underline{s}_{(u,2j)})}{m_0 + 2 m_1 + n_0/2 + n_1} \\ \\
    = \frac{\underline{T}_0 + \underline{T}_1+ \sum_{u,j} (\underline{t}_{(u,2j)}+ \underline{s}_{(u,2j)})}{m - \Delta m + n_0/2 + n_1} \\ \\
    \leq \max \left\{ \frac{\underline{T}_0 + \underline{T}_1 }{m - \Delta m } , \frac{\sum_{u,j} (\underline{t}_{(u,2j)}+ \underline{s}_{(u,2j)})}{n_0/2 + n_1}  \right\} \\ \\
    \leq \max \left\{ \frac{\underline{T}_0 + \underline{T}_1 }{m - \Delta m } , \max_j (\underline{t}_{(0,2j)}+ \underline{s}_{(0,2j)}),
         \frac{(\underline{t}_{(1,2j)}+ \underline{s}_{(1,2j)})}{2} \right\}. \\ \\
    \leq \max \left\{ h_{fw}, h_{\min} \right\}.
  \end{array}$$  
\item The cycles in the travel direction, passing by the central part and by branch~2 of the line.
  For example, the cycle given in Figure~\ref{fig-G(B)-2}: $(0,0) \to (0,2) \to (0,4) \to (0,6) \to (2,2) \to (2,4) \to (0,0)$.
%Note that its cycle means are upper bounded by the cycle means of the cycle in the same direction passing by the same parts of the same component, depicted in Figure~\ref{fig-G(A2*A1)}.
Its cycle mean is given by
  $$\begin{array}{l}
    \frac{\sum_j \underline{t}_{(0,j)} + \sum_j \underline{t}_{(2,j)} + \sum_{u,j} (\underline{t}_{(u,2j)} + \underline{s}_{(u,2j)})}{\sum_j {b}_{(0,j)}+\sum_j 2{b}_{(2,j)} + \sum_{j} b_{(0,2j)} + \sum_{j} \bar{b}_{(0,2j)} + \sum_{j} 2{b}_{(2,2j)} + \sum_{j} 2\bar{b}_{(2,2j)} } \\ \\
    = \frac{\underline{T}_0 + \underline{T}_2 + \sum_{u,j} (\underline{t}_{(u,2j)} + \underline{s}_{(u,2j)})}{m_0 + 2 m_2 + n_0/2 + n_2} \\ \\
    = \frac{\underline{T}_0 + \underline{T}_2+ \sum_{u,j} (\underline{t}_{(u,2j)}+ \underline{s}_{(u,2j)})}{m + \Delta m + n_0/2 + n_2} \\ \\
    \leq \max \left\{ \frac{\underline{T}_0 + \underline{T}_2 }{m + \Delta m } , \frac{\sum_{u,j} (\underline{t}_{(u,2j)}+ \underline{s}_{(u,2j)})}{n_0/2 + n_2}  \right\} \\ \\
    \leq \max \left\{ \frac{\underline{T}_0 + \underline{T}_2 }{m + \Delta m } , \max_j (\underline{t}_{(0,2j)}+ \underline{s}_{(0,2j)}),
         \frac{(\underline{t}_{(2,2j)}+ \underline{s}_{(2,2j)})}{2} \right\}. \\ \\
    \leq \max \left\{ h_{fw}, h_{\min} \right\}.
  \end{array}$$  
  \item[$\Rightarrow$] Note that the cycle means are upper bounded by the cycle means of the cycles in the same direction passing by the same parts of the same component and those of the loops over one node, depicted in Figure~\ref{fig-G(A2*A1)}.
  
  \item The cycles against the travel direction, passing by the central part and by branch~1 of the line.
  For example, the cycle given in Figure~\ref{fig-G(B)-2}: $(0,0) \to (1,4) \to (1,2) \to (0,6) \to (0,4) \to (0,2) \to (0,0)$.
Its cycle mean is given by
  $$\begin{array}{l}
    \frac{\sum_j \underline{s}_{(0,j)} + \sum_j \underline{s}_{(1,j)} + \sum_{u,j} (\underline{t}_{(u,2j)} + \underline{s}_{(u,2j)})}{\sum_j \bar{b}_{(0,j)}+\sum_j 2\bar{b}_{(1,j)} + \sum_{j} b_{(0,2j)} + \sum_{j} \bar{b}_{(0,2j)} + \sum_{j} 2{b}_{(1,2j)} + \sum_{j} 2\bar{b}_{(1,2j)} } \\ \\
    = \frac{\underline{S}_0 + \underline{S}_1 + \sum_{u,j} (\underline{t}_{(u,2j)} + \underline{s}_{(u,2j)})}{n_0-m_0 + 2 (n_1-m_1) + n_0/2 + n_1} \\ \\
    = \frac{\underline{S}_0 + \underline{S}_1+ \sum_{u,j} (\underline{t}_{(u,2j)}+ \underline{s}_{(u,2j)})}{\bar{m} - \Delta \bar{m} + n_0/2 + n_1} \\ \\
    \leq \max \left\{ \frac{\underline{S}_0 + \underline{S}_1 }{\bar{m} - \Delta \bar{m} } , \frac{\sum_{u,j} (\underline{t}_{(u,2j)}+ \underline{s}_{(u,2j)})}{n_0/2 + n_1}  \right\} \\ \\
    \leq \max \left\{ \frac{\underline{S}_0 + \underline{S}_1 }{\bar{m} - \Delta \bar{m} } , \max_j (\underline{t}_{(0,2j)}+ \underline{s}_{(0,2j)}),
         \frac{(\underline{t}_{(1,2j)}+ \underline{s}_{(1,2j)})}{2} \right\}. \\ \\
    \leq \max \left\{ h_{bw}, h_{\min} \right\}.
  \end{array}$$

\item The cycles against the travel direction, passing by the central part and by branch~2 of the line.
  For example, the cycle given in Figure~\ref{fig-G(B)-2}: $(0,0) \to (2,4) \to (2,2) \to (0,6) \to (0,4) \to (0,2) \to (0,0)$.
  $$\begin{array}{l}
    \frac{\sum_j \underline{s}_{(0,j)} + \sum_j \underline{s}_{(2,j)} + \sum_{u,j} (\underline{t}_{(u,2j)} + \underline{s}_{(u,2j)})}{\sum_j \bar{b}_{(0,j)}+\sum_j 2\bar{b}_{(2,j)} + \sum_{j} b_{(0,2j)} + \sum_{j} \bar{b}_{(0,2j)} + \sum_{j} 2{b}_{(2,2j)} + \sum_{j} 2\bar{b}_{(2,2j)} } \\ \\
    = \frac{\underline{S}_0 + \underline{S}_2 + \sum_{u,j} (\underline{t}_{(u,2j)} + \underline{s}_{(u,2j)})}{n_0-m_0 + 2 (n_2-m_2) + n_0/2 + n_2} \\ \\
    = \frac{\underline{S}_0 + \underline{S}_2+ \sum_{u,j} (\underline{t}_{(u,2j)}+ \underline{s}_{(u,2j)})}{\bar{m} + \Delta \bar{m} + n_0/2 + n_2} \\ \\
    \leq \max \left\{ \frac{\underline{S}_0 + \underline{S}_2 }{\bar{m} + \Delta \bar{m} } , \frac{\sum_{u,j} (\underline{t}_{(u,2j)}+ \underline{s}_{(u,2j)})}{n_0/2 + n_2}  \right\} \\ \\
    \leq \max \left\{ \frac{\underline{S}_0 + \underline{S}_2 }{\bar{m} + \Delta \bar{m} } , \max_j (\underline{t}_{(0,2j)}+ \underline{s}_{(0,2j)}),
         \frac{(\underline{t}_{(2,2j)}+ \underline{s}_{(2,2j)})}{2} \right\}. \\ \\
    \leq \max \left\{ h_{bw}, h_{\min} \right\}.
  \end{array}$$  
  \item[$\Rightarrow$] Note that the cycle means are upper bounded by the cycle means of the cycles in the same direction passing by the same parts of the same component  and those of the loops over one node, depicted in Figure~\ref{fig-G(A2*A1)}.

   \item All the cycles using two segments, and all the cycles going from a node to another node in the graph, and then changing the direction and
     going back to the original node.  Considering those on the central part, for example, the cycle given in Figure~\ref{fig-G(B)-2}: $(0,0) \to (0,2) \to (0,0) $.
Their cycle mean is given by
  $$\begin{array}{l}
  \frac{\sum_{j=1}^3 \underline{t}_{(u,j)} + \sum_{j=1}^3 \underline{s}_{(u,j)}}{\sum_{j=1}^3b_{(u,j)} + \sum_{j=1}^3\bar{b}_{(u,j)}}  \\ \\
    = \frac{\sum_{j=1}^3 \underline{t}_{(u,j)} + \sum_{j=1}^3 \underline{s}_{(u,j)}}{3} \\ \\
    \leq \max \left\{ \underline{t}_{(u,j)} + \underline{s}_{(u,j)}  \right\} \\ \\
    \leq h_{\min}.
  \end{array}$$ 
   \item All the cycles using two segments, and all the cycles going from a node to another node in the graph, and then changing the direction and
     going back to the original node.  Considering those on the branches, for example, the cycle given in Figure~\ref{fig-G(B)-2}: $(2,2) \to (2,4) \to (2,2) $.
Their cycle mean is given by
  $$\begin{array}{l}
  \frac{\sum_{j=1}^3 \underline{t}_{(u,j)} + \sum_{j=1}^3 \underline{s}_{(u,j)}}{\sum_{j=1}^3 2b_{(u,j)} + \sum_{j=1}^3 2\bar{b}_{(u,j)}}  \\ \\
    = \frac{\sum_{j=1}^3 \underline{t}_{(u,j)} + \sum_{j=1}^3 \underline{s}_{(u,j)}}{6} \\ \\
    \leq \max \left\{ \frac{\underline{t}_{(u,j)} + \underline{s}_{(u,j)}}{2}  \right\} \\ \\
    \leq h_{\min}. 
  \end{array}$$ 
   \item[$\Rightarrow$] The cycle means of these cycles are upper bounded by the cycle means of the loops, see Figure~\ref{fig-G(A2*A1)}.

   \item The cycle in form of a ``8'' passing by the two branches, without passing by the central part. One cycle passes on branch~1 in the travel direction
     and on branch~2 against the travel direction: $(0,6) \to (1,2) \to (1,4) \to (0,0) \to (2,4) \to (2,2) \to (0,6)$.
Its cycle mean is given by
  $$\begin{array}{l}
    \frac{\sum_j \underline{t}_{(1,j)} + \sum_j \underline{s}_{(2,j)} + \sum_{u,j} (\underline{t}_{(u,2j)} + \underline{s}_{(u,2j)})}{\sum_j 2{b}_{(1,j)}+\sum_j 2\bar{b}_{(2,j)} + \sum_{u,j} 2{b}_{(u,2j)} + \sum_{u,j} 2\bar{b}_{(u,2j)} } \\ \\
    = \frac{\underline{T}_1 + \underline{S}_2 + \sum_{u,j} (\underline{t}_{(u,2j)} + \underline{s}_{(u,2j)})}{2(m_1 + n_2-m_2 + n_1 + n_2)} \\ \\
    = \frac{\underline{T}_1 + \underline{S}_2+ \sum_{u,j} (\underline{t}_{(u,2j)}+ \underline{s}_{(u,2j)})}{2(m_1 + \bar{m}_2 + n_1 + n_2)} \\ \\
    \leq \max \left\{ \frac{\underline{T}_1 + \underline{S}_2 }{2(m_1 - \bar{m}_2) } , \frac{\sum_{u,j} (\underline{t}_{(u,2j)}+ \underline{s}_{(u,2j)})}{2}  \right\} \\ \\
    \leq \max \left\{ h_{br}, h_{\min} \right\}.
  \end{array}$$ 

   \item The other cycle passes on branch~2 in the travel direction
     and on branch~1 against the travel direction: $(0,6) \to (2,2) \to (2,4) \to (0,0) \to (1,4) \to (1,2) \to (0,6)$.
Its cycle mean is given by
   $$\begin{array}{l}
    \frac{\sum_j \underline{t}_{(2,j)} + \sum_j \underline{s}_{(1,j)} + \sum_{u,j} (\underline{t}_{(u,2j)} + \underline{s}_{(u,2j)})}{\sum_j 2{b}_{(2,j)}+\sum_j 2\bar{b}_{(1,j)} + \sum_{u,j} 2{b}_{(u,2j)} + \sum_{u,j} 2\bar{b}_{(u,2j)} } \\ \\
    = \frac{\underline{T}_2 + \underline{S}_1 + \sum_{u,j} (\underline{t}_{(u,2j)} + \underline{s}_{(u,2j)})}{2(m_2 + n_1-m_1 + n_1 + n_2)} \\ \\
    = \frac{\underline{T}_2 + \underline{S}_1+ \sum_{u,j} (\underline{t}_{(u,2j)}+ \underline{s}_{(u,2j)})}{2(m_2 + \bar{m}_1 + n_1 + n_2)} \\ \\
    \leq \max \left\{ \frac{\underline{T}_2 + \underline{S}_1 }{2(m_2 - \bar{m}_1) } , \frac{\sum_{u,j} (\underline{t}_{(u,2j)}+ \underline{s}_{(u,2j)})}{2}  \right\} \\ \\
    \leq \max \left\{ h_{br}, h_{\min} \right\}.
  \end{array}$$ 
  \item[$\Rightarrow$] Note that the cycle means are upper bounded by the cycle means of the cycles in the same direction passing by the same parts of the same component  and those of the loops over one node, depicted in Figure~\ref{fig-G(A2*A1)}.
\end{itemize}

Consequently, the maximum cycle mean of the strongly connected component (Figures~\ref{fig-G(A2*A1)},~\ref{fig-G(B)-2}) of the system $\delta = B(\gamma) \otimes \delta$ is given by the maximum cycle mean of four types of cycles: $h_{fw}, h_{min}, h_{bw} \text{ and } h_{br}$.

\begin{figure}[]
\centering
%main node/.style={circle,draw,font=\sffamily\Large\bfseries}%
\tikzset{
    side by side/.style 2 args={
        line width=1.0pt,
        #1,
        postaction={
            clip,postaction={draw,#2}
        }
    }
}
\tikzset{
    side by side 2/.style 2 args={
        line width=1.5pt,
        #1,
        postaction={
            clip,postaction={draw,#2}
        }
    }
}
\definecolor{light-gray}{gray}{0.75}
\begin{tikzpicture}[->,auto, node distance=2.7 cm, state_0/.style={circle,draw},
state_1/.style={circle,draw},
state_2/.style={circle,draw}]
\node [state_0] (1) {0,0};
\node [state_0](2) [left of = 1] {0,1};
\node [state_0](3) [left of = 2] {0,2};
\node [state_0](4) [below left of = 3] {0,3};
\node [state_0](5) [below right of = 4] {0,4};
\node [state_0](6) [right of = 5] {0,5};
\node [state_0](7) [right of = 6] {0,6};

\node [state_1](8) [below right of=7] {1,1};
\node [state_1](9) [below right of=8] {1,2};
\node [state_1](10) [right of=9] {1,3};
\node [state_1](11) [above of=10] {1,4};
\node [state_1](12) [above left of=11] {1,5};
%\node [state_1](12) [above left of=11] {1,6};

\node [state_2](13) [above of=12] {2,1};
\node [state_2](14) [above right of=13] {2,2};
\node [state_2](15) [above of=14] {2,3};
\node [state_2](16) [left of=15] {2,4};
\node [state_2](17) [below left of=16] {2,5};
%\node [state_2](18) [below left of=17] {2,6};

\path[]
%t(0-1)
(1) edge [bend right, line width=1 pt] node [] {} (5)
%(3) edge [bend right, line width=1 pt] node [] {} (5)
(5) edge [ line width=1 pt] node [] {} (14)
%(7) edge [bend right, line width=1 pt] node [] {} (9)
(14) edge [bend right, line width=1 pt] node [] {} (1)
%(11) edge [bend right, line width=1 pt] node [] {} (1)

%%s(0-2)
%(2) edge [bend left, line width=1 pt, dashed] node [] {} (17)
(16) edge [ line width=1 pt,dashed] node [] {} (7)
%(15) edge [ line width=1 pt,dashed] node [] {} (13)
%(13) edge [bend right, line width=1 pt,dashed] node [] {} (6)
(7) edge [line width=1 pt,dashed] node [] {} (3)
(3) edge [line width=1 pt,dashed] node [] {} (16)
%
%
%t(0-2)
(3) edge [bend right, line width=1 pt] node [] {} (7)
%(4) edge [line width=1 pt] node [] {} (6)
(7) edge [bend right, line width=1 pt] node [] {} (11)
%(13) edge [bend left, line width=1 pt] node [] {} (15)
(11) edge [line width=1 pt] node [] {} (3)
%(17) edge [line width=1 pt] node [] {} (2)
%
%%s(0-1)
(1) edge [line width=1 pt,dashed] node [] {} (9)
%(11) edge [bend right, line width=1 pt,dashed] node [] {} (9)
%(9) edge [bend right, line width=1 pt,dashed] node [] {} (7)
(9) edge [line width=1 pt,dashed] node [] {} (5)
(5) edge [line width=1 pt,dashed] node [] {} (1)
%(3) edge [bend right, line width=1 pt,dashed] node [] {} (1)

%loops
(1) edge [loop above,line width=1pt, dotted] node [] {} (1)
(2) edge [loop above,line width=1pt, dotted] node [] {} (2)
(3) edge [loop above,line width=1pt, dotted] node [] {} (3)

(4) edge [loop below,line width=1pt, dotted] node [] {} (4)
(5) edge [loop below,line width=1pt, dotted] node [] {} (5)
(6) edge [loop below,line width=1pt, dotted] node [] {} (6)

(7) edge [loop below,line width=1pt, dotted] node [] {} (7)
(8) edge [loop below,line width=1pt, dotted] node [] {} (8)
(9) edge [loop below,line width=1pt, dotted] node [] {} (9)

(10) edge [loop above,line width=1pt, dotted] node [] {} (10)
(11) edge [loop above,line width=1pt, dotted] node [] {} (11)

(13) edge [loop below,line width=1pt, dotted] node [] {} (13)
(14) edge [loop below,line width=1pt, dotted] node [] {} (14)
(15) edge [loop below,line width=1pt, dotted] node [] {} (15)

(16) edge [loop above,line width=1pt, dotted] node [] {} (16)
%(17) edge [loop above,line width=1pt, dotted] node [] {} (17)

(9) edge [line width=1 pt,dotted] node [] {} (14)
(11) edge [line width=1 pt,dotted] node [] {} (16);

\begin{customlegend}[legend cell align=left,
legend entries={ 
Weight $=\underline{t}$, % Duration $=\gamma^{b_{(u,j)}+b_{(u,j+1)}}$,
Weight $=\underline{s}$, % Duration $=\gamma^{\bar{b}_{(u,j)}+\bar{b}_{(u,j+1)}}$,
Weight $=\underline{t}+\underline{s}$, % Duration $=1$,
},
legend style={at={(3.3,6.4)},font=\footnotesize}]
	\addlegendimage{-stealth,opacity=1}
	\addlegendimage{-stealth,dashed,opacity=1}
	\addlegendimage{-stealth,dotted, opacity=1}
\end{customlegend}
\end{tikzpicture}
\caption{$\mathcal G(B(\gamma))\text{, with } B(\gamma) = A_2(\gamma) \otimes A_2(\gamma) \otimes  A_1(\gamma) \otimes A_1(\gamma)$  for $n_0=n_1=n_2=6$ (selected arcs are shown, part~1).}
\label{fig-G(A2*A1)}
\end{figure}

\end{proof}

%.........................................................................

%G(A2*A2*A1*A1), partie 2;
\begin{figure}[]
\centering
%main node/.style={circle,draw,font=\sffamily\Large\bfseries}%
\tikzset{
    side by side/.style 2 args={
        line width=1.0pt,
        #1,
        postaction={
            clip,postaction={draw,#2}
        }
    }
}
\tikzset{
    side by side 2/.style 2 args={
        line width=1.5pt,
        #1,
        postaction={
            clip,postaction={draw,#2}
        }
    }
}
\definecolor{light-gray}{gray}{0.75}
\begin{tikzpicture}[->,auto, node distance=2.7 cm, state_0/.style={circle,draw},
state_1/.style={circle,draw},
state_2/.style={circle,draw}]
\node [state_0] (1) {0,0};
\node [state_0](2) [left of = 1] {0,1};
\node [state_0](3) [left of = 2] {0,2};
\node [state_0](4) [below left of = 3] {0,3};
\node [state_0](5) [below right of = 4] {0,4};
\node [state_0](6) [right of = 5] {0,5};
\node [state_0](7) [right of = 6] {0,6};

%\node [state_1](7) [below right of=6] {1,1};
\node [state_1](8) [below right of=7] {1,1};
\node [state_1](9) [below right of=8] {1,2};
\node [state_1](10) [right of=9] {1,3};
\node [state_1](11) [above of=10] {1,4};
\node [state_1](12) [above left of=11] {1,5};

\node [state_2](13) [above of=12] {2,1};
\node [state_2](14) [above right of=13] {2,2};
\node [state_2](15) [above of=14] {2,3};
\node [state_2](16) [left of=15] {2,4};
\node [state_2](17) [below left of=16] {2,5};
%\node [state_2](18) [below left of=17] {2,6};

\path[]
%t(0-1)
(1) edge [bend right, line width=1 pt,dotted] node [] {} (3)
(3) edge [bend right, line width=1 pt,dotted] node [] {} (5)
(5) edge [bend right, line width=1 pt,dotted] node [] {} (7)
(7) edge [bend right, line width=1 pt,dotted] node [] {} (14)
(14) edge [bend right, line width=1 pt,dotted] node [] {} (16)
(16) edge [bend right, line width=1 pt,dotted] node [] {} (1)
(11) edge [bend right, line width=1 pt,dotted] node [] {} (1)
(7) edge [bend right, line width=1 pt,dotted] node [] {} (9)
(9) edge [bend right, line width=1 pt,dotted] node [] {} (11)

%s(0-2)
(1) edge [bend right, line width=1 pt,dotted] node [] {} (16)
(16) edge [line width=1 pt,dotted] node [] {} (14)
(14) edge [bend right, line width=1 pt,dotted] node [] {} (7)
(1) edge [bend right, line width=1 pt,dotted] node [] {} (11)
(11) edge [line width=1 pt,dotted] node [] {} (9)
(9) edge [bend right, line width=1 pt,dotted] node [] {} (7)
(7) edge [bend right, line width=1 pt,dotted] node [] {} (5)
(5) edge [line width=1 pt,dotted] node [] {} (3)
(3) edge [bend right, line width=1 pt,dotted] node [] {} (1);

\begin{customlegend}[legend cell align=left,
legend entries={ 
Weight $=\underline{t} + \underline{s}$,
%Weight $=\underline{s}$,
%Weight $=\underline{t}_{(u,j)}+\underline{s}_{(u,j+1)}$ Duration $=1$,
},
legend style={at={(3.3,6.4)},font=\footnotesize}]
	%\addlegendimage{-stealth,opacity=1}
	%\addlegendimage{-stealth,dashed,opacity=1}
	\addlegendimage{-stealth,dotted, opacity=1}
\end{customlegend}
\end{tikzpicture}
\caption{$\mathcal G(B(\gamma))\text{, with } B(\gamma) = A_2(\gamma) \otimes A_2(\gamma) \otimes  A_1(\gamma) \otimes A_1(\gamma)$  for $n_0=n_1=n_2=6$ (selected arcs are shown, part~2).}
\label{fig-G(B)-2}
\end{figure}

\newpage
% =========================================
\subsection{The Asymptotic Average Train Frequency}
% =========================================

\begin{corollary}\label{cor-cdc17}
  The asymptotic average train frequency on the central part $f_0$ and on the branches $f_1=f_2$ are given as follows:
  $$f_0 = 2 f_1 = 2 f_2 = \max \left\{ 0, \min \left\{ \frac{1}{h_{fw}}, \frac{1}{h_{\min}} ,\frac{1}{h_{bw}}, \frac{1}{h_{br}} \right\} \right\}.$$
\end{corollary}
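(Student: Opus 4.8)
The plan is to obtain Corollary~\ref{cor-cdc17} directly from Theorem~\ref{th-cdc17} by passing from time-headways to frequencies, that is, by taking reciprocals. First I would recall the definitions: the asymptotic average train frequency $f_u$ on part $u$ is the asymptotic number of train departures per unit of time at any node of part $u$ in the stationary regime, which is precisely the reciprocal of the asymptotic average train time-headway $h_u$ on that part (on the branches this is the \emph{physical} frequency, the factor two arising from the change of variables being already incorporated into $h_1 = h_2 = 2h_0$). Since Theorem~\ref{th-cdc17} gives $h_0 = h_1/2 = h_2/2$, taking reciprocals immediately yields $f_0 = 2f_1 = 2f_2$, so it only remains to evaluate $f_0 = 1/h_0$.

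Next I would substitute the closed-form expression $h_0 = \max\{h_{fw}, h_{\min}, h_{bw}, h_{br}\}$ from Theorem~\ref{th-cdc17}. For $0 < m < n$ each of the four quantities $h_{fw}, h_{\min}, h_{bw}, h_{br}$ is finite and strictly positive, being a maximum of ratios whose numerators are strictly positive sums of minimal travel and safe-separation times and whose denominators $m - \Delta m = m_0 + 2m_1$, $m + \Delta m = m_0 + 2m_2$, $\bar{m} - \Delta\bar{m} = \bar{m}_0 + 2\bar{m}_1$, $\bar{m} + \Delta\bar{m} = \bar{m}_0 + 2\bar{m}_2$, $2(n_2 - \Delta m) = 2(\bar{m}_2 + m_1)$, $2(n_1 + \Delta m) = 2(\bar{m}_1 + m_2)$ are positive integers in this regime. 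Since $x \mapsto 1/x$ is strictly decreasing on the positive reals, it turns the maximum into a minimum, so
$$f_0 = \frac{1}{h_0} = \frac{1}{\max\{h_{fw}, h_{\min}, h_{bw}, h_{br}\}} = \min\left\{\frac{1}{h_{fw}}, \frac{1}{h_{\min}}, \frac{1}{h_{bw}}, \frac{1}{h_{br}}\right\} > 0,$$
and the outer $\max\{0, \cdot\}$ is then vacuous in this regime.

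It remains to treat the two degenerate cases $m = 0$ and $m = n$ excluded from Theorem~\ref{th-cdc17}. If $m = 0$ there are no trains, no departures ever occur and the frequency is $0$; consistently, $m = m_1 = m_2 = 0$ forces $\Delta m = 0$, the denominator $m - \Delta m$ of $h_{fw}$ vanishes, so $h_{fw} = +\infty$ and $1/h_{fw} = 0$, whence the inner minimum is $0$. Symmetrically, if $m = n$ every segment is occupied, the dynamics are fully implicit (deadlock), no movement is possible and the frequency is $0$; here $\bar{m} = \bar{m}_1 = \bar{m}_2 = 0$, the denominator of $h_{bw}$ vanishes, $h_{bw} = +\infty$, and the inner minimum is again $0$. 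In all cases the outer $\max\{0, \cdot\}$ returns the correct value and also guarantees non-negativity of the expression. This completes the argument. The only point requiring attention is the reciprocal bookkeeping combined with the $+\infty$/$0$ conventions in the degenerate cases; no estimate beyond Theorem~\ref{th-cdc17} is invoked, so I anticipate no real difficulty.
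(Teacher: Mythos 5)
Your proposal is correct and follows exactly the paper's route: the paper's proof is the one-line observation ``Directly from Theorem~\ref{th-cdc17}, with $0\leq f=1/h$,'' i.e.\ the reciprocal turns the max into a min and the outer $\max\{0,\cdot\}$ covers the degenerate cases $m=0$ and $m=n$ that the paper excludes just before the theorem. Your more careful bookkeeping of the positivity of the denominators and of the $+\infty$/$0$ conventions merely makes explicit what the paper leaves implicit.
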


\begin{proof}
Directly from Theorem~\ref{th-cdc17}, with $0\leq f=1/h$.
\end{proof}

The asymptotic average train time-headway
(respectively asymptotic average train frequency) is given by Theorem~\ref{th-cdc17}
(respectively Corollary~\ref{cor-cdc17}),
as a function of two parameters: $m$ (the total number of trains running on 
the line) and $\Delta m = m_2-m_1$ (the difference on the number of trains
on branches~2 and~1).
Theorem~\ref{th-cdc17} shows that in a metro line with two branches and with an one-over-two operated junction, the branch with the longest
headway imposes its frequency to the other branch.
Moreover, the frequency on the central part is twice the one on the branches.
\begin{figure}[h]
 \centering
 \frame{
  \includegraphics[width=\textwidth]{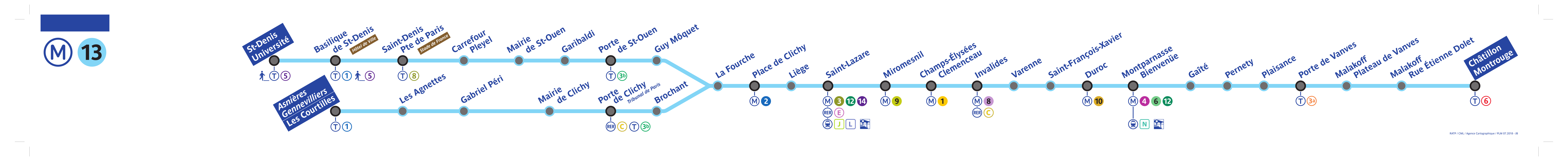} }
\caption {RATP metro line 13, Paris.}
\label{fig-L13}
\end{figure}

% =========================================
\section{The Traffic Phases}\label{transactions}
\subsubsection{Application: RATP Metro Line 13, Paris}
%=======================
In order to illustrate the theoretic results from above, the model is applied to metro line 13, Paris.
The layout of the line is shown in Fig.~\ref{fig-L13}.
The line consists of one central part and two branches, connected with a junction.
The passenger demand on the two branches is balanced. Therefore, an one-over-two operation of the convergence and of the divergence is optimal from a passengers' point of view.

The operator RATP has provided the minimum (theoretic) run times $\underline{r}_{(u,j)}$, 
dwell times $\underline{w}_{(u,j)}$ and safe separation times $\underline{s}_{(u,j)}$ of line 13.
Note that the dwell times are calculated for each station depending on the passenger travel demand.
Since the line is equipped with a \textit{Grade of Automation 2} (GOA~2) system, where train starting, stopping and door opening (but not door closing) are fully automated,
minimum run times are respected.
It is furthermore supposed that train drivers respect minimum dwell times.
Margins can be included in the minimum travel times to recover small perturbations, for example via a speed and a dwell time control.
%Table~\ref{table-par}
Table 3.3 depicts the parameters for the first segments on the central part (from station \textit{La Fourche} towards the terminus \textit{Ch\^atillon -- Montrouge}). %~\ref{table-0}.
Travel time $\underline{t}_{(u,j)}$ is the sum of run and dwell times.
With the sum over travel times and safe separation times, on all segments of each part ($\underline{T}_{0,1,2},\underline{S}_{0,1,2}$), all parameters are defined
to depict the phase diagram of the train dynamics in metro line 13, see Fig.~\ref{fig-2D}.

\begin{table}\label{table-par}
\centering
\caption{Extract of model parameters for central part of RATP metro line 13, Paris. There can be several segments $j$ per inter-station.}
\begin{tabular}{|l|cccc|}
\hline
\vspace{2pt}
    		&$\underline{r}_{(u=0,j)}$ & $\underline{w}_{(0,j)}$ & $\underline{t}_{(0,j)}$  & $\underline{s}_{(0,j)}$ \\ \hline
\vspace{2pt}
   $j=1$ \textit{Fourche - Pl. d C.}	 &51.5 s  & 0 s    & 51.5 s    & 14.9 s        \\ \hline
\vspace{2pt}
  $j=2$ \textit{Place de Clichy} 		&17.4 s   & 19 s	   	& 36.4 s    & 42.6 s          \\ \hline
\vspace{2pt}
   $j=3$ \textit{Place d C. - Li\`ege} 	&53.6 s  & 0 s    	& 53.6 s    & 9.3 s       \\ \hline
\vspace{2pt}
   $j=4$ \textit{Place d C. - Li\`ege} 	&75.7 s   & 0 s    	& 75.7 s     & 9.8 s       \\ \hline
\vspace{2pt}
   \vdots &  \vdots &  \vdots    &  \vdots                        & \vdots        \\ \hline
\vspace{2pt}
    	&   	&  	    & $\underline{T}_0$                         & $\underline{S}_0$        \\ \hline
\vspace{2pt}
    $\sum$ &   &    & 54.0 min                         & 26.0 min        \\ \hline
\end{tabular}
\end{table}

% ==============================================================
\subsubsection{The Phase Diagram of the Train Dynamics}
% ==============================================================
The steady state dynamics in a metro line with a junction are given by Theorem~\ref{th-cdc17} and Corollary~\ref{cor-cdc17}.
The closed-form solutions prove the existence of four pairs of traffic phases, I-a/I-b, II-a/II-b, III-a/III-b and IV-a/Iv-b, are shown in Fig.~\ref{fig-2D} and are explained in the following.
Every point on the phase diagram 
represents an average frequency $f_0$ on the central part of the line.
It is a function of the number of trains $m$, and
the difference in the number of trains running on branch~2~and~1 $\Delta m = m_2 - m_1$.
In fact, since dynamics depend furthermore on
the temporal parameters, the travel time $\underline{T}_{(u=0,1,2)}$ and minimum
safe separation time $\underline{S}_{(u=0,1,2)}$,
there is a family of phase diagrams of the train dynamics in a metro line.

Note furthermore
$$\underline{T} = (2\underline{T}_0 + \underline{T}_1 + \underline{T}_2)/2,$$
$$\underline{S} = (2\underline{S}_0 + \underline{S}_1 + \underline{S}_2)/2.$$

In the following, the traffic phases derived from Theorem~\ref{th-cdc17} and Corollary~\ref{cor-cdc17} and depicted in Fig.~\ref{fig-2D} are interpreted,
based on the case of metro line 13 of Paris.

\begin{figure}[h]
  \centering
     \frame{\includegraphics[width=\textwidth]{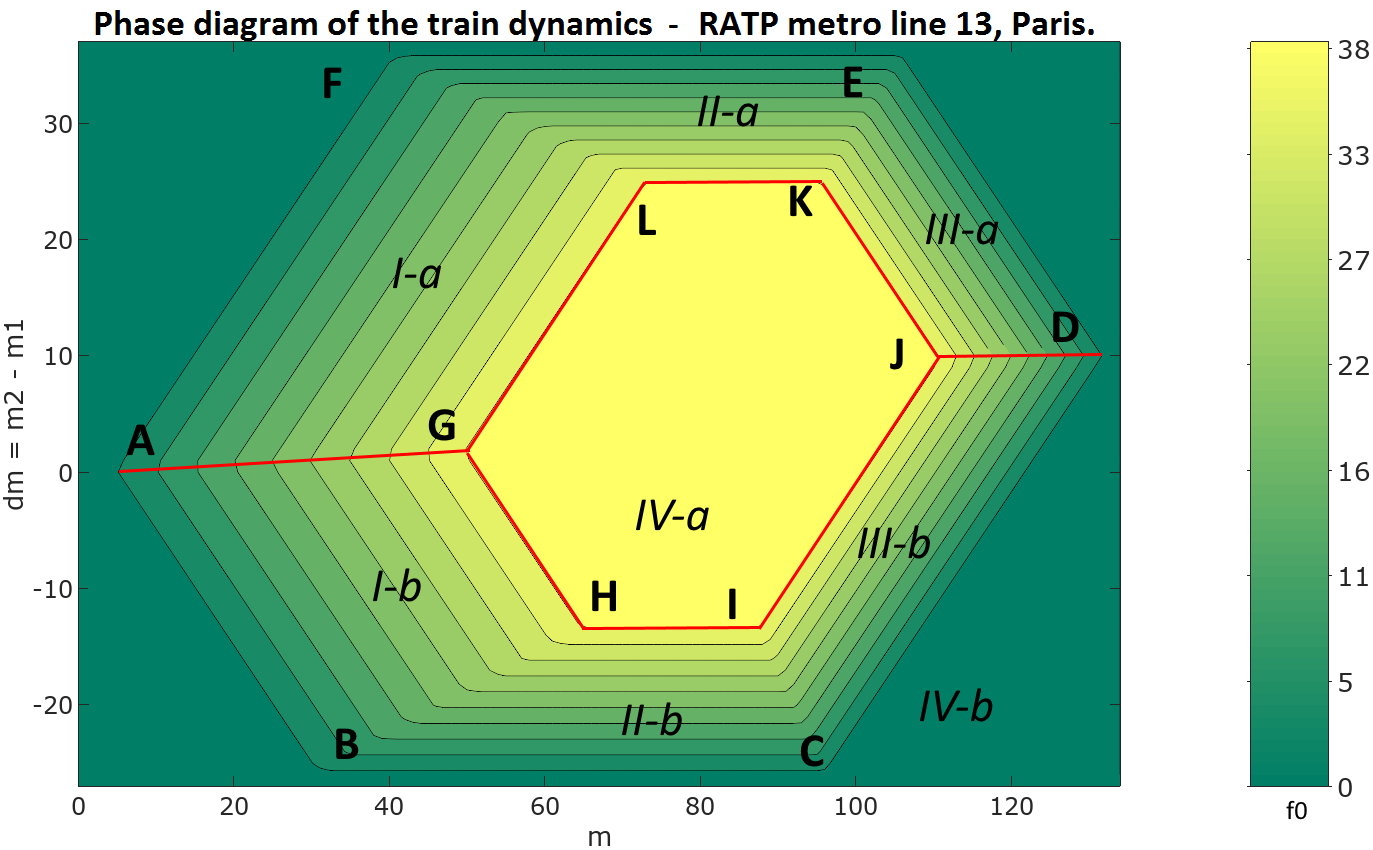}}
  \caption[2D representation of the asymptotic average train frequency $f_0$ (color-scale) on the central part of metro line 13, Paris.]{2D representation of the asymptotic average train frequency $f_0$ (color-scale) on the central part of metro line 13, Paris. The layout of the line is given in Figure~\ref{fig-L13}. The asymptotic average frequency is a function of
  the number of trains \textit{m} and of the difference between the number of trains on the branches~2 and~1 \textit{$\Delta m = m_2 - m_1$}.
  Model parameters (minimum run, dwell and safe separation times) have been provided by the operator RATP.}
    \label{fig-2D}
\end{figure}

% ==============================================================
\subsubsection{Eight Traffic Phases of the Train Dynamics}
% ==============================================================
As it can be seen in Fig.~\ref{fig-2D}, the asymptotic average frequency $f_0$ is maximized at the points of the hexagon surface (GHIJKL) (phase IV-a explained below).
Among from the points of this surface, the one requiring minimum number $m$ of trains is the point $G$ whose coordinates $(m^*, \Delta m^*, f_{\max})$ are given as follows.
\begin{itemize}
   \item $f_{\max} = 1 / h_{\min}$.
   \item $m^* := \underline{T} \; f_{\max}$.
   \item $\Delta m^* := \Delta \underline{T} \; f_{\max} / 2 = (\underline{T_2} - \underline{T_1}) f_{max}/2$.   
\end{itemize}

For every $m$, there exists a difference $\Delta m^*$ between the number of trains on the two branches 
that maximizes the asymptotic average train frequency.

\begin{theorem}\label{th-itsc}
$$\forall m, \exists \Delta m^*(m), \forall \Delta m, f_0(m, \Delta m^*(m)) \geq f_0(m, \Delta m).$$
\end{theorem}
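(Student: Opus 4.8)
The plan is to fix $m$ and study the map $\Delta m \mapsto h_0(m,\Delta m)$ given by Theorem~\ref{th-cdc17}, showing that it is \emph{quasiconvex} (unimodal, with a single valley) on the admissible range of $\Delta m$. Then, since Corollary~\ref{cor-cdc17} gives $f_0(m,\cdot)=\max\{0,1/h_0(m,\cdot)\}$ with $h_0=\max\{h_{fw},h_{\min},h_{bw},h_{br}\}$, the frequency $f_0(m,\cdot)$ will attain its maximum precisely at a minimizer of $h_0(m,\cdot)$, which we take as $\Delta m^*(m)$.

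First I would note that $h_{\min}$ does not depend on $\Delta m$, and that each of the six remaining ratios entering $h_{fw}$, $h_{bw}$, $h_{br}$ has the form $c/(\alpha+\beta\,\Delta m)$ with $c>0$, $\beta\in\{-2,-1,1,2\}$, and a strictly positive denominator in the interior of the admissible domain (on the boundary, where a denominator vanishes, the ratio is $+\infty$ and $f_0=0$ there, which is consistent with the statement and harmless). Using the identities $m-\Delta m=m_0+2m_1$, $m+\Delta m=m_0+2m_2$, $\bar m-\Delta\bar m=(n_0+2n_1)-(m-\Delta m)$ and $\bar m+\Delta\bar m=(n_0+2n_2)-(m+\Delta m)$, each ratio is, as a function of $\Delta m$, either nondecreasing or nonincreasing. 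Concretely, $\tfrac{\underline T_0+\underline T_1}{m-\Delta m}$, $\tfrac{\underline S_0+\underline S_2}{\bar m+\Delta\bar m}$ and $\tfrac{\underline T_1+\underline S_2}{2(n_2-\Delta m)}$ are nondecreasing in $\Delta m$, while $\tfrac{\underline T_0+\underline T_2}{m+\Delta m}$, $\tfrac{\underline S_0+\underline S_1}{\bar m-\Delta\bar m}$ and $\tfrac{\underline S_1+\underline T_2}{2(n_1+\Delta m)}$ are nonincreasing.

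Next I would set $\phi^{+}(\Delta m)$ to be the maximum of $h_{\min}$ and the three nondecreasing ratios, and $\phi^{-}(\Delta m)$ the maximum of the three nonincreasing ratios, so that $h_0=\max\{\phi^{+},\phi^{-}\}$ with $\phi^{+}$ nondecreasing and $\phi^{-}$ nonincreasing. The elementary fact that the pointwise maximum of a nondecreasing and a nonincreasing function is quasiconvex (its sublevel sets are intersections of a left ray with a right ray, hence intervals) then shows that $h_0(m,\cdot)$ is nonincreasing up to some threshold and nondecreasing afterwards. Since $\Delta m$ ranges over a finite set of integers (it is bounded via $0\le m_u\le n_u$ and $m_0+m_1+m_2=m$), $h_0(m,\cdot)$ attains its minimum on that set, at a point I denote $\Delta m^*(m)$. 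Finally, because $h_0\ge h_{\min}>0$ everywhere, one has $f_0(m,\Delta m)=1/h_0(m,\Delta m)$, and $t\mapsto 1/t$ is strictly decreasing on $(0,\infty)$; hence $f_0(m,\cdot)$ is maximized exactly where $h_0(m,\cdot)$ is minimized, i.e.\ at $\Delta m^*(m)$, which is the assertion.

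I expect the only delicate point to be the bookkeeping of the admissible domain of $\Delta m$ for a fixed $m$ and its boundary behaviour, where some denominators degenerate to zero; at such points $h_0=+\infty$ and $f_0=0$, so they are never maximizers and may be discarded, leaving the quasiconvexity argument intact. A secondary, purely cosmetic refinement would be to give $\Delta m^*(m)$ the explicit closed form obtained by equating the dominant nondecreasing and nonincreasing branches of $h_0$ at the crossover (mirroring the formula $\Delta m^{*}=\Delta\underline T\,f_{\max}/2$ used for the point $G$ in the discussion preceding the statement), but this is not required for the existence claim.
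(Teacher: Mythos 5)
Your proposal is correct, and it reaches the conclusion by a route that differs from the paper's. The paper's entire proof is the observation that, for fixed $m$, the map $\Delta m \mapsto f_0(m,\Delta m)$ is concave: implicitly, each of $1/h_{fw}$, $1/h_{bw}$, $1/h_{br}$ is a minimum of functions affine in $\Delta m$ (e.g.\ $1/h_{fw}=\min\{(m-\Delta m)/(\underline{T}_0+\underline{T}_1),\,(m+\Delta m)/(\underline{T}_0+\underline{T}_2)\}$), $1/h_{\min}$ is constant, and a minimum of concave functions is concave, so a maximizer exists. You instead work on the headway side: you sort the six $\Delta m$-dependent ratios of Theorem~\ref{th-cdc17} into nondecreasing and nonincreasing families (your classification is right, using $\bar m\mp\Delta\bar m=n_0+2n_{1,2}-(m\mp\Delta m)$), conclude that $h_0(m,\cdot)=\max\{\phi^+,\phi^-\}$ is quasiconvex because its sublevel sets are intersections of two rays, and pass to $f_0=1/h_0$. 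Your argument is more elementary (it never needs affineness, only monotonicity of each branch) but proves a strictly weaker structural property — quasiconcavity of $f_0(m,\cdot)$ rather than concavity — which still suffices for the existence claim, since $\Delta m$ ranges over a finite integer set. Note that concavity is what the paper later leans on for the strict-concavity/uniqueness discussion in the free-flow regime (the proposition following equation~(\ref{opt-dm})), so the paper's stronger property is not gratuitous; your unimodality argument would also deliver uniqueness only under an additional strictness hypothesis. Your handling of the degenerate boundary (vanishing denominators giving $h_0=+\infty$, $f_0=0$, hence never maximizers for $0<m<n$) is a point the paper glosses over and is worth keeping.
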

\proof It is easy to see from Theorem~\ref{th-cdc17} and Corollary~\ref{cor-cdc17} that for every fixed $m$, the one-variable function
$f_0(m, \Delta m)$, function of $\Delta m$, is concave. \endproof

Optimal $\Delta m^*(m)$ corresponding to every $m$ whose existence is stated by Theorem~\ref{th-itsc} are not necessarily unique.
They are derived analytically from Theorem~\ref{th-cdc17} and Corollary~\ref{cor-cdc17} and are given below in the description of the traffic phases.

The traffic phases are explained on the formula giving the frequencies since it is piece-wise linear.
Eight traffic phases can be distinguished from Theorem~\ref{th-cdc17}, Corollary~\ref{cor-cdc17} and Fig.~\ref{fig-2D}.
They are described by groups.

% ==========================================
\subsection{Two Free Flow Phases}
% ==========================================
The two phases are given on the 3D space $(m,\Delta m,f_0)$ by the two planes
\begin{align}
   & f_0 = \left(m - \Delta m\right) / \left(\underline{T}_0 + \underline{T}_1\right) \label{I-a} \tag{I-a} \\
   & f_0 = \left(m + \Delta m\right) / \left(\underline{T}_0 + \underline{T}_2\right). \label{I-b} \tag{I-b}
\end{align}
They are bordered by six straight lines, as it can be seen in Fig.~\ref{fig-2D}.
The equations of the six lines are available in Appendix~\ref{appendix}.
In these two phases, trains move freely on the line.
For a given number $m$ of trains on the line, the optimal difference $\Delta m^*(m)$ realizing the maximum average train frequency
in the steady state is such that the point $(m,\Delta m^{*}(m))$  is on the straight line separating the two plan surfaces
corresponding to the two free flow phases (line (AG), Fig.~\ref{fig-2D}):
%Optimal $\Delta m^*(m)$ is then given by the intersection of the two plans~(\ref{I-a}) and~(\ref{I-b}) corresponding to the two
%free flow phases.
\begin{equation}\label{opt-dm}
    \Delta m^*(m) = \left( \Delta \underline{T} / (2 \underline{T})  \right) m.
\end{equation}
% More precisely, if we denote $m^*_1 = \frac{2\underline{T}_0 + \underline{T}_1 + \underline{T}_2}{2} f_{\max}$, then we have the following result.
\begin{proposition}
  $\forall m$ such that $0\leq m \leq m^*$,  $\Delta m^* (m)$ given in~(\ref{opt-dm}) (line (AG), Fig.~\ref{fig-2D}) is unique.
\end{proposition}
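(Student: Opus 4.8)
\noindent\textit{Proof plan.}
The plan is to fix $m$ with $0<m\le m^{*}$ (for $m=0$ the only feasible value is $\Delta m=0$, so there is nothing to prove) and to show that the concave function $\Delta m\mapsto f_{0}(m,\Delta m)$ (concavity by Theorem~\ref{th-itsc}) attains its maximum at the single point $\Delta m^{*}(m)$ of~(\ref{opt-dm}). The starting point is that, by Corollary~\ref{cor-cdc17}, every feasible $\Delta m$ satisfies $f_{0}(m,\Delta m)\le 1/h_{fw}(m,\Delta m)$, since $1/h_{fw}$ is one of the four terms inside the minimum. The discussion preceding the proposition moreover places $(m,\Delta m^{*}(m))$ on the segment $(AG)$ inside the free-flow phases, so that substituting $\Delta m^{*}(m)$ into either free-flow formula~(\ref{I-a}) or~(\ref{I-b}) gives $f_{0}(m,\Delta m^{*}(m))=m/\underline{T}$, which is the maximal value of $f_{0}(m,\cdot)$. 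Hence it suffices to establish the strict inequality $1/h_{fw}(m,\Delta m)<m/\underline{T}$ for every feasible $\Delta m\neq\Delta m^{*}(m)$: no such $\Delta m$ can then be a maximizer.

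The core is an elementary one-variable analysis of $h_{fw}(m,\cdot)$. Writing $g_{1}(\Delta m)=(\underline{T}_{0}+\underline{T}_{1})/(m-\Delta m)$ and $g_{2}(\Delta m)=(\underline{T}_{0}+\underline{T}_{2})/(m+\Delta m)$, one has $h_{fw}(m,\cdot)=\max\{g_{1},g_{2}\}$ on the open interval $\{\,|\Delta m|<m\,\}$; this interval contains every feasible $\Delta m$ that can be optimal, since the boundary values $\Delta m=\pm m$ empty one branch and force $f_{0}=0<m/\underline{T}$. On that interval $g_{1}$ is strictly increasing and $g_{2}$ strictly decreasing, with $g_{2}$ dominating near $\Delta m=-m$ and $g_{1}$ dominating near $\Delta m=m$, so the strictly increasing difference $g_{1}-g_{2}$ has a unique zero. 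Solving $g_{1}=g_{2}$, i.e. $(\underline{T}_{0}+\underline{T}_{1})(m+\Delta m)=(\underline{T}_{0}+\underline{T}_{2})(m-\Delta m)$, gives $(2\underline{T}_{0}+\underline{T}_{1}+\underline{T}_{2})\,\Delta m=(\underline{T}_{2}-\underline{T}_{1})\,m$, that is $\Delta m=(\Delta\underline{T}/(2\underline{T}))\,m=\Delta m^{*}(m)$ (recall $2\underline{T}=2\underline{T}_{0}+\underline{T}_{1}+\underline{T}_{2}$ and $\Delta\underline{T}=\underline{T}_{2}-\underline{T}_{1}$); since $m-\Delta m^{*}(m)=m(\underline{T}_{0}+\underline{T}_{1})/\underline{T}$, the common value there equals $\underline{T}/m$. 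To the left of $\Delta m^{*}(m)$ the maximum is $g_{2}$, strictly decreasing, and to the right it is $g_{1}$, strictly increasing, so $h_{fw}(m,\cdot)$ is uniquely minimized at $\Delta m^{*}(m)$ with minimum $\underline{T}/m$. Therefore $h_{fw}(m,\Delta m)>\underline{T}/m$ for every feasible $\Delta m\neq\Delta m^{*}(m)$, whence $f_{0}(m,\Delta m)\le 1/h_{fw}(m,\Delta m)<m/\underline{T}=f_{0}(m,\Delta m^{*}(m))$, proving that $\Delta m^{*}(m)$ is the unique maximizer.

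The only input that is not a self-contained routine computation --- and hence the step to check carefully --- is the identity $f_{0}(m,\Delta m^{*}(m))=m/\underline{T}$, equivalently the claim that at $(m,\Delta m^{*}(m))$ none of $h_{\min}$, $h_{bw}$, $h_{br}$ exceeds $h_{fw}=\underline{T}/m$ when $0\le m\le m^{*}$. For $h_{\min}$ this is immediate, since $m\le m^{*}=\underline{T}/h_{\min}$ gives $h_{\min}\le\underline{T}/m$; for $h_{bw}$ and $h_{br}$ it is exactly the statement that the segment $(AG)$ belongs to the free-flow phases~(\ref{I-a})--(\ref{I-b}) for $m\le m^{*}$, which I would read off from the closed-form expressions in Theorem~\ref{th-cdc17} and which is visible in Figure~\ref{fig-2D}. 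Beyond that, the argument is the standard observation that a pointwise maximum of a strictly increasing and a strictly decreasing function has a unique minimizer, here applied to $h_{fw}(m,\cdot)$.
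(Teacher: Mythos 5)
Your argument is correct, but it follows a genuinely different route from the paper's. The paper's proof is two sentences: it notes that $(m^*,\Delta m^*(m^*),f_{\max})$ is the intersection of the planes~(\ref{I-a}), (\ref{I-b}) and $f_0=f_{\max}$, and then asserts that for fixed $m\leq m^*$ the map $\Delta m\mapsto f_0(m,\Delta m)$ is \emph{strictly} concave, from which uniqueness of the maximizer follows. You instead bound $f_0$ above by $1/h_{fw}$ and show directly that $h_{fw}(m,\cdot)=\max\{g_1,g_2\}$, as the pointwise maximum of a strictly increasing and a strictly decreasing branch, has a unique minimizer at the crossing point $\Delta m^*(m)$ with value $\underline{T}/m$, so that every other feasible $\Delta m$ gives $f_0\leq 1/h_{fw}<m/\underline{T}=f_0(m,\Delta m^*(m))$. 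Your version is arguably the more defensible one: a minimum of finitely many affine functions of $\Delta m$ is concave but never strictly concave in the literal sense, so the paper's appeal to strict concavity is really shorthand for exactly the monotone-crossing argument you spell out. What the two proofs share is the one non-routine input, namely that the free-flow term is the binding one along (AG) for $m\leq m^*$ (equivalently that $h_{bw}$ and $h_{br}$ do not exceed $\underline{T}/m$ there); the paper disposes of this with ``it is easy to check,'' and you flag it explicitly and read it off Theorem~\ref{th-cdc17} and Figure~\ref{fig-2D}, which is the same level of rigor. Your handling of the degenerate endpoints $\Delta m=\pm m$ and of $m=0$ is an extra care the paper omits.
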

\begin{proof}
It is easy to check that the point $(m^*, \Delta m^*(m^*), f_{\max})$ is the intersection point of the three planes~(\ref{I-a}),~(\ref{I-b}) and
the plane $f_0 = f_{\max}$. From Theorem~\ref{th-cdc17} and Corollary~\ref{cor-cdc17}, it can be seen that for every fixed $m, 0\leq m\leq m^*$, the one-variable function
$f_0(\Delta m)$, is strictly concave.
\end{proof}

~\\
%=======================================================================
\textbf{\textit{I - a - Free flow on central part and branch 1.}}
% ======================================================================

This case corresponds to points $(m, \Delta m)$ located on the plane~I-a of Fig.~\ref{fig-2D},
whose equation is~(\ref{I-a}), and delimited by four straight lines, whose equations
in the plane $(m, \Delta m)$ are given in Appendix~\ref{appendix}, see (\ref{AG}), (\ref{LG}), (\ref{FL}) 
and (\ref{FA}) respectively.
During this phase, the traffic only moves freely on branch~1 and on the central part.
Branch~2 is overloaded because the difference $\Delta m$ of the number of trains between branches~2 and~1 exceeds the optimum $\Delta m^*$ given by~(\ref{opt-dm}).
Trains on branch~2 have to wait before entering the central part in order to respect the one-by-one rule at the junction.

~\\
% =======================================================================
\textbf{\textit{I - b - Free flow on central part and branch 2.}}
% =======================================================================

This case corresponds to points $(m, \Delta m)$ belonging to the plan~I-b of Fig.~\ref{fig-2D},
whose equation is~(\ref{I-b}), and delimited by four straight lines, whose equations
in the plane $(m, \Delta m)$ are given in Appendix~\ref{appendix}, see (\ref{AG}), (\ref{GH}), (\ref{BH}) 
and (\ref{AB}) respectively.
During this phase, the traffic only moves freely on branch~2 and on the central part.
Here, branch~1 is overloaded because the difference $\Delta m$ of the number of trains between branches~2 and~1 is under the optimum $\Delta m^*$ given by~(\ref{opt-dm}). 
Trains on branch 1 have to wait before being able to enter the central part in order to respect the one-by-one rule at the junction.

% =========================================================
\subsection{Two Unbalanced Branches Phases}
% =========================================================

The two phases are given by the two planes
\begin{align}
   & f_0 = 2 \left( n_2 - \Delta m \right) / \left( \underline{T}_1 + \underline{S}_2\right) \label{II-a} \tag{II-a} \\
   & f_0 = 2 \left( n_1 + \Delta m \right) / \left( \underline{S}_1 + \underline{T}_2\right). \label{II-b} \tag{II-b}
\end{align}
The two phases are located opposite of each other, each one bordered by four straight lines in the plane $(m, \Delta m)$,
see Fig.~\ref{fig-2D}.
These phases are characterized by a large number of running trains and a non-optimal difference between the number of trains on the two branches.
A consequence of that is, that trains on one branch have to wait at the convergence before entering the central part.
The average train frequency corresponds both to free flow of trains on the branch with a small number of trains, and to
congested flow on the branch with big number of trains, where the trains bother each other, as in road traffic.

~\\
% ======================================================
\textbf{\textit{II - a - Free flow on branch 1, congestion on branch 2.}}
% ======================================================

This case corresponds to points $(m,\Delta m)$ located above the straight line (KL):
\begin{equation}\label{opt-dm_7}
    \Delta m \geq n_2 - f_{\max} \left( \underline{T}_1 + \underline{S}_2 \right) / 2.
\end{equation}
The traffic phase II-a is delimited by four straight lines, whose equations are given in Appendix~\ref{appendix} by~(\ref{EF}), (\ref{FL}), (\ref{KL}) and (\ref{EK}).

~\\
% ========================================================
\textbf{\textit{II - b - Free flow on branch 2, congestion on branch 1.}}
% ========================================================

This case corresponds to points $(m,\Delta m)$ below the straight line (HI):
\begin{equation}\label{opt-dm_8}
    \Delta m \leq f_{\max} \left( \underline{S}_1 + \underline{T}_2 \right) / 2 - n_1. 
\end{equation}
The traffic phase II-b is delimited by the four straight lines given in Appendix~\ref{appendix} by equations~(\ref{BC}), (\ref{CI}), (\ref{HI}) and (\ref{BH}).

% ====================================================
\subsection{Two Congested Traffic Phases}
% ====================================================

The two phases are given by the two planes with the following equations:
\begin{align}
   & f_0 = \left( \bar{m} + \Delta \bar{m}\right) / \left( \underline{S}_0 + \underline{S}_2 \right) \label{III-a} \tag{III-a} \\
   & f_0 = \left( \bar{m} - \Delta \bar{m}\right) / \left( \underline{S}_0 + \underline{S}_1 \right). \label{III-b} \tag{III-b}   
\end{align}
They are bordered by six straight lines, as depicted in Fig.~\ref{fig-2D}.
The equations of the six lines are given in Appendix~\ref{appendix}.
These phases are characterized by the fact that the total number $m$ of trains is big enough 
such that at least the central part and one of the two branches are congested.
The asymptotic average train frequency is then given by this congestion. 
The congested traffic phases require a large number of trains and are rather of theoretic interest.

~\\
% ======================================================================================
\textbf{\textit{III - a - Congestion on branch 2 and the central part.}}
% ======================================================================================

This case corresponds to 
\begin{equation}\label{opt-dm_10}
    \Delta \bar{m} \leq \left( \Delta \underline{S} / ( 2 \underline{S} ) \right) \bar{m}.
\end{equation}
A set point on the~\ref{III-a} plane, signifies congestion on branch 2 and the central part.
The number of trains on these two parts of the line is such high with regard to the number of segments,
that trains cannot move freely.

~\\
% ======================================================================================
\textbf{\textit{III - b - Congestion on branch 1 and the central part.}}
% ======================================================================================

This case corresponds to 
\begin{equation}\label{opt-dm_11}
    \Delta \bar{m} \geq \left( \Delta \underline{S} / (2 \underline{S} ) \right) \bar{m}.
\end{equation}
In the contrary to above, if the set point is on the~\ref{III-b} plane, there is congestion on branch 1 and the central part.
Trains on these parts bother each other and have to wait until their predecessor has cleared the downstream segment.

% ==============================================================
\subsection{Maximum Frequency and Zero Flow Phase} 
% ==============================================================

The two phases are given by the two planes
\begin{align}
   & f_0 = f_{\max} = 1 / h_{\min}, \label{IV-a} \tag{IV-a} \\
   & f_{0,1,2} = 0. \label{IV-b} \tag{IV-b}
\end{align}
Here, the train frequency is independent of the total number of trains $m$ and of the difference $\Delta m$.

~\\
% ==========================================================
\textbf{\textit{IV - a - Maximum frequency phase.}}
% ==========================================================

This phase corresponds to the surface delimited by the polygon (GHIJKL) in Fig.~\ref{fig-2D}.
The total number $m$ of trains is sufficiently high and well allocated between branches and central part to realize the maximum train-frequency and is 
sufficiently low to avoid congestion.
The optimal set point $(m^*, \Delta m^*)$ is located at point $G$ in Fig.~\ref{fig-2D}.
An important detail is that two border lines lying opposite of each other are not necessarily parallel.
Only the lines (HI) and (KL) which are border to phases~\ref{II-a},~\ref{II-b} are parallel to each other and to the m-axis.

~\\
% =====================================================
\textbf{\textit{IV - b - Zero flow phase.}}
% =====================================================

This phase includes all the points outside of the polygon (ABCDEF) or on its border, in Fig.~\ref{fig-2D}.
Only the points $(m, \Delta m)$ at the border of the polygon are feasible, where the train frequency is zero.
The points outside of the polygon are non-realizable.
 The border lines, starting from point A $(m,\Delta m = 0)$, against the clock are:
 $\Delta m \equiv m$ (all trains on branch 1),
 $\Delta m \equiv -n_1$ (branch 1 full, additional trains on the central part),
 $\Delta m \equiv m - n_0 - 2n_1$ (branch 1 and central part full, additional trains on branch 2).
 Here, point D $(m,\Delta m = n,\Delta n)$ is reached, where there is a train on every segment.
 Now with the clock, again starting from point A, to point D:
 $\Delta m \equiv - m$ (all trains on branch 2),
 $\Delta m \equiv n_2$ (branch 2 full, additional trains on the central part),
 $\Delta m \equiv - m + n_0 + 2n_2$ (branch 2 and central part full, additional trains on branch 1).

%%
% =========================================================
\subsubsection{Lines' Equations and Points' Coordinates}
% =========================================================

\label{appendix}

\textit{\textbf{Coordinates of the points $A,B,\ldots, L$ in the $m / \Delta m$ plane.}}\\
$$\begin{array}{lr}
    A = (0,0) &  B = (n_1,-n_1) \\
    C = (n_0 + n_1, - n_1) & D = (n, \Delta n) \\
    E = (n_0 + n_2, n_2) & F = (n_2,n_2)
\end{array}$$

$$\begin{array}{l}
    G = \left( \underline{T} f_{\max}, \Delta \underline{T} f_{\max} / 2 \right) \\
    H = \left( \frac{2\underline{T}_0 + \underline{T}_2 - \underline{S}_1}{2h_{min}} + n_1, \frac{\underline{S}_1 + \underline{T}_2}{2h_{min}} - n_1 \right) \\
    I = \left( n_0 + n_1 + \frac{\underline{T}_2 - 2\underline{S}_0 - \underline{S}_1}{2h_{min}}, \frac{\underline{S}_1 + \underline{T}_2}{2h_{min}} - n_1 \right) \\
    J = \left( n - \frac{2 \underline{S}_0 + \underline{S}_1 + \underline{S}_2}{2h_{min}}, \Delta n - \frac{\Delta \underline{S}}{2h_{min}} \right) \\
    K = \left( n_0 +n_2 + \frac{\underline{T}_1 - 2\underline{S}_0 - \underline{S}_2}{2h_{min}}, n_2 - \frac{\underline{T}_1 + \underline{S}_2}{2h_{min}} \right) \\
    L = \left( \frac{2\underline{T}_0 + \underline{T}_1 - \underline{S}_2}{2h_{min}} + n_2, n_2 - \frac{\underline{T}_1 + \underline{S}_2}{2h_{min}} \right)
\end{array}$$

% =============================

\textit{\textbf{Straight lines and semi-planes delimiting phase IV-b.}}\\
\begin{align}
    m        & \equiv  - \Delta m, \label{AB} \tag{AB}\\
    \Delta m & \equiv  - n_1, \label{BC} \tag{BC}\\
    m        & \equiv  \Delta m + (n_0 + 2n_1), \label{CD} \tag{CD}\\
    m        & \equiv  - \Delta m + (n_0 + 2n_2), \label{DE} \tag{DE}\\
    \Delta m & \equiv  n_2, \label{EF} \tag{EF}\\
    m        & \equiv  \Delta m. \label{FA} \tag{FA}
\end{align}

\textit{\textbf{Straight lines and semi-planes delimiting phase IV-a.}}\\
\begin{align}
    m        & \equiv  \frac{\underline{T}_0+\underline{T}_2}{h_{min}} - \Delta m, \label{GH} \tag{GH} \\
    \Delta m & \equiv  \frac{\underline{S}_1 + \underline{T}_2}{2h_{min}} - n_1, \label{HI} \tag{HI} \\
    m        & \equiv  n_0 + 2n_1 - \frac{\underline{S}_0 +\underline{S}_1}{h_{min}} + \Delta m, \label{IJ} \tag{IJ} \\
    m        & \equiv  n_0 + 2n_2 - \frac{\underline{S}_0+\underline{S}_2}{h_{min}} - \Delta m, \label{JK} \tag{JK} \\
    \Delta m & \equiv  n_2 - \frac{\underline{T}_1 + \underline{S}_2}{2h_{min}}, \label{KL} \tag{KL} \\
    m        & \equiv  \Delta m + \frac{\underline{T}_0 + \underline{T}_1}{h_{min}}. \label{LG} \tag{LG}
\end{align}

\textit{\textbf{Straight lines and semi-planes delimiting phases I-a/I-b, I-b/II-b, II-b/III-b, III-b/III-a, III-a/II-a, II-a/I-a:}}\\
\begin{align}
    \Delta m & \equiv  \left( \Delta \underline{T} / \underline{T} / 2 \right) \; m, \label{AG} \tag{AG} \\
    \Delta m & \equiv  \frac{-m (\underline{S}_1 + \underline{T}_2) + 2n_1 (\underline{T}_0 + \underline{T}_2)}{\underline{S}_1 - 2\underline{T}_0 - \underline{T}_2}, \label{BH} \tag{BH}
    \end{align}
    \begin{align}
    \Delta m & \equiv  \frac{- 2n_1 (\underline{S}_0 - \underline{T}_2) + (\underline{S}_1 + \underline{T}_2) (n_0 - m)}{2\underline{S}_0 + \underline{S}_1 - \underline{T}_2}, \label{CI} \tag{CI} \\
    \Delta m & \equiv  \Delta n + \frac{\Delta \underline{S}(m - n)}{2\underline{S}_0 + \underline{S}_1 + \underline{S}_2}, \label{DJ} \tag{DJ}
        \end{align}
    \begin{align}
    \Delta m & \equiv  \frac{2n_2(\underline{S}_0-\underline{T}_1)-(\underline{S}_2+\underline{T}_1)(m-n_0)}{2\underline{S}_0 + \underline{S}_2 - \underline{T}_1}, \label{EK} \tag{EK} \\
    \Delta m & \equiv  \frac{- 2n_2 (\underline{T}_0 + \underline{T}_1) + m(\underline{S}_2 + \underline{T}_1)}{\underline{S}_2 - 2\underline{T}_0 - \underline{T}_1}. \label{FL} \tag{FL}
\end{align}

%=======================
%\chapter{Feedback control of the number of running trains}
\section{Feedback Control Laws for the Number of Trains}\label{mac-control}
Above, the traffic phases of the train dynamics on a metro line with a junction where trains respect minimum travel times, have been derived. An ensemble of stable set points in the stationary regime has been characterized where trains respect minimum dwell and run times. These minimum values can include time margins to handle small perturbations.
In case these margins are insufficient to recover perturbations, a macroscopic control shifting the set point of the system can optimize metro operations.
Some macroscopic control laws for a feedback control of the number of trains to respond to a change in passenger travel demand or a perturbation are presented below.

%=======================
The derivation of the traffic phases of a metro line with a junction gives an ensemble of optimal operating points
for different train frequencies.
More precisely, these optimal operating points are all the ones of the straight line (AG) in Fig.~\ref{fig-2D}
separating the two surfaces I-a and I-b of the free flow phases (see equations~(\ref{I-a}) and~(\ref{I-b})).
This section concerns the control of the number of running trains ($m$ and $\Delta m$), to track the optimal operating point, changing with different parameters: run and dwell times and the required train frequency (responding to the level of passenger demand). As mentioned above, the macroscopic control we propose here is applied in the case where travel time margins are insufficient to recover perturbations.

Consider the following notations.

\begin{tabular}{lp{0.8\textwidth}}
$\underline{T}$ & $:= \left( 2\underline{T}_0 + \underline{T}_1 + \underline{T}_2 \right) / 2$.\\
  $\Delta \underline{T}$ & $:= \underline{T}_2 - \underline{T}_1$.\\
 $\underline{S}$ & $:= \left( 2\underline{S}_0 + \underline{S}_1 + \underline{S}_2 \right) / 2$. \\
  $\Delta \underline{S}$ & $:= \underline{S}_2 - \underline{S}_1$.
\end{tabular}

For the regulation, the following is considered.
\begin{itemize}
  \item Observed state variables: $(\underline{T}, \Delta \underline{T})$ or $f_0$.
  \item Set point: $f_0$.
  \item Control variables: $m, \Delta m$.
\end{itemize}
The procedure of traffic regulation is then the following.
Assuming that the metro line is operated at an optimal operating point on the straight line (AG) of Fig.~\ref{fig-2D},
the run and dwell times are on-line measured. Every time a train runs (stops) on a given segment, the corresponding run (dwell) time
and the total travel times ($\underline{T}, \Delta \underline{T}$) are updated.
If the coordinates of the set point in the plan of Fig.~\ref{fig-2D} change, the control retrieves the new $m$ and $\Delta m$ corresponding
to the new coordinates of the set point.
Acting on $m$ consists in inserting or removing a train.
Acting on $\Delta m$ can be done by instantaneously changing the train order at the junction.
Indeed it is sufficient to once change the order at the divergence or at the convergence,
to realize a one step shift of $\Delta m$.
However, it is preferable to act only on the train passing order at the convergence, because at the divergence, changing the train destination means that passengers have to change trains which leads to passenger congestion at the divergence and degrades the quality of service.
This control is macroscopic, since it regulates the number of trains on the three parts of the line, the junction,
and consequently the average headways on the line.

Below are derived the main equations needed for the feedback control, distinguished in three cases.
The system of equations of the straight line (AG) in the 3D space is given directly by the
two equations~(\ref{I-a}) and~(\ref{I-b}).
Equivalently, as shown in the following, it can be written as a combination of its 2D equation (see Appendix~(\ref{AG}) in the $(m, \Delta m)$ plane
with equation~(\ref{I-a}) (or equivalently~(\ref{I-b})).
\begin{align}
    & 2 \Delta m / m = \Delta \underline{T} / \underline{T}, \label{ceq1} \\
    & f_0 = (m - \Delta m) / (\underline{T}_0 + \underline{T}_1). \label{ceq2}
\end{align}
Solving for $\Delta m$ in~(\ref{ceq1}) and replacing it in~(\ref{ceq2}) gives
\begin{equation}\label{rel1}
  f_0 = m / \underline{T}.
\end{equation}
Solving for $m$ in~(\ref{ceq1}) and replacing it in~(\ref{ceq2}) gives
\begin{equation}\label{rel2}
  f_0 = 2 \Delta m / \Delta \underline{T}.
\end{equation}

Formula~(\ref{rel1}) is the slope of the 3D straight line (AG) in the $(m,f)$ plane.
It retrieves the one for a linear metro line without junction, derived in~\cite{FNHL16}.
Indeed, (\ref{rel1}) can also be written $f_0 = \rho v$, where $\rho := m/L$ is the average train density on the metro line
and $v := L / \underline{T}$ is the free flow (or maximum) train speed on the metro line.

Formula~(\ref{rel2}) is new. It gives the slope of the 3D straight line (AG) in the 2D $(\Delta m,f)$ plane.
Formula~(\ref{rel2}) can not be written with train densities and speeds.
These equations serve for traffic control. 
From~(\ref{rel1}) and~(\ref{rel2}) the following feedback laws are derived.
\begin{align}
   & m = \underline{T} \; f_0. \label{law1} \\
   & \Delta m = \Delta \underline{T} \; f_0 / 2. \label{law2}
\end{align}

Feedback laws~(\ref{law1}) and~(\ref{law2}) allow to control the number of running trains.
%At every time step, $\underline{T}$ and $\Delta \underline{T}$ are observed, and a train frequency set point $f_0$ is chosen
%depending on the passenger travel demand. This set point can either be the same as in the last time step, or change in case of a change in passenger demand.
%Based on $\underline{T}, \Delta \underline{T}$ and $f_0$, control laws~(\ref{law1}) and~(\ref{law2}) are applied and
%the values of $m$ and $\Delta m$ are derived for the next time step.
The controls $m$ and $\Delta m$ need to be rounded to integers before applying them.
This does not affect stability, since all the set points are stable.

 In the following, three cases of changing traffic conditions are distinguished.
 \begin{itemize}
   \item Due to an enduring affluence of arrival passengers, there is the need to increase the train frequency of the
     metro line in order to respond to the travel demand. As a consequence, the set point $f_0$ changes.
   \item Due to an incident, train travel times increase. Consequently, a changing in
	$\underline{T}$ and/or in $\Delta \underline{T}$ is observed. However, there is no possibility to insert or to cancel trains
	in order to respond to the incident.
   \item As above, due to an incident on the metro line, train travel times increase. Consequently, a changing in
	$\underline{T}$ and/or in $\Delta \underline{T}$ is observed. This time, trains can be inserted or canceled to respond to the incident.
 \end{itemize}

% ==============================================================
%\subsection{Responding to a changing travel demand volume }
\subsubsection{Control Law for the Number of Trains in Feedback of the Passenger Travel Demand}
% ==============================================================
\label{control-1}
For a transition from one time period to another one with different travel demand levels (for example when passing from off-peak to peak hour), 
where the difference between the travel demands on the two branches does not change (and where no further disturbances affect operations)
the train frequency needs to be updated to respond to the new travel demand. The controls $m$ and $\Delta m$ are then calculated as functions
of the new train frequency $f_0$ with the following feedback law.
\begin{equation}\label{lin2}
  \begin{pmatrix}
     m \\ \Delta m
  \end{pmatrix}
  =
  \begin{pmatrix}
     \underline{T} \\ \Delta \underline{T} /2
  \end{pmatrix}
  \;
  f_0.
\end{equation}

In practice, the new train frequency $f_0$ is evaluated with regard to the new travel demand.
Then the new controls $m$ and $\Delta m$ are derived from the feedback law~(\ref{lin2}),
where $\underline{T}$ and $\Delta \underline{T}$ are supposed to remain unchanged.

With regard to Fig.~\ref{fig-2D}, for a new train frequency level $f_0$, the feedback~(\ref{lin2}) 
will move the optimal operating point on the 2D graphic of Fig.~\ref{fig-2D} by remaining on the straight line (AG), 
to reach a new optimal operating point corresponding to the required new train frequency set point $f_0$.

% ===========================================================================================================
%\subsection{Responding to disturbances without possibility of inserting or deleting trains}
\subsubsection{Control Law for the Number of Trains in Case of a Perturbation on the Branches}
% ===========================================================================================================
\label{control-2}
In the following, a time period with constant travel demand level and
disturbances on $\underline{T}$ and $\Delta \underline{T}$ is considered.
However, there is no possibility to act on the total number $m$ of trains, that means trains cannot be inserted or canceled.
This is a common constraint in transportation, for example due to the non-availability of additional trains and/or drivers.
A consequence of this assumption is that the train frequency will decrease due to the disturbances on $\underline{T}$ and $\Delta \underline{T}$.
The optimal control $\Delta m$ that permits to remain on the straight line (AG) of Fig.~\ref{fig-2D} is the following.
\begin{equation}\label{lin4}  
     \Delta m =  m \Delta \underline{T} / (2\underline{T}).
\end{equation}
The new train frequency changes with respect to $\underline{T}$, independent of $\Delta \underline{T}$, as follows.
\begin{equation} \nonumber
  f_0 = m / \underline{T}.  
\end{equation}
With respect to Fig.~\ref{fig-2D}, a disturbance on $\Delta \underline{T}$ and $\underline{T}$ modifies the phase diagram such that the
resulting $\Delta m$ re-places the frequency set point $f_0$ back on the straight line (AG), but on a degraded level, due to the prolongation of the travel time $\underline{T}$.

% ===========================================================================================================
%\subsection{Responding to disturbances during a time period with a constant passenger travel demand}
\subsubsection{Control Law for the Number of Trains on the Branches in Case of a Perturbation on the Branches}
% ===========================================================================================================
\label{control-3}
During a time period where the travel demand level is constant,
the train frequency needed to absorb the travel demand does not change.
Therefore, during the considered time period, the set point $f_0$ remains the same over all time steps. 
In this case, the feedback law system is linear on $\underline{T}$ and $\Delta \underline{T}$,
where the train frequency $f_0$ is assumed to be unchanged.
Moreover, the two feedback laws are independent of each other since the gain matrix is diagonal.
\begin{equation}\label{lin1}
  \begin{pmatrix}
     m \\ \Delta m
  \end{pmatrix}
   = 
   \begin{pmatrix}
      f_0 & 0 \\ 0 & f_0/2
   \end{pmatrix}
   \;
   \begin{pmatrix}
      \underline{T} \\ \Delta \underline{T}
   \end{pmatrix}.
\end{equation}

In this case, the two feedback laws can be applied separately. 
That is to say that feedback law~(\ref{law1}) can be applied every time step a change in $\underline{T}$ is observed,
and feedback law~(\ref{law2}) can applied every time step a change in $\Delta \underline{T}$ is observed.

With respect to Fig.~\ref{fig-2D}, a disturbance on $\underline{T}$ will modify the phase diagram
such that the resulting $m$ from feedback law~(\ref{law1}) combined with the unchanged $\Delta m$ will 
guarantee the same train frequency $f_0$.
Similarly, a disturbance on $\Delta \underline{T}$ will modify the 2D graphic of Fig.~\ref{fig-2D}
such that the resulting $\Delta m$ from feedback law~(\ref{law2}) combined with the unchanged $m$ will 
maintain the train frequency set point $f_0$.

%{\color{red} At RATP, depending on the signaling system, less then $1 \text{ min}$ is considered to apply one $\Delta m$ control step.
%For one $m$ control step, less then $1 \text{ min}$ is required in an automated metro line, whereas in a non-automated line, control times can be much longer.
%}

Table~\ref{table-setpointcontrol} summarizes for all types of perturbations studied above,
how the phase diagram and the optimal set point change, and
how control variables have to be adjusted to respond to the perturbations.

\begin{table}[h]
\centering
\caption{Summary over control actions as a function of: 1.~Disturbances on observed variables; 2. The set point.}
\begin{tabular}{|l|l|c|c|cc|}
\hline
\vspace{2pt}
\textbf{Feedback} &\textbf{{Observation}}		&\textbf{{Phase}}	&\textbf{{Set}} &\multicolumn{2}{c}{\textbf{{Control}}}\vline\\
\textbf{control} &		&\textbf{{diagram}}	& \textbf{{point}}  &\multicolumn{2}{c}{\textbf{{variables}}}\vline \\ 
\vspace{2pt}
&      														&				&$f_0$		  				& $m$ 		&$\Delta m$ \\ \hline
\vspace{2pt}
Part \ref{control-1}&  $\nearrow f_0$	  										&same			& $\nearrow$						&	$\nearrow$ & $|\Delta \underline{m}| \nearrow$	\\
& & & & & \\ \hline
\vspace{2pt}
Part \ref{control-2}&    $(\underline{T}\nearrow, \Delta \underline{T} \updownarrow)$		& new  			& $\searrow$ 					& same   & follows \\
&    	&   			&  					&    & $ \Delta \underline{T}$ \\ \hline
\vspace{2pt}
Part \ref{control-3}&    $(\underline{T}\nearrow, \Delta \underline{T} \updownarrow)$		& new  			& same 					& $\nearrow$   & follows \\ 
&   	&   			&  					&    & $\Delta \underline{T}$ \\ \hline
\end{tabular}
\label{table-setpointcontrol}
\end{table}

% -----------------------------------------------------------------------------
%\chapter{A model of the train dynamics in a linear line with demand-dependent (microscopic) control}
\chapter{The Effect of the Passenger Travel Demand on the Traffic}\label{Chap-4}
\chaptermark{The Effect of the Passenger Travel Demand}
\begin{quote}{In Chapter~\ref{1A}, a discrete event traffic model of the train dynamics on a metro line with a junction where trains respect given lower bounds on train dwell, run and safe separation times has been presented.
In this chapter, the model is extended. The train dwell times are modeled as a function of the passenger travel demand, such that they are extended for trains with a long time-headway within a margin on the run times. The train run times are controlled to cancel a possible extension of the train dwell times.
Two applications are presented in the following. A first one to a linear line and a second one to a line with a junction.
In both cases, it is shown that the dynamics of the entire line, with the combination of demand-dependent dwell times and controlled run times, are max-plus linear.
The main result is the derivation of the unique asymptotic average growth rate, interpreted as the asymptotic average train time-headway.
The traffic phases of the train dynamics are derived. Their illustration is the fundamental diagram for a linear line and for a line with a junction, relating the variables train frequency, number of trains, passenger travel demand, as well as the run time margin.
}
\end{quote}
% -----------------------------------------------------------------------------
%\section{Demand-dependent modeling - The case of a linear line}
\section{A Model of the Demand-dependent Train Dynamics on a Linear Line}\label{acc}
\sectionmark{A Traffic Model of a Linear Line}
% -----------------------------------------------------------------------------
\subsection{Passenger Arrivals Modeling}
In the preceding Chapter~\ref{1A}, a model of the train dynamics in a line with a junction has been presented. It has been supposed that trains respect given minimum values on dwell and run times. Indeed, these minimum values can include time margins to respond to smaller perturbations.
In this chapter, the model is extended taking into account the effect of the passenger demand on the train dynamics. This modeling of the passenger arrivals is first applied to the model of a linear line, presented by the authors of~\cite{FNHL17a}. In the second part of this chapter, the model of the train dynamics on a line with a junction of Chapter~\ref{1A} is extended, modeling the passenger arrivals to the platforms and their effect on the train dynamics.

The passenger travel demand is modeled by taking into account average passenger arrival and departure rates to, respectively from the platforms. These rates (or flows) can be calculated from Origin-Destination matrices.
Moreover, an average boarding, respectively average alighting capacity is considered. These average flow-capacities depend on the layout of the trains, especially the length and the number of doors, and the number of passengers on the platform and in the train. To simplify the model, one average boarding and one average alighting capacity is considered per platform. This means that the passenger exchange process is supposed to be independent of the number of passengers on the train and on the platform. Therefore, congestion phenomena cannot be modeled here.

Finally, it is supposed that the train dwell time is calculated such that the passenger alighting and boarding demand on a certain platform is completely satisfied. This means that all the passengers who want to alight and all the ones who are on the platform waiting to board a train, can do so. It is considered that the capacity of train platforms and trains is sufficient to satisfy the demand and are therefore not explicitly modeled here.
Accordingly, consider the following notations.

~\\
\begin{tabular}{lp{0.8\textwidth}}
  %$\lambda_{j}$ & average passenger arrival rate to platform~$j$, $\lambda_{j} = 0$ if $j$ is not a platform node.\\
  $\lambda^{\text{in}}_i$ & $=\sum_j \lambda_{ij}$ the average passenger arrival rate on origin platform $i$ to any destination platform. \\      
  $\lambda^{\text{out}}_j$ & $=\sum_i \lambda_{ij}$ the average passenger departure rate from destination platform $j$, from any origin platform. \\
  $\alpha^{\text{in}}_j$ & average passenger boarding rate on platform $j$. \\      
  $\alpha^{\text{out}}_j$ & average passenger alighting rate on platform $j$. \\  
\end{tabular}
~\\

Using the parameters defined above, the number of boarding passengers is modeled by multiplying the average passenger arrival flow to a platform (number of passengers per time interval) with the train time-headway. The train time-headway is the time interval between two consecutive departures from a platform. Dividing the number of boarding passengers by the boarding capacity gives the time for passenger boarding.
The number of alighting passengers, depends on the number of passengers having boarded a train on the preceding stations. Here, it is directly modeled using the average passenger departure rate from the alighting platform, the corresponding alighting capacity and the time-headway of the train at the alighting platform, such that the time for passenger alighting is extended for a train with a long time-headway. Then, consider the following notations.

~\\
\begin{tabular}{lp{0.8\textwidth}}
  $\frac{\lambda^{\text{in}}_i}{\alpha^{\text{in}}_i} \; h_i$ & time for passenger boarding at platform $j$. \\
%\end{tabular}
%\begin{tabular}{lp{0.8\textwidth}}
  $\frac{\lambda^{\text{out}}_j}{\alpha^{\text{out}}_j}\; h_j$ & $\approx \sum_{i} \lambda_{ij} h_{i} / \alpha_{j}^{out}$ time for passenger alighting at platform $j$.
  Note that the number of alighting passengers at a platform $j$ is estimated with the headway $h_j$ of the train at the arrival platform $j$. This is an approximation since the number of alighting passengers depends on the number of passengers having boarded the train on the previous platforms, which depend on $h_i$.\\% Note furthermore that in case of a constant train time-headway $\frac{\lambda^{\text{out}}_j}{\alpha^{\text{out}}_j}\; h_j$ $= \sum_{i} \lambda_{ij} h_{i} / \alpha_{j}^{out}$. \\
\end{tabular}
~\\

Finally, $x_j$ is defined as the passenger demand parameter incorporating average passenger arrival and departures rates per platform, and average passenger boarding and alighting capacities per platform
\begin{equation} \label{eq-x}
   x_j = \left( \frac{\lambda^{\text{out}}_j}{\alpha^{\text{out}}_j} + \frac{\lambda^{\text{in}}_j}{\alpha^{\text{in}}_j} \right),
\end{equation}
such that $x_j h_j$ represents the time needed for passenger alighting and boarding at platform $j$.

%......................................
\subsection{Demand-dependent Train Dynamics}
%.....................................
\begin{figure}[h]
  \centering
  \includegraphics{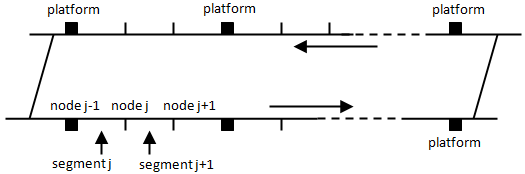}
  \caption{A linear metro line representation.}
  \label{fig-loop-line}
\end{figure}
The notation for the modeling of the train dynamics on a linear line, used throughout this chapter, are similar to those in Chapter~\ref{1A} above.
Similarly to the model for a line with a junction presented in Section~\ref{1A-a} above, the line is discretized into $n$ segments,
as shown in Fig.~\ref{fig-loop-line}.
Considering a linear metro line, it is sufficient to consider the equations of the train dynamics out of the junction~(\ref{neq-c1}) and~(\ref{neq-c2}) and to set, for all nodes and all segments, $u = 0$ in order to retrieve an adequate notation.

Equation~(\ref{neq-c2}) models the constraint on the safe separation time and is for a linear line written
\begin{equation}\label{const2}
       d^k_j \geq d^{k-\bar{b}_{j+1}}_{j+1} + \underline{s}_{j+1}, \quad \forall k \geq 0.
\end{equation}
Other than that, it remains unchanged.

Equation~\ref{neq-c1} models the travel time constraint and is for a linear line written as follows
\begin{equation}\label{const1}
d^k_j \geq d^{k-b_{j}}_{j-1} + \underline{t}_j, \quad \forall k \geq 0.
\end{equation}
More precisely the minimum travel time $\underline{t}_j$ is the sum of minimum run time $\underline{r}_j$ and minimum dwell time $\underline{w}_j$.
\begin{equation}%\label{}
d^k_j \geq d^{k-b_{j}}_{j-1} + \underline{r}_j + \underline{w}_j, \quad \forall k \geq 0.
\end{equation}
This constraint is rewritten in the following by replacing the minimum run and dwell times by a run time function and a dwell time function.
\begin{itemize}
\item The dwell time function calculates the dwell time depending on the passenger alighting and boarding times defined above.
\item The run time function models a control that cancels a possible extension of the dwell times in case of a high passenger affluence and long passenger exchange times.
\end{itemize}

Replace constraint~(\ref{const1}) with the following.
\begin{equation}\label{const3}
   d^k_j \geq d^{k-b_{j}}_{j-1} + t^k_j(h^k_j,x_j) = d^{k-b_{j}}_{j-1} + r^k_j(h^k_j,x_j) + w^k_j(h^k_j,x_j).
\end{equation}
In constraint~(\ref{const3}), $r^k_j, w^k_j$ and thus $t^k_j$ are functions of the train time-headway $h^k_j$,
and of the passenger demand.
The function $t^k_j(h^k_j,x_j)$ is the control law which makes the train dynamics adaptive with respect to 
train delays and to passenger demand.

In this model, minimum travel times are replaced by a travel time which is a function
of the passenger demand, summarized in parameter $x_j$.
Moreover, the dwell time, the first component of the travel time, is a linear function of the train time-headway and takes into account the passenger demand.
The run time, the second component of the travel time, is as well a function of the train time-headway and allows to recover eventual extensions of the train dwell time.
The two functions $r^k_j(h^k_j)$ and $w^k_j(h^k_j)$ are detailed below.

% -----------------------------------------------------------------------------
%\subsection{Demand-dependent dwell time}
% -----------------------------------------------------------------------------
\subsubsection{Dynamic Programming Traffic Model}
\label{dyn_prog}
% -----------------------------------------------------------------------------
%This paragraph is short recall of two dwell time models depending on
Two models of the train dwell time accounting for the
passenger travel demand have been proposed by the authors of~\cite{FNHL17b}.
The first model is of the form
\begin{equation}\label{mod1}
  w^k_j = \frac{\lambda^{\text{in}}_j}{\alpha_j} h^k_j.
\end{equation}
It can be seen that the dwell time takes into account the needed boarding time without taking into account the
needed alighting time. This model permits to adjust the dwell times on platforms in function of
the arrival demand.
However, it has been shown in~\cite{FNHL17b} that the train dynamics is not stable in this case.
This means train delays are amplified over time and propagate backwards through the metro line.

To deal with the latter problem, the authors of~\cite{FNHL17b} have proposed a second model for the train dwell times at
platforms. The model is of the form
\begin{equation}\label{mod2}
  w^k_j = \max\left\{\underline{w}_j , \bar{w}_j - \theta_j \frac{\lambda^{\text{in}}_j}{\alpha_j} h^k_j \right\},
\end{equation}
with $\theta_j$ being a control parameter to be fixed.
The authors have shown that the model~(\ref{mod2}) guarantees the stability of the train dynamics,
and that the dynamics admit an asymptotic regime with an asymptotic average train time-headway.
The latter is derived by simulation in function of the number of trains and of the level of
the passenger demand.
However, with the control law of  model~(\ref{mod2}), a delayed train at platform $j$, which induces an accumulation of passengers at platform $j$, will reduce the train dwell time at that platform,
what is absurd for passengers.

The authors of~\cite{FNHL17b} have shown that with dwell time control~(\ref{mod2}), and under the condition
$0\leq \theta_j\leq 1, \forall j$, the train dynamics
admits an asymptotic regime with a unique average growth rate, which can be interpreted as the average train time-headway. 
Therefore the train dynamics are stable. Moreover, they can be interpreted as the dynamic programming system of 
a stochastic optimal control problem of a Markov chain.

% -----------------------------------------------------------------------------
\subsubsection{Max-plus Linear Traffic Model}
\label{max-plus}
% -----------------------------------------------------------------------------
The model presented below resolves the problem that the dynamics on a linear line with demand-dependent dwell times accordingly to~(\ref{mod1}) are unstable. Moreover, as it has just been discussed, the dwell time control~(\ref{mod2}) guaranteeing stability is absurd from a passengers point of view, since the dwell times of trains running with a long time-headway are shortened.% to guarantee stability of the system.

~\\
\textbf{\textit{Dwell time model}}\\
The following dwell time model~(\ref{eq-dwell}) is similar to the model~(\ref{mod1}), but it also takes into account
the attraction term of the travel demand.
\begin{align}
  & w_j^k(h_j^k,x_j) = \min (x_j h_j^k, \bar{w}_j ), \label{eq-dwell}
\end{align}

where the $k^{th}$ dwell time on platform $j$, $w^k_j$ is the minimum 
between
%the time needed for passenger alighting and boarding $\underline{w}_j$,
the time needed for passenger alighting and boarding ${x}_jh^k_j$,
and an upper bound on the train dwell time $\bar{w}_j$.

Furthermore, the minimum and maximum dwell time are related to the train time-headway $h_j$ and the dynamic interval $g_j$ as follows.
\begin{align}
  & \underline{w}_j = x_j \underline{h}_j,\\
   & \bar{w}_j = x_j \bar{h_j},
\end{align}
where $\underline{h}_j$ and $\bar{h}_j$ are derived from given $\underline{g}_j$ and $\bar{g}_j$, from the formula 
$h = g + w$ of~(\ref{form3}), and from the dwell time law~(\ref{eq-dwell}), as follows.
\begin{align}
   & \underline{h}_j = \underline{g}_j + \underline{w}_j = \underline{g}_j + x_j \underline{h}_j.\\
    & \bar{h}_j = \bar{g}_j + \bar{w}_j = \bar{g}_j + x_j \bar{h}_j.
\end{align}
Then
\begin{align}
   & \underline{h}_j = 1/(1-x_j) \; \underline{g}_j,\\  %\left(\frac{1}{1-x_j}\right) \; \underline{g}_j. \\
    & \bar{h}_j = 1/(1-x_j) \; \bar{g}_j, % \left(\frac{1}{1-x_j}\right) \; \bar{g}_j,
\end{align}
and then
\begin{align}
   & \underline{w}_j = X_j \underline{g}_j, \label{eq-min-dwell}\\
    & \bar{w}_j = X_j \bar{g}_j, \label{eq-max-dwell}
\end{align}
where $X_j := x_j / (1 - x_j)$.

Equation~(\ref{eq-min-dwell}) gives the dwell time for passenger alighting and boarding at platform $j$ for a train which arrives at this platform with the minimum dynamic interval $\underline{g}_j~=~g^k_j~=~a^k_j-d^{k-1}_j$. This minimum dynamic interval is an infrastructure constraint, imposed by the signaling system. The corresponding dwell time $\underline{w}^k_j$ is the minimum dwell time.

%à discuter;
Equation~(\ref{eq-max-dwell}) defines the maximum dwell time such that the passenger alighting and boarding demand at platform $j$ is completely satisfied for a train which arrives at this platform with the maximum dynamic interval $\bar{g}_j = g^k_j = a^k_j - d^{k-1}_j$.
In order to guarantee the stability of the trains dynamics, a run time margin is defined below. It will allow to extend dwell times to the maximum $\bar{w}_j$ as defined by equation~(\ref{eq-max-dwell}). The stability conditions are then explained in \textit{Demand-dependent train dynamics}, after Theorem~\ref{th1-acc18}.

%Another way to calculate the maximum dwell time is to directly fix the upper bound on the train time-headway $\bar{h}_j$.
%Denote by $\kappa$ the passenger load capacity per train. Then, $\bar{h}_j$ can be obtained as follows.
%\begin{equation}
%    \bar{h}_j := \kappa / (\sum_j \lambda^{in}_j), \quad \forall j,
%    \label{eq-max-h}
%\end{equation}
%in such a way that all the passenger arrivals can theoretically be served.
%
%Then, based on $\bar{h}_j$, the upper bounds $\bar{w}_j$ and $\bar{g}_j$ are given by:
%\begin{align}
%    & \bar{w}_j := x_j \bar{h}_j, \quad \forall j, \label{eq-max-w} \\
%    & \bar{g}_j := \bar{h}_j - \underline{w}_j, \quad \forall j. \label{eq-max-g}
%\end{align}

% -----------------------------------------------------------------------------
%\subsubsection{Run time control}
% -----------------------------------------------------------------------------

~\\
\textbf{\textit{Run time model}}\\
In order to deal with the instability issue, the dwell time model is completed with a run time
model which cancels the term that causes instability in the dwell time model.
More precisely, for a train delay at a platform $j$, the dwell time model extends the dwell time
at that platform in order to satisfy the minimum alighting and boarding times given by equation~(\ref{eq-min-dwell}).
The run time model will reduce the run time from platform $j$ to platform $j+1$
in order to compensate in such a way that the whole travel time ($t=w+r$) remains stable.

Consider the following run time law.
\begin{align}
  & r_j^k(h_j^k,x_j) = \max \left\{ \underline{r}_j, \underline{r}_j + \Delta r_j - x_j \left( h_j^k - \underline{h}_j \right) \right\}, \label{eq-run}
\end{align}
where the sum of minimum run time $\underline{r}_j$ and run time margin $\Delta r_j$ is the nominal run time of trains on segment $j$, $\tilde{r}_j$.
The model~(\ref{eq-run}) gives the run time as the maximum between a given minimum run time $\underline{r}_j$
and a term that subtracts $x_j \left( h_j^k - \underline{h}_j \right)$ from the nominal run time.
The term $x_j \left( h_j^k - \underline{h}_j \right)$ expresses a deviation of the boarding and alighting time, 
due to a deviation of the train time-headway.
Notice here that the term $x_j h^k_j$, appearing in the dwell time law~(\ref{eq-dwell}) with a sign ``$+$'',
appears in the run time law~(\ref{eq-run}) with a sign ``$-$''.

Combining the dwell time law~(\ref{eq-dwell}) with the run time law~(\ref{eq-run}), gives the following new travel time law.
\begin{equation}\label{eq-travel}
   t_j^k(x_j) = r_j^k(h_j^k,x_j) + w_j^k(h_j^k,x_j).
\end{equation}

~\\
\textbf{\textit{Demand-dependent train dynamics}}\\
% -----------------------------------------------------------------------------
%\subsection{Controlled train dynamics model}
% -----------------------------------------------------------------------------
Consider the following notations.
\begin{align}
    & \Delta h_j := \bar{h}_j - \underline{h}_j, \quad \Delta g_j := \bar{g}_j - \underline{g}_j, \nonumber \\
    & \Delta w_j := \bar{w}_j - \underline{w}_j, \quad \Delta r_j := \tilde{r}_j - \underline{r}_j. \nonumber
\end{align}

It is then easy to check the following.
$$\Delta w_j = x_j \Delta h_j = X_j \Delta g_j, \forall j.$$

The following result shows that with the dwell time and run time models chosen above, the train dynamics on a linear line with demand-dependent dwell times are stable in the stationary regime.
\begin{theorem}\label{th1-acc18}
  % If $x_j \delta h^k_j \leq \delta w_j \leq \delta r_j, \forall j,k$, then the dynamic system~(\ref{const2})-(\ref{const3}) is
  If $h^1_j \leq \bar{h}_j=1/(1 - x_j) \; \bar{g}_j, \forall j$ and if $\Delta r_j \geq \Delta w_j = X_j \Delta g_j, \forall j$, then the constraints~(\ref{const2})-(\ref{const3}) is
  a max-plus linear system, and is equivalent to
  \begin{align}
    & d^k_j \geq d^{k-b_{j}}_{j-1} + \tilde{r}_j + X_j \underline{g}_j. \label{eq-mp7} \\
     & d^k_j \geq d^{k-\bar{b}_{j+1}}_{j+1} + \underline{s}_{j+1}. \label{eq-mp8}
  \end{align}

The dynamics is then written as follows.

\begin{equation}\label{acc-dynamics}
d^k_j = \max \left\{
	\begin{array}{l}
	    d^{k-b_{j}}_{j-1} + \tilde{r}_j + X_j \underline{g}_j, \\~~\\
	   d^{k-\bar{b}_{j+1}}_{j+1} + \underline{s}_{j+1}.
	\end{array} \right.
\end{equation}
\end{theorem}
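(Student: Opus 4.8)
The plan is to show two things: first, that under the stated hypotheses the coupled constraints~(\ref{const2})--(\ref{const3}) reduce to the explicit max-plus linear system~(\ref{eq-mp7})--(\ref{eq-mp8}); and second, that this reduced system is indeed linear in the max-plus algebra (which is manifest once the reduction is done, since the right-hand sides are then $\max$ of terms of the form $d^{k-c}_{j'} + \text{const}$). The whole difficulty is concentrated in the first claim, namely in proving that the $\min$ in the dwell law~(\ref{eq-dwell}) and the $\max$ in the run law~(\ref{eq-run}) always resolve in the ``right'' direction, so that $t^k_j(h^k_j,x_j) = \tilde{r}_j + X_j\underline{g}_j$ identically, independently of $h^k_j$. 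Note $\tilde r_j + X_j\underline g_j = \underline r_j + \Delta r_j + X_j\underline g_j$, and since $X_j\underline g_j = \underline w_j$ by~(\ref{eq-min-dwell}), this equals $\underline r_j + \Delta r_j + \underline w_j$, the nominal travel time with no headway dependence.

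**The key computation.**

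First I would establish the inductive invariant $h^k_j \leq \bar h_j$ for all $j$ and all $k\geq 1$. The base case $k=1$ is a hypothesis. For the induction step, I would argue on the dynamics: if all headways entering step $k$ are bounded by the corresponding $\bar h$, then the departure recursion together with the travel-time law keeps $h^k_j \leq \bar h_j$ — this is where the run-time margin condition $\Delta r_j \geq \Delta w_j = X_j\Delta g_j$ enters, as it guarantees the run-time reduction can fully absorb the headway-induced dwell extension without the run time hitting its floor $\underline r_j$. Concretely, on the interval $\underline h_j \leq h^k_j \leq \bar h_j$ one checks $x_j(h^k_j - \underline h_j) \leq x_j\Delta h_j = \Delta w_j \leq \Delta r_j$, so the inner expression $\underline r_j + \Delta r_j - x_j(h^k_j-\underline h_j)$ in~(\ref{eq-run}) is $\geq \underline r_j$, hence the $\max$ selects it: $r^k_j = \tilde r_j - x_j(h^k_j - \underline h_j)$. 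Simultaneously, for $h^k_j \leq \bar h_j$ the dwell law gives $x_j h^k_j \leq x_j\bar h_j = \bar w_j$, so the $\min$ in~(\ref{eq-dwell}) selects $w^k_j = x_j h^k_j$. Adding, $t^k_j = r^k_j + w^k_j = \tilde r_j - x_j(h^k_j - \underline h_j) + x_j h^k_j = \tilde r_j + x_j\underline h_j = \tilde r_j + \underline w_j = \tilde r_j + X_j\underline g_j$, a constant. Substituting this constant travel time into~(\ref{const3}) yields~(\ref{eq-mp7}), while~(\ref{const2}) is literally~(\ref{eq-mp8}); combining the two constraints via the ``departs as soon as both are satisfied'' rule gives~(\ref{acc-dynamics}).

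**The main obstacle and how to handle it.**

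The delicate point is the lower tail: one must also rule out $h^k_j < \underline h_j$, since the above computation assumed $h^k_j \geq \underline h_j$ so that $x_j(h^k_j-\underline h_j)\geq 0$ and the $\max$ in~(\ref{eq-run}) genuinely picks the nominal branch rather than, possibly, still picking it but with the interpretation changing. Here I would invoke the safe-separation constraint~(\ref{eq-mp8}) together with $\underline g_j$ being the infrastructure-imposed minimum dynamic interval: the dynamic interval $g^k_j = a^k_j - d^{k-1}_j$ cannot fall below $\underline g_j$ because the signaling system enforces it (equivalently, the safe separation time $\underline s_{j+1}$ plus the minimum run time forces $h^k_j \geq \underline h_j$), so $h^k_j = g^k_j + w^k_j \geq \underline g_j + \underline w_j = \underline h_j$ always. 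With both tails controlled, the travel time is the stated constant on the whole feasible range, the reduction is complete, and max-plus linearity of~(\ref{acc-dynamics}) follows by inspection — each right-hand side is a max over affine-in-$d$ terms with integer backshifts $b_j, \bar b_{j+1} \in\{0,1\}$, so the system can be written $d = A(\gamma)\otimes d$ for an appropriate polynomial matrix $A(\gamma)$ of degree one. I would close by remarking that the stability hypothesis $\Delta r_j \geq X_j\Delta g_j$ is exactly what makes the run margin large enough for this to work, and is the quantitative substitute for the $\theta_j \le 1$ condition in the earlier model~(\ref{mod2}).
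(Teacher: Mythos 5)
Your base-case computation is exactly right and matches the paper's: for $h^k_j\leq\bar h_j$ the margin condition $\Delta r_j\geq\Delta w_j$ forces the $\max$ in~(\ref{eq-run}) to select the nominal branch and the bound $x_jh^k_j\leq x_j\bar h_j=\bar w_j$ forces the $\min$ in~(\ref{eq-dwell}) to select $x_jh^k_j$; the headway-dependent terms cancel and $t^k_j=\tilde r_j+x_j\underline h_j=\tilde r_j+X_j\underline g_j$ is constant. (Your ``main obstacle'', the lower tail $h^k_j<\underline h_j$, is not actually an obstacle: in that case $\tilde r_j-x_j(h^k_j-\underline h_j)>\tilde r_j\geq\underline r_j$ and $x_jh^k_j<\underline w_j\leq\bar w_j$, so both selections are unchanged and the same constant comes out. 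No appeal to the signaling system is needed, and invoking a physical argument rather than the model equations there would in any case be out of place.)

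The genuine gap is the induction step. You assert that ``the departure recursion together with the travel-time law keeps $h^k_j\leq\bar h_j$'' and attribute this to the condition $\Delta r_j\geq\Delta w_j=X_j\Delta g_j$. But that condition only decides which branches of the $\max$ and $\min$ are active \emph{given} that the headway bound already holds at the step in question; it says nothing about why $h^{k+1}_j$ cannot exceed $\bar h_j$. The reduction of the travel-time law at step $k+1$ presupposes $h^{k+1}_j\leq\bar h_j$, which is exactly what has to be established, so the argument as you state it is circular. The paper closes this loop with a separate idea that your sketch does not contain: once the equivalence holds at step $k$, the update $d^{k+1}=\mathbf f(d^k)$ is given by a max-plus linear map $\mathbf f$, and such maps are nonexpansive ($1$-Lipschitz) for the sup norm, whence $\|d^{k+1}-d^{k}\|_\infty=\|\mathbf f(d^k)-\mathbf f(d^{k-1})\|_\infty\leq\|d^k-d^{k-1}\|_\infty$; the headways therefore cannot grow from one step to the next and the invariant $h^k_j\leq\bar h_j$ propagates. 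Some such nonexpansiveness or monotonicity argument is indispensable here, and supplying it is the real content of the induction.
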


\begin{proof}
By induction, it can be shown that~(\ref{const2})-(\ref{const3}) is equivalent to~(\ref{eq-mp7})-(\ref{eq-mp8}) for $k=1$.
\begin{itemize}
  \item On the one side, $x_j h^1_j \leq x_j \bar{h}_j = \bar{w}_j$ holds. Therefore, the first term realizes the minimum in~(\ref{eq-dwell}).
    That is,~(\ref{eq-dwell}) is equivalent to
    \begin{equation}\label{eq-dwell1}
       w_j^1(h_j^1,x_j) = x_j h_j^1.
    \end{equation}
  \item On the other side,
    $$\begin{array}{ll}
            \tilde{r}_j - x_j(h^1_j - \underline{h}_j)  & \geq \tilde{r}_j - x_j\left( \bar{h}_j - \underline{h}_j \right)\\
                                                        & = \tilde{r}_j - \Delta w_j \geq \tilde{r}_j - \Delta r_j = \underline{r}_j
      \end{array}$$
	holds.
    % $\tilde{r}_j - x_j(h^k_j - \underline{h}_j) \geq \tilde{r}_j - x_j(\bar{h}_j - \underline{h}_j) = \tilde{r} - \Delta w_j \geq \tilde{r}_j - \Delta r_j = \underline{r}_j$.
    Therefore, the second term realizes the maximum in~(\ref{eq-run}).
    That is,~(\ref{eq-run}) is equivalent to
    \begin{equation}\label{eq-run1}
       r_j^1(h_j^1,x_j) = \tilde{r}_j - x_j \left( h_j^1 - \underline{h}_j \right).
    \end{equation}
\end{itemize}
Consequently,~(\ref{eq-travel}) gives
\begin{equation}\label{eq-travel1}
   t_j^1(x_j) = \tilde{r}_j + x_j \underline{h}_j = \tilde{r}_j + X_j \underline{g}_j.
\end{equation}
% We notice here that $t_j^k(x_j)$ is independent of $k$, since its dependence on $h^k_j$ has disappeared.
Then~(\ref{const3}) can be written
\begin{equation}\nonumber
   d^1_j \geq d^{1-b_{j}}_{j-1} + \tilde{r}_j + X_j \underline{g}_j.
\end{equation}

In the following it is shown, that if~(\ref{const2})-(\ref{const3}) is equivalent to~(\ref{eq-mp7})-(\ref{eq-mp8}) for a given $k$, then
it holds also for $k+1$.
Since it holds for $k$, then a Max-plus linear dynamics~(\ref{eq-mp7})-(\ref{eq-mp8}) will be applied for $k$.
Note by $\mathbf f$ the max-plus map of the max-plus dynamics.
Max-plus linear maps are $1$-Lipschitz for the sup. norm.
The assertion holds for $k$ means that $h^k_j \leq \bar{h}_j, \forall j$.
Then $||d^k - d^{k-1}||_{\infty} \leq \bar{h}_j$.
Hence
$$||d^{k+1} - d^{k}||_{\infty} = ||\mathbf f(d^k) - \mathbf f(d^{k-1})||_{\infty} \leq ||d^k - d^{k-1}||_{\infty} \leq \bar{h}_j.$$
Therefore $h^{k+1}_j \leq \bar{h}_j, \forall j$.
Then, it can easily be shown (as done for $k=1$) that~(\ref{const2})-(\ref{const3}) is equivalent to~(\ref{eq-mp7})-(\ref{eq-mp8}) for $k+1$.
\end{proof}

The two conditions of Theorem~\ref{th1-acc18} can be interpreted as follows.
\begin{itemize}
%condition giving the maximum train dwell time extension;
 \item Condition $\Delta r_j \geq \Delta w_j = X_j \Delta g_j , \forall j$ limits the margin on the train dwell times
   to the margin on the train run times. 
   With this this condition, the upper bound on the train dwell time in equation~(\ref{eq-dwell}) can be fixed to:
\begin{align}\label{max-dwell}
\bar{w}_j = \underline{w}_j + \Delta w_j =  X_j \underline{g}_j + \Delta r_j.
\end{align}
   Indeed, as explained above, the model consists in responding to disturbances and train delays by first extending
   the train dwell times so that the passengers accumulated in the train and on the platforms have time to alight (respectively board) the train, and second,
   by recovering the dwell time extension by reducing the train run times on the inter-stations ahead.
   The following condition ensures this recovering to be possible while fully serving the passenger travel demand, and then the dynamic system to be stable.
%condition giving the maximum headway difference;
 \item Condition $h^1_j \leq \bar{h}_j=1/(1 - x_j) \; \bar{g}_j, \forall j,k$ limits the initial headway $h^1_j$
   (that is the initial condition) to 
   its upper bound $\bar{h}_j$, which is given by the level of the passenger travel demand $x_j$ at platform $j$, the maximum dynamic interval $\bar{g}_j$ and the run time margin $\Delta r_j$ as follows.~\label{cond2}
\begin{align}
   & \bar{h}_j = 1/(1-x_j) \bar{g}_j.
\end{align}
By replacing $\bar{g}_j = \bar{h}_j-\bar{w}_j$:
\begin{align}
& \bar{h}_j = 1/(1-x_j)(\bar{h}_j-\bar{w}_j)\\
& \bar{h}_j = (1/x_j) \bar{w}_j.
\end{align}
Then, replace $\bar{w}_j = X_j \underline{g}_j + \Delta r_j$ accordingly to equation~(\ref{max-dwell}) of the condition above:
\begin{align}
&\bar{h}_j = (1/x_j)(X_j \underline{g}_j + \Delta r_j)\\
&\bar{h}_j = 1/(1-x_j)\underline{g}_j + (1/x_j) \Delta r_j.\label{max-h}
\end{align}
The maximum headway difference $\Delta h_j$ under which the passenger demand is fully served and the train dynamics are stable are related to the run time margin as follows. From equation~(\ref{max-h}), limiting the intial condition to the maximum headway:
\begin{align}
&\Delta h_j = \bar{h}_j - \underline{h}_j= 1/(1-x_j)\underline{g}_j + (1/x_j) \Delta r_j - \underline{w}_j - \underline{g}_j.\\
&\Delta h_j = X_j\underline{g}_j - \underline{w}_j + (1/x_j) \Delta r_j.
\end{align}
And with $\underline{w}_j = X_j\underline{g}_j$:
\begin{align}
&\Delta h_j = X_j\underline{g}_j - X_j \underline{g}_j + (1/x_j) \Delta r_j\\
&\Delta h_j = (1/x_j) \Delta r_j.
\end{align}
\end{itemize}

Since a big value of $h^k_j$ corresponds to a delay of the $k^{th}$ train passing by platform $j$,
this condition tells that if all the delays expressed by $h^k_j, \forall j$ are limited to $\bar{h}_j, \forall j$,
then the dynamic system is max-plus linear, and is then stable, and admits a stationary regime.
In other words, the train dynamics is stable under small disturbances.~\label{cond1}

Note that the nominal run time on segment~$j$ is written $\tilde{r}_j$.
$\tilde{r}_j$ is given by stability condition
\begin{align}
&\Delta r_j \geq \Delta w_j = x_j \Delta h_j, \forall j.
\end{align}
and depends on $\Delta{r}_j$.
Indeed $\Delta{r}_j$ influences the robustness and the frequency
of the metro system, that means it can either be fixed equal to $\Delta{w}_j$ to enhance the average frequency, or it can be chosen greater than $\Delta{w}_j$ to reinforce robustness towards perturbations.

%Secondly, it is important to notice that fixing bounds on $r$ and on $g$ will impose bounds on $s$, according to~(\ref{form1}).
%$$\underline{g}_j = \underline{r}_j + \underline{s}_j.$$

% We write:
% \begin{equation}~\label{eq-t(x)}
%    t^k_{j,lin} (x_j) = \tilde{r}_j + g^{min}_j \left(\frac{x_j}{1 - x_j}\right),
% \end{equation}

%Finally, the dynamic modeling of the dwell and run time allows to adjust the run time margin $\Delta r_j$ depending on the passenger demand.
The system can be optimized with regard to train frequency or stability.
In the following section, the traffic phases for a linear line with demand-dependent dwell times and controlled run times are derived and the analytic formulas for the asymptotic average train time-headway $h$ and the asymptotic average
frequency $f$ are presented.

% -----------------------------------------------------------------------------
\section{The Steady State Train Dynamics} %on a linear line with demand-dependent control}
% -----------------------------------------------------------------------------

The authors of~\cite{FNHL17a} have presented a max-plus linear traffic model for a linear line, where trains respect lower bounds on dwell and run times.
They have derived the traffic phases of the train dynamics on a linear line, the main result is recalled below.
\begin{equation}\label{th-seattle}
h (m) = \max \left\{ \frac{\sum_j \underline{t}_j}{m}, \max_j (\underline{t}_j+\underline{s}_j), \frac{\sum_j \underline{s}_j}{n-m} \right\},
\end{equation}
with

\begin{tabular}{lp{0.8\textwidth}}
  $m$ & the number of trains,\\
  $n$ & the number of segments on the line.\\  
\end{tabular}

The max-plus theorem derived in~\cite{FNHL17a} gives the asymptotic average train time-headway on a linear line as a function of the number of trains, the number of segments, minimum dwell, run and safe separation times.
Since the traffic model presented above in Section~\ref{acc} with demand-dependent dwell times and controlled run times is max-plus linear, too, $\underline{t}_j$ in~(\ref{th-seattle}) can directly be replaced by $t_j(x_j)$.
\begin{equation}\label{th2-acc18}
  h (m) = \max \left\{ \frac{\sum_j t_j(x_j)}{m}, \max_j (t_j(x_j)+\underline{s}_j), \frac{\sum_j \underline{s}_j}{n-m} \right\}.
\end{equation}

By replacing $t^k_j(x_j)$ using~(\ref{eq-travel}), one finds:

\begin{theorem}\label{th2-acc18}
The asymptotic average train time-headway of the max-plus linear system with dynamic demand-dependent dwell times and controlled run times is given by the asymptotic average growth rate of the system.
The average headway depends on the number of trains $m$ on the line and the passenger travel demand parameter for every platform $X_j$.
   $$  h(m,X) = \max \left\{ \begin{array}{l}
                               \frac{\sum_j (\underline{g}_j X_j + \tilde{r}_j)}{m}, \\~\\
                               \max_j ((\underline{g}_j X_j + \tilde{r}_j)+\underline{s}_j),\\~\\
                               \frac{\sum_j \underline{s}_j}{n-m}.
                            \end{array} \right.$$                                            
\end{theorem}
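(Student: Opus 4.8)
The plan is to derive Theorem~\ref{th2-acc18} by assembling two facts already established in the excerpt: first, the max-plus linearity of the demand-dependent dynamics on a linear line (Theorem~\ref{th1-acc18}), which collapses constraints~(\ref{const2})-(\ref{const3}) into the explicit system~(\ref{acc-dynamics}); and second, the closed-form headway formula~(\ref{th-seattle}) of Farhi et al.~\cite{FNHL17a} for a linear line where trains respect lower bounds on dwell, run, and safe separation times. The key observation is that~(\ref{acc-dynamics}) has exactly the same structural form as the dynamics underlying~(\ref{th-seattle}), with the constant per-segment travel-time lower bound $\underline{t}_j$ replaced by the effective travel time $t_j(x_j) := \tilde{r}_j + X_j\underline{g}_j$ and the safe separation lower bound $\underline{s}_{j+1}$ left untouched. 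So the strategy is simply to invoke~(\ref{th-seattle}) with this substitution and then expand.

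Concretely, I would proceed as follows. First I would note that under the two hypotheses of Theorem~\ref{th1-acc18} (the initial headway bound $h^1_j \leq \bar{h}_j$ and the margin condition $\Delta r_j \geq X_j \Delta g_j$), Theorem~\ref{th1-acc18} gives that the train dynamics are governed by the max-plus linear system~(\ref{eq-mp7})-(\ref{eq-mp8}), i.e.\ $d^k_j = \max\{ d^{k-b_j}_{j-1} + \tilde{r}_j + X_j\underline{g}_j,\; d^{k-\bar{b}_{j+1}}_{j+1} + \underline{s}_{j+1}\}$. This is precisely the linear-line traffic model of~\cite{FNHL17a} with the role of the minimum travel time $\underline{t}_j$ played by $\tilde{r}_j + X_j\underline{g}_j$. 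Second, I would apply the theorem from~\cite{FNHL17a} yielding~(\ref{th-seattle}): the system admits a stationary regime with a unique asymptotic average growth rate, interpreted as the asymptotic average train time-headway, equal to the maximum cycle mean of the associated graph, which evaluates to $\max\{ (\sum_j \underline{t}_j)/m,\ \max_j(\underline{t}_j + \underline{s}_j),\ (\sum_j \underline{s}_j)/(n-m)\}$. Third, I would substitute $\underline{t}_j \leftarrow \tilde{r}_j + X_j\underline{g}_j$ termwise: $\sum_j \underline{t}_j$ becomes $\sum_j(\underline{g}_j X_j + \tilde{r}_j)$, the middle term becomes $\max_j((\underline{g}_j X_j + \tilde{r}_j) + \underline{s}_j)$, and the last term $(\sum_j \underline{s}_j)/(n-m)$ is unchanged since the safe-separation constraints were untouched. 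This gives exactly the three-branch formula in the statement, with explicit dependence on $m$ and on the vector $X = (X_j)_j$.

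I would also want to justify that the hypotheses of the cited max-plus theorem (irreducibility of the polynomial matrix and acyclicity of the subgraph $\mathcal{G}(B_0)$, cf.\ Theorem~\ref{th-mpa}) carry over verbatim: the substitution $\underline{t}_j \to \tilde{r}_j + X_j\underline{g}_j$ changes only the \emph{weights} of the arcs, not the structure of the graph, so irreducibility and acyclicity of the implicit part are inherited from the original linear-line model (excluding the degenerate cases $m = 0$ and $m = n$, as in Chapter~\ref{1A}). Since $\tilde{r}_j \geq \underline{r}_j > \varepsilon$ and $X_j\underline{g}_j$ is finite, all relevant arc weights remain finite, so Theorem~\ref{th-mpa} applies without modification and the asymptotic average growth vector has all components equal to the common value $h(m,X)$.

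The main obstacle — though it is conceptual rather than computational — is making the reduction from~\cite{FNHL17a} fully rigorous: one must be careful that the graph cycles counted in~(\ref{th-seattle}) (the forward cycle of travel-time arcs giving $(\sum_j \underline{t}_j)/m$, the self-loops giving $\max_j(\underline{t}_j+\underline{s}_j)$, and the backward cycle of safe-separation arcs giving $(\sum_j\underline{s}_j)/(n-m)$) are exactly the same cycles in the new graph, with the new weights on the travel-time arcs, and that no new cycles or parity issues arise — this is immediate for a linear line since there is no junction, so unlike Chapter~\ref{1A} there is no need to reason about parity of $l$ or products of four matrices. Once that is observed, the proof is essentially a one-line substitution into the already-proven formula~(\ref{th-seattle}) followed by the termwise expansion above.
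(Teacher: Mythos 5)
Your proposal is correct and follows essentially the same route as the paper's own (much terser) proof: invoke Theorem~\ref{th1-acc18} to reduce the demand-dependent dynamics to the max-plus linear form~(\ref{acc-dynamics}), then substitute $\underline{t}_j \leftarrow \tilde{r}_j + X_j\underline{g}_j$ into the linear-line formula~(\ref{th-seattle}) of~\cite{FNHL17a}. Your added remark that the substitution changes only arc weights, not the graph structure, so that irreducibility and acyclicity carry over, is a useful justification that the paper leaves implicit.
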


\begin{proof}
Under the conditions of Theorem~\ref{th1-acc18}, the system can be written in max-plus algebra.
It has been shown that in this case, the asymptotic average growth rate of the system can be analytically derived and
corresponds to the average train time-headway.
Replacing $\underline{t}_j$ in equation~(\ref{th-seattle}) (the max-plus theorem of~\cite{FNHL17a}) allows to obtain the demand-dependent asymptotic average train time-headway on a linear line.
\end{proof}
Notice that the asymptotic average train time-headway $h$ in the free flow phase depends not only on the number of trains $m$, but furthermore on a weighted mean of the passenger travel demand and the nominal run times $\sum_j (\underline{g}_j X_j + \tilde{r}_j)$.
The maximum frequency of the system is given by the maximum over passenger demand, nominal run time and safe separation time, over all segments $\max_j ((\underline{g}_j X_j + \tilde{r}_j)+\underline{s}_j)$.

The following analytic formulas for the asymptotic average train frequency on a linear line are directly obtained from Theorem~\ref{th2-acc18}.
\begin{corollary}\label{cor-acc18}
The asymptotic average frequency of the max-plus linear system with demand-dependent dwell times and controlled run times is a function of the number of trains and the passenger travel demand.
	$$f (m,X) = \min \left\{ \begin{array}{l}
					\frac{m}{\sum_j (\underline{g}_j X_j + \tilde{r}_j)}, \\~\\
					\frac{1}{\max_j ((\underline{g}_j X_j + \tilde{r}_j)+\underline{s}_j)}, \\~\\
					\frac{n-m}{\sum_j \underline{s}_j}.
					 \end{array} \right.$$
\end{corollary}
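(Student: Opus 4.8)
The plan is to obtain Corollary~\ref{cor-acc18} as an immediate consequence of Theorem~\ref{th2-acc18} by passing to reciprocals. Recall that under the two hypotheses of Theorem~\ref{th1-acc18} (the bound $h^1_j \leq \bar{h}_j = 1/(1-x_j)\,\bar{g}_j$ on the initial headways, and $\Delta r_j \geq \Delta w_j = X_j \Delta g_j$ relating the run-time margin to the dwell-time margin), the demand-dependent train dynamics on the linear line form a max-plus linear system, so Theorem~\ref{th-mpa} applies and the system admits a unique asymptotic average growth rate, which Theorem~\ref{th2-acc18} identifies with $h(m,X) = \max\{h_1, h_2, h_3\}$, where $h_1 = \sum_j(\underline{g}_j X_j + \tilde{r}_j)/m$, $h_2 = \max_j\big((\underline{g}_j X_j + \tilde{r}_j) + \underline{s}_j\big)$, and $h_3 = \sum_j \underline{s}_j/(n-m)$.

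First I would record that, for $1 \leq m \leq n-1$, the three quantities $h_1, h_2, h_3$ are finite and strictly positive: indeed $\tilde{r}_j \geq \underline{r}_j \geq 0$, the terms $\underline{g}_j X_j$ are nonnegative, $\underline{s}_j > 0$, and both denominators $m$ and $n-m$ lie in $\{1,\dots,n-1\}$; hence $h(m,X) \in (0,\infty)$. Consequently the asymptotic average frequency $f(m,X) := 1/h(m,X)$ is well defined. (The excluded cases $m=0$ and $m=n$ --- no train, respectively no possible movement --- are not governed by the max-plus recursion and give $f = 0$, consistently with the convention used for the line with a junction in Chapter~\ref{1A}.)

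Then I would invoke the elementary fact that $t \mapsto 1/t$ is strictly decreasing on $(0,\infty)$ and therefore exchanges $\max$ and $\min$: for positive reals, $1/\max\{h_1,h_2,h_3\} = \min\{1/h_1, 1/h_2, 1/h_3\}$. Substituting the three explicit terms from Theorem~\ref{th2-acc18} yields
\begin{equation*}
  f(m,X) = \min\left\{ \frac{m}{\sum_j (\underline{g}_j X_j + \tilde{r}_j)},\ \frac{1}{\max_j\big((\underline{g}_j X_j + \tilde{r}_j) + \underline{s}_j\big)},\ \frac{n-m}{\sum_j \underline{s}_j} \right\},
\end{equation*}
which is precisely the claimed formula.

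There is no genuine obstacle in this argument; it is essentially a one-line corollary. The only points that require care --- and that I would state explicitly rather than grind through --- are checking that the hypotheses of Theorem~\ref{th1-acc18} are in force here (so that the dynamics are max-plus linear and $h$, hence $f$, exists at all, rather than the dynamics being unstable), and checking the finiteness and strict positivity of $h(m,X)$ so that inversion of the formula is legitimate. Everything else is the trivial interchange of $\max$ with the reciprocal.
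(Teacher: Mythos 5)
Your proposal is correct and follows exactly the paper's own route: the paper's proof reads simply ``Directly from Theorem~\ref{th2-acc18} with $f = 1/h$,'' and your argument is that same inversion, made explicit via the identity $1/\max = \min$ of reciprocals for positive reals. The extra care you take in checking finiteness and strict positivity of $h(m,X)$, and in noting that the hypotheses of Theorem~\ref{th1-acc18} must hold for $h$ to exist, is a reasonable elaboration of what the paper leaves implicit.
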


\begin{proof}
Directly from Theorem~\ref{th2-acc18} with $f = 1/h$.
\end{proof}

Note that in Theorem~\ref{th2-acc18} and Corollary~\ref{cor-acc18}, the asymptotic average train time-headway $h$, respectively asymptotic average frequency $f$ depend linearly on the passenger travel demand parameter $X_j = x_j/(1-x_j)$ with 
$$ x_j = \left( \frac{\lambda^{\text{out}}_j}{\alpha^{\text{out}}_j} + \frac{\lambda^{\text{in}}_j}{\alpha^{\text{in}}_j} \right).$$
Consequently, the dependency between average passenger arrival and departure rates to and from the platforms, and the train dynamics is exponential, not linear.
However, since the average boarding and alighting capacities are typically a lot bigger than the average passenger arrival and departure rates to and from the platforms, the following holds:
$$\lambda^{\text{out}}_j, \lambda^{\text{in}}_j << \alpha^{\text{out}}_j, \alpha^{\text{in}}_j,$$
and $x_j << 1$ can be assumed.
By consequent, for small values of $x_j$, the train dynamics can be said to depend approximately linearly on $x_j$ and on the passenger travel demand.

% -----------------------------------------------------------------------------
\section{{The Traffic Phases}}\label{phases-acc18}
% -----------------------------------------------------------------------------

%\begin{table}
%\centering
%%\vspace{10pt}
%\caption{The traffic phases of the train dynamics in a linear metro line with demand-dependent dwell times and a margin for run time control. A Free flow phase (1), a capacity phase (2) and a congestion phase (3) can be distinguished. The average train time-headway resp. frequency depend on the number of trains and on the passenger travel demand. A line of the same color represents one frequency level.}
%\begin{tabular}{|c|c|}
%\hline
%$h(m,x)$ & $f(m,x)$ \\
%\hline
%& \\
%\includegraphics[scale=0.17]{} & \includegraphics[scale=0.17]{} \\
%\hline
%\end{tabular}
%\label{tabular1-acc18}
%\end{table}

%\textcolor{red} {\textbf{New figures, text to be modified.}}\\

The Figures~\ref{acc-1},~\ref{acc-2},~\ref{acc-3},~\ref{acc-6}~and~\ref{acc-7} below depict the asymptotic average train frequency as given by Corollary~\ref{cor-acc18}.
The parameters of Corollary~\ref{cor-acc18} are estimated with the values representing the central part of metro line 13 of Paris.

Precisely, the minimum dynamic intervals $\underline{g}_j$ and the safe separation times $\underline{s}_j$ represent constraints imposed by the block system and have been provided by the operator RATP.
The passenger travel demand is taken into account by the average passenger arrival rates to and the passenger departure rates from the platforms. These flows are estimated based on data from the operator on the number of boarding and alighting passengers per time interval of 30 minutes for each platform. %The numbers are essentially obtained by manual passenger counting.
Passenger arrival rates to and departure rates from the platforms can be updated continuously. %Note that the formula in Corollary~\ref{cor-acc18} can handle more precise inputs, for example the passenger flows for shorter time intervals.
Together with the boarding and alighting capacity, the passenger demand parameter $$X_j = x_j/(1-x_j)$$ with $$x_j = \lambda^{in}_j/\alpha^{in}_j + \lambda^{out}_j/\alpha^{out}_j,$$ which is non-zero for every platform, can be calculated.
The train boarding and alighting capacity has been estimated based on average values provided by the operator.

The run times include a margin here, that is $$\tilde{r}_j = \underline{r}_j + \Delta r_j.$$ The minimum run times $\underline{r}_j$ are taken into account as calculated by the MATYS entity of the operator RATP. They are based on train speed profiles, account for train characteristics such as traction, braking curves and maximum speed. They depend furthermore on infrastructure characteristics including curve radius and gradient.
In addition, a margin $\Delta r_j$  is applied on the minimum train run times. The margin is a parameter that can be chosen freely.
It allows to realize dynamic dwell times depending on the passenger volume on the platform while guaranteeing the stability of the train dynamics within the margin. More precisely, dwell times are extended up to a maximum value depending on the run time margin, in case a perturbation causes a long headway on a train accordingly to formula~(\ref{eq-dwell}): $$w^k_j (h^k_j,x_j) = \min(x_jh^k_j,\bar{w}_j).$$ A possible extension of the dwell time is canceled by controlling the train run times by accelerating the train in the inter-station within the run time margin, respecting the minimum run time given by the maximum speed, see formula~(\ref{eq-run}): $$ r^k_j(h^k_j,x_j) = \max\{\underline{r}_j, \tilde{r}_j - x_j(h^k_j-\underline{h}_j)\}. $$
Finally, as it has been derived by the authors of~\cite{FNHL17a} for the traffic phases of the train dynamics on a linear line with constant run and dwell times, the asymptotic average train frequency (and train time-headway) depend on the number of trains.
Note that $n$ is the number of segments on the line.

In Figures~\ref{acc-1},~\ref{acc-2},~\ref{acc-3},~\ref{acc-6}~and~\ref{acc-7} three traffic phases of the train dynamics can be distinguished accordingly to Theorem~\ref{th2-acc18} and Corollary~\ref{cor-acc18}. The asymptotic average frequency depends on:
\begin{itemize}
\item the number of trains $m$,
\item the passenger travel demand and
\item the run time margin.
\end{itemize}

%=====================

\begin{figure}[h]
    \centering
    \fbox{\includegraphics[width=\textwidth]{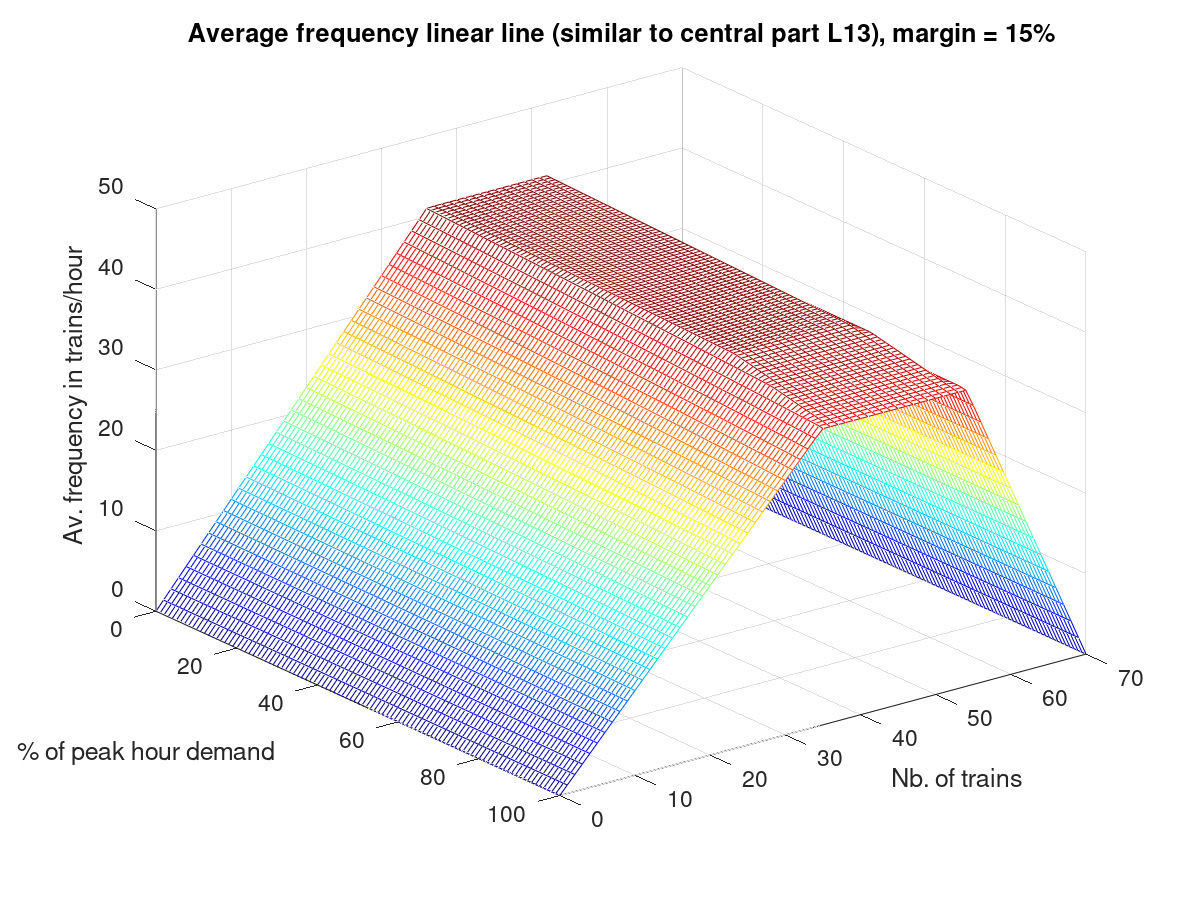}}
    \caption{Average frequency over total number of trains and passenger travel demand, for a run time margin of 15\%, on a linear line similar to the central part of Paris metro line 13 (view 1).}
    \label{acc-1}
\end{figure}

%~\\
%\textbf{\textit{I - Free flow phase}}
\subsection{The Effect of the Passenger Travel Demand}
\subsubsection{Free Flow Phase}

In the beginning, from the first train running up to a certain limit, the system is in the free flow phase. In this phase, trains run freely on the line, without bothering each other.
As it can be seen in Figure~\ref{acc-1} for a fixed run time margin of 15\%, the frequency increases linearly with the number of trains, for a fixed passenger demand.
For a fixed number of trains, the average frequency decreases non-linearly with increasing passenger arrival and departure flows (here represented in \% of peak hour demand), as it can be seen in Figure~\ref{acc-3}, that is with an increasing passenger travel demand parameter $x_j$.
As it has been shown above, the asymptotic average frequency (and train time-headway) depend linearly on $X_j$ but non-linearly on $x_j$ since $X_j = x_j/(1-x_j)$. However, for small values of $x_j$, the dependency on $x_j$ is approximately linear.
Note in Figure~\ref{acc-3} that contour lines are non-parallel to the passenger demand axis. %, but divert slightly in direction of the number of trains axis.
This is a direct consequence of the demand-dependent dwell times, which increase with the passenger affluence. %The average headway and frequency depend nonlinearly on the passenger arrival flow to and departure flows from the platforms, see Theorem~\ref{th2-acc18} and Corollary~\ref{cor-acc18}.

\begin{figure}[h]
    \centering
    \fbox{\includegraphics[width=\textwidth]{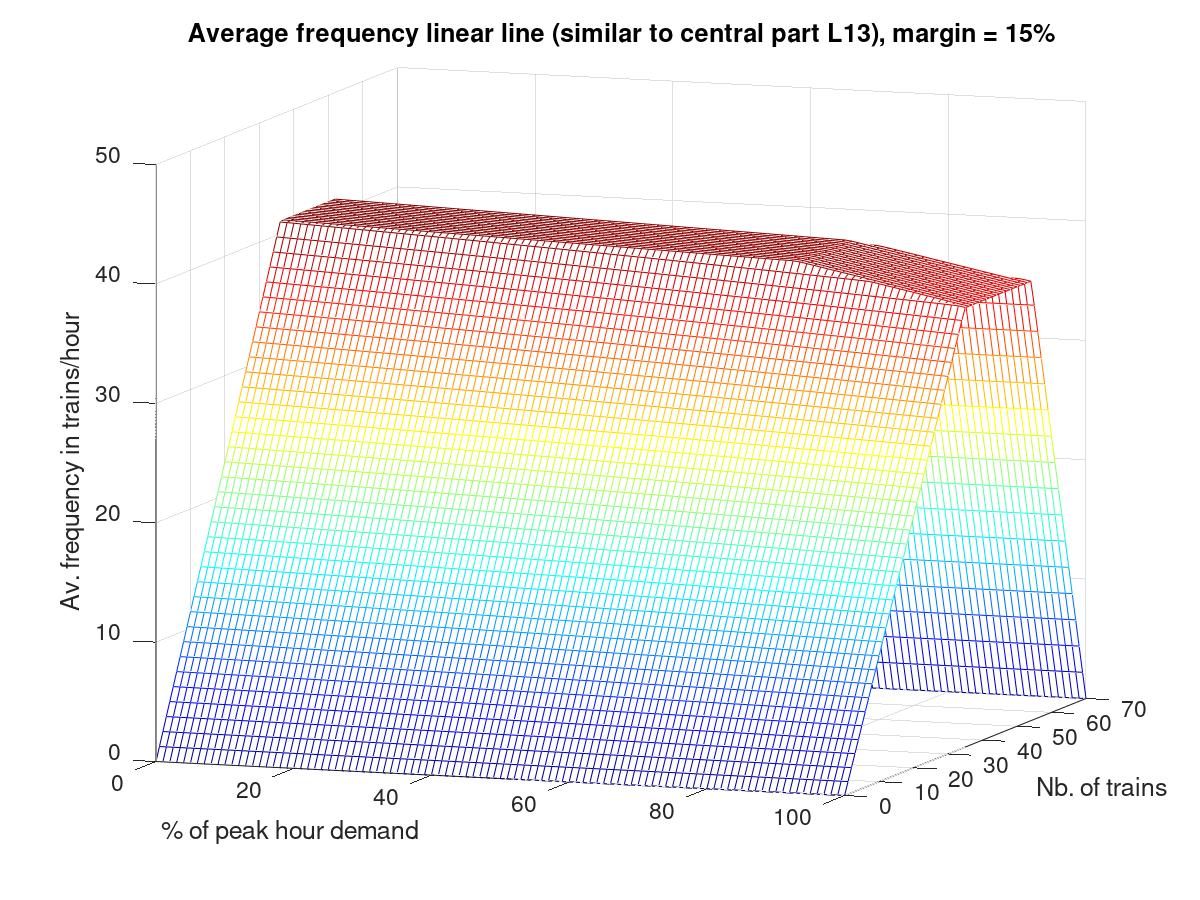}}
    \caption{Average frequency over total number of trains and passenger travel demand, for a run time margin of 15\%, on a linear line similar to the central part of Paris metro line 13 (view 2).}
    \label{acc-2}
\end{figure}

%~\\
%\textbf{\textit{II - Maximum frequency phase}}
\subsubsection{Maximum Frequency Phase}

The second phase is the maximum capacity phase. Here, the minimum train time-headway, respectively the maximum frequency is realized. The minimum train time-headway is determined by the segment which realizes the maximum for the sum of minimum run time, run time margin, demand-dependent dwell time and safe separation time: $$\max_j{((\underline{g}_jX_j + \tilde{r}_j) + \underline{s}_j)}.$$
As it can be seen in Figure~\ref{acc-2}, in this phase, the maximum frequency is reached and is independent of the number of trains. The system is operated at capacity. As for the standard model for linear lines with fixed run and dwell times, developed by the authors of~\cite{FNHL17a}, the optimal number of trains is reached at the intersection of free flow phase and maximum frequency phase. This number maximizes the frequency for a minimal number of trains. Therefore, there is no interest in running more trains than the one represented by the straight line at the intersection of the two traffic phases.
In the contrary to the standard model in~\cite{FNHL17a}, the maximum train frequency depends on the passenger travel demand.
Figure~\ref{acc-2} allows to analyze this dependency for the case of the central part of Paris metro line 13. Note that with an increasing passenger demand, the capacity decreases. This is a consequence of the demand-dependent dwell times. In Figure~\ref{acc-2}, at around 80\% of peak hour demand a discontinuity in the gradient with respect to passenger travel demand parameter $x_j$ can be identified in the maximum frequency phase. At this point, the bottleneck of the line switches from one segment to another. %Because the demand which increases with the same rate on all platforms, whereas the other parameters, that is run times and safe separation times, differ, the bottleneck can change depending on the passenger demand.
For example, for the central part of Paris metro line 13, for a low demand level, the bottleneck is the segment of platform \textit{Gaîté}, northbound, because of its high run and safe separation times. In the contrary, for a very high demand level, the dwell time becomes important at the platform \textit{Montparnasse -- Bienvenüe}, northbound, which is an important interchange station. Although the segment of that platform has optimized run and safe separation times, the high dwell times make this segment the new bottleneck of the line. %Due to the long dwell times for high passenger demand, the corresponding platform turns out to be the new bottleneck.

\begin{figure}[h]
    \centering
    \fbox{\includegraphics[width=\textwidth]{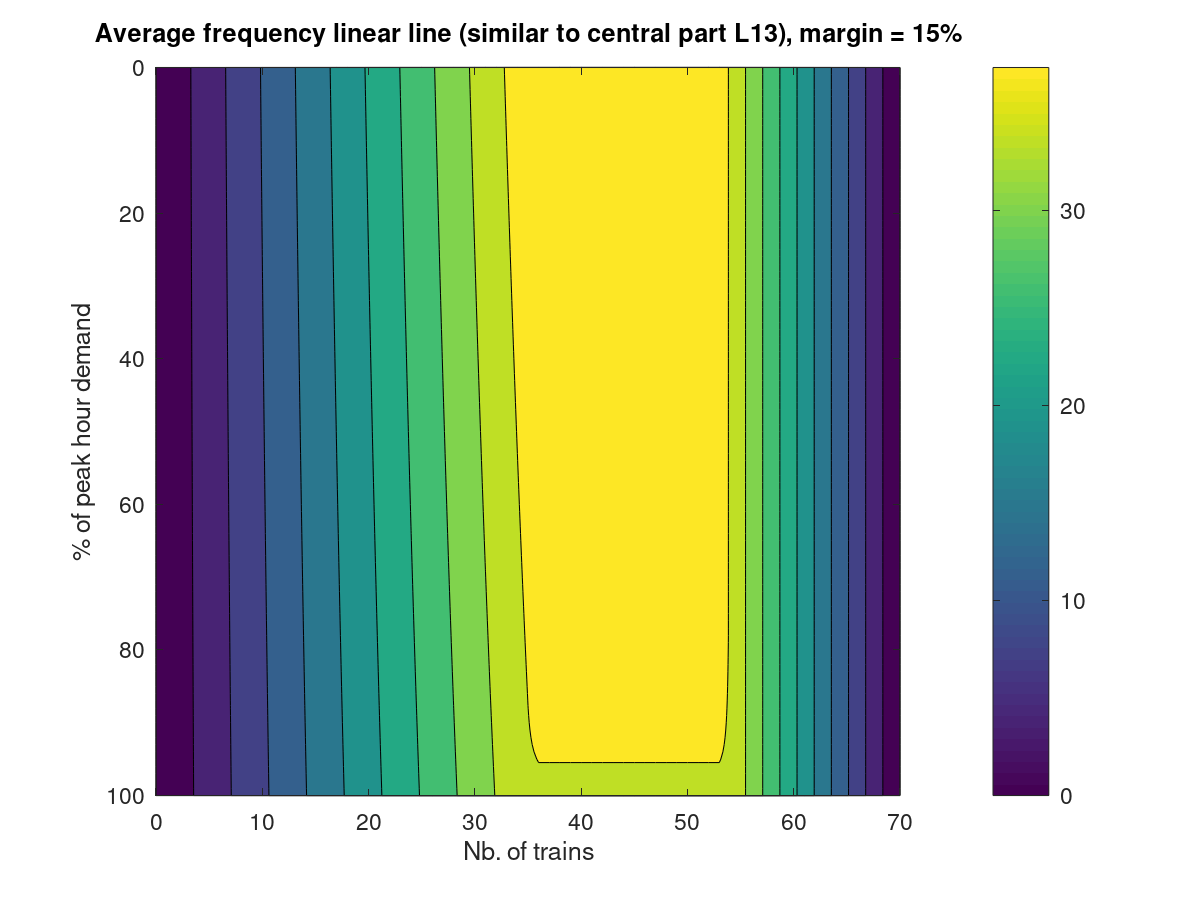}}
    \caption{Average frequency over total number of trains and passenger travel demand, for a run time margin of 15\%, on a linear line similar to the central part of Paris metro line 13}.
    \label{acc-3}
\end{figure}

Finally, Figure~\ref{acc-3} shows that the optimal number of trains depends on the passenger travel demand.
This is a direct consequence of the demand-dependent capacity of the line.
Note that the intersection line between the free flow phase and the maximum frequency phase is not parallel to the passenger travel demand axis.
For example, for some run time margin, here $15 \text{ \%}$, with an increasing passenger demand, the capacity of the system decreases. Moreover, to reach the decreased capacity, the number of trains has to be increased.
In the contrary, the intersection between the maximum frequency phase and the congestion phase does not depend on the passenger travel demand. Congestion occurs from a certain number of trains on, independently of the passenger demand.%For an increasing demand level, the system can remain longer, this means for more trains, in the capacity phase. As explained above, this is a consequence of the reduced capacity for a higher passenger demand.

%~\\
%\textbf{\textit{III - Congestion phase}}
\subsubsection{Congestion Phase}

The third phase is the congestion phase. In this phase, the line is overloaded with trains, so that they start to interact with each other. For example, trains will have to wait frequently in inter-station until the preceding train has cleared the downstream platform. Here, the average frequency decreases for an increasing number of trains. Apart from the number of trains, it depends only on the total number of segments and their safe separation time. Consequently, the contour lines of same frequency in Figure~\ref{acc-3} are parallel to the passenger travel demand axis. Note that the frequency decreases linearly in $m$ in this phase. Any operator will avoid that its system reaches this phase.

\subsection{The Effect of the Run Time Margin}
%margin;
\begin{figure}[h]
    \centering
    \fbox{\includegraphics[width=\textwidth]{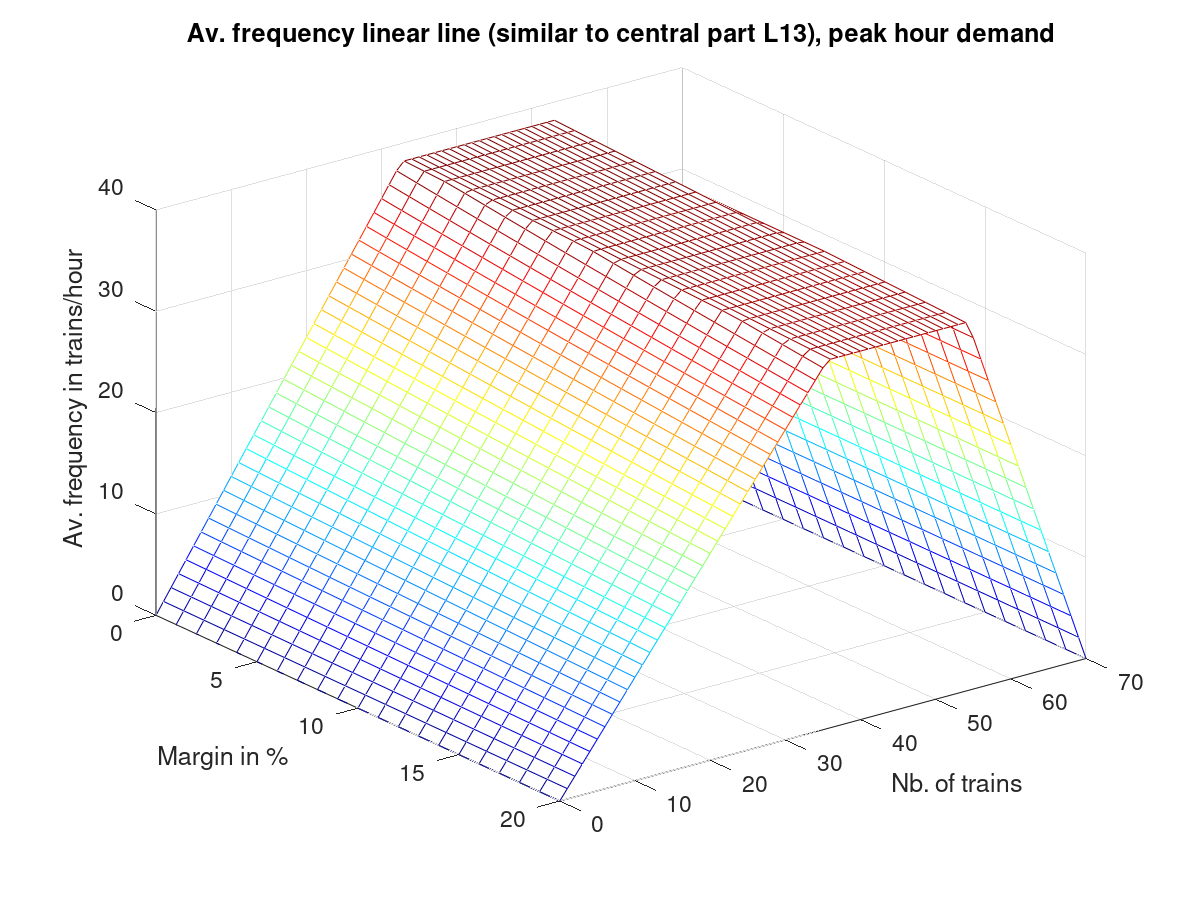}}
    \caption{Average frequency over total number of trains and run time margin, for peak hour demand, on a linear line similar to the central part of Paris metro line 13.}
    \label{acc-6}
\end{figure}
\begin{figure}[h]
    \centering
    \fbox{\includegraphics[width=\textwidth]{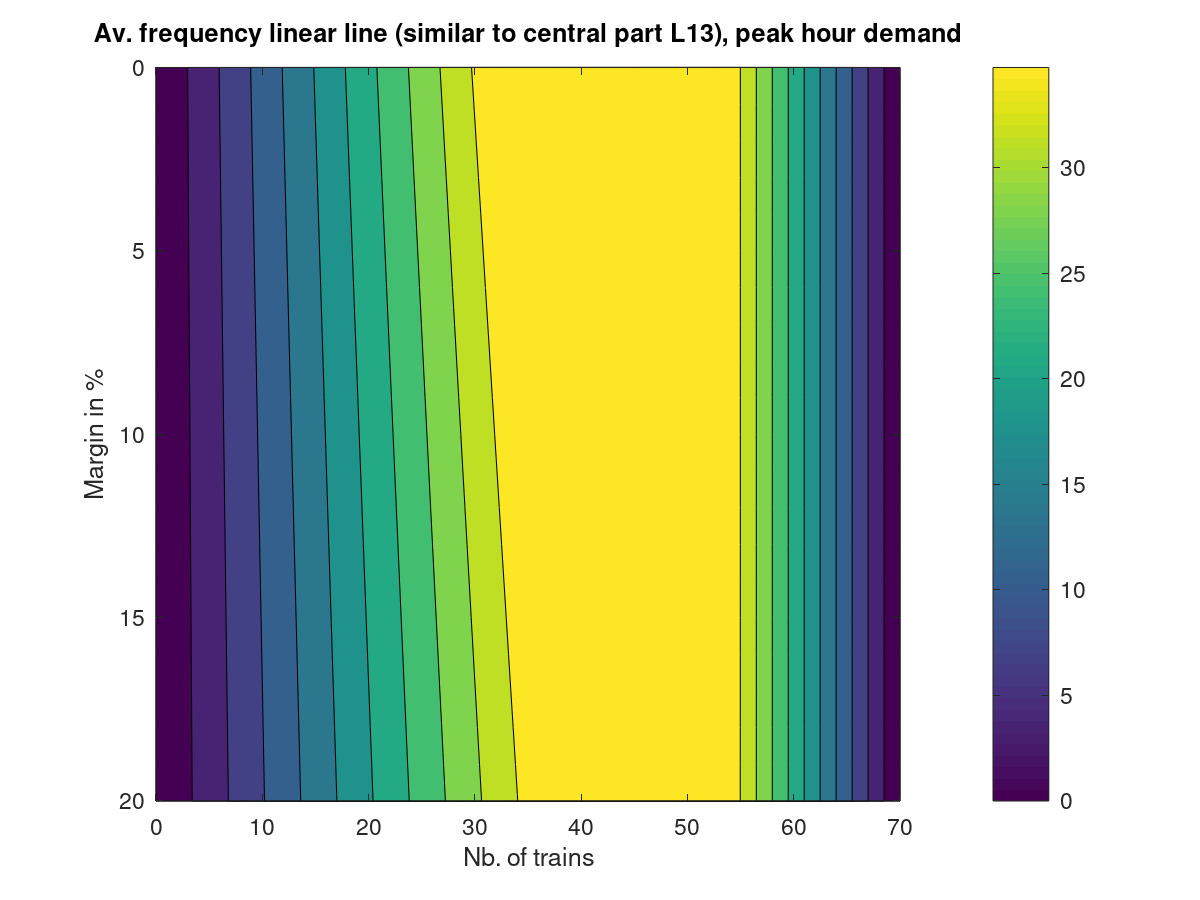}}
    \caption{Average frequency over total number of trains and run time margin, for peak hour demand, on a linear line similar to the central part of Paris metro line 13.}
    \label{acc-7}
\end{figure}

Refer to Figures~\ref{acc-6}~and~\ref{acc-7} to study the effect of the run time margin on the asymptotic average frequency. Again, a free flow phase, a maximum frequency phase and a congestion phase can be distinguished. The two figures depict the asymptotic average frequency on the linear line over the number of trains and the run time margin, for peak hour demand.
 
\subsubsection{Free Flow Phase}
In the free flow traffic phase, for a fixed number of trains, the frequency decreases linearly with an increasing margin. For a fixed margin, the frequency increases linearly with the number of trains.
Moreover, the contour lines of Figure~\ref{acc-7} are a good way to study the cost of an increased time margin. If an operator wants to know how many additional trains have to be inserted in order to guarantee the same frequency with increased run time margins, it is sufficient to follow the corresponding contour line in Figure~\ref{acc-7}.
Taking the example of $f = 30$ and a margin of $5\%$, it can be seen that approximately $30$ trains are necessary to operate the line at the desired frequency. If an operator chooses to increase the robustness of its timetable by increasing the margin to $15\%$, around $3$ additional trains are required to ensure the same average frequency.

\subsubsection{Maximum Frequency Phase}
Refer to Figure~\ref{acc-6} to study the effect of the run time margin on the maximum frequency phase. With an increasing margin, the capacity of the system decreases. This means, increasing the robustness of the timetable by increasing the run time margin affects negatively the capacity of the system. This has to be considered, especially for mass transit lines with a high passenger travel demand.
Moreover, note in Figure~\ref{acc-7} that the optimal number of trains, represented by the intersection between the free flow phase and the maximum capacity phase, increases linearly with an increasing margin at the cost of a lower capacity. Consequently, for an increasing margin not only capacity decreases, but, to reach capacity, the number of trains has to be increased.

\subsubsection{Congestion Phase}
As it can be seen in Figure~\ref{acc-7}, this phase is independent of the run time margin and all contour lines of one frequency are parallel to the margin-axis.

%\begin{figure}
%    \centering
%    \fbox{\includegraphics[scale = 0.35]{}}
%    \caption{...}
%    \label{acc-4}
%\end{figure}
%
%
%
%\begin{figure}
%    \centering
%    \fbox{\includegraphics[scale = 0.35]{}}
%    \caption{...}
%    \label{acc-5}
%\end{figure}

%\begin{table}
%\centering
%\vspace{10pt}
%\caption{asymptotic average train time-headway and frequency in function of number of trains and demand (x/ X). Three traffic phases to be distinguished: headway
%1-linear in \textit{X} for \textit{m = const}; 2-linear in \textit{X}; 3-independent of \textit{X}. Linear metro line, Data RATP, Paris.}
%%\begin{tabular}{|l|p{0.5\textwidth}|}
%\begin{tabular}{|c|c|}
%  \hline
%  $h(m,x)$ & $h(m,X)$ \\
%  \hline
%   & \\
%  \includegraphics[scale=0.17]{h_over_small_x.PNG} & \includegraphics[scale=0.16]{} \\%~~\\
%  \hline %~~\\
%  $f(m,x)$ & $f(m,X)$ \\
%   \hline
%   & \\
%  \includegraphics[scale=0.17]{f_over_small_x.PNG} & \includegraphics[scale=0.16]{} \\%~~\\
%  \hline %~~\\
%  & \\
%  \includegraphics[scale=0.17]{} & \includegraphics[scale=0.17]{} \\%~~\\
%  \hline
%\end{tabular}
%\label{tab}
%\end{table}

% -----------------------------------------------------------------------------
%\chapter{A model of the train dynamics in a metro line with a junction with demand-dependent (microscopic) control}
%\section{Demand-dependent modeling - The case of a line with a junction}
\section{A Model of the Demand-dependent Train Dynamics on a Line with a Junction}\label{cdc18}
\sectionmark{A Traffic Model of a Line with a Junction}
%\subsection{Passenger travel demand}
\subsection{Passenger Arrivals Modeling}
% -----------------------------------------------------------------------------
This section is based on the model of the train dynamics on a line with a junction, developed in Chapter~\ref{1A} which will be extended here with a passenger demand model, accordingly to the one for a linear line in Section~\ref{acc}.
%This sections is based on the train dynamic modeling approach of~\cite{FNHL16,FNHL17a, SFLG18b}, and especially on the one for a metro line with a junction of~\cite{SFCLG17}, where
In the model of Chapter~\ref{1A}, trains are supposed to respect minimum run and dwell times. The train dynamics are modeled as discrete events and the $k^{th}$ departure time from each node $(u,j)$ is written under two constraints, one on the train travel time (run + dwell time), and one on the safe separation time.
\begin{align}
  & d^k_{(u,j)} \geq d^{k-b_{(u,j)}}_{(u,j-1)} + \underline{r}_{(u,j)} + \underline{w}_{(u,j)}. \label{eq-2} \\
  & d^k_{(u,j)} \geq d^{k-\bar{b}_{(u,j+1)}}_{(u,j+1)} + \underline{s}_{(u,j+1)}. \label{eq-3}
\end{align}

%In~\cite{FNHL17a, SFCLG17}, the run and dwell times have been fixed to $r_{(u,j)} = \underline{r}_{(u,j)}$ and
%$w_{(u,j)} = \underline{w}_{(u,j)}$. 
Similar to the model for a linear line in Section~\ref{acc}, the minimum run times $\underline{r}_{(u,j)}$ as well as the minimum dwell times $\underline{w}_{(u,j)}$ will be replaced by a demand-dependent control while guaranteeing stability of the dynamics in the stationary regime.%, where the feedback considered are the train dynamics and the passenger travel demand.
This section follows the modeling approach of~\ref{acc}.
Consider the following additional notations for the passenger
travel demand.

~\\
\begin{tabular}{lp{0.8\textwidth}}
%   $\lambda^{\text{in}}$ & origin-destination passenger travel demand \\
%       & matrix.\\
$\lambda_{(u,i),(v,j)}$		& passenger travel demand from platform~$(u,i)$ on part $u$ to platform~$(v,j)$ on part $v$, when $i$ and $j$ denote platforms, and 
					$\lambda_{(u,i),(u,j)} = 0$ if $i$ or $j$ is not a platform node.\\ 
$\lambda^{\text{in}}_{(u,i)}$	& $=\sum_{(v,j)} \lambda_{(u,i),(v,j)}$ the average passenger arrival rate on origin platform $(u,i)$ to any destination platform.\\  
$\lambda^{\text{out}}_{(v,j)}$& $=\sum_{(u,i)} \lambda_{(u,i),(v,j)}$ the average passenger departure rate from destination platform $(v,j)$ from any origin platform.\\
  $\alpha^{\text{in}}_{(u,i)}$	& average passenger boarding capacity on platform ${(u,i)}$. \\      
  $\alpha^{\text{out}}_{(v,j)}$ & average passenger alighting capacity on platform $(v,j)$.
\end{tabular}
~\\

Accordingly to the model in Section~\ref{acc}, the time for passenger alighting and boarding can be estimated as follows. The number of passenger waiting on the platform to board the next train can be calculated by multiplying the passenger arrival flows to the platforms (passengers per time interval) with the train time-headway. Dividing this number by the boarding capacity gives the required passenger boarding time. The boarding capacity can be approximated by an average value which depends on the layout of the platforms, the trains and especially the door width.
The time for passenger alighting is approximated by multiplying the average passenger departure flows from a platform with the train time-headway at the same platform, divided by the passenger alighting capacity. Note, that the number of passengers willing to alight depends on the number of passenger having boarded the train at the preceding platforms. However, the dependency of the alighting time on the train time-headway will still extend the alighting time in case of a long headway, which is a good approximation.

~\\
\begin{tabular}{lp{0.8\textwidth}}
  $\frac{\lambda^{\text{in}}_{(u,i)}}{\alpha^{\text{in}}_{(u,i)}} \; h_{(u,i)}$ & time for passenger boarding at platform $(u,i)$. \\
%\end{tabular}
%
%\begin{tabular}{lp{0.8\textwidth}}
  $\frac{\lambda^{\text{out}}_{(v,j)}}{\alpha^{\text{out}}_{(v,j)}}\; h_{(v,j)}$ & $\approx \sum_{(u,i)} \lambda_{(u,i)} h_{(u,i)} / \alpha_{(v,j)}^{out}$ time for passenger alighting at platform $(v,j)$.
  Note that the number of alighting passengers at a platform $(v,j)$ is estimated with the headway $h_{(v,j)}$ of the train at the arrival platform $(v,j)$. This is an approximation since the number of alighting passengers depends on the number of passengers having boarded the train on the previous platforms, which depend on $h_{(u,i)}$.\\% Note furthermore that in case of a constant train time-headway $\frac{\lambda^{\text{out}}_j}{\alpha^{\text{out}}_j}\; h_j$ $= \sum_{i} \lambda_{ij} h_{i} / \alpha_{j}^{out}$. \\
\end{tabular}
~\\

A passenger demand parameters $x_{(u,j)}$ is defined accordingly to the model for a linear line in Section~\ref{acc}.
\begin{equation}
x_{(u,i)} = \left(\frac{\lambda_{(u,i)}^{out}}{\alpha_{(u,i)}^{out}} + \frac{\lambda_{(u,i)}^{in}}{\alpha_{(u,i)}^{in}}\right),
\label{eq-6}
\end{equation}
such that $x_{(u,i)} h_{(u,i)}$ gives the time needed for passenger alighting and boarding at platform $(u,i)$.
Furthermore, define 
\begin{equation}
  X_{(u,i)} = \frac{x_{(u,i}}{1 - x_{(u,i)}},
\end{equation}
such that, accordingly to the dwell time model in Section~\ref{acc}
\begin{align}
x_{(u,i)} h_{(u,i)} = X_{(u,i)} g_{(u,i)},
\forall \underline{h}_{(u,i)} \leq h_{(u,i)} \leq \bar{h}_{(u,i)} \text{ and }
\underline{g}_{(u,i)} \leq g_{(u,i)} \leq \bar{g}_{(u,i)}, \forall u,i.
\end{align}

% -----------------------------------------------------------------------------
\subsubsection{Demand-dependent Dwell Times \& Controlled Run Times}
% -----------------------------------------------------------------------------
The minimum run times $\underline{r}_{(u,j)}$ and minimum dwell times $\underline{w}_{(u,j)}$ in~(\ref{eq-2}) and in the model of Chapter~\ref{1A} are here functions of
the passenger travel demand parameter $x_{(u,j)}$ and of the train time-headway 
$h_{(u,j)}$ as in the model for a linear line, see Section~\ref{acc}.% as in~\cite{SFLG18b} (but extended to a line with a junction).
\begin{align}
  & w_{(u,j)}^k = \min\left\{x_{(u,j)}h_{(u,j)}^k, \bar{w}_{(u,j)}\right\}, \label{eq-7}
\end{align}

%copy;
The first term in equation~(\ref{eq-7}) corresponds to the minimum dwell time satisfying the demand.
The second term in equation~(\ref{eq-7}) relates the upper bound on the dwell time to the passenger travel demand and the maximum dynamic interval. 
The maximum dynamic interval, this means the time interval during which there is no train at a platform, has to be fixed accordingly to the passenger travel demand.

% -----------------------------------------------------------------------------
%\subsubsection{Run time control}
% -----------------------------------------------------------------------------
In order to deal with the instability issue, the dwell time model is completed with a run time
model which cancels the term that causes instability in the dwell time model.
More precisely, for a train delay at a platform $(u,j)$, the dwell time model extends the dwell time
at that platform in order to satisfy the minimum alighting and boarding times given by equation~(\ref{eq-7}).
The run time model will reduce the run time from platform $(u,j)$ to platform $(u,j+1)$
in order to compensate in such a way that the whole travel time ($t=w+r$) remains stable.

Consider the following run time law.
\begin{align}
  & r_{(u,j)}^k = \max\left\{\tilde{r}_{(u,j)} - x_{(u,j)}\Delta h_{(u,j)}^k, \underline{r}_{(u,j)}\right\}. \label{eq-8}
\end{align}
where $\tilde{r}_{(u,j)}$ denotes the nominal run times on segment $j$ of part $u$ including a possible run time margin,
and $\Delta h^k_{(u,j)} := h^k_{(u,j)} - \underline{h}_{(u,j)}$. 
The model~(\ref{eq-8}) gives the run time as the maximum between a given minimum run time $\underline{r}_{(u,j)}$
and a term that subtracts $x_{(u,j)} \left( h_{(u,j)}^k - \underline{h}_{(u,j)} \right)$ from the nominal run time.
The term $x_{(u,j)} \left( h_{(u,j)}^k - \underline{h}_{(u,j)} \right)$ expresses a deviation of the boarding and alighting time, 
due to a deviation of the train time-headway.
Notice here that the term $x_{(u,j)} h^k_{(u,j)}$, appearing in the dwell time law~(\ref{eq-7}) with a sign ``$+$'',
appears in the run time law~(\ref{eq-8}) with a sign ``$-$''.

Combining the dwell time law~(\ref{eq-7}) with the run time law~(\ref{eq-8}), gives the following new travel time law for lines with a junction.
\begin{equation}\label{eq-travel-new}
   t_{(u,j)}^k(x_{u,j)}) = r_{(u,j)}^k(h_{(u,j)}^k,x_{(u,j)}) + w_{(u,j)}^k(h_{(u,j)}^k,x_{(u,j)}).
\end{equation}

% -----------------------------------------------------------------------------
\subsection{Demand-dependent Train Dynamics}
\subsubsection{Dynamics out of the Junction}
% -----------------------------------------------------------------------------
%For the train dynamics use the notations of~\ref{1A-a-ii}.
Consider the following notations.

\begin{tabular}{ll}
  $\Delta r_{(u,j)}$ & $:= \tilde{r}_{(u,j)} - \underline{r}_{(u,j)}$. \\
  $\Delta w_{(u,j)}$ & $:= \bar{w}_{(u,j)} - \underline{w}_{(u,j)}$. \\
  $\Delta g_{(u,j)}$ & $:= \bar{g}_{(u,j)} - \underline{g}_{(u,j)}$.\\
    $\Delta h_{(u,j)}$ & $:= \bar{h}_{(u,j)} - \underline{h}_{(u,j)}$.
\end{tabular}

It is then easy to check the following.
$$\Delta w_{(u,j)} = x_{(u,j)} \Delta h_{(u,j)} = X_{(u,j)} \Delta g_{(u,j)}, \forall u,j.$$
Notice here that $\Delta r_{(u,j)}$ is a margin on the run time in order
to be able to recover disturbances. Minimum train run times $\underline{r}_{(u,j)}$ are determined 
by taking into account all the characteristics of the infrastructure, in particular, the maximum train speed
on every segment.

From Theorem~\ref{th1-acc18} it can directly be derived that if
$$h^1_{(u,j)} \leq \bar{h}_{(u,j)} = 1/(1-x_{(u,j)}) \bar{g}_{(u,j)}, \forall u,j$$
and if $$\Delta r_{(u,j)} \geq \Delta w_{(u,j)} = X_{(u,j)} \Delta g_{(u,j)}, \forall u,j,$$
then~(\ref{eq-7}) and~(\ref{eq-8}) sum to
\begin{equation}\label{eq_t}
  t_{(u,j)}^k = r_{(u,j)}^k + w_{(u,j)}^k = \tilde{r}_{(u,j)} + X_{(u,j)} \underline{g}_{(u,j)}.
\end{equation}
With travel time equation~(\ref{eq_t}), the minimum run and dwell times in~(\ref{eq-2}) can replaced and the new constraint on the travel time is written
\begin{equation}\label{eqq-2}
    d^k_{(u,j)} \geq d^{k-b_{(u,j)}}_{(u,j-1)} + \tilde{r}_{(u,j)} + X_{(u,j)} \underline{g}_{(u,j)}.
\end{equation}
Then assuming that the $k^{th}$ departure from segment $(u,j)$ is realized as soon as the two constraints~(\ref{eqq-2}) 
and~(\ref{eq-3}) are satisfied, the train dynamics is written as follows.
\begin{equation}\label{tr_dyn}
   d^k_{(u,j)} \geq \max \left\{ \begin{array}{l}
                                d^{k-b_{(u,j)}}_{(u,j-1)} + \tilde{r}_{(u,j)} + X_{(u,j)} \underline{g}_{(u,j)}, \\
                                d^{k-\bar{b}_{(u,j+1)}}_{(u,j+1)} + \underline{s}_{(u,j+1)}.
                            \end{array} \right.
\end{equation}
Therefore, the train dynamics~(\ref{tr_dyn}) outside of the junction is max-plus linear, see~\cite{FNHL17a, SFLG18b}.

% -----------------------------------------------------------------------------
\subsubsection{Dynamics at the Divergence}
% -----------------------------------------------------------------------------
In the following, the train dynamics at the divergence of the model for a line with a junction of Chapter~\ref{1A} is adapted to include the passenger demand.
To do so, replace $\underline{t}_{(u,j)}$ 
on every segment $(u,j)$ by $t^k_{(u,j)}$ given in~(\ref{eq_t}).
Assuming that odd departures from the central part go to branch~1,
while even ones go to branch~2,
the following constraints for the train dynamics on the divergence apply.

The $k^{\text{th}}$ departures from the central part.
\begin{equation}\label{eq-d1}
  d^k_{(0,n_0)} \geq d^{k-b_{(0,n_0)}}_{(0,n_0-1)} + \tilde{r}_{(0,n_0)} + X_{(0,n_0)} \underline{g}_{(0,n_0)}, \; \forall k\geq 0, \\
\end{equation}

\begin{equation}\label{eq-d2}
  d^k_{(0,n_0)} \geq \begin{cases}
                      d^{(k+1)/2-\bar{b}_{(1,1)}}_{(1,1)} + \underline{s}_{(1,1)} & \text{for } k \text{ is odd} \\ ~~ \\
                      d^{k/2-\bar{b}_{(2,1)}}_{(2,1)} + \underline{s}_{(2,1)} & \text{for } k \text{ is even} \\
                   \end{cases}
\end{equation}

The $k^{\text{th}}$ departures from the entry of branch 1.
\begin{eqnarray}
   d^k_{(1,1)} \geq d^{(2k-1)-2b_{(1,1)}}_{(0,n_0)} + \tilde{r}_{(1,1)} + X_{(1,1)} \underline{g}_{(1,1)}, \; \forall k\geq 0, \label{eq-d3}\\
   d^k_{(1,1)} \geq d^{k-\bar{b}_{(1,2)}} _{(1,2)} + \underline{s}_{(1,2)}, \; \forall k\geq 0. \label{eq-d4}
\end{eqnarray}

The $k^{\text{th}}$ departures from the entry of branch 2.
\begin{eqnarray}
   d^k_{(2,1)} \geq d^{2k-2b_{(2,1)}}_{(0,n_0)} + \tilde{r}_{(2,1)} + X_{(2,1)} \underline{g}_{(2,1)}, \; \forall k\geq 0, \label{eq-d5}\\
   d^k_{(2,1)} \geq d^{k-\bar{b}_{(2,2)}}_{(2,2)} + \underline{s}_{(2,2)}, \; \forall k\geq 0. \label{eq-d6}
\end{eqnarray}

% -----------------------------------------------------------------------------
\subsubsection{Dynamics at the Convergence}
% -----------------------------------------------------------------------------
For the train dynamics at the convergence, assuming that odd departures at node $(0,0)$ to the central part 
correspond to trains coming from branch~1 while even ones correspond to trains coming from branch~2, the dynamics are written as follows.

The $k^{\text{th}}$ departures from the central part.
\begin{equation}\label{eq-m1}
  d^k_{(0,0)} \geq \begin{cases}
                      d^{(k+1)/2-b_{(1,n_1)}}_{(1,n_1-1)} + \tilde{r}_{(1,n_1)} + X_{(1,n_1)} \underline{g}_{(1,n_1)} & \text{odd } k \\ ~~ \\
                      d^{k/2-b_{(2,n_2)}}_{(2,n_2-1)} + \tilde{r}_{(2,n_2)} + X_{(2,n_2)} \underline{g}_{(2,n_2)} & \text{even } k \\
                   \end{cases}
\end{equation}
\begin{equation}\label{eq-m2}
  d^k_{(0,0)} \geq d^{k-\bar{b}_{(0,1)}}_{(0,1)} + \underline{s}_{(0,1)}, \; \forall k\geq 0, \\
\end{equation}

The $k^{\text{th}}$ departures from the entry of branch 1.
\begin{eqnarray}
   d^k_{(1,n_1-1)} \geq d^{k-b_{(1,n_1-1)}}_{(1,n_1-2)} + \tilde{r}_{(1,n_1-1)} + X_{(1,n_1-1)} \underline{g}_{(1,n_1-1)}, \label{eq-m3}\\
   d^k_{(1,n_1-1)} \geq d^{2k-1-2\bar{b}_{(1,n_1)}}_{(0,0)} + \underline{s}_{(1,n_1)}. \label{eq-m4}
\end{eqnarray}

The $k^{\text{th}}$ departures from the entry of branch 2.
\begin{eqnarray}
   d^k_{(2,n_2-1)} \geq d^{k-b_{(2,n_2-1)}}_{(2,n_2-2)} + \tilde{r}_{(2,n_2-1)} + X_{(2,n_2-1)} \underline{g}_{(2,n_2-1)}, %\; \forall k\geq 0,
   \label{eq-m5} \\
   d^k_{(2,n_2-1)} \geq d^{2k-2\bar{b}_{(2,n_2)}}_{(0,0)} + \underline{s}_{(2,n_2)}, \; \forall k\geq 0. \label{eq-m6}
\end{eqnarray}

Notice here that a changing of variable is necessary to transform the train dynamics
at the junction to a max-plus linear dynamics. This can be done as in Chapter~\ref{1A} without any modification.

In the following, the closed-form solutions for the asymptotic average train time-headway on the central part $h_0$ and on the branches $h_1,h_2$, as well as for the asymptotic average frequency on the central part $f_0$ and on the branches $f_1,f_2$ are derived from this traffic model.
The traffic phases of the train dynamics depending on the number of trains, the passenger travel demand and the run time margin are derived.

% -----------------------------------------------------------------------------
\section{The Steady State Train Dynamics}
% -----------------------------------------------------------------------------
The following result giving the asymptotic average train time-headway and the asymptotic average frequency on a line with a junction, with demand-dependent dwell times and controlled run times, are based on the main result of Chapter~\ref{1A}, where the asymptotic average train time-headway (Theorem~\ref{th-cdc17}) and frequency (Corollary~\ref{cor-cdc17}) on a line with a junction where trains respect minimum dwell and run times, have been presented.
The effect of the passenger demand on the train travel time (dwell + run) is resumed in~(\ref{eq_t}).
The following result is derived by replacing $\underline{t}_{(u,j)}$ in Theorem~\ref{th-cdc17} with $t^k_{(u,j)}$ given by~(\ref{eq_t}).

%We consider the following notations. 
%
%\begin{tabular}{ll}
%  $m$ & $= m_0+m_1+m_2$ the total number of trains \\
%      & on the line.\\
%  $\Delta m$ & $= m_2 - m_1$ the difference in the number \\
%     & of trains between branches 2 and 1.\\
%  $\bar{m}_u$ & $= n_u - m_u, \forall u\in\{0,1,2\}$.\\      
%  $\bar{m}$ & $= \bar{m}_0 + \bar{m}_1 + \bar{m}_2$.\\
%  $\Delta \bar{m}$ & $= \bar{m}_2 - \bar{m}_1$.\\
%    $\underline{S}_u$ & $= \sum_j \underline{s}_{(u,j)}, \forall u\in\{0,1,2\}$.
%\end{tabular}

\begin{theorem}\label{thm-4}
The train dynamics admit an asymptotic regime with a unique asymptotic average growth rate 
which represents here the asymptotic average train time-headway $h_0$ on the central part of the metro line.
$$\begin{array}{ll}
        h_0 (m,\Delta m, X) & = h_1(m,\Delta m, X)/2 = h_2(m,\Delta m, X)/2 \\~~\\
                            & = \max \left\{h_{fw}, h_{min}, h_{bw}, h_{br}\right\},
    \end{array}$$
      with\footnote{fw: forward, bw: backward, min: minimum, br: branches.}

$$h_{fw} = \max \left\{ \begin{array}{l}
                           \frac{\sum_{0,1,j} (\underline{g}_{u,j} X_{(u,j)} + \tilde{r}_{(u,j)})}{m - \Delta m}, \\~~\\
                           \frac{\sum_{0,2,j} (\underline{g}_{u,j} X_{(u,j)} + \tilde{r}_{(u,j)})}{m + \Delta m}, 
                        \end{array} \right.$$
                        
$$h_{min} = \max 
  \begin{cases}
		  \max_{0,j} ((\underline{g}_{(0,j)} X_{(0,j)} + \tilde{r}_{(0,j)}) + \underline{s}_{(0,j)}), & \\~~\\
		  \max_{u,j} ((\underline{g}_{(u,j)} X_{(u,j)} + \tilde{r}_{(u,j)}) + \underline{s}_{(u,j)})/2, & \quad u \in \{1,2\}, j \neq \{n_u\},
  \end{cases} $$
  
$$h_{bw} = \max\left\{ \frac{\underline{S}_0 + \underline{S}_1}{\bar{m} - \Delta \bar{m}},
                  \frac{\underline{S}_0 + \underline{S}_2}{\bar{m} + \Delta \bar{m}} \right\},$$
  
$$h_{br} = \max \left\{ \begin{array}{l}
                \frac{\sum_{1,j} (\underline{g}_{(1,j)} X_{(1,j)} + \tilde{r}_{(1,j)}) + \underline{S}_2}{2(n_2 - \Delta m)}, \\~~\\ \frac{\underline{S}_1 + \sum_{2,j} (\underline{g}_{(2,j)} X_{(2,j)} + \tilde{r}_{(2,j)})}{2(n_1 + \Delta m)}.
            \end{array} \right.$$
\end{theorem}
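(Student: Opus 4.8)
The plan is to reduce Theorem~\ref{thm-4} to Theorem~\ref{th-cdc17} by the same substitution mechanism used earlier in this chapter for the linear line (compare the derivation of Theorem~\ref{th2-acc18} from~(\ref{th-seattle})). First I would invoke the results established in Section~\ref{cdc18}: under the two hypotheses $h^1_{(u,j)} \le \bar{h}_{(u,j)} = 1/(1-x_{(u,j)})\,\bar{g}_{(u,j)}$ and $\Delta r_{(u,j)} \ge \Delta w_{(u,j)} = X_{(u,j)}\Delta g_{(u,j)}$ for all $u,j$, equation~(\ref{eq_t}) shows that the demand-dependent dwell time law~(\ref{eq-7}) and run time law~(\ref{eq-8}) combine to give a constant effective travel time $t^k_{(u,j)} = \tilde{r}_{(u,j)} + X_{(u,j)}\underline{g}_{(u,j)}$, independent of $k$. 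This is exactly the argument of Theorem~\ref{th1-acc18}, carried out node by node including the divergence and convergence nodes, where the one-over-two bookkeeping (equations~(\ref{eq-d1})--(\ref{eq-m6})) is formally identical to Chapter~\ref{1A} after the same change of variables $\delta^{2k}_{(u,j)} = d^k_{(u,j)}$ on the branches.

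Next I would observe that with $t^k_{(u,j)}$ replaced by the constant $\tilde{r}_{(u,j)} + X_{(u,j)}\underline{g}_{(u,j)}$, the entire train dynamics on the line with a junction takes precisely the form~(\ref{eq-mp1}) treated in Chapter~\ref{1A}, with $\underline{t}_{(u,j)}$ there replaced by $\tilde{r}_{(u,j)} + X_{(u,j)}\underline{g}_{(u,j)}$ and with the safe separation times $\underline{s}_{(u,j)}$ unchanged. Hence the same max-plus polynomial matrix $B(\gamma)$ of Proposition~\ref{propos1} governs the system (the $\gamma$-exponents, which encode the initial occupation $b_{(u,j)}, \bar b_{(u,j)}$, are untouched by the substitution since the substitution affects only the weights, not the shifts), and Theorem~\ref{th-cdc17} applies verbatim to this modified weighting. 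I would then simply transcribe the four quantities $h_{fw}, h_{\min}, h_{bw}, h_{br}$ from Theorem~\ref{th-cdc17}: every occurrence of $\underline{t}_{(u,j)}$ becomes $\underline{g}_{(u,j)}X_{(u,j)} + \tilde{r}_{(u,j)}$, every sum $\underline{T}_u = \sum_j \underline{t}_{(u,j)}$ becomes $\sum_j (\underline{g}_{(u,j)}X_{(u,j)} + \tilde{r}_{(u,j)})$, and the $\underline{S}_u$ terms remain as they are because safe separation times are not demand-dependent. This yields exactly the four displayed formulas for $h_{fw}, h_{\min}, h_{bw}, h_{br}$ in the statement, and the identities $h_0 = h_1/2 = h_2/2$ carry over since they come from the doubling of the departure count on the branches, which is a structural feature of the change of variables and is independent of the weights.

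The one point that genuinely requires care — and which I expect to be the main obstacle — is verifying that the hypotheses of Theorem~\ref{th-cdc17} (irreducibility of $B(\gamma)$, acyclicity of $\mathcal G(B_0)$, and the exclusion of the degenerate cases $m=0$, $m=n$) are still met after the substitution, and, more subtly, that the cycle-mean computation in the proof of Theorem~\ref{th-cdc17} still shows the four families $h_{fw}, h_{\min}, h_{bw}, h_{br}$ dominate all other cycles. Irreducibility and acyclicity are unaffected because they are purely combinatorial properties of which arcs carry a non-$\varepsilon$ weight, and that arc set is unchanged. The cycle-mean dominance arguments in the proof of Theorem~\ref{th-cdc17} (all the $\le h_{\min}$ estimates) are averaging inequalities of the form $\frac{\sum (\text{weight})}{\text{number of segments}} \le \max(\text{per-segment weight})$; these remain valid when $\underline{t}_{(u,j)}$ is replaced by $\underline{g}_{(u,j)}X_{(u,j)} + \tilde{r}_{(u,j)}$, since the loops over a single node still have weight $2(\underline{g}_{(u,j)}X_{(u,j)} + \tilde{r}_{(u,j)}) + 2\underline{s}_{(u,j)}$ on a denominator of $2$ (central part) or $4$ (branches), matching the definition of $h_{\min}$. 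So the whole proof is a faithful relabeling; I would state it as such, pointing the reader to the proof of Theorem~\ref{th-cdc17} and to Theorem~\ref{th1-acc18}, and only spell out the substitution table explicitly.

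\begin{proof}
Under the two stability conditions $h^1_{(u,j)} \le \bar{h}_{(u,j)}$ and $\Delta r_{(u,j)} \ge \Delta w_{(u,j)} = X_{(u,j)} \Delta g_{(u,j)}$ for all $u,j$, equation~(\ref{eq_t}) shows that the dwell time law~(\ref{eq-7}) and the run time law~(\ref{eq-8}) combine into a constant effective travel time $t^k_{(u,j)} = \tilde{r}_{(u,j)} + X_{(u,j)} \underline{g}_{(u,j)}$ on every segment, the argument being identical to that of Theorem~\ref{th1-acc18} applied node by node (including the divergence and convergence nodes, whose one-over-two structure is unchanged). Substituting this constant for $\underline{t}_{(u,j)}$ in the dynamics~(\ref{eq-d1})--(\ref{eq-m6}) and applying the change of variables of Chapter~\ref{1A}, the train dynamics on the whole line become $\delta = B(\gamma) \otimes \delta$ with the matrix $B(\gamma)$ of Proposition~\ref{propos1}, where each weight $\underline{t}_{(u,j)}$ is replaced by $\underline{g}_{(u,j)} X_{(u,j)} + \tilde{r}_{(u,j)}$ while the $\gamma$-exponents (encoding $b_{(u,j)}, \bar{b}_{(u,j)}$) and the safe separation weights $\underline{s}_{(u,j)}$ are unchanged. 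Since the substitution alters only arc weights and not the arc set, $B(\gamma)$ remains irreducible with acyclic $\mathcal G(B_0)$, and the cases $m=0$, $m=n$ are excluded as before. Theorem~\ref{th-cdc17} therefore applies; transcribing its formulas under the substitution $\underline{t}_{(u,j)} \mapsto \underline{g}_{(u,j)} X_{(u,j)} + \tilde{r}_{(u,j)}$ and $\underline{T}_u \mapsto \sum_j (\underline{g}_{(u,j)} X_{(u,j)} + \tilde{r}_{(u,j)})$, with the $\underline{S}_u$ terms unchanged, yields the stated expressions for $h_{fw}, h_{\min}, h_{bw}, h_{br}$, and the relations $h_0 = h_1/2 = h_2/2$ follow from the doubling of the departure count on the branches, exactly as in Theorem~\ref{th-cdc17}.
\end{proof}
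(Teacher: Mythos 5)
Your proposal is correct and follows exactly the route the paper takes: the paper's own proof is the one-line remark ``Accordingly to proof of Theorem~\ref{th-cdc17} in Chapter~\ref{1A},'' preceded by the observation that the result is obtained by replacing $\underline{t}_{(u,j)}$ in Theorem~\ref{th-cdc17} with the constant effective travel time $t^k_{(u,j)} = \tilde{r}_{(u,j)} + X_{(u,j)}\underline{g}_{(u,j)}$ from~(\ref{eq_t}). Your version is simply a more careful write-up of that substitution argument, and the extra checks you add (the arc set and $\gamma$-exponents of $B(\gamma)$ are unchanged, so irreducibility, acyclicity of $\mathcal G(B_0)$, and the cycle-mean dominance inequalities all carry over) are exactly the points the paper leaves implicit.
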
~~\\~~

\begin{proof}
Accordingly to proof of Theorem~\ref{th-cdc17} in Chapter~\ref{1A}.
\end{proof}

\begin{corollary}\label{coro-1}
The asymptotic average train frequency $f_0$ at the central part and $f_1$ and $f_2$ at the two branches of the metro line
are given as follows. 
$$\begin{array}{ll}
    f_0(m,\Delta m, X) & = 2f_1(m,\Delta m, X) = 2f_2(m,\Delta m, X) \\~~\\
                    & = \max\{0, \min \left\{ f_{fw}, f_{min}, f_{bw}, f_{br} \right\}\},
        \end{array} $$
%with\footnote{fw: forward, bw: backward, min: minimum, br: branches.}
        
$$f_{fw} = \min \begin{cases}
		    \frac{m - \Delta m}{\sum_{0,1,j} (\underline{g}_{u,j} X_{(u,j)} + \tilde{r}_{(u,j)})}, & \\~~\\
		    \frac{m + \Delta m}{\sum_{0,2,j} (\underline{g}_{u,j} X_{(u,j)} + \tilde{r}_{(u,j)})},
		\end{cases} $$

$$f_{max} = \min  \begin{cases}
		     \min_{0,j} 1/(\underline{g}_{(0,j)} X_{(0,j)} + \tilde{r}_{(0,j)} + \underline{s}_{(0,j)}), & \\~~\\
		     \min_{u,j} 1/(\underline{g}_{(u,j)} X_{(u,j)} + \tilde{r}_{(u,j)} + \underline{s}_{(u,j)})/2, & \quad \forall u \in \{1,2\}, j \neq \{n_u\},
		  \end{cases} $$

$$f_{bw} = \min \left\{\frac{\bar{m} - \Delta \bar{m}}{\underline{S}_0 + \underline{S}_1}, \frac{\bar{m} + \Delta \bar{m}}{\underline{S}_0 + \underline{S}_2} \right\}, $$
  
$$f_{br} = \min \begin{cases}                 
                    \frac{2(n_2 - \Delta m)}{\sum_{1,j} (\underline{g}_{(1,j)} X_{(1,j)} + \tilde{r}_{(1,j)}) + \underline{S}_2}, & \\~~\\
                    \frac{2(n_1 + \Delta m)}{\underline{S}_1 + \sum_{2,j} (\underline{g}_{(2,j)} X_{(2,j)} + \tilde{r}_{(2,j)})}. &
                \end{cases} $$
\end{corollary}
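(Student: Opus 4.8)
\textbf{Proof plan for Corollary~\ref{coro-1}.}

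The plan is to derive the corollary directly from Theorem~\ref{thm-4}, exactly as Corollary~\ref{cor-cdc17} was derived from Theorem~\ref{th-cdc17} and Corollary~\ref{cor-acc18} from Theorem~\ref{th2-acc18}. The only content is the passage from the asymptotic average growth rate (time-headway) to the asymptotic average frequency. First I would recall that, since the dynamics $\delta = B(\gamma)\otimes\delta$ is max-plus linear with the matrix $B(\gamma)$ built exactly as in Chapter~\ref{1A} after the change of variables (the change of variables is unaffected by the demand model, as noted after equations~(\ref{eq-m5})--(\ref{eq-m6})), Theorem~\ref{thm-4} already gives a well-defined common growth rate $h_0$ on the central part and $h_1=h_2=2h_0$ on the branches. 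Then the frequency is by definition $f_u = 1/h_u$ whenever $h_u>0$, and $f_u=0$ in the degenerate cases $m=0$ or $m=n$ (which give $h=+\infty$ in the sense that no finite positive frequency is attainable); hence $f_0 = \max\{0,1/h_0\}$ and $f_1=f_2=\max\{0,1/h_1\}=\max\{0,1/(2h_0)\}=f_0/2$, which is the claimed relation $f_0 = 2f_1 = 2f_2$.

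The substantive step is then purely algebraic: since $h_0 = \max\{h_{fw},h_{min},h_{bw},h_{br}\}$ is a finite positive maximum of finitely many strictly positive terms, the reciprocal of a max is the min of the reciprocals, so
$$\frac{1}{h_0} = \min\left\{\frac{1}{h_{fw}},\frac{1}{h_{min}},\frac{1}{h_{bw}},\frac{1}{h_{br}}\right\},$$
and each of $h_{fw},h_{min},h_{bw},h_{br}$ is itself a max of (one or two) positive fractions, whose reciprocal is correspondingly a min of the inverted fractions. Carrying this out term by term: $1/h_{fw}$ becomes $f_{fw}$ by inverting the two fractions $\frac{\sum_{0,1,j}(\underline{g}_{u,j}X_{(u,j)}+\tilde r_{(u,j)})}{m-\Delta m}$ and $\frac{\sum_{0,2,j}(\cdots)}{m+\Delta m}$; $1/h_{min}$ becomes $f_{max}$ by inverting $\max_{0,j}(\cdots+\underline{s}_{(0,j)})$ and $\max_{u,j}(\cdots+\underline{s}_{(u,j)})/2$; $1/h_{bw}$ becomes $f_{bw}$ by inverting the two backward fractions; and $1/h_{br}$ becomes $f_{br}$ by inverting the two ``figure-eight'' branch fractions. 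Wrapping the whole thing in $\max\{0,\cdot\}$ to cover the excluded extremal cases yields precisely the stated formula for $f_0$.

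I do not anticipate a genuine obstacle here — the result is a mechanical consequence of $1/\max = \min(1/\cdot)$ applied to the strictly positive quantities supplied by Theorem~\ref{thm-4}. The one point worth a sentence of care is that the manipulation $1/\max\{a,b\}=\min\{1/a,1/b\}$ requires $a,b>0$; this is guaranteed because all the run, dwell (hence $\underline{g}_{(u,j)}X_{(u,j)}$), safe-separation times and the denominators $m\pm\Delta m$, $\bar m\pm\Delta\bar m$, $2(n_u\mp\Delta m)$ are strictly positive on the admissible region $0<m<n$ (with the feasibility constraints on $\Delta m$ from phase~IV-b), and the two genuinely degenerate cases $m=0$, $m=n$ are absorbed by the outer $\max\{0,\cdot\}$. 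Hence the proof reduces to the single line ``Directly from Theorem~\ref{thm-4} with $f=1/h$ and $1/\max=\min 1/(\cdot)$'', matching the style of Corollary~\ref{cor-cdc17}.
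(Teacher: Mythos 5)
Your proposal is correct and takes essentially the same route as the paper, whose entire proof reads ``Directly from Theorem~\ref{thm-4}, with $f = 1/h$.'' Your elaboration of the $1/\max = \min(1/\cdot)$ step and the positivity caveat on the admissible region $0 < m < n$ is a faithful unpacking of that one-line argument, not a different approach.
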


\begin{proof}
Directly from Theorem~\ref{thm-4}, with $f = 1/h$.
\end{proof}

Figure~\ref{cdc18-1} is a schematic graph, depicting the average frequency on the central part, over the total number of trains $m$ and the difference between the number of trains on the branches $\Delta m = m_2-m_1$, for three passenger travel demand levels, accordingly to Corollary~\ref{coro-1}.
It becomes clear, that for a given passenger travel demand, the resulting form of the graph is the one having been derived from the model with fixed train run and dwell times, see Corollary~\ref{cor-cdc17} in Chapter~\ref{1A} above.
Precisely, the eight traffic phases of the train dynamics can be identified. There are two free flow phases, two congested branches phases, two congestion phases, one maximum frequency and one zero-flow phase.
Three levels of passenger travel demand are depicted.
\begin{itemize}
\item Firstly, demand level $a1$, which serves as reference.
\item Secondly, demand level $a2$ which represents a symmetric increase of the passenger arrival rates to and passenger departure rates from all platforms.
\item And last, the demand level $a3$ where the passenger demand increases only on the central part and on branch 2.
\end{itemize}

It can be seen that with regard to the reference scenario $a1$, if the demand level increases similarly over all platforms (scenario $a2$), the capacity of the system (GLKJIH plane in Figure~\ref{cdc18-1}) decreases. By consequent, the optimal number of trains decreases, too. However, congestion only occurs from a higher number of trains on (point~J), which is a direct consequence of the reduced maximum frequency.

Furthermore, if considering a scenario where the passenger demand increases asymmetrically only on the central part and on branch 2 ($a3$), resulting travel times on branch~2 are longer than travel times on branch~1, which causes that the optimal difference between the number of trains on branch~2 and branch~1 $m_2-m_1$ changes. By consequent, the optimal operating point (point G) maximizing the average frequency for a minimum number of trains, diverts towards larger values in $m$ and a more important difference between the number of trains on the branches $m_2-m_1$.

To sum up, as for the traffic phases of the train dynamics on a metro line with a junction and with constant run and dwell times presented in Corollary~\ref{cor-cdc17}, the average frequency depends on the number of trains $m$ and on the difference between the number of trains on branch~2 and branch~1, $\Delta m$.
Furthermore, there exists an optimal difference $\Delta m^{*}$ for every $m$ accordingly to Theorem~\ref{th-itsc}.
In the following, the traffic phases of the train dynamics on a line with a junction with demand-dependent dwell times and controlled run times will be described.

\begin{figure}[h]
    \centering
    \fbox{\includegraphics[width=\textwidth]{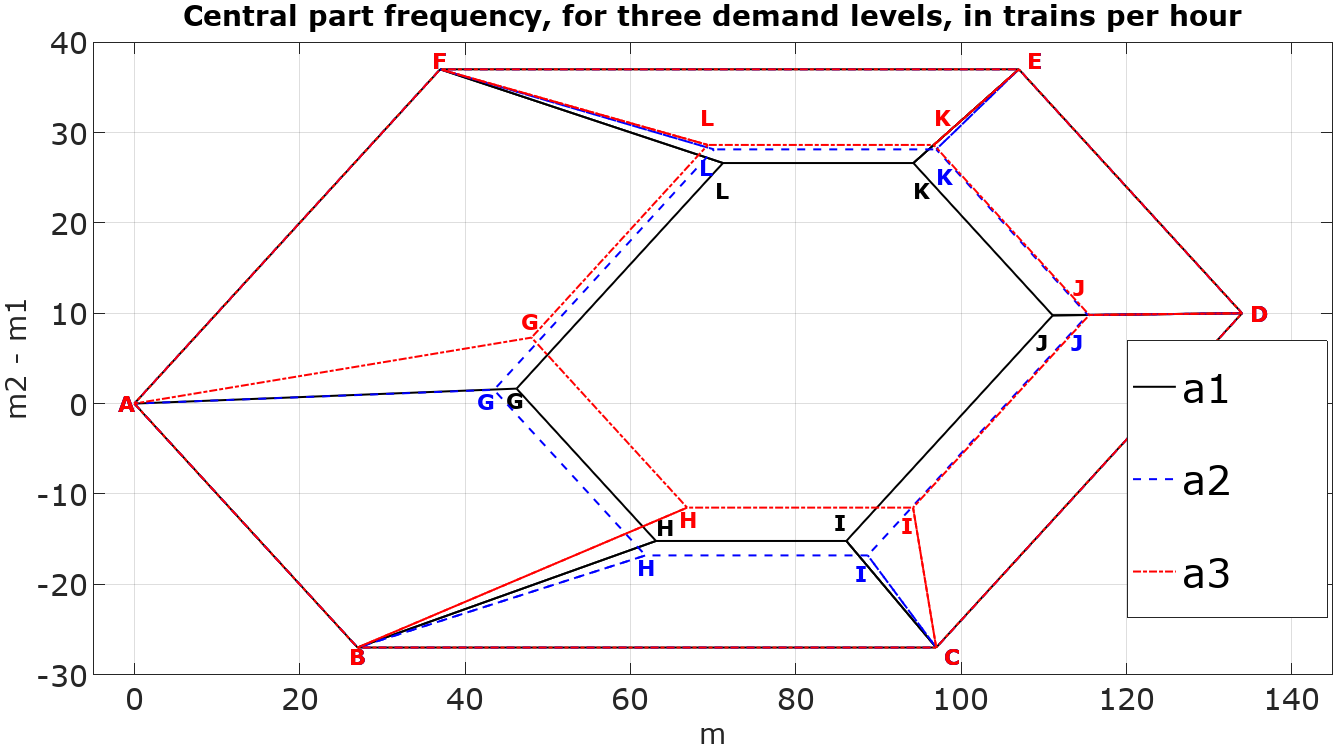}}
    %\caption{Average central part frequency $f_0$ on a RATP metro line 13 of Paris, with a junction. In the left, for today's peak hour demand level $a_1$ and a demand level $a_2$ where the demand increases by 50 \%, as well as demand level $a_3$ where the increase of 50 \% on the central part goes exclusively on branch 2. For $a_3$ we suppose restrained (de-)boarding by 50 \% on branch 2 to model the congestion phenomenon during alighting and boarding due to the very high demand . In the right, only demand levels $a_2$ and $a_3$ are depicted.}
    \caption[Schematic illustration of Corollary~\ref{coro-1}. Average central part frequency over number of trains $m$, the difference on the branches $m_2-m_1$, for three demand levels.]{Schematic illustration of Corollary~\ref{coro-1}. Average central part frequency over number of trains $m$, the difference on the branches $m_2-m_1$, for three demand levels: reference scenario $a1$, a symmetric demand increase $a2$, an asymmetric demand increase $a3$.}
    \label{cdc18-1}
\end{figure}

% -----------------------------------------------------------------------------
\section{{The Traffic Phases}}
% -----------------------------------------------------------------------------
The Figures~\ref{cdc-dm-demand},~\ref{cdc-dm-margin}~\ref{cdc-f-demand},~\ref{cdc-f-demand-contourf},~\ref{cdc-f-margin}~and~\ref{cdc-f-margin-contourf} in the following illustrate Theorem~\ref{thm-4} and Corolllary~\ref{coro-1} from above. They depict the traffic phases of the train dynamics on a metro line with a junction with demand-dependent control. The figures in the following represent the traffic phases of metro line 13, Paris. This line is composed of a central part and two branches, connected with a junction. Furthermore, the traffic phases of metro line 13 are depicted with the asymptotic average train frequency on the central part $f_0$. Note that accordingly to Corollary~\ref{coro-1}, the frequencies on the branches can be obtained with $f_1 = f_2 = f_0/2$.

The infrastructure parameters $\underline{g}_{(u,i)}$ and $\underline{s}_{(u,j)}$ in Corollary~\ref{coro-1} are per definition the minimum dynamic interval, this is the minimal time during which there is no train on a platform, and the safe separation time, the minimum time to be respected between the moment a first train clears a segment and a second train enters the same segment. Both are infrastructure constraints, the parameters have been provided by the operator RATP.
Moreover, the passenger travel demand is modeled by the passenger arrival flows to and the passenger departure flows from the platforms $\lambda^{in}_{(u,j)}, \lambda^{out}_{(u,j)}$. Average passenger flows over time intervals of 30 min have been used here, accordingly to data provided by RATP. The passenger flows can be updated at any time interval. Finally, an average value of the boarding and alighting capacity $\alpha^{in}_{(u,j)}, \alpha^{out}_{(u,j)}$, that is the number of passengers who can board, respectively alight from a train per time interval, has been provided by the operator RATP. From these inputs, the passenger demand parameter is calculated
$$X_{(u,j)} = x_{(u,j)}/(1 - x_{(u,j)}),$$
with
$$x_{(u,j)} = \lambda^{in}_{(u,j)}/\alpha^{in}_{(u,j)} + \lambda^{out}_{(u,j)} / \alpha^{out}_{(u,j)}.$$

The nominal train run times are the sum of a minimum run time and a run time margin which allows to realize the demand-dependent dwell times:
$$\tilde{r}_{(u,j)} = \underline{r}_{(u,j)} + \Delta r_{(u,j)}.$$
The minimum run times for metro line 13, Paris, have been simulated at the MATYS entity of RATP. The simulation takes into account the rolling stock, for example traction, maximum speed and braking characteristics. Moreover, infrastructure constraints are considered, such as the gradient and curve radius.
In addition to this minimum run time, a run time margin can be applied which increases the nominal run times and reduces the nominal train speed. In case of a perturbation on the train time-headways, dwell times can then be extended in the limit of the run time margin to take into account the accumulation of passengers on the platform:
$$w^{k}_{(u,j)}(h^{k}_{(u,j)},x_{(u,j)}) = \min(x_{(u,j)}h^{k}_{(u,j)},\bar{w}_{(u,j)}).$$

The train which has realized a longer dwell time, can then be accelerated within the run time margin in the following inter-station, respecting the maximum speed, that is the minimum run time:
$$r^{k}_{(u,j)}(h^k_{(u,j)},x_{(u,j)}) = \max\{\underline{r}_{(u,j)}, \tilde{r}_{(u,j)} - x_{(u,j)}(h^k_{(u,j)} - \underline{h}_{(u,j)})\}.$$
This combined traffic control allows to realize demand-dependent dwell times while guaranteeing stability of the traffic within the run time margin.
Note that $n_{u}$ is the number of segments on each part of the line.

\subsection{The Number of Trains on the Branches}
\begin{figure}[h]
 	\centering
 	\frame{
  	\includegraphics[width=\textwidth]{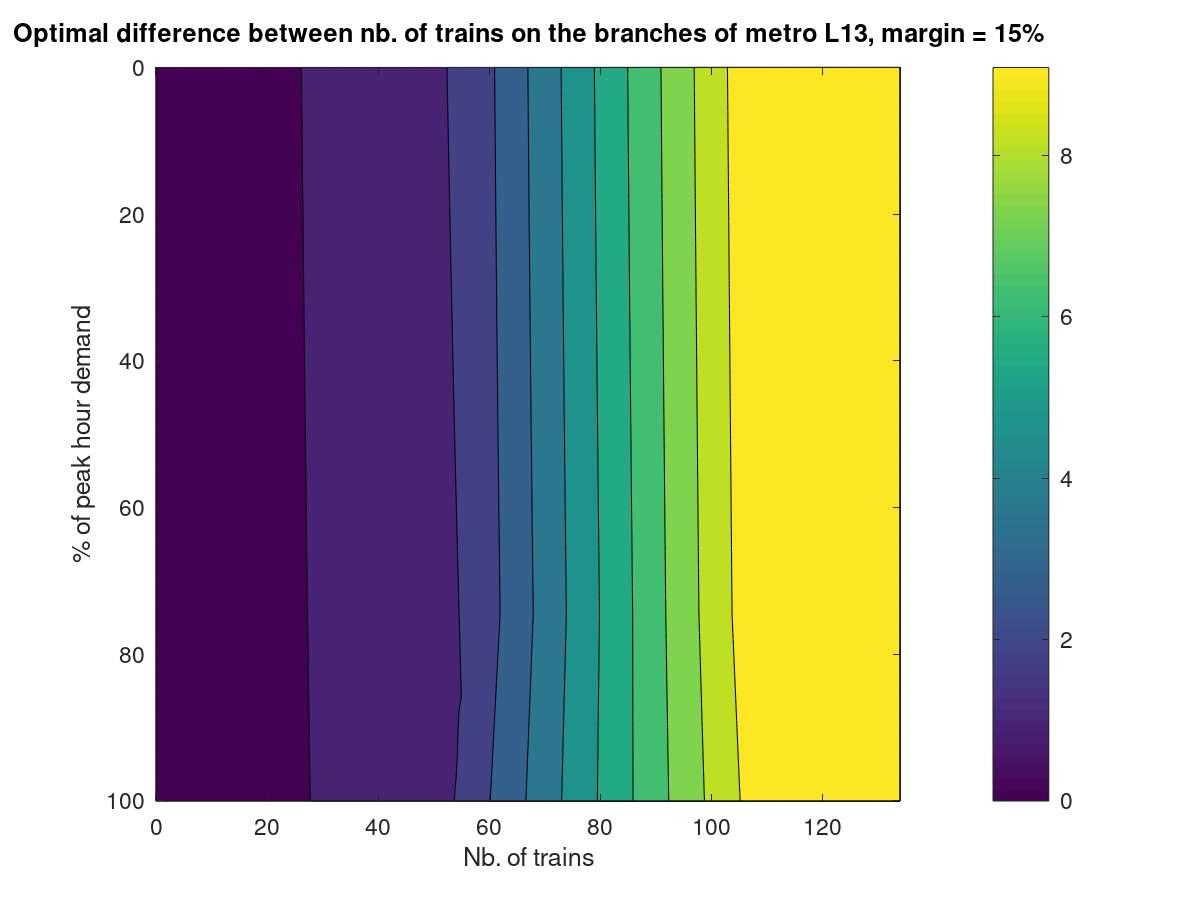}}
	\caption {Optimal $\Delta m^{*}$, over $m$ and passenger demand, margin of 15\%.}
	\label{cdc-dm-demand}
\end{figure}

\begin{figure}[h]
 \centering
 	\frame{
  	\includegraphics[width=\textwidth]{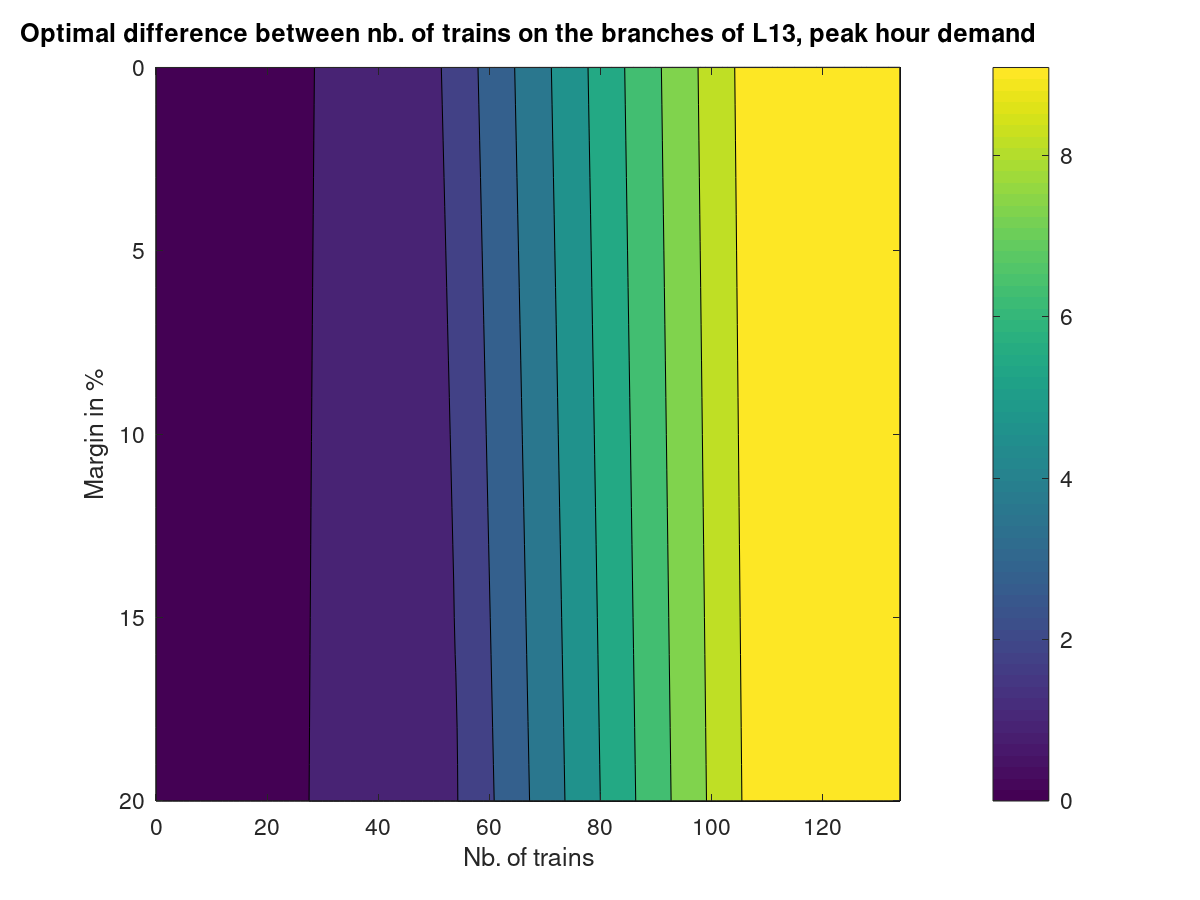}}
	\caption {Optimal $\Delta m^{*}$, over $m$ and run time margin, peak hour demand.}
	\label{cdc-dm-margin}
\end{figure}

In Figures~\ref{cdc-f-demand},~\ref{cdc-f-demand-contourf},~\ref{cdc-f-margin}~and~\ref{cdc-f-margin-contourf} three traffic phases of the train dynamics can be distinguished, for an optimal difference between the number of trains on the branches $\Delta m{*}$, shown in Figures~\ref{cdc-dm-demand}~and~\ref{cdc-dm-margin}. The optimal difference $\Delta m^{*}$ can be derived accordingly to Chapter~\ref{1A}, Section~\ref{transactions} for the traffic phases on a line with a junction with minimum run and dwell times.
With regard to Figure~\ref{cdc18-1} the optimal difference are all the values for $\Delta m$ along a line AG and JD.
\begin{equation}\label{opt-deltam}
\Delta m^{*}(m) =
\begin{cases}
\frac{\Delta T}{2 T}m \text{ if } m \leq T f_{max}\\ ~~ \\
%\frac{\Delta T f_{max}}{2} + (m - Tf_{max})\frac{\Delta N - \frac{\Delta S f_{max}}{2} - \frac{\Delta T f_{max}}{2}}{N - \underline{S}f_{max} - Tf_{max}} \text{ if } m > Tf_{max} \text{ and } m < N - \underline{S} f_{max}\\ ~~ \\
\Delta n + \frac{\Delta \underline{S}}{2 \underline{S}}(m-N) \text{ if } m \geq N - \underline{S} f_{max},
\end{cases}
\end{equation}

where

$$T = (2T_0 + T_1 + T_2)/2$$

and

$$\underline{S} = (2\underline{S}_0 + \underline{S}_1 + \underline{S}_2)/2.$$

Note that for an optimal $\Delta m^{*}$, the diagram depicting the traffic phases of a metro line with a junction with demand-dependent dwell times and controlled run times, see Figures~\ref{cdc-f-demand},~\ref{cdc-f-demand-contourf},~\ref{cdc-f-margin}~and~\ref{cdc-f-margin-contourf}, has the same shape as the one of the traffic phases of a linear line with demand-dependent dwell times and controlled run times, derived in Section~\ref{phases-acc18}.
More precisely, the asymptotic average train frequency on a line with a junction and demand-dependent dwell times and controlled run times depends on the following variables:
\begin{itemize}
\item the number of trains $m$,
\item the passenger travel demand and
\item the run time margin.
\end{itemize}

Figure~\ref{cdc-dm-demand} depicts the optimal difference between the number of trains on the branches $\Delta m^{*}$ accordingly to formula~(\ref{opt-deltam}) for metro line 13, Paris, for a fixed run time margin of 15\% over the number of trains $m$ and the passenger travel demand.
With an increasing number of trains the optimal difference between the number of trains on the branches increases, too. Note that at the intersection between free flow phase and maximum frequency phase at around $m=50$ trains, the optimal difference is $\Delta m = 2$. From here to the intersection between maximum frequency phase and congestion phase at around $m=110$ trains, the optimal difference is $\Delta m = 10$. The effect of the passenger travel demand on the optimal difference is less important. With a symmetrically over all platforms increasing passenger demand, the difference between the travel time on the branches - and consequently the optimal difference - does not change considerably.

 Figure~\ref{cdc-dm-margin} depicts the optimal difference accordingly to formula~(\ref{opt-deltam}) for metro line 13, Paris, over the number of trains and the run time for margin, for a peak hour passenger demand.
 Again, it can be seen that with an increasing number of trains, $\Delta m^{*}$ increases. As before, at the intersection between free flow phase and maximum frequency phase at around $m=50$ trains, $\Delta m^{*} = 2$, and at the intersection between maximum frequency phase and congestion phase at around $m=110$ trains, $\Delta m^{*}=10$. The effect of the run time margin is minor.
Especially with regard to real metro operations and the fact the optimal difference has to be rounded to integer values to be applied, it can be concluded that a symmetric prolongation of travel times does not have an impact on $\Delta m^{*}$.
In the following the effect of the passenger demand and the run time margin on the traffic phases for an optimal $\Delta m$ accordingly to equation~(\ref{opt-deltam}), corresponding to the line AGJD in Figure~\ref{cdc18-1} is derived.

Note, that in the maximum frequency phase, the asymptotic average frequency derived in Theorem~\ref{thm-4} is constant and consequently independent of $m, \Delta m$ within the GLKJIH plane, see Figure~\ref{cdc18-1}.

\subsection{The Effect of the Passenger Travel Demand}
\subsubsection{Free Flow Phase}
\begin{figure}[h]
 	\centering
 	\frame{
  	\includegraphics[width=\textwidth]{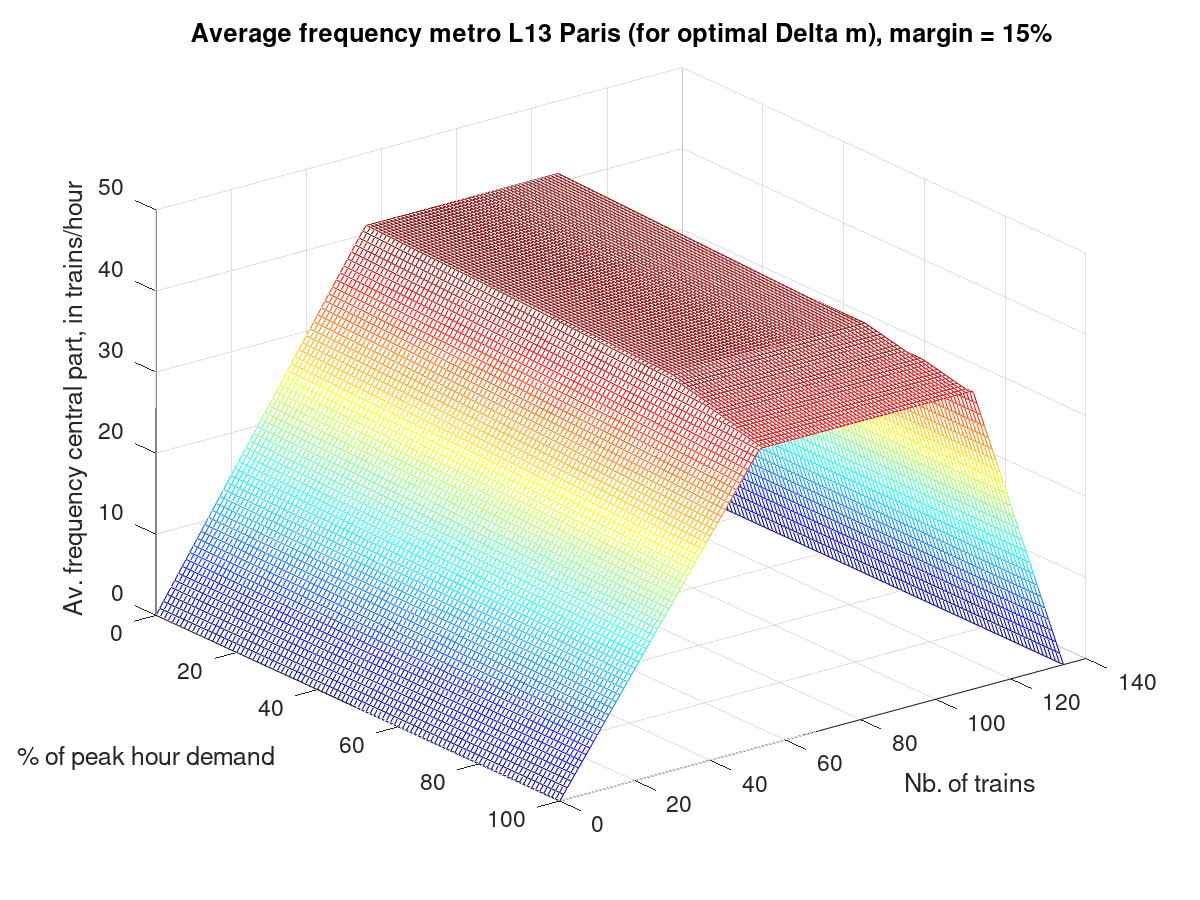} }
	\caption {Average frequency for optimal $\Delta m^{*}$, over $m$ and demand, fixed margin (view 1).}
	\label{cdc-f-demand}
\end{figure}

\begin{figure}[h]
    \centering
    \fbox{\includegraphics[width=\textwidth]{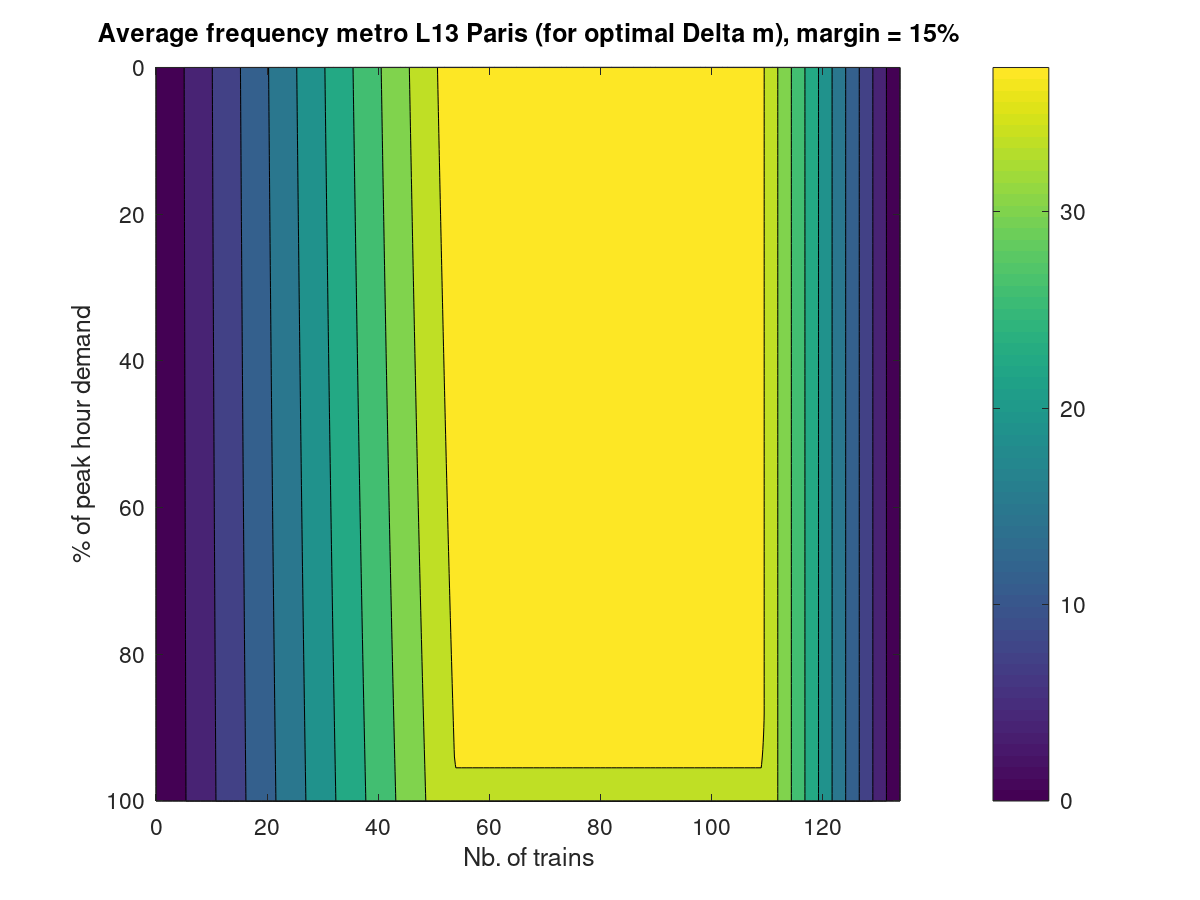}}
    \caption{Average frequency for optimal $\Delta m^{*}$, over number of trains and passenger demand, for a fixed run time margin (view 2).}
    \label{cdc-f-demand-contourf}
\end{figure}

For an optimal $\Delta m^{*}$, Figure~\ref{cdc-f-demand} depicts the three traffic phases. First of all, there is a free flow phase, where the asymptotic average frequency increases with the number of trains for a fixed passenger demand level.
Refer to Figure~\ref{cdc-f-demand-contourf} to study the influence of the passenger demand. For a fixed number of trains, the asymptotic average frequency decreases with an increasing passenger travel demand. This is due to a prolongation of train dwell and travel times. If one aims to realize a certain frequency level and dwell times have to extended due to an increasing passenger demand, this might effect the number of trains required. For example, in Figure~\ref{cdc-f-demand-contourf}, if the passenger demand is at $40 \%$ of peak hour, approximately $m=40$ trains are needed to realize a frequency $f_0=30$ trains per hour. If the demand increases to peak hour level, around $m=48$ trains are necessary to ensure the same average frequency. Note that a change in the total number of trains might also require a change in $\Delta m^{*}$. In this example, if the number of trains change to $m= 40 + 8 = 48$, the optimal difference could be changed by one to $\Delta m^{*} = 1+1 = 2$, see figure~\ref{cdc-dm-demand}.
Finally, the optimal number operating point, where the frequency is maximized for a minimal number of trains can be found. For the case of metro line 13, Figure~\ref{cdc-f-demand-contourf} shows that the optimal number of trains is at around $m=50$ for minimum passenger demand and increases to around $m=55$ for passenger peak hour demand level.

\subsubsection{Maximum Frequency Phase}

In the maximum frequency phase, the capacity of the system is reached. The maximum frequency is given by the segment on which the sum of dwell time, run time and safe separation time realizes the maximum over the entire line. Note, that due to the one-over-two operations rule at the junction, the corresponding sums on the branches have to be divided by factor $2$.
Figure~\ref{cdc-f-demand} shows that the maximum frequency depends on the passenger travel demand (for a fixed run time margin). This is a direct consequence of the demand-dependent dwell times, which increase with the passenger volume. Furthermore, at around 80\% of peak hour demand, a discontinuity in the gradient can be identified. Here, the bottleneck of the system switches from the segment of the station \textit{Gaîté}, direction northbound, to the segment of the station \textit{Montparnasse -- Bienvenüe}, direction northbound. This can be interpreted as follows: for a low passenger demand, dwell times are relatively short. Then, the most constraining segment of the line is one on which the run time and the safe separation time are relatively long, this is the case for the station \textit{Gaîté}. However, with increasing passenger demand, dwell times get considerably longer on platforms with a high passenger volume. This is the case for the interchange station \textit{Montparnasse -- Bienvenüe}. Even though around this station block distances are optimized such that run times and safe separation are minimal, this platform is the bottleneck of the line due to important dwell times during peak hour.

From Figure~\ref{cdc-f-demand-contourf} it can be seen that the capacity phase is bordered by some straight lines at the intersection with the free flow phase and the congestion phase. The intersection with the free flow phase represents the optimal number of trains, since it is the one maximizing the frequency at minimum cost.
This number depends on the passenger travel demand. For an increasing demand, the capacity decreases, whereas the travel times on the line increase due to longer dwell times. As a consequent, not only capacity does decrease, but a higher number of trains is necessary to attain the reduced capacity.
From the intersection between maximum frequency phase and congestion phase on, congestion occurs. %Congestion can be observed from different frequency levels on. The frequency depends on the passenger travel demand, where for a high demand level, the maximum frequency decreases and by consequent, congestion can be observed for lower frequency values.
Congestion always occurs from the same number of trains on, dependent on the infrastructure of the line, but independent of the passenger travel demand. Note, that the intersection line is parallel to the demand-axis. For the case of metro line 13, this is at around $m=110$ trains.

%\begin{figure}
%    \centering
%    \fbox{\includegraphics[width=\textwidth]{}}
%    \caption{Average frequency for optimal $\Delta m$, over number of trains and passenger demand, for a fixed run time margin (view 3).}
%    \label{cdc18-5}
%\end{figure}

\subsubsection{Congestion Phase}

Figure~\ref{cdc-f-demand-contourf} shows that in the congestion phase, the average frequency decreases with an increasing number of trains, but it is independent of the passenger travel. Note that lines of same frequency are parallel to the demand-axis.

\subsection{The Effect of the Run Time Margin}
\subsubsection{Free Flow Phase}
%margin;
\begin{figure}[h]
 	\centering
 	\frame{
  	\includegraphics[width=\textwidth]{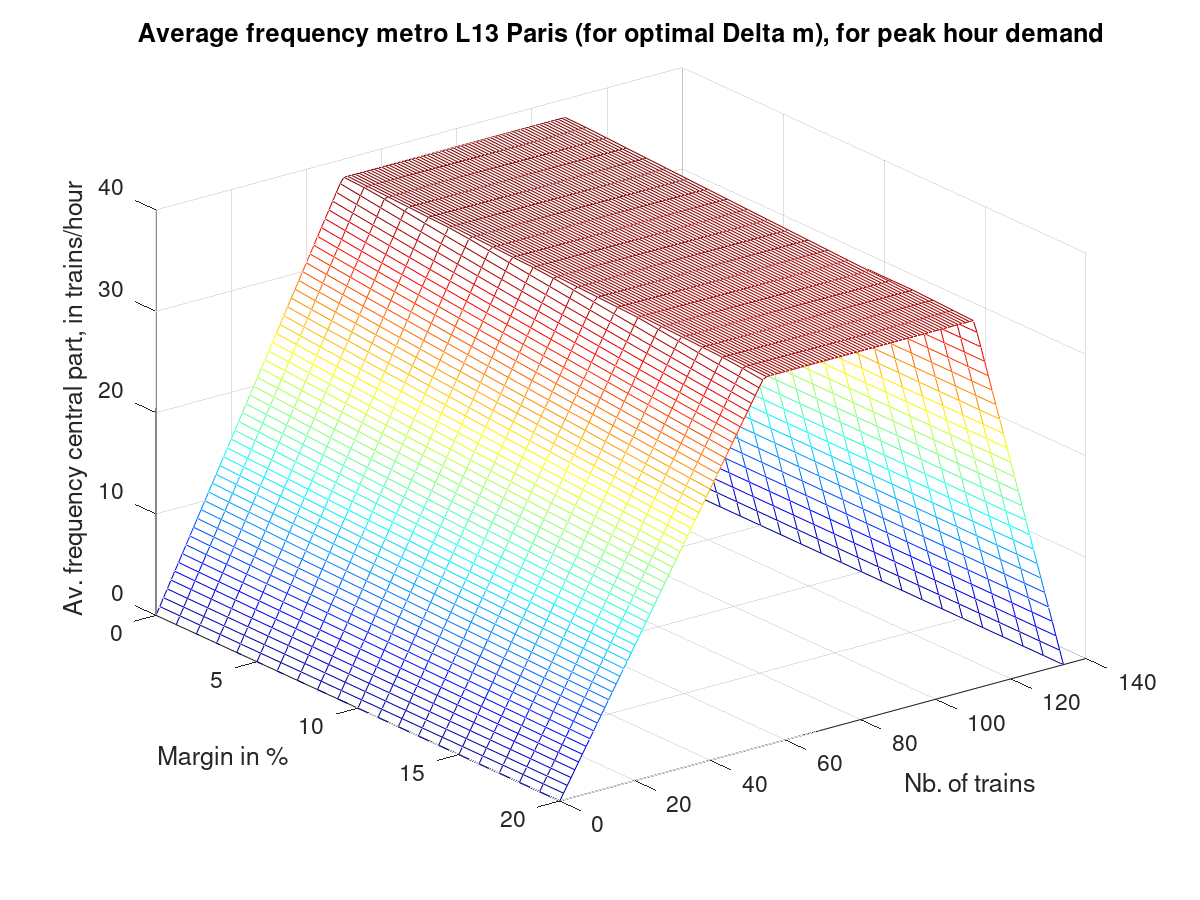} }
	\caption {Traffic phases for optimal $\Delta m^{*}$ over $m$ and run time margin for peak hour demand (view 1).}
	\label{cdc-f-margin}
\end{figure}

Again for an optimal $\Delta m^{*}$, Figure~\ref{cdc-f-margin} depicts the three traffic phases for peak hour demand level. This allows to study the influence of the run time margin on the traffic phases.
In the free flow phase, the asymptotic average frequency increases with the number of trains. As it can be be seen in Figure~\ref{cdc-f-margin-contourf}, for a fixed number of trains, an increasing run time margin has a negative impact on frequency. For example, to realize an average frequency of $f_0 = 30$ trains per hour with a margin of $5\%$, approximately $m=42$ trains are necessary. If it is chosen to increase the run time margin to $20\%$ for a better robustness of the timetable, around $m= 42+5=47$ trains are required to realize the same average frequency. Increasing the number of trains might also lead to change in the optimal difference $\Delta m^{*}$.

\subsubsection{Maximum Frequency Phase}
%margin;
\begin{figure}[h]
 	\centering
 	\frame{
  	\includegraphics[width=\textwidth]{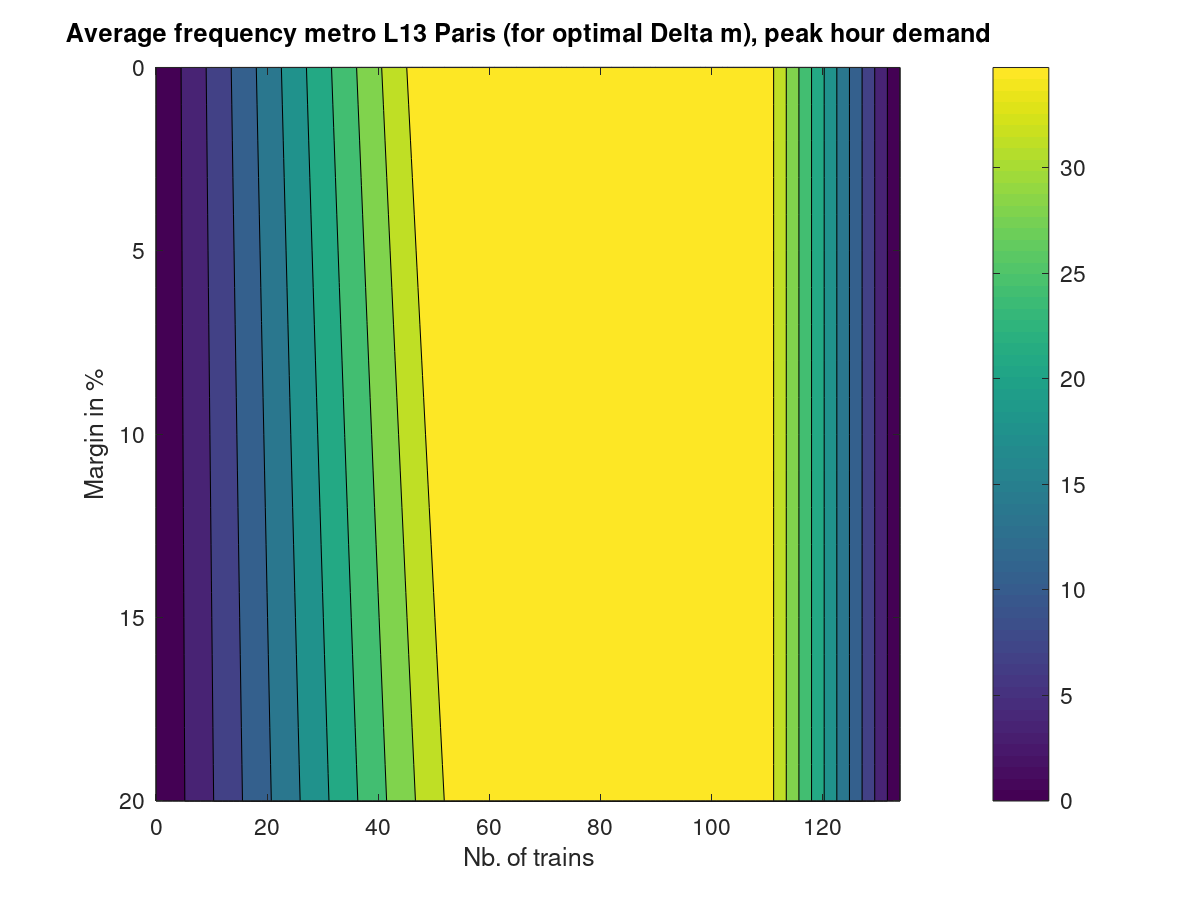} }
	\caption {Traffic phases for optimal $\Delta m^{*}$ over $m$ and run time margin for peak hour demand (view 2).}
	\label{cdc-f-margin-contourf}
\end{figure}

With regard to Figure~\ref{cdc-f-margin}, it can be seen that an increasing run time margin has a restraining effect on the maximum frequency. This is due to the longer travel times an all segments which will lead to longer minimum train time-headways on the line. Most importantly, if one aims to include higher margins to increase the robustness of a timetable, the limiting effect on the capacity has to be considered. This is relevant for mass transit systems with a high passenger demand which are driven on the capacity limit. Moreover, as it can be seen in Figure~\ref{cdc-f-margin-contourf}, the intersection line between the free flow phase and maximum frequency phase it not parallel to the margin-axis. This means, that the total number of trains required to reach the optimal operating point which maximizes the frequency for a minimal number of trains, increases with the run time margin.

\subsubsection{Congestion Phase}
Figure~\ref{cdc-f-margin-contourf} shows that from around $m=110$ trains on, congestion occurs on metro line~13. Here, trains start to interact with each other an the average frequency decreases with the number of trains. As it is the case for the passenger travel demand, the average frequency in the congestion phase is independent of the run time margin. Refer also to Theorem~\ref{thm-4}, which shows that in this phase, the frequency depends only on the safe separation times.

%résumé Chap. 3 et 4
Throughout this thesis, discrete event traffic models for metro lines with a junction have been been developed.
In Chapter~\ref{1A}, the traffic phases of the train dynamics on a line with a junction considering minimum train dwell and run times, have been derived. It has been shown, that the asymptotic average train time-headway on the line depends on train dwell, run and safe separation times and on the number of trains and on the difference of the number of trains on the branches.
In this model, the effect of the passenger demand on the train dynamics is not modeled. The minimum dwell and run times can be seen as the nominal timetable values and can include a margin to recover small perturbations.
In Section~\ref{mac-control}, laws for macroscopic traffic control have been presented.
This macroscopic control is interesting for lines with a junction, where it consists in controlling the number of trains and the difference between the number of trains on the branches.
For example, in case of a changing passenger travel demand or a perturbation which exceeds the included margins, macroscopic control on the number of trains on the central part and on the branches can be applied. In feedback of the demand, respectively of the observed train travel times, the optimal set point of the system can calculated.

In Chapter~\ref{Chap-4} the model is extended by a microscopic control of the train dwell and run times. The minimum dwell times are replaced by a function of the average passenger travel arrival rate to and departure rate from the platforms and the train time-headway, bounded by a maximum train dwell time. To guarantee the stability of the dynamics, minimum train run times are replaced by a function canceling an extension of the dwell times in case of a long headway, bounded by a minimum run time. This model has first been developed for linear metro lines, see Section~\ref{acc}, and then extended for lines with a junction, see~\ref{cdc18}. The traffic phases of the train dynamics have been derived and the effect of new parameters, the margin on the run time and the passenger travel demand on the asymptotic average frequency, have been studied.

\chapter{Simulation of Feedback Traffic Control}\label{sim} %RATP user tools
%\chaptermark{Simulation of feedback control}
%=============================================

\begin{quote}{
In this chapter, three simulation cases, applied to metro line 13, Paris, are presented.
They illustrate the traffic control proposed in the preceding chapters.
In the first section, the macroscopic control law presented in~\ref{control-1} for the number of trains on a line with a junction in feedback of a changing passenger travel demand is illustrated.
In the second section, the macroscopic control law presented in~\ref{control-3} for the number of trains on the branches in case of a perturbation on the train dwell and/or run times on the branches is illustrated. Therefore, the closed-loop train dynamics on a line with a junction are simulated for the case of a perturbation of the train travel times on one of the branches. A scenario without control is compared to a scenario with control. %It is shown how the macroscopic control of Section~\ref{mac-control} can be used to real-time control the train passing order at the convergence. %By temporarily changing the train passing order at the convergence (different from the one-over-two rule), the perturbation disappears.
In the third section, the closed-loop train dynamics with demand-dependent dwell times and controlled run times as presented in~\ref{acc} are simulated. It is shown that an additional control on the train dwell times leads to an harmonization of the train time-headways on the line.}
\end{quote}

%==================================
%\section{Responding to a changing travel demand volume}
\section{Macroscopic Control of the Number of Trains in Feedback of the Passenger Travel Demand Volume}\label{mac-control1}
\sectionmark{Responding to a Changing Travel Demand Volume}
%==================================
\begin{figure}
 \centering
    \frame{\includegraphics[width=\textwidth]{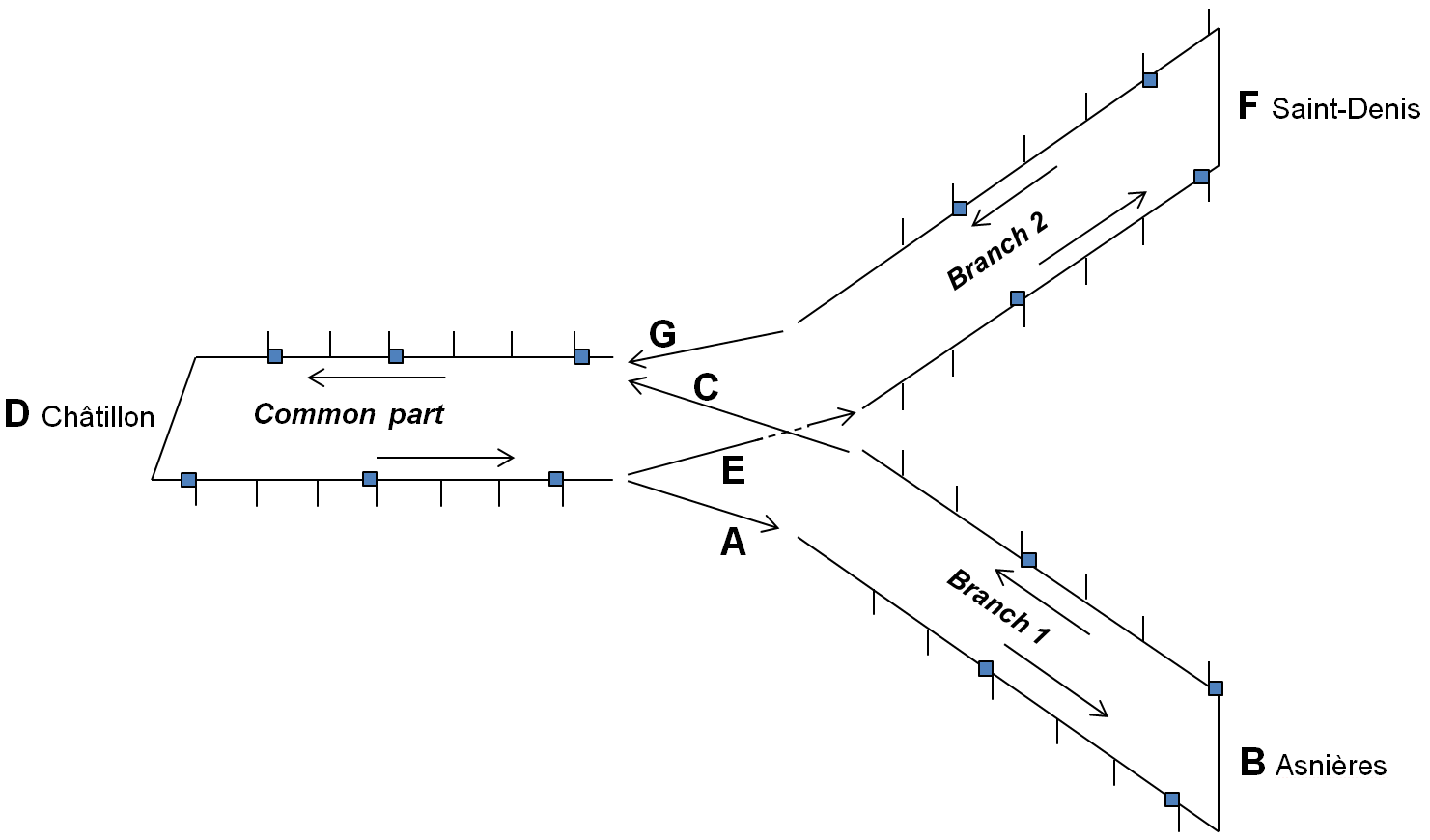}}
    \caption[Schema of metro line 13, Paris.]{Schema of metro line 13, Paris, illustrating the positions of the points A,B,C,D,E,F and G of the $y$-axis of Figure~\ref{fig_perturb_without} and Figure~\ref{fig_perturb_with} on the metro line.}
    \label{fig_schema}
\end{figure}
%
%The first tool, ''Capacity Analysis through the Physics of mass transit with Exploration of capacity-constraints'' (CAPE) allows to simulate the capacity of any metro line with a junction.
%It takes as inputs
%\begin{itemize}
%\item the train speed profiles,
%\item dwell times as fixed in the timetable,
%\item safe separation times and
%\item a passenger travel demand profile.
%\end{itemize}
%
%On this basis, the following outputs are generated.
%\begin{itemize}
%\item A fundamental diagram of the line. This allows to visualize capacity and the traffic phases of the line. On this basis,
%\item the optimal number of trains on the central part and on the branches with regard to the passenger travel demand is can be calculated.
%\item Finally, the operation margins with regard to the capacity of the system can be visualized.
%\end{itemize}

Consider Paris metro line 13 with a junction as presented in Figure~\ref{fig_schema}.
Figure~\ref{fig_demand} depicts the maximum passenger charge on the central part and on the branches,
for time intervals of 30 minutes during morning peak hour.
Passenger demand data has been provided by RATP.
%insertion;
More precisely, the operator disposes of the average passenger arrival rates to and the departure rates from the platforms per time interval of 30 minutes. By consequent, for this time interval, the passenger charge, that is the number of passenger to be transported per inter-station, can be calculated. For the central part and each branch, evolution of the number of passengers in the inter-station with the most important passenger demand is depicted in Figure~\ref{fig_demand}. It can be seen, that the passenger charge on the branches is approximately half the one on the central part and balanced over both branches. The demand peak is reached between 8h30 and 9h00 in the morning.
%end insertion;
As the passenger travel demand level on the two branches is nearly the same, see Figure~\ref{fig_demand}, RATP has chosen an one-over-two operation of the junction.

The aim here is to control $m, \Delta m$ in feedback of a new set point $f_0$ due to a changing passenger travel demand volume.
Let $c_{(u,j)}$ be the passenger charge on board on a segment $(u,j)$.
RATP calculates the train capacity with a passenger load factor of 4 passengers per $m^2$.
The capacity of one train of metro line 13 is thus $\kappa = 584$ passengers.
The required maximum train time-headway $h^{\text{req}}_0$ (central part), with respect to the passenger demand is calculated as follows.
\begin{align}\label{f/rho}
  h^{\text{req}}_0 = \min \left\{ \min_j \frac{\kappa}{c_{(0,j)}}, \min_j \frac{\kappa}{2 c_{(1,j)}}, \min_j\frac{\kappa}{2 c_{(2,j)}} \right\}.
\end{align}
The feasible maximum train time-headway $h^{\text{fea}}_0$ (central part), with respect to the passenger demand is calculated as follows.
\begin{equation} \label{h_fea}
  h^{\text{fea}}_0 = \max (h^{\text{req}}_0, h_{\min}).
\end{equation}
The feasible headway $h^{\text{fea}}_0$ on metro line 13 (central part) with respect to the passenger demand, is depicted in Figure~\ref{fig_headway}.
Furthermore, Figure~\ref{fig_headway} shows the optimal $m$ and $\Delta m$ calculated with~(\ref{lin2}), where $f^{\text{fea}}_0 = 1/h^{\text{fea}}_0$,
$\underline{T} = (2\underline{T}_0 + \underline{T}_1 + \underline{T}_2)/2 = 84.2$ min, and
$\Delta \underline{T} = 5.8$ min.

\begin{figure}
 \centering
 \frame{
  \includegraphics[width=\textwidth]{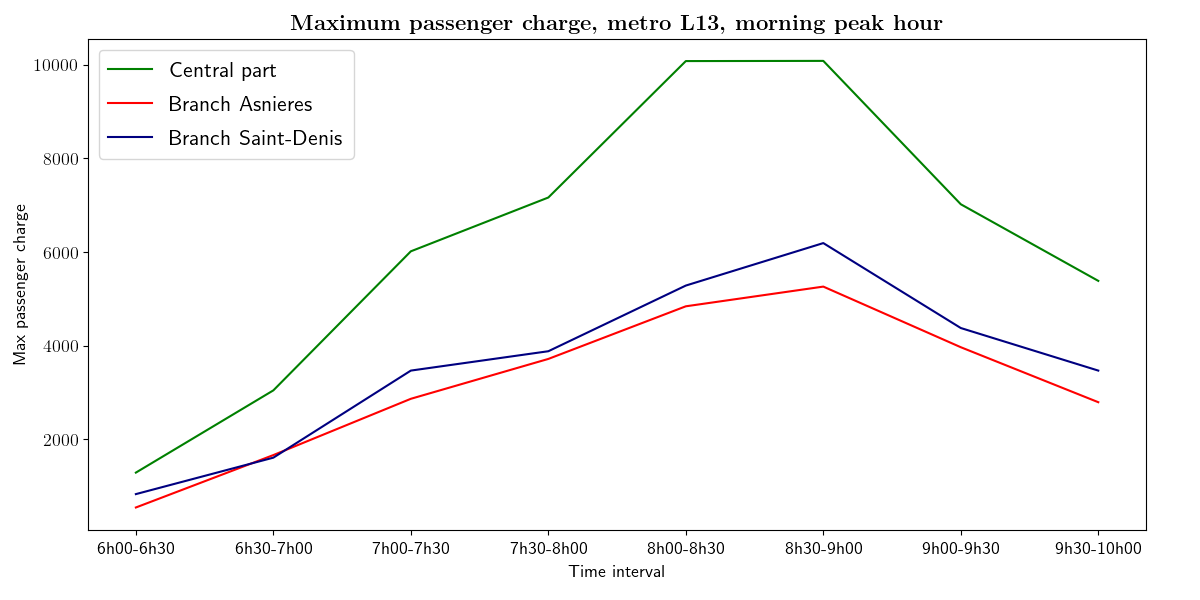} }
\caption {Maximum passenger charge $c_{(u,j)}$ during morning peak hour on metro line 13. Data from RATP.}
\label{fig_demand}
\end{figure}

\begin{figure}
 \centering
 \frame{
  \includegraphics[width=\textwidth]{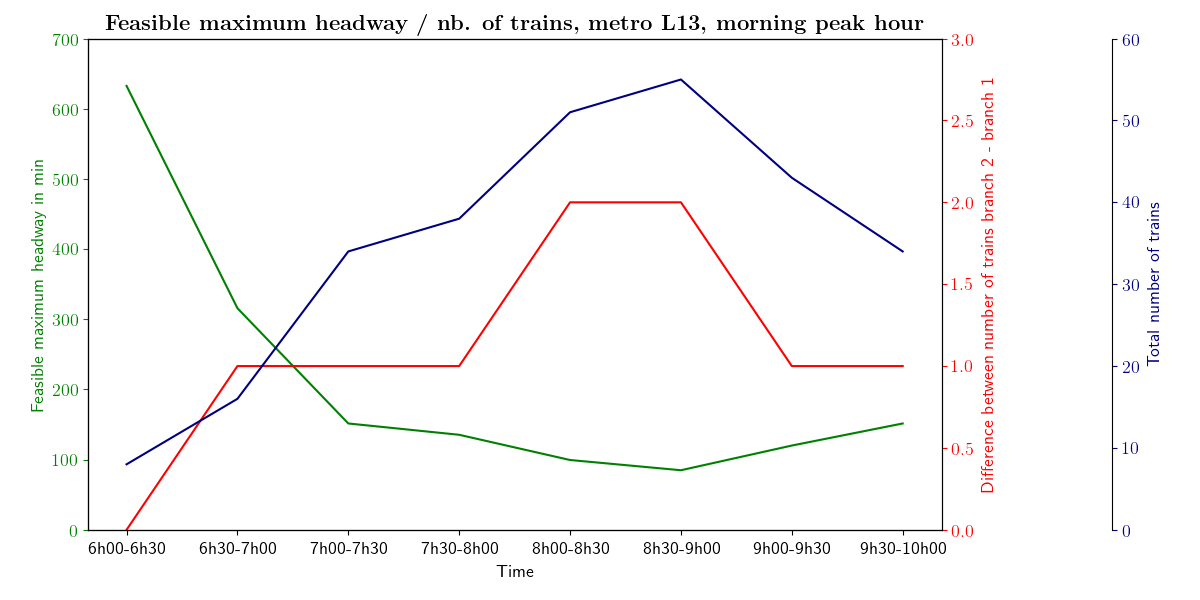} }
\caption {Feasible maximum train time-headway $h^{\text{fea}}_0$ calculated with~(\ref{h_fea}) on RATP metro line 13 Paris (green), total number $m$ of trains (blue),
  and difference $\Delta m$ in the number of trains on the two branches (red).}
\label{fig_headway}
\end{figure}

As traffic remains in the free flow phases, the travel times are the sum of theoretic minimum dwell and run times.
A \textit{Grade of Automation 2} (GOA~2) system implemented on metro line 13, ensures that trains respect the
theoretic minimum run times. Moreover, it is assumed that theoretic dwell times are respected.
With the asymptotic average train time-headway $h_0$ chosen to the feasible time-headway $h^{\text{fea}}_0$, with respect to the passenger demand,
calculated accordingly to~(\ref{h_fea}), and $f^{\text{fea}}_0 = 1 / h^{\text{fea}}_0$, control law~(\ref{lin2}) is applied. The results are given in Table~\ref{table-1} below
with the following control variables: 1) number $m$ of trains on the line (rounded to the nearest integer), and
2) optimal difference $\Delta m$ between the number of trains on the two branches (rounded to the nearest integer).

\begin{table}[h]\label{table-1}
\centering
\caption{Control variables $m, \Delta m$ for optimal set points. Metro line 13, Paris, morning peak hour.}
\begin{tabular}{|l|cccccc|}
\hline
\vspace{2pt}
    		&$h^{\text{req}}_0$ & $f^{\text{req}}_0$ & $h^{\text{fea}}_0$  & $f^{\text{fea}}_0$ & $m$ & $\Delta m$ \\ \hline
\vspace{2pt}
   6.00-6.30 &632.5   & 5.7     &632.5   & 5.7             &  8          &  0\\ \hline
\vspace{2pt}
   6.30-7.00 &315.8   & 11.4     &315.8   & 11.4          &  16          &  1\\ \hline
\vspace{2pt}
   7.00-7.30 &151.5   & 23.8     &151.5   & 23.8           &  34          &  1\\ \hline
\vspace{2pt}
   7.30-8.00 &135.4   & 26.6     &135.4   & 26.6             &  38          &  1\\ \hline
\vspace{2pt}
    8.00-8.30 &99.5   & 36.2   &99.5   & 36.2             &  51          & 2\\ \hline
\vspace{2pt}
   \textcolor{red}{8.30-9.00} &\textcolor{red}{84.4}   & \textcolor{red}{42.4}     & \textcolor{red}{92.6}
          & \textcolor{red}{38.9}            &  \textcolor{red}{55}          &  \textcolor{red}{2}\\ \hline
\vspace{2pt}
    9.00-9.30 &120.1   & 30.0  & 120.1          & 30.0           &  43         & 1\\ \hline
\vspace{2pt}
   9.30-10.00 &151.5   & 23.8     & 151.5           & 23.8        &  34          &  1\\ \hline
\end{tabular}
\label{table-1}
\end{table}

The feasible train time-headway on the central part $h^{\text{fea}}_0$ follows inversely the evolution of the passenger travel demand, see~(\ref{f/rho}),~(\ref{h_fea}).
Moreover, the total number of trains $m$ follows directly the evolution of the passenger travel demand, see~(\ref{lin2}).
Furthermore, it can be seen that the total number $m$ of trains is highly sensitive to a changing set point $f_0$ (with a factor of $\underline{T} = 84.2 \text{ min}$, accordingly to feedback
control law~(\ref{lin2})), whereas $\Delta m$ is less sensitive to $f_0$ (with a factor of $\Delta\underline{T} = 5.8 \text{ min}$, see feedback
control law~(\ref{lin2})).
All the required train frequencies for corresponding passenger demand levels are feasible, except for the one corresponding to
the 8.30-9.00 time period. Indeed, Table~\ref{table-1} shows that for the 8.30-9.00 time period (the highest level of passenger demand,
see Figure~\ref{fig_demand} and Figure~\ref{fig_headway}), the required train frequency responding to the passenger demand level is $f^{\text{req}}_0 = 42.4 \text{ trains/hour}$, while
the feasible one is $f^{\text{fea}}_0 = 38.9 \text{ trains/hour}$.

%=======================================
%\section{Responding to a perturbation on one of the branches}

\section[Macroscopic Control of the Number of Trains on the Branches in Case of a Perturbation]{Macroscopic Control of the Number of Trains on the Branches in Case of a Perturbation on the Branches \sectionmark{Responding to a Perturbation on the Branches}}
\sectionmark{Responding to a Perturbation on the Branches}
\label{mac-control2}

%=======================================
%The second tool, ''Real-time macroscopic traffic control for metro lines'' (REAL) allows to respond to perturbations on the branches by a real-time control of the train passing order at the convergence.
%It takes as inputs in real-time
%\begin{itemize}
%\item the train positions,
%\item the speed profiles,
%\item the dwell times and
%\item the safe separation times.
%\end{itemize}

%insertion;
Still consider Paris metro line 13 as shown in Figure~\ref{fig_schema}. The aim here is to control the train passing order at junction. This means a new order which is different to the one-over-two rule is applied temporarily. This can be interesting in case of a perturbation to avoid long train dwell times at junction when forcing to respect the one-over-two rule. Based on the observed train dwell and run times on the line, the train order at the convergence can be optimized. It is shown in the following that firstly, the time to recover the perturbation is reduced, and secondly, undesired dwell times at the convergence before entering the central part are avoided.

This part focuses on the 7.00-7.30 time period during morning peak hour.
The traffic is in the stable steady state, as defined in the corresponding line of Table~\ref{table-1} above.
%A minor incident on branch~1 provokes a prolongation of the travel times on this branch by twice the train time-headway on the central part:$2h_0 = 303\text{ sec}$.
Consider a minor incident on branch~1 which leads to an extension of train dwell times, and consequently a prolongation of the travel times, on this branch by twice the train time-headway on the central part:
$2h_0 = 303\text{ sec}$.
%insertion;
Figure~\ref{fig_perturb_without} depicts the train trajectories on the line for this case.
Note that points A, B, C, D, E, F, G represent a position on metro line 13 accordingly to Figure~\ref{fig_schema}. Part A-B-C represents branch~1, part C-D-E the central part and part E-F-G branch~2. Trains coming from and going to branch~1 are represented in blue, whereas trains coming from and going to branch~2 have red trajectories. Firstly, it can be seen the average frequency on the central part is twice the one on the branches, this is due to the one-over-two rule. At around $t=10$ minutes, a perturbation occurs near B on branch~1. A number of trains following each other realize significantly longer travel times, which can for example represent a passenger incident. Without control and forcing the nominal one-over-two rule at the junction, trains coming from branch~2 which have not been perturbed, have to wait near point G before entering the central part. The perturbation from branch~1 fully propagates on the central part.

Minor incident means here that the total travel time $\underline{T}$ over the line is assumed not to be seriously affected (it
remains unchanged).
In this case, train regulation on the affected branch and at the junction becomes necessary, in order to allow trains from the unaffected branch to enter
the central part without waiting for the delayed trains from the affected branch.
With $\Delta \underline{T}$ being observed, a decrease of $\Delta \underline{T}$ to $\Delta \underline{T} - 2h_0$ is stated.
Accordingly to~(\ref{lin4}), the new $\Delta m'$ is calculated as follows.
\begin{equation}
   \Delta m' = m(\Delta \underline{T} - 2h_0)/(2\underline{T}) = \Delta m - 1.
\label{sim-1}
\end{equation}

The new $\Delta m' =\Delta m - 1 = 1 - 1 = 0$,
$\Delta m$ is sensitive even to minor perturbations on the 
travel time on the branches.
In fact, $\Delta m$ has decreased by one train.
Therefore, the new optimal set point is located one unit below the old one, with regard to the $y$-axis of Figure~\ref{fig-2D}.
In this case, by
letting pass two subsequent trains, instead of only one train, from the unaffected branch entering the central
part, the new optimal set point will be reached, and the traffic will be well regulated at the convergence.
Note that $m$ does not change here.

Once the incident has terminated,
a diminution of the travel times on branch~1 by $-2h_0$ is observed, corresponding to a change in $\Delta \underline{T}$ of $+2h_0$, with respect to the perturbed situation.
Therefore, a new $\Delta m''$ is calculated as follows.
\begin{equation}
   \Delta m'' = m(\Delta \underline{T})/(2\underline{T}) = \Delta m' + 1 = \Delta m.
   \label{sim-2}
\end{equation}

\begin{sidewaysfigure}[]
 \centering
  \frame{ \includegraphics[width=\textwidth]{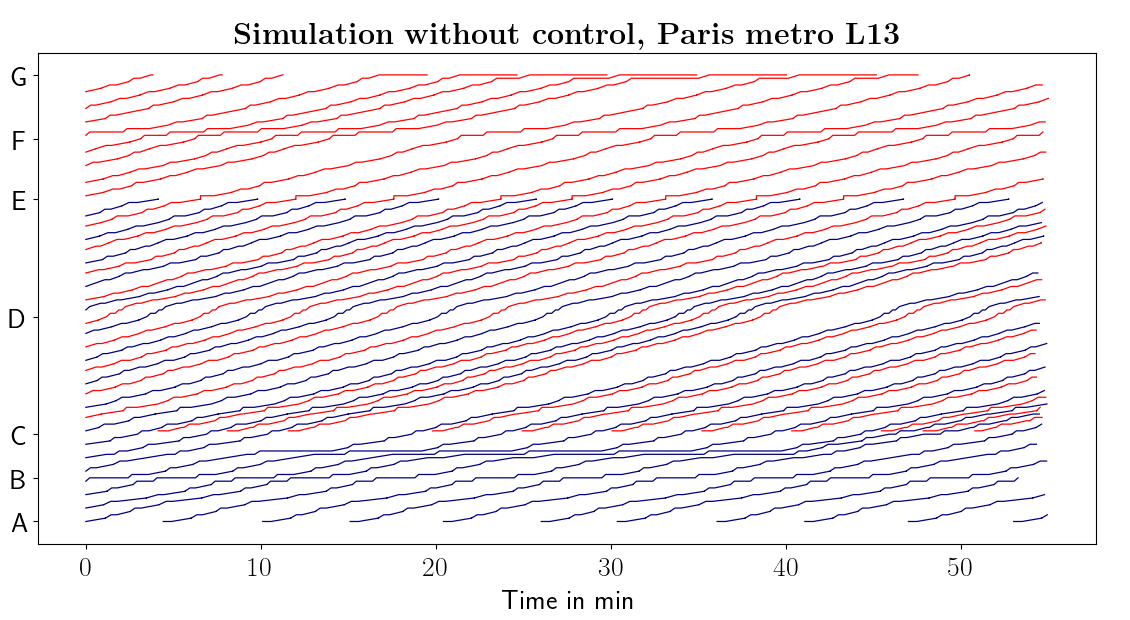}}
  \caption[Simulation results for metro line 13, Paris. A perturbation is injected on branch 1. Case without control.]{Simulation results for metro line 13, Paris. The localization of the points A,B,C,D,E,F and G of the $y$-axis is given in Figure~\ref{fig_schema}.
    A perturbation is injected on branch 1 (between B and C). Here, without
    control, trains from branch~2 have to wait at the convergence to respect the one-over-two rule (point G).}
\label{fig_perturb_without}
\end{sidewaysfigure}

\begin{sidewaysfigure}[]
 \centering
  \frame{ \includegraphics[width=\textwidth]{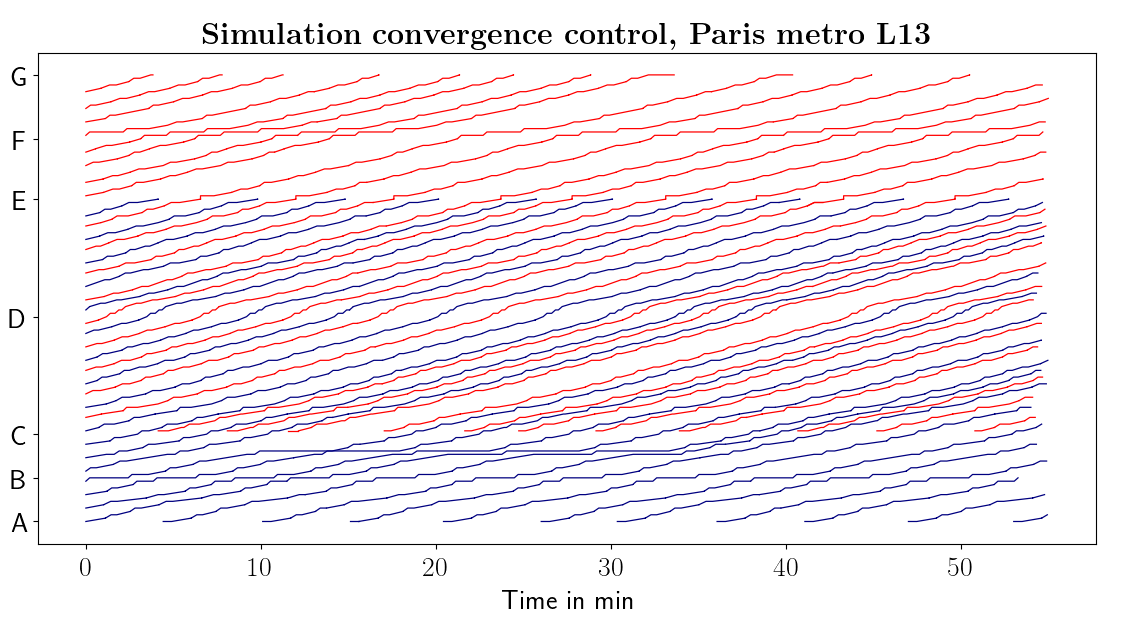}}
  \caption[Simulation results for metro line 13, Paris. A perturbation is injected on branch 1. Case with control.]{Simulation results for metro line 13, Paris. The localization of the points A,B,C,D,E,F and G of the $y$-axis is given in Figure~\ref{fig_schema}.
    A perturbation is injected on branch 1 (between B and C). Here, the train passing order at the convergence is controlled accordingly to~(\ref{sim-1}),~(\ref{sim-2}). The perturbation (at the convergence) disappears.} %Trains are not perturbed and the traffic remains fluid.}
\label{fig_perturb_with}
\end{sidewaysfigure}

Here, the nominal value
$\Delta m'' = \Delta m' + 1 = 0 + 1 = 1$ is re-obtained.
By letting pass two consecutive trains, instead of only one, from the previously affected branch, 
$\Delta m'$ will be increased by one train, such that the initial optimal set point is retrieved.

%Refer to Figure~\ref{fig_perturb_without} and Figure~\ref{fig_perturb_with} for the simulation results of the control presented above. Figure~\ref{fig_perturb_without} shows the train trajectories on metro line 13 with a perturbation on branch~1 without control. 
%In this case, trains arriving from branch~2 at the convergence (point G) have to wait before entering the 
%central part, in order to respect the planed train order at the junction (one-over-two rule).
%Reference was wrong (!)

%insertion;
Figure~\ref{fig_perturb_with} shows the simulation results with traffic control at the convergence.
Note that the initial train positions are the same as in Figure~\ref{fig_perturb_without} and at around $t=10$ minutes, a perturbation occurs near point B on branch~1. A number of consecutive trains realize significantly longer travel times, this can be due to a passenger incident. In the contrary to the simulation without control, the train passing order at convergence is modified. Right after the perturbation has occurred, two consecutive trains from branch~2 enter the central part at point~G. Here, trains from
branch~2 arriving at the convergence (point~G) can directly pass onto the central part, since traffic has
been well regulated at the convergence: At $t=15\text{ min}$ (respectively $t=35\text{ min}$), the train
order has been changed following~(\ref{sim-1}),~(\ref{sim-2}).
This allows, firstly, to reduce the time-headway on the central part compared to the situation in Figure~\ref{fig_perturb_without} and avoids long train dwell times near point G, which means that travel times for passenger from branch~2 are close the nominal value and are significantly shorter than in the situation without control.
When the perturbation on branch~1 disappears, when applying the macroscopic control, two consecutive trains coming from branch~1 enter the central part (point~G). This allows to avoid congestion at the convergence as it can be observed in Figure~\ref{fig_perturb_without}. With the macroscopic control, the traffic is re-stabilized after $35$ minutes, compared to $40$ minutes in the case without control.

%=======================================
\section[Real-time Control of Train Dynamics with Harmonization of the Train Time-headways]{Real-time Control of Train Dynamics with Harmonization of the Train Time-headways \sectionmark{Harmonization of Train Time-headways}}\label{dynamics-control}%train time-headway variance minimization}
\sectionmark{Harmonization of Train Time-headways}
%1.) First figure: Dynamics accordingly to ACC stable within \Delta{h}x_j which can be fixed accordingly to \Delta r... check if this condition is satisfied if not, say that the dynamics are forced to be stable but demand is not fully served! In any case different waiting times, different passenger charges on board => bad passenger comfort . => Solution for stable train dynamics and serving all passengers => headway harmonization!
%2.) For passenger comfort: Harmonization of train time-headways by additional control on train dwell times, guaranteeing a certain frequency, under the cost of temporarily not serving the entire demand (!) => possible solution: include additional margins on train dwell time!
%
%\subsection{Dwell time control}
\subsection{Controlled Train Dynamics}
% -----------------------------------------------------------------------------
In Chapter~\ref{Chap-4}, Section~\ref{acc} a traffic model for linear lines has been presented, where train dwell times are a function of the average passenger arrival rates to and the average passenger departures rates from the platforms and the train time-headway. To ensure that the train dynamics remain stable in the stationary regime, a control is applied on the run times which cancels an extension of the train dwell time in case of a long time-headway.
This model has been adopted to lines with a junction in Section~\ref{cdc18}.
It has been shown that the train dynamics remain stable under conditions.
First the initial train time-headways have to satisfy
\begin{equation}
h^1_j \leq \bar{h}_j = 1/(1-x_j) \bar{g}_j, \forall j,
\end{equation}
which can be written
\begin{equation}\label{stability_cond1}
h^1_j \leq \bar{h}_j = 1/(1-x_j)\underline{g}_j + (1/x_j)\Delta r_j, \forall j.
\end{equation}
Equation~(\ref{stability_cond1}) limits the initial condition to an expression of the passenger travel demand $x_j$, the minimum dynamic interval which is an infrastructure constraint $\underline{g}_j$ and the run time margin chosen $\Delta r_j$.
Note that in case this condition is satisfied, the following condition is automatically satisfied train dynamics on a linear line with demand-dependent dwell times and controlled run times are stable.
\begin{equation}
\Delta w_j \leq \Delta r_j, \forall j,
\end{equation}
which is equivalent to
\begin{equation}\label{stability_cond2}
X_j \Delta g_j \leq r_j, \forall j,
\end{equation}
which means that applying the dwell time equation~(\ref{eq-dwell}), train dwell times $\Delta w_j$ are at maximum extended up to the margin on the train run times $\Delta r_j$:
\begin{equation}\label{eq-dwell-remind}
w^k_j = \min\left\{ x_j h^k_j, \bar{w}_j \right\}, \forall k,j.
\end{equation}
In case~(\ref{stability_cond1}) is not satisfied, the application of~(\ref{stability_cond2}) ensures that dwell times are not extended over the run time margin and train dynamics remain stable. However, this implies that train dwell times cannot be extended to fully serve the passenger travel demand accordingly to dwell time equation~(\ref{eq-dwell-remind}).
The corresponding train dynamics have been proven to be stable, this means the asymptotic average train time-headway in the stationary regime exists.
Note the variance on the train time-headways
\begin{equation}
\Delta h_j := \bar{h}_j - \underline{h}_j.
\end{equation}

Then, the max-plus linear traffic model of Section~\ref{max-plus} guarantees 
stability of the train dynamics and controls both dwell and run times
to take into account the passenger travel demand. The stability is guaranteed under a variance on the train time-headways $\Delta h_j$ within the initial conditions~(\ref{stability_cond1}), and~(\ref{stability_cond2}) however, it does not harmonize the train time-headways on the line.

In Section~\ref{dyn_prog}, the dynamic programming model
from~\cite{FNHL16} has been presented which guarantees stability but controls 
only the dwell times at platforms without controlling the run times. This model is better in term of harmonization
of train departure time intervals, comparing to the max-plus one.
The model in this section extends the max-plus model~\ref{max-plus}
to a dynamic programming model,
as the model of Section~\ref{dyn_prog}.
Consequently, the new model benefits from the advantages of both models.

\begin{itemize}
\item Accounting for the passenger travel demand with (partially) demand-dependent dwell times and a run time control,
\item Harmonization of the train time-headways via a dwell time control without degrading the asymptotic average frequency.
\end{itemize}

In the following, the unchanged train run time control law from~(\ref{eq-run}) is considered, while a new train dwell time control law is proposed, which modifies~(\ref{eq-dwell-remind}) as follows.

\begin{align}
    w^k_j = \min \left\{(1-\gamma_j) x_j h^k_j , \bar{w}_j \right\}, \text{ with } 0 \leq \gamma_j \leq 1.
    \label{dwell-eq}
\end{align}

Notice that for $\gamma_j = 0$, equation~(\ref{dwell-eq}) is equivalent to~(\ref{eq-dwell-remind}) and the train dynamics are max-plus linear.
By activating the control ($1 \geq (1-\gamma_j) > 0$) in case of an extension of the time-headway due to a train delay, excessively long dwell times are limited, which would have been a direct consequence of a long headway, see equation~(\ref{eq-dwell-remind}).

% -----------------------------------------------------------------------------
%\subsection{Controlled train dynamics}
% -----------------------------------------------------------------------------
Consider the following notations.
\begin{itemize}
  \item $\Delta w_j := \bar{w}_j - \underline{w}_j$.
  \item $\Delta g_j := \bar{g}_j - \underline{g}_j$.
  \item $\Delta r_j := \tilde{r}_j - \underline{r}_j$.
\end{itemize}
It is then easy to check that $\Delta w_j = 1/(1-x_j) \Delta g_j$.
\begin{proposition}~\label{proposition_1}
   If for all $j$, $h^1_j \leq \bar{h}_j = 1/(1-x_j) \underline{g} + (1/x_j) \Delta r_j$, then $\Delta r_j \geq \Delta w_j$, 
   and then $t^k_j = \tilde{r}_j + X_j \underline{g}_j - \gamma_j X_j g^k_j, \forall j$.
   %\label{proposition-eq}
\end{proposition}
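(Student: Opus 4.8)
The plan is to reduce Proposition~\ref{proposition_1} to a direct application of Theorem~\ref{th1-acc18} (or rather to re-run its argument with the modified dwell-time law~(\ref{dwell-eq})), since the structure is identical: the only change is that $x_j h_j^k$ in the dwell-time law is replaced by $(1-\gamma_j) x_j h_j^k$, while the run-time law~(\ref{eq-run}) is unchanged. First I would establish the implication ``$h_j^1 \leq \bar h_j \Rightarrow \Delta r_j \geq \Delta w_j$''. Here $\bar h_j = 1/(1-x_j)\,\underline g_j + (1/x_j)\,\Delta r_j$ by hypothesis, and $\Delta w_j = 1/(1-x_j)\,\Delta g_j = X_j \Delta g_j$ by the identity noted just above the proposition. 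Recalling $\underline h_j = 1/(1-x_j)\,\underline g_j$ and $\bar h_j = \underline h_j + \Delta h_j$ with $\Delta h_j = (1/x_j)\Delta r_j$ (this was derived at the end of the \emph{Demand-dependent train dynamics} subsection, equations leading to $\Delta h_j = (1/x_j)\Delta r_j$), one gets $\Delta w_j = x_j \Delta h_j = \Delta r_j$, so in fact $\Delta r_j \geq \Delta w_j$ with equality; even when the initial condition only gives the inequality $h_j^1 \le \bar h_j$, the relation between $\Delta r_j$ and $\Delta w_j$ is a relation between parameters (not between trajectories), so it holds as stated.

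Next I would verify the travel-time formula. Exactly as in the proof of Theorem~\ref{th1-acc18}, I proceed by induction on $k$, using that a max-plus-linear map is $1$-Lipschitz for the sup-norm to propagate the bound $h_j^k \le \bar h_j$ from the initial condition. For the base case $k=1$: since $h_j^1 \le \bar h_j$, one has $(1-\gamma_j) x_j h_j^1 \le (1-\gamma_j) x_j \bar h_j \le x_j \bar h_j = \bar w_j$, so the first term realizes the minimum in~(\ref{dwell-eq}), giving $w_j^1 = (1-\gamma_j) x_j h_j^1$. For the run time, the computation is identical to that in Theorem~\ref{th1-acc18}: $\tilde r_j - x_j(h_j^1 - \underline h_j) \ge \tilde r_j - x_j(\bar h_j - \underline h_j) = \tilde r_j - \Delta w_j \ge \tilde r_j - \Delta r_j = \underline r_j$, so the second term realizes the maximum in~(\ref{eq-run}), giving $r_j^1 = \tilde r_j - x_j(h_j^1 - \underline h_j)$. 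Adding, and writing $h_j^1 = g_j^1 + w_j^1$ together with $\underline h_j = \underline g_j + \underline w_j$, $\underline w_j = x_j \underline h_j$:
$$ t_j^1 = \tilde r_j + w_j^1 - x_j(h_j^1 - \underline h_j) = \tilde r_j + x_j \underline h_j + w_j^1 - x_j h_j^1 = \tilde r_j + X_j \underline g_j - \gamma_j x_j h_j^1. $$
Finally replacing $x_j h_j^1$ by $X_j g_j^1$ (valid on the admissible range of headways, as stated in the passenger-arrivals modeling of Section~\ref{cdc18}) yields $t_j^1 = \tilde r_j + X_j \underline g_j - \gamma_j X_j g_j^1$, which is the claimed form.

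For the inductive step I would argue as in Theorem~\ref{th1-acc18}: once the dynamics have been shown to coincide with a max-plus-linear system up to step $k$, the $1$-Lipschitz property gives $\|d^{k+1}-d^k\|_\infty \le \|d^k - d^{k-1}\|_\infty \le \bar h_j$, hence $h_j^{k+1} \le \bar h_j$ for all $j$, and the base-case computation re-applies verbatim at step $k+1$. The main obstacle I anticipate is purely bookkeeping: one must be careful that with $\gamma_j > 0$ the system is \emph{not} max-plus-linear in the usual sense (the dwell time now depends on $g_j^k$, i.e.\ on the difference of departure times, so the travel-time law~$t_j^k = \tilde r_j + X_j\underline g_j - \gamma_j X_j g_j^k$ is still monotone and $1$-Lipschitz but introduces an extra backshift term), so the Lipschitz argument must be phrased for the monotone-and-additively-homogeneous (topical) map $\mathbf f$ rather than literally for a max-plus-linear map — this is the subtlety that the subsequent development of the dynamic-programming model presumably addresses, and it is the one place where the proof genuinely departs from a mechanical copy of Theorem~\ref{th1-acc18}.
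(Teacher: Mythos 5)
Your proof is correct and follows exactly the route the paper takes: the paper's entire proof of Proposition~\ref{proposition_1} is the single remark that the argument is by induction and similar to that of Theorem~\ref{th1-acc18}, which is precisely the induction you carry out (base case identifying which branch of the modified dwell-time law~(\ref{dwell-eq}) and of the unchanged run-time law~(\ref{eq-run}) is active, summation to get $t^1_j=\tilde r_j+X_j\underline g_j-\gamma_j x_j h^1_j$, inductive step via non-expansiveness). Your closing observation that for $\gamma_j>0$ the map is no longer max-plus linear but only monotone and additively homogeneous, so the $1$-Lipschitz step must be justified for topical maps, is a genuine subtlety the paper glosses over; your write-up is if anything more careful than the original.
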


\begin{proof}
The proof is by induction. It is similar to the one of Theorem~\ref{th1-acc18} in Section~\ref{max-plus}.
\end{proof}

The train dynamics is then written as follows.
\begin{equation}\label{itsc-dynamics}
    d^k_j = \max \left\{
	\begin{array}{l}
	    % \frac{1}{1 + \gamma_j x_j} d^{k-b_j}_{j-1} + \frac{\gamma_j x_j}{1 + \gamma_j x_j} d^{k-1}_j + \frac{1}{1 + \gamma_j x_j} (\tilde{r} + X_j \underline{g}_j), \\
	    (1 - \delta_j) d^{k-b_j}_{j-1} + \delta_j d^{k-1}_j + (1-\delta_j) (\tilde{r} + X_j \underline{g}_j), \\~~\\
	    d^{k-\bar{b}_{j+1}}_{j+1} + \underline{s}_{j+1},\\
	\end{array} \right.
\text{with } \delta_j := (\gamma_j x_j) / (1 + \gamma_j x_j).
\end{equation}

The dynamics presented here~(\ref{itsc-dynamics}) extends the one of Theorem~\ref{th1-acc18},
since in the case where $\delta_j = 0$ ($\gamma_j = 0$),
(\ref{itsc-dynamics}) coincides with the one presented in
Theorem~\ref{th1-acc18}.
This is a direct consequence of the fact that the train dwell time control law~(\ref{dwell-eq})
extends~(\ref{eq-dwell-remind}).

If $m=0$ (zero trains) or if $m=n$ (the metro line is full of trains), then the dynamic system~(\ref{itsc-dynamics}) is fully implicit. 
Indeed, if $m=0$, then $b_j = 0, \forall j$, that is the first term of the maximum operator in~(\ref{itsc-dynamics}) is implicit for every $j$.
Similarly, if $m=n$, then $\bar{b}_j = 0, \forall j$, that is the second term of the maximum operator in~(\ref{itsc-dynamics}) is implicit for every $j$.
In both cases, $m=0$ and $m=n$, no train movement is possible.
On the other side it is not difficult to check that if $0 < m <n$, then the dynamic system is implicit but triangular, that means
there exists an order on $j$ of updating the variables $d^k_j$ in such a way that the system will be explicit.
In fact, this order corresponds to the one of the train movements on the metro line.
In the following, only the case $0<m<n$ is considered. Therefore, the dynamic system~(\ref{itsc-dynamics}) admits an equivalent triangular system.

The train dynamics~(\ref{itsc-dynamics}) can be written as follows.
\begin{align}\label{markov_chain}
    d^k_j = \max_{u \in U} \left\{ \left(M^u d^{k-1}\right)_j + \left(N^u d^k\right)_j + c^u_j \right\}, \forall j,k,
\end{align}
where $M^u, u\in\mathcal U$ and $N^u, u\in \mathcal U$ are two families of square matrices, and $c^u, u\in\mathcal U$ is a family of column vectors.
Moreover, since $0\leq \delta_j < 1, \forall j$ by definition, then $M^u_{ij} \geq 0, \forall u,i,j$ and $N^u_{ij} \geq 0, \forall u,i,j$.
Finally $\sum_j \left(M^u_{ij} + N^u_{ij}\right) = 1, \forall u,i$ holds.

The equivalent triangular system of system~(\ref{itsc-dynamics}) can be written as follows.
\begin{align}\label{markov_chain2}
    d^k_j = \max_{u \in U} \left\{ \left(\tilde{M}^u d^{k-1}\right)_j + \tilde{c}^u_j \right\}, \forall j,k,
\end{align}
where $\tilde{M}^u, u\in\mathcal U$ is a family of square matrices, and $\tilde{c}^u, u\in\mathcal U$ is a family of column vectors,
which can be derived from $M^u, N^u$ and $c^u, u\in\mathcal U$.
Moreover, $\tilde{M}^u_{ij} \geq 0, \forall u,i,j$ and $\sum_j \tilde{M}^u_{ij} = 1, \forall u,i$ is still guaranteed.
By consequent, the system~(\ref{markov_chain2}) can be seen as a dynamic programming system of an optimal control problem of a Markov chain,
whose transition matrices are $\tilde{M}^u, u\in\mathcal U$ associated to every control action $u\in\mathcal U$, and whose 
associated rewards are $\tilde{c}^u, u\in\mathcal U$.

\begin{theorem}~\label{thm-eigenvalue}
    If $0 < m < n$, then the dynamics~(\ref{itsc-dynamics}) admits a stationary regime, with a unique asymptotic average
    growth vector (independent of the initial state vector $d^0$) whose components are all equal to $h$, which is
    interpreted here as the asymptotic average train time-headway.    
\end{theorem}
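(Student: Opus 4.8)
The plan is to reduce the implicit dynamics~(\ref{itsc-dynamics}) to the explicit triangular form~(\ref{markov_chain2}), to recognise the latter as the dynamic programming recursion (value iteration) of an ergodic finite-state Markov control problem, and then to invoke the existence-and-uniqueness result for the cycle-time vector of such systems, following the same lines as in~\cite{FNHL17b} for the dwell-time-only control model and as in Theorem~\ref{th-mpa} for the max-plus linear case.

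First I would make the reduction precise. Since $0<m<n$, as observed in the discussion following~(\ref{itsc-dynamics}), there is an order on the index $j$ (the order in which trains move along the line) for which the two terms of the maximum in~(\ref{itsc-dynamics}) become explicit; substituting the already-computed values $d^k_{j'}$ back into the remaining equations eliminates the implicit dependence on $d^k$ and yields~(\ref{markov_chain2}) with $\tilde M^u_{ij}\ge 0$ and $\sum_j \tilde M^u_{ij}=1$ for all $u,i$. The key point is that composing and taking maxima of maps of the form $d\mapsto M^u d+c^u$ with $M^u$ sub-stochastic (here $0\le\delta_j<1$ and $\sum_j(M^u_{ij}+N^u_{ij})=1$, so the $M^u,N^u$ of~(\ref{markov_chain}) have nonnegative entries with row sums $1$) again produces maps of the same type, so the row-stochasticity of the $\tilde M^u$ is preserved along the elimination; I would isolate this as a short lemma proved by induction along the triangular order.

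Second, let $\mathbf f:\mathbb R^n\to\mathbb R^n$ be defined by $\mathbf f(d)_j=\max_{u\in\mathcal U}\{(\tilde M^u d)_j+\tilde c^u_j\}$, so that the dynamics read $d^k=\mathbf f(d^{k-1})$. Because each $\tilde M^u$ is nonnegative and row-stochastic, $\mathbf f$ is monotone, additively homogeneous ($\mathbf f(d+\alpha\mathbf 1)=\mathbf f(d)+\alpha\mathbf 1$ for all $\alpha\in\mathbb R$), and nonexpansive for the sup-norm; equivalently, $\mathbf f$ is the dynamic programming operator of a finite Markov control problem with transition matrices $\tilde M^u$ and per-step rewards $\tilde c^u$. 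For such operators the relevant statement — existence of a cycle-time vector $\chi=\lim_{k\to\infty}\mathbf f^k(d^0)/k$, independent of the initial state $d^0$, together with the fact that under a communication (irreducibility) hypothesis on the underlying chain all components of $\chi$ coincide with a single scalar $h$ — is classical, and I would cite it in the form already used in~\cite{FNHL17b} and Theorem~\ref{th-mpa}.

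The main obstacle, and the step deserving the most care, is verifying the irreducibility condition that forces $\chi=h\mathbf 1$ and makes $h$ unique. Concretely, one must show that the directed graph on $\{1,\dots,n\}$ whose arcs are the pairs $(i,j)$ with $(\tilde M^u)_{ji}>0$ for some $u\in\mathcal U$ is strongly connected. For $0<m<n$ this holds because every segment is used by trains infinitely often: the travel-time arcs of~(\ref{itsc-dynamics}) propagate dependence forward along the line and the safe-separation arcs propagate it backward, and with at least one train and at least one free segment the forward and backward chains together link every node to every other node — exactly the analogue, in this dynamic-programming setting, of the strong connectedness of $\mathcal G(B(\gamma))$ invoked in Theorem~\ref{th-cdc17}. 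Once strong connectedness is established, the cited theorem yields the unique asymptotic average growth rate $h$, which by the interpretation of the variables is the asymptotic average train time-headway, completing the proof. As a consistency check, when $\gamma_j=0$ for all $j$ one has $\delta_j=0$, $\mathbf f$ becomes max-plus linear, and $h$ reduces to the value of Theorem~\ref{th1-acc18}.
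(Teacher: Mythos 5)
Your proposal follows essentially the same route as the paper's own (sketched) proof: reduce the implicit dynamics to the explicit triangular form~(\ref{markov_chain2}), interpret it as the dynamic programming operator of a finite Markov control problem with row-stochastic matrices $\tilde M^u$, and conclude via the irreducibility of the associated chain that the cycle-time vector exists, is independent of $d^0$, and has all components equal. You in fact supply more detail than the paper, which merely asserts that irreducibility ``has to be shown on the equivalent triangular system'' and defers to Theorem~5.1 of~\cite{FNHL16}, whereas you sketch the connectivity argument (forward travel-time arcs plus backward safe-separation arcs under $0<m<n$) explicitly.
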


\begin{proof}
It follows a sketch of the proof here. 
Since the dynamics~(\ref{itsc-dynamics}) is interpreted as the dynamic programming system of a stochastic optimal control problem
of a Markov chain,
it is sufficient to prove that the Markov chain in question, which is acyclic since $0 < m < n$,
is irreducible for every control strategy.
This is not obvious from the dynamics~(\ref{itsc-dynamics}). It has to be shown on the equivalent triangular system~(\ref{markov_chain2}).
An alternative proof is the one of Theorem~5.1 in~\cite{FNHL16}.
\end{proof}

Theorem~\ref{thm-eigenvalue} does not give an analytic formula for the asymptotic average train time-headway $h$.
However, it guarantees its existence and its uniqueness. Therefore, $h$ can be approximated by numerical simulation
based on the value iteration, as follows.
\begin{align}
    h \approx d^K_j/K, \forall j, \text{ for a large } K.
\end{align}

The objective of the model proposed here~(\ref{itsc-dynamics}) which extends the max-plus linear dynamics~(\ref{acc-dynamics}), is to harmonize the train time-headways on the line. %minimize the asymptotic average train time-headway variance.
%We give here the main ideas explaining why the control we proposed permits to minimize the asymptotic average train time-headway variance.
%Rigorous arguments with proved theorems will be given in our forthcoming articles.

As mentioned above, if $\gamma_j = 0, \forall j$, the dynamics~(\ref{itsc-dynamics}) are max-plus linear, and are equivalent to
~(\ref{acc-dynamics}).
In this case, if the dynamics~(\ref{itsc-dynamics}) or equivalently~(\ref{acc-dynamics}) are written under the triangular form~(\ref{markov_chain2}), then
the associated matrices $\tilde{M}^u$ are boolean circulant matrices. The latter may then have eigenvalue~$1$ with multiplicity bigger than~$1$.
Therefore, although the average growth rate of the dynamics is unique and independent of the initial state $d^0$, the asymptotic
state $d^k$ (up to an additive constant) may depend on the initial state $d^0$.

However, in case of the dynamics~(\ref{itsc-dynamics}) with $0 < \gamma_j \leq 1$ for some~$j$, the application of $\gamma_j$  will force the dynamics
to converge a stationary regime where the activated matrices $\tilde{M}^u$ are the ones
having eigenvalue~$1$ as a simple one, that is with multiplicity~$1$. In this case, $d^k$ will converge, up to an additive constant,
independent of the initial state $d^0$. Moreover, the asymptotic state $d$ will correspond to the case where the train time-headway
is the same at all the platforms. 

With regard to dwell time equation~(\ref{dwell-eq}), notice that a control $1 > (1-\gamma_j) > 0$ is applied temporarily in case a variance on the train time-headways is observed. During the time interval when the control is applied, the train time-headways at the platforms converge under the asymptotic average train time-headway. Indeed, with~(\ref{dwell-eq}), dwell times are slightly shortened to allow train time-headway harmonization. %This means, in case of a big interval, train dwell times are no longer extended excessively, but are expanded just a little bit.
By consequent, during the time period when the control is activated, the passenger travel demand cannot be fully served, this means the train dwell times do not completely account to the accumulation of passenger on the platform (and in the train).
However, the asymptotic average train time-headway is guaranteed, since dwell times are shortened.
Note that the control is deactivated as soon as the time-headways are harmonized and the dwell times correspond again to the passenger travel demand.

On lines with a very high passenger travel demand, as on the RATP network in Paris, the here proposed control can be more interesting compared to a classic holding control. It allows a train time-headway harmonization under a required time-headway at the cost of temporarily shortened dwell times, whereas the holding control is at the cost of an increased asymptotic average train time-headway which might lead to serious passenger accumulation on the platforms and overcrowded trains.

In the next section, by means of numerical simulation, it will be shown that in this case,
the train dynamics converge to a traffic state where the train time-headways are harmonized.
In other words, while the train time-headway converges to its asymptotic value $h$, the variance of the train time-headways,
that is its deviation with respect to its asymptotic average value $h$, converges to zero.

% -----------------------------------------------------------------------------
\subsection{{Simulation Results}}
% -----------------------------------------------------------------------------
%\begin{table}%[thbp]
%\centering
%\vspace{10pt}
%\caption{Simulation results for a linear metro line, Paris. In the left column, the train trajectories over all segments $j$ for $20$ trains,
%in the right column the headways between two consecutive trains on the line at the end of the simulation, for $20, 21, 22$ trains.
%In the first line without control ($\gamma_j = 0$), i.e. Max-plus linear traffic model
%with demand-dependent dwell and run times, in the second line with static control
%($\gamma_j = 0.1$), in the third line with dynamic control ($\gamma_j = 0.5 - (0.5/K)k$) for all $j$.}

\begin{sidewaysfigure}[h]
	\centering
	\fbox{\includegraphics[width=\textwidth]{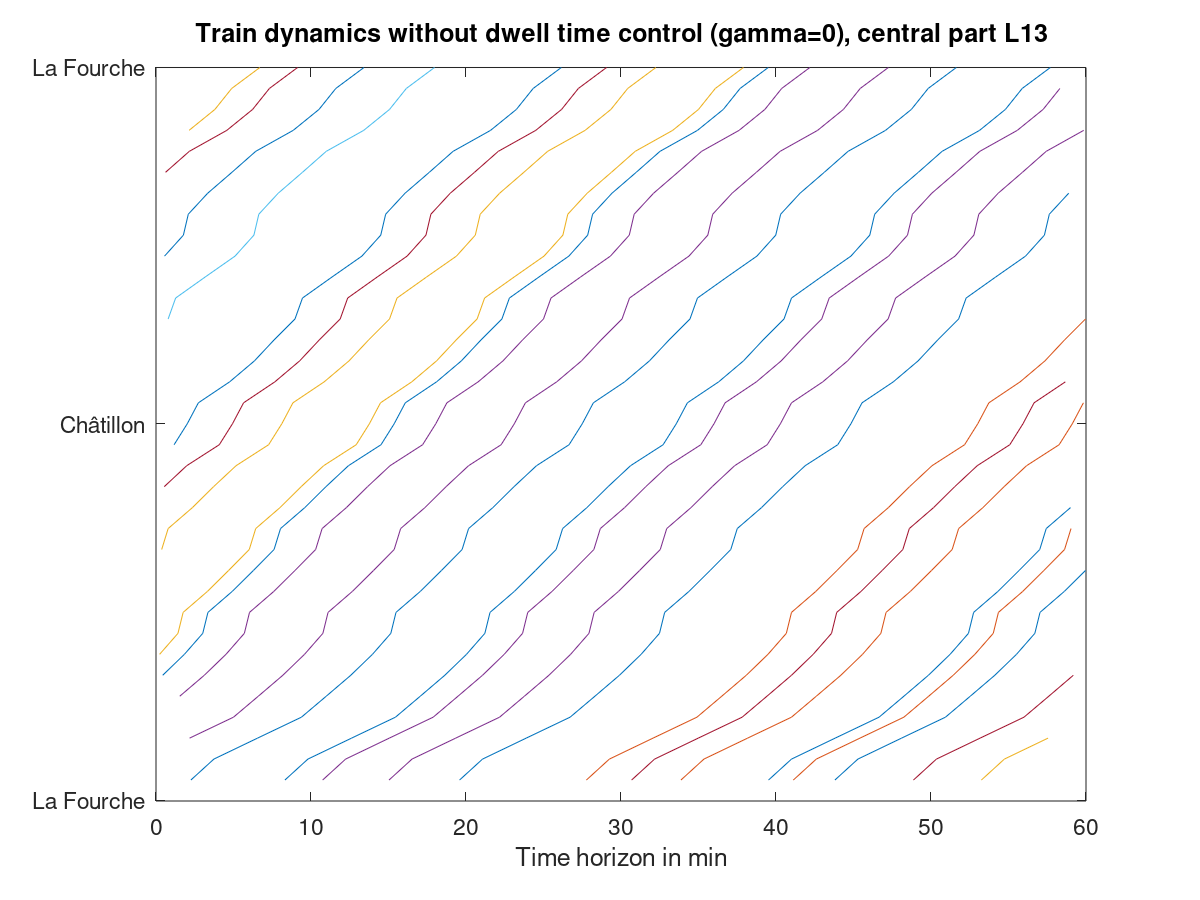}}
	\caption[Simulation of train trajectories with perturbed initial train time-headways, central part of Paris metro line 13. Max-plus linear train dynamics.]{Simulation of train trajectories with perturbed initial train time-headways, central part of Paris metro line 13. Max-plus linear train dynamics ($\gamma_j = 0$)  accordingly to~(\ref{itsc-dynamics}), with demand-dependent dwell times, run time margin $15\%$.}
	\label{its_gamma_0}
\end{sidewaysfigure}

\begin{sidewaysfigure}[h]
	\fbox{\includegraphics[width=\textwidth]{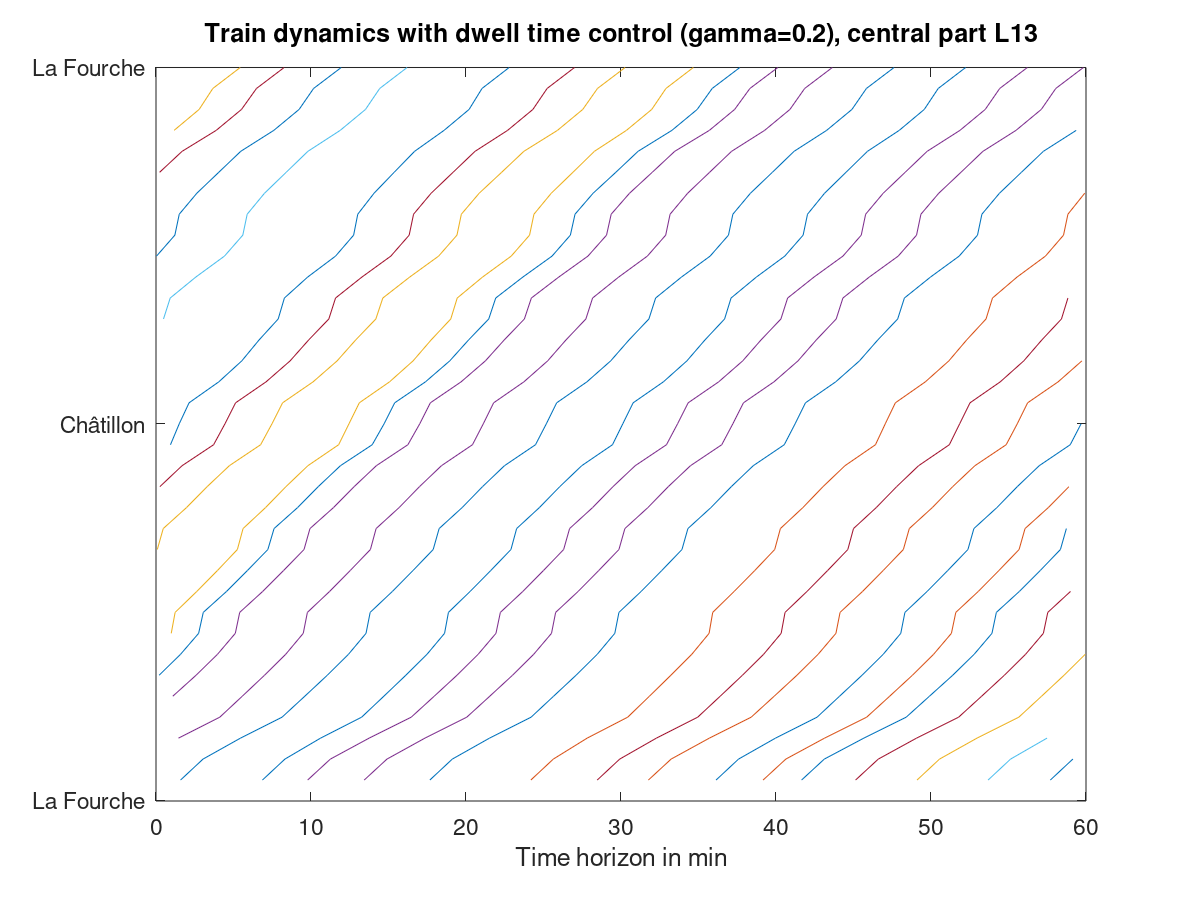}}
	\caption[Simulation of train trajectories with perturbed initial train time-headways, central part of Paris metro line 13. Train dynamics with dwell time control ($\gamma_j = 0.2$).]{Simulation of train trajectories with perturbed initial train time-headways, central part of Paris metro line 13. Train dynamics with dwell time control ($\gamma_j = 0.2$) accordingly to~(\ref{itsc-dynamics}), run time margin $15\%$.}
	\label{its_gamma_02}
\end{sidewaysfigure}

\begin{sidewaysfigure}[h]
	\fbox{\includegraphics[width=\textwidth]{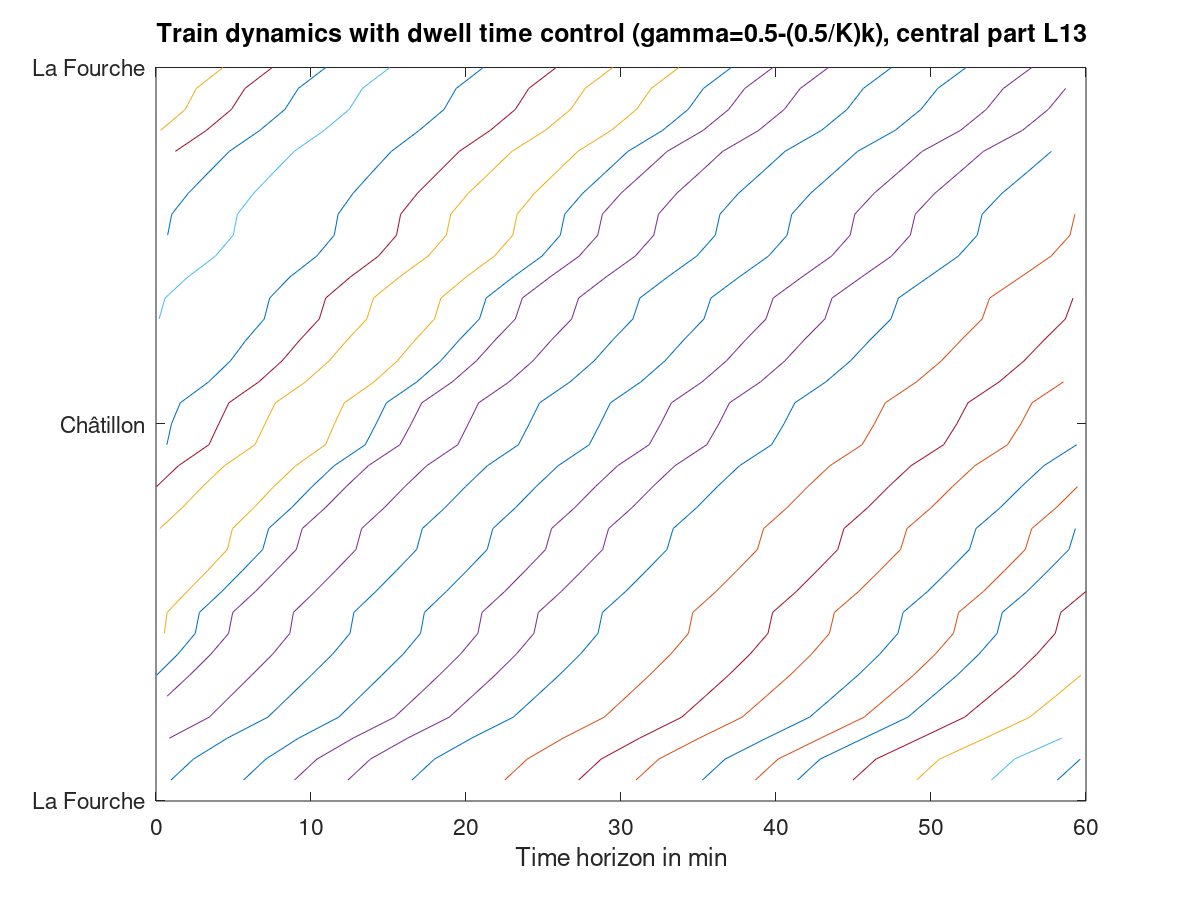}}
	\caption[Simulation of train trajectories with perturbed initial train time-headways, central part of Paris metro line 13. Train dynamics with dwell time control ($\gamma_j = 0.5-(0.5/K)k$).]{Simulation of train trajectories with perturbed initial train time-headways, central part of Paris metro line 13. Train dynamics with dwell time control ($\gamma_j = 0.5-(0.5/K)k$) accordingly to~(\ref{itsc-dynamics}), run time margin $15\%$.}
	\label{its_gamma_05}
\end{sidewaysfigure}

%\begin{tabular}{|c|c|}
%  \hline
%  $\gamma_j = 0$ & $\gamma_j = 0$ \\
%  \hline
%   & \\
%  \includegraphics[scale=0.19]{trajec_gamma_0.PNG} & \includegraphics[scale=0.19]{} \\%~~\\
%  \hline %~~\\
%  $\gamma_j = 0.1$ & $\gamma_j = 0.1$ \\
%   \hline
%   & \\
%  \includegraphics[scale=0.19]{} & \includegraphics[scale=0.19]{} \\%~~\\
%  \hline %~~\\
%  $\gamma_j = 0.5 -(0.5/K) k $ & $\gamma_j = 0.5 -(0.5/K) k$ \\
%   \hline
%   & \\
%  \includegraphics[scale=0.19]{trajec_gamma_05.PNG} & \includegraphics[scale=0.19]{} \\%~~\\
%  \hline
%\end{tabular}
%\label{tab}
%\end{table}

% Since we do not have an analytic formula for the unique eigenvalue of system~(\ref{dynamics}),
% interpreted as the asymptotic average train time-headway $h(\gamma_j)$ along the line in the
% stationary regime, but which has been proven to exist, we present here some simulation results.
% The train time-headway $h^k_j$, which we suppose to converge towards ${h}(\gamma_j)$ for all
% $j$, independently of the initial state vector $d^0_j$, i.e. of the initial train positions,
% are approximated as follows.
% \begin{align}
%     h(\gamma_j) \approx d^K_j/K, \forall j, \text{ for a large K}.
% \end{align}

The following figures~\ref{its_gamma_0},~\ref{its_gamma_02},~\ref{its_gamma_05} present simulation results of the train trajectories for the train dynamics of equation~(\ref{itsc-dynamics}) for different values of parameter $\gamma_j$.
First, the max-plus linear dynamics are depicted, this means $\gamma_j = 0$.
Second, the control is fixed to $\gamma_j = 0.2$, which is close to the max-plus linear dynamics.
Third, with $\gamma_j = 0.5 - (0.5/K)k$, the control degrades linearly from $0.5$ to $0$, over the simulation horizon $K$.
The simulation horizon $K$ is fixed accordingly to the time span, after which the train positions should be harmonized. 
It has been chosen to $K = 15$ departures here.

Figure~\ref{its_gamma_0} depicts the train trajectories on the central part of Paris metro line 13 over a time horizon of one hour. The trajectories are given by the max-plus linear train dynamics with $\gamma_j = 0$. In the initial state ($t = 0$) the initial train time-headways are perturbed. For example, at the station \textit{Châtillon -- Montrouge}, the first time-headways which can be observed (starting with the blue trajectory) are $h^1 = 175 \text{ sec}, h^2 = 191 \text{ sec}, h^3 = 339 \text{ sec}, h^4 = 96 \text{ sec}$.
In this case train dwell times are a function of the passenger arrival rates to and the passenger departure rates from the platforms, and the train time-headway such that they are extended for trains with a long headway. In the example, this is the case for the second yellow train with an initial headway of $h^3 = 339 \text{ sec}$. Note that this train is also accelerated in the inter-station to cancel the effect of the extended dwell time. Since the trajectories in the figures below show only the train departures $d^k_j$ from the platforms, this detail is not visible in the graph. The corresponding train dynamics have been proven to be stable in Section~\ref{max-plus} and to reach a stationary regime with an asymptotic average train time-headway. Figure~\ref{its_gamma_0} strongly suggests that this asymptotic average exists since train travel times and the number of trains are constant. Finally it can be seen that without dwell time control, the variance between the train time-headways does not disappear.

Figure~\ref{its_gamma_02} gives the train trajectories on the central part of line 13 for the dynamics accordingly to~(\ref{itsc-dynamics}) with $\gamma_j = 0.2$. The initial condition, that is the train positions and the initial headways are the same as before. In this case, the control on the dwell times is activated. Comparing the figure to the case of the max-plus linear dynamics, it can be seen that the time-headways converge over the horizon. The long initial headways disappear over time.
At the example of the second yellow train with the initial headway of $h^3 = 339 \text{ sec}$, the effect of the reduced dwell times can be studied. At the station La Fourche, the headways of this train with respect to its predecessor and it successor are nearly harmonized. Furthermore, it can be seen that the headway harmonization control does not lead to a decreasing frequency. In the contrary, at the end of the horizon at $t = 60 \text{ min}$ at first station, \textit{La Fourche}, the number of departures realized within the time horizon is increased by $+1$. In case of perturbation an accumulation of passengers on the platforms can typically be observed. With the here presented control, the asymptotic average frequency is increased while harmonizing the train time-headways.

Figure~\ref{its_gamma_05} depicts the train trajectories on the same line and still accordingly to~(\ref{itsc-dynamics}) for $\gamma_j = 0.5 - (0.5/K)k$ which means the control decreases linearly over the time horizon. In the beginning, the control applied is stronger which allows to harmonize the train time-headways more rapidly and therefore, to decrease the control linearly so that it reaches $\gamma_j = 0$ at the end of the horizon. It can be seen that the time-headways are harmonized faster. For example the first train (blue trajectory) at the station \textit{Châtillon -- Montrouge} has, at the end of the line at \textit{La Fourche}, nearly equalized time-headways with regard to its predecessor and its successor. Comparing this to Figure~\ref{its_gamma_02} with a less strong control $\gamma_j = 0.2$, it can be seen that the harmonization is much quicker.
Once a perturbation on the train time-headways is observed, it can be interesting to temporarily apply a strong control so that the perturbation rapidly disappears.

%=================================
\chapter{Conclusion}\label{conclusion}
\section*{Summary}
%=================================
%Forschungsfeld;
This thesis is on metro traffic modeling and control. 
%===
The train dynamics of a metro line with a junction are modeled with a discrete event traffic model. %is developed describing the train dynamics on a line with a junction, operated accordingly to an one-over-two rule.
The discrete event model is based on a modeling approach first developed by Farhi et al.~\cite{FNHL16} for a linear line and it represents a novel model of railway traffic. The discrete events considered here are the train departures from the nodes of the segments on the line, corresponding to the signaling blocks. The train dynamics are stated by two constraints. The first one is on train run and dwell times. The second one is on the safe separation time at each segment. 
% A train departs as soon as both constraints (the maximum) are satisfied.

%Erweiterungen zum aktuellen Wissensstand;
%Chap. 3;
%New to the existing research is, 
Original with respect to existing research is, first of all, %the application of the discrete event traffic model to a line with a junction which is operated accordingly to an one-over-two rule.
the modeling of the train dynamics of a metro line with an one-over-two operated junction as a discrete event system.
% of a metro line with an one-over-two operated junction.
In Chapter~\ref{1A}, a first model is proposed where lower bounds are considered on the train run, dwell and safe separation times on the segments. %More precisely, this means that trains respect the nominal dwell times and the nominal run times. %The dwell time is the maximum of nominal dwell time and safe separation time. Consequently, in case the safe separation constraint realizes the maximum, the train dynamics are an approximation because of the constant run times.
%===
The traffic model for a line with a junction is written linearly in the max-plus algebra. By application of the max-plus algebra theory, it is shown that the traffic on a line with a junction reaches a stationary regime with an asymptotic average growth rate, interpreted as the asymptotic average train time-headway. The main result is the closed-form solution of the asymptotic average train time-headway as a function of the train run, dwell, and safe separation times, of the number of trains and of the difference between the number of trains on the branches. It is shown that in the free flow (and in the congestion) phase, and for a fixed number of trains, there exists an optimal difference between the number of trains on the branches, maximizing the asymptotic average frequency.
From this result, eight traffic phases of the train dynamics are derived analytically. These are two phases for free flow, two phases representing congested branches, two
phases representing congestion on the central part of the line, a phase for the train line capacity (maximum frequency), and a zero frequency phase.
Based on the closed-form solutions of the traffic phases, three macroscopic control laws are derived. They allow to control the total number of trains as well as the difference between the number of trains on the branches, in cases of: 1) a changing passenger travel demand volume, 2) a perturbation on the travel times for a fixed number of trains, and 3) a perturbation on the travel times where trains can be inserted or canceled. This macroscopic control is interesting for lines with a junction since it allows to control the train passing order at the convergence by following the optimal difference between the number of trains on the branches and in the case where time margins included for microscopic control are insufficient to re-stabilize the traffic.
%===

%Chap. 4;
In Chapter~\ref{Chap-4}, an extended model is proposed, where the lower bounds on the  train run and dwell times are replaced by functions of the passenger travel demand and
the train time-headway, allowing a microscopic control of the train dynamics. Train dwell times are extended for trains with long time-headways in order to take into account the accumulation of passengers on the platforms. On the other side, and for those trains with long time-headways, the train run times are shortened by accelerating the trains in the upcoming inter-stations, in order to compensate the extension of the train dwell times. 
%===
%Consequently, the properties of the first model hold. 
The model remains max-plus linear and the train dynamics reach a stable stationary regime with an asymptotic average train time-headway. Closed-form solutions of the asymptotic average train time-headway are derived depending, in addition to the parameters of the first model, furthermore on the passenger travel demand and on the run time margin. The traffic phases of the train dynamics are then derived and allow the analysis of the effect of the passenger demand and of the train run time margins on the asymptotic average train frequency.
For the case of a linear line, three traffic phases are distinguished: a free flow phase, a maximum frequency phase and train congestion phase. For the case of a line with a junction, eight traffic phases are distinguished, as in the standard model. 
% For the optimal difference between the number of trains on the branches, the fundamental diagram is the same as for a linear line.

%Chap. 5;
Finally, three simulation cases at the example of the metro line 13 of Paris (with one junction) are presented in Chapter~\ref{sim}. The first case illustrates the macroscopic control on the number of trains depending on the passenger travel demand volume. In this case, the train dynamics do not change and therefore there is no need for simulation of the train dynamics. The computed control actions can directly be applied, and the stability of the system is proved. 
The second case shows the macroscopic control of the number of trains on the branches in case of a perturbation of the train travel times. Since the train dynamics are perturbed here, they are simulated, and the results with and without control are compared.
The third case gives a simulation of the demand-dependent train dynamics with perturbed initial time-headways.
% , and compares two scenarios. In the first scenario, where the train dynamics are max-plus linear and stable, the train dwell times are demand-dependent, and the run times are controlled, but the initial perturbation on the time-headways does not disappear. And a second one with an additional control on the dwell times, where train dynamics can be interpreted as a stochastic optimal control problem of a Markov chain. Here, the temporarily applied control guarantees a convergence of the train time-headways to the asymptotic average train time-headway which harmonizes train positions and time-headways.

%=============================
\section*{Outreach and Limitations}
%=============================
The thesis brings about several contributions to railway traffic modeling and control.
Their applications in planning and in operations, their limits and perspectives are discussed below.

%applications ...
First of all, thanks to the novel modeling approach, fundamental diagrams for metro lines with a junction have been derived. They can serve for planing issues, for example the timetable construction or the capacity analysis of existing or planed metro lines. The diagrams which depict the relationships between the macroscopic variables: Total number of trains and the difference between the number of trains on the two branches, and the asymptotic average train frequency, depending on the train run, dwell and safe separation times, allow to derive an optimal train configuration with respect to a requested train frequency.

% ...
Moreover, from the underlying max-plus linear traffic model, control laws have been derived, which allow real-time macroscopic control of the number of trains and of the difference between the number of trains on the branches, depending on the train 
run, dwell and safe separation times.
The macroscopic control of the difference between the number of trains on the branches can be realized with a control of the train passing order at the junction. The control is optimal with regard to the asymptotic average train frequency in the stationary regime. 
% However, in the transient regime of a perturbed traffic, this control of the convergence might lead to increasing dwell times at the junction and therefore to increasing train travel times, and finally to longer train time-headways in the transitory regime.
More attention needs to be given to the control at the junction in the transient traffic regime.

Moreover, in an extended version of the traffic model, a dynamic control of the train dwell times, accounting for a possible passenger accumulation on the platform in case of a long time-headway, and a dynamic control of the train run times guaranteeing the system stability has been proposed. Both have direct applications in metro operations within a margin on the train run times.
The dynamic control guarantees system stability combined with demand-dependent dwell times in case of a perturbation on the train time-headways. However, it is limited by the margin on the train run times, such that in case a perturbation exceeds the margin, the maximum dwell time is applied and optimality with regard to the passenger travel demand is no longer guaranteed. The dynamic dwell time does not take into account the train capacity.
A model of the train capacity will enhance the field of applications, for example in case of important perturbations.

A final possible application in metro operations is the dynamic dwell time control, reducing the dwell time for trains with a long time-headway. It leads to the harmonization of the train time-headways on the line, see Chapter~\ref{sim}.
Based on the dynamic dwell time model Chapter~\ref{Chap-4}, this control guarantees the harmonization of the time-headways within a limit on the longest time-headway, depending on the run time margin chosen. Moreover, reducing train dwell times in case of a long time-headway is negative from a passenger point of view and needs technical installations which guarantee its applicability. A possible solution is an automatic closing of platform doors and trains doors.
Its combination with a holding control could ensure headway harmonization even in case of important perturbations.

%system theory ...
The railway system considered here is a metro line. This allows to simplify the description of the system at some points, compared to a full railway, for example with regard to a simplified block and signaling system. %Precisely, the constant safe separation times do not allow model the possibility of a reduced safety distances in case of a limited approaching speed in case of dense traffic is not modeled here.\\
For example, constant train run times do not represent acceleration and braking in case a train is affected by the signaling system.
To allow the application of the model to full railway systems, for example RER lines (Paris), suburban lines or high speed lines, a more specific model of the train dynamics is necessary.

The metro line modeled here has one junction. In the steady state, an one-over-two operations rule is applied at the divergence and at the convergence.
Many metro lines with a junction are operated accordingly to the one-over-two rule because it theoretically ensures constant time-headways between trains on both branches. However, different rules exist and can be modeled with the approach of this thesis.

With regard to full railway systems, more complex configurations with several branches and junctions exit. These configurations can also be modeled using the here presented approach. However, the representation of the traffic phases of the train dynamics is difficult for more complex lines. The fundamental diagram of a line with a central part and two branches derived here has three dimensions. Consequently, the fundamental diagram of a line with more than two branches has more than three dimensions which makes the graphical interpretation of the phase diagrams more complex.

Finally, trains are supposed to stop at all the platforms and the number of trains in the system is supposed to be constant over a certain period in time.
Using the same modeling approach, an extension representing the operation of express trains which skip stops is possible.
% However, with regard to open systems, including lines with shared tracks between several lines, where the train number is not constant, the application of the max-plus modeling approach is possibly limited.

%=============================
\section*{Perspectives}
%=============================
As highlighted above, this thesis has contributed to the theoretical understanding of the physics of metro traffic on lines with a junction. The theory developed has furthermore direct applications in traffic planing and real-time control.
On the offline traffic planing side, the fundamental diagrams of this thesis have demonstrated that two variables have to be optimized to obtain a desired frequency with regard to the system parameters run times, run time margin, passenger travel demand and safe separations times: The number of trains and the difference between the number of trains on the branches.
The theory can be developed further by, first, considering systems with a junction which is operated with a rule different from one-over-two. Second, modeling systems with more than two branches, and third, modeling the effect of a skip-stop policy.

On the online real-time control side, the model proposes a dwell time accounting for passenger accumulation on the platform and controlled run times to guarantee traffic stability within a run time margin. It is necessary to further develop the control for the transient regime. In case of a serious perturbation, an efficient control for the transient regime is required in order to reach the stationary regime in a reasonable time.

\backmatter
\bibliographystyle{abbrv}
\bibliography{bibliography} 
% Note the lack of whitespace between the commas and the next bib file.
\end{document}